\theoremstyle{plain}
\newtheorem{theorem}{Theorem}[section]
\newtheorem{lemma}[theorem]{Lemma}
\newtheorem{corollary}[theorem]{Corollary}
\newtheorem{proposition}[theorem]{Proposition}
\theoremstyle{definition}
\newtheorem{definition} [theorem]{Definition}
\begin{document}

\newcommand{\E}{\mathbb{E}\!}
\newcommand{\ES}{\mathbb{E}}
\renewcommand{\P}{\mathbb{P}}
\newcommand{\R}{\mathbb{R}}
\newcommand{\N}{\mathbb{N}}
\newcommand{\smallsum}{\textstyle\sum}
\newcommand{\tr}{\operatorname{trace}}
\newcommand{\citationand}{\&}
\newcommand{\dt}[1][t]{\, \mathrm{d} #1}
\newcommand{\grid}{\,\mathcal{P}}
\newcommand{\euler}{Z^{\,N}}
\newcommand{\leuler}{\tilde{Z}^{\,N}}
\newcommand{\exteuler}{\bar{Z}^{\,N}}

\newcommand{\floor}[2]{\lfloor #1\rfloor_{#2}}
\newcommand{\ceil}[2]{\lceil #1\rceil_{#2}}
\newcommand{\mesh}[2]{\|#1\|^{#2}_{\grid_T}}
\newcommand{\bigbrack}[2]{\big( #1\big)^{#2}}
\newcommand{\bignorm}[3]{\bnl #1\bnr^{#2}_{#3}}
\newcommand{\bigsharp}[2]{\big[ #1\big]^{#2}}
\newcommand{\lpn}[3]{L^{#1}(#2;#3)}
\newcommand{\eulerm}[1]{Z^{\,\Theta_{#1}}}
\newcommand{\eulerpart}[1]{Z^{#1}}

\newcommand{\expeuler}[1]{Z^{\,\text{exp},#1}}
\newcommand{\impeuler}[1]{Z^{\,\text{imp},#1}}

\newcommand{\embed}[2]{\kappa^I_{#1,\,#2}}

\newcommand{\groupC}{ C }
\newcommand{\diagC}{ C }
\newcommand{\psiC}{ \eta }
\newcommand{\driftC}{{\bf y}}
\newcommand{\diffusionC}{{\bf z}}
\newcommand{\power}{ q }

\newcommand{\set}{{\mathbb{H}}}

\newcommand{\resolvent}[2]{R_{#2}( #1 )}

\title{Weak convergence rates for Euler-type\\
approximations of semilinear stochastic evolution\\
equations with nonlinear diffusion coefficients}

\author{Arnulf Jentzen and 
Ryan Kurniawan\\[2mm]
\emph{ETH Zurich, Switzerland}}

\maketitle

\begin{abstract}
Strong convergence rates for time-discrete numerical approximations 
of semilinear stochastic evolution equations (SEEs) with smooth and regular 
nonlinearities are well understood in the literature. Weak convergence rates for 
time-discrete numerical approximations of such SEEs have been investigated since 
about 12 years and are far away from being well understood: roughly speaking, 
no essentially sharp weak convergence rates are known for time-discrete numerical 
approximations of parabolic SEEs with nonlinear diffusion coefficient functions; see 
Remark~2.3 in [A.\ Debussche, \emph{Weak approximation of stochastic partial differential equations: the 
nonlinear case}, Math.\ Comp.\ {\bf 80} (2011), no.\ 273, 89--117] for details. 
In the recent article [D.\ Conus, A.\ Jentzen \& R.\ Kurniawan, \emph{Weak convergence rates 
of spectral Galerkin approximations for SPDEs with nonlinear 
diffusion coefficients}, arXiv:1408.1108] the weak convergence problem 
emerged from Debussche's article has been solved in the case of spatial spectral Galerkin
approximations for semilinear SEEs with nonlinear diffusion coefficient functions.
In this article we overcome the problem emerged from Debussche's article
in the case of a class of time-discrete Euler-type approximation 
methods (including exponential and linear-implicit Euler approximations as
special cases)
and, in particular, we establish essentially sharp weak convergence rates 
for linear-implicit Euler approximations of semilinear SEEs
with nonlinear diffusion coefficient functions. 
Key ingredients of our approach are applications of a mild It\^{o} type formula
and the use of suitable semilinear integrated counterparts 
of the time-discrete numerical approximation processes.
\end{abstract}

\tableofcontents

\section{Introduction}
\label{sec:intro}

This article studies weak convergence rates for time-discrete numerical approximations of semilinear stochastic evolution equations (SEEs). 
We first review a few weak convergence results from the literature and then present 
the main weak convergence result obtained in this article.
This introductory section is based on Section~1 of Conus et al.~\cite{ConusJentzenKurniawan2014arXiv}.
For finite dimensional stochastic ordinary differential equations (SODEs) with smooth 
and regular nonlinearities both strong and numerically weak convergence rates of time-discrete 
numerical approximations are well understood in the literature; see, e.g., the monographs 
Kloeden \& Platen~\cite{kp92} and Milstein~\cite{m95}.
The situation is different in the case 
of possibly infinite dimensional semilinear stochastic evoluation equations (SEEs).
While strong convergence rates for time-discrete numerical approximations 
of semilinear SEEs with smooth and regular nonlinearities are well understood in 
the literature, 
weak convergence rates for time-discrete numerical approximations of 
such SEEs have been investigated since about 12 years and are far away from being well 
understood: roughly speaking, no essentially sharp weak convergence rates are known 
for time-discrete numerical approximations of parabolic SEEs 
with nonlinear diffusion coefficient functions; see Remark~2.3 in 
Debussche~\cite{Debussche2011} for details.
In this article we overcome the problem emerged from Debussche's article
in the case of a class of time-discrete Euler-type approximation 
methods for SEEs (including exponential and linear-implicit Euler approximations as
special cases)
and, in particular, we establish essentially 
sharp weak convergence rates 
for linear-implicit Euler approximations of semilinear SEEs
with nonlinear diffusion coefficient functions. 
To illustrate the weak convergence problem emerged from 
Debussche's article and our solution to this problem we consider the following setting as a special case of
our general setting in Section~\ref{sec:weak_convergence_irregular} below.
Let 
$ ( H, \left< \cdot, \cdot \right>_H, \left\| \cdot \right\|_H ) $
and
$ ( U, \left< \cdot, \cdot \right>_U, \left\| \cdot \right\|_U ) $
be separable $ \R $-Hilbert spaces,
let 
$ T \in (0,\infty) $, 
let $ ( \Omega, \mathcal{F}, \P, ( \mathcal{F}_t )_{ t \in [0,T] } ) $
be a stochastic basis,
let $ ( W_t )_{ t \in [0,T] } $ be a cylindrical
$ \operatorname{Id}_U $-Wiener process with respect to
$ ( \mathcal{F}_t )_{ t \in [0,T] } $,
let 
$ A \colon D(A) \subseteq H \to H $
be a generator of a strongly continuous analytic semigroup 
with 
$
  \sup\!\big( 
    \text{Re}( \operatorname{spectrum}( A ) ) 
  \big) < 0 
$,
let
$  
  ( 
    H_r 
    ,
    \left< \cdot, \cdot \right>_{ H_r }
    ,$ $
    \left\| \cdot \right\|_{ H_r } 
  )
$,
$ r \in \R $,
be a family of interpolation spaces associated to $ - A $
(cf., e.g., Theorem and Definition~2.5.32 in \cite{Jentzen2014SPDElecturenotes}),
let
$ 
  \iota \in \R
$,
$ 
  \xi \in H_{ \iota } 
$,
$ 
\gamma \in [ 0, \frac{ 1 }{ 2 } ] 
$,
and 
let 
$ 
  F \colon H_{ \iota } \to 
  (
    \cap_{ r < \iota - \gamma }
    H_r
  )
$
and
$
  B \colon H_{ \iota } \to
  \operatorname{Lin}( U, 
    \cap_{ r < \iota - \nicefrac{ \gamma }{ 2 } }
    H_r
  )
$
be functions with the property that
$
  \forall \, r \in ( - \infty ,  \iota - \gamma )
  \colon
  \big[
    \big(
      H_{ \iota } \ni 
      v \mapsto 
      F( v ) \in H_r
    \big)
    \in 
    C^5_b( H_{ \iota } , H_r ) 
  \big]
$,
with the property that 
$
  \forall \, r \in ( - \infty ,  \iota - \frac{ \gamma }{ 2 } ), 
  v \in H_{ \iota }
  \colon
  \big[
    \big(
      U \ni u \mapsto 
      B( v ) u \in H_r
    \big)
    \in 
    HS( U, H_r )
  \big]
$,
and with the property that
$
  \forall \, r \in ( - \infty ,  \iota - \frac{ \gamma }{ 2 } )
  \colon
  \big[
    \big(
      H_{ \iota } \ni 
      v \mapsto 
      [ U \ni u \mapsto B( v ) u \in H_r ]
      \in
      HS( U, H_r )
    \big)
    \in 
    C^5_b( H_{ \iota } , HS( U, H_r ) ) 
  \big]
$,
where for two $ \R $-vector spaces 
$ V_1 $ and $ V_2 $
we denote by 
$
  \operatorname{Lin}( V_1, V_2 )
$
the set of all linear mappings 
from $ V_1 $ to $ V_2 $
and 
where for two $ \R $-Banach spaces 
$ ( V_1, \left\| \cdot \right\|_{ V_1 } ) $ 
and 
$ ( V_2, \left\| \cdot \right\|_{ V_2 } ) $ 
we denote by $ C^5_b( V_1 , V_2 ) $ the set
of all five times continuously Fr\'{e}chet differentiable
functions from $ V_1 $ to $ V_2 $ 
which have globally bounded derivatives 
(see Subsection~\ref{sec:notation} below for more details).
The above 
assumptions 
ensure (cf., e.g., Proposition 3 in Da Prato et al.~\cite{DaPratoJentzenRoeckner2012}, 
Theorem 4.3 in Brze{\'z}niak~\cite{b97b}, 
Theorem 6.2 in Van Neerven et al.~\cite{vvw08}) 
the existence of a continuous mild solution 
process 
$ X \colon [0,T] \times \Omega \to H_{ \iota } $
of the SEE
\begin{equation}
\label{eq:SEE_intro}
dX_t= 
\left[ AX_t 
+ 
F(X_t) 
\right] 
\,dt 
+ 
B(X_t) 
\,dW_t, 
\qquad 
t\in[0,T]
,
\qquad 
X_0=\xi
.
\end{equation}
As an example for \eqref{eq:SEE_intro},
we think of $ H = U = L^2( (0,1) ; \R ) $
being the $ \R $-Hilbert space of equivalence classes
of Lebesgue square integrable functions from $ ( 0,1 ) $ to $ \R $
and $ A $ being a linear differential operator on $ H $.
In particular, in Subsection~\ref{sec:anderson} we formulate the continuous version of 
\emph{the parabolic Anderson model} as an example of \eqref{eq:SEE_intro}
(in that case the parameter $ \gamma $, which controls the regularity of the operators
$ F $ and $ B $, satisfies $ \gamma = \frac{ 1 }{ 2 } $)
and in Subsection~\ref{sec:cahn} we formulate \emph{a fourth-order stochastic partial
differential equation} as an example of \eqref{eq:SEE_intro}
(in that example we have $ \gamma = \frac{ 1 }{ 4 } $).
In this work we are interested in the analysis of numerical 
approximations of \eqref{eq:SEE_intro}.
For example, let $ Y^N \colon \{ 0, 1, \dots, N \} \times \Omega \to H_{ \iota } $,
$ N \in \N $, be stochastic processes with the property that for all $ N \in \N $,
$ n \in \{ 0, 1, \dots, N - 1 \} $
it holds $ \P $-a.s.\ that
\begin{equation}
\label{eq:linear_impl}
  Y^N_0 = \xi,
  \qquad
  Y^N_{ n + 1 }
  =
  \left( \operatorname{Id}_H - \tfrac{ T }{ N } A \right)^{ - 1 }
  \Big(
    Y_n
    +
    F( Y_n ) \tfrac{ T }{ N }
    +
    \smallint\nolimits_{ \frac{ n T }{ N } }^{
      \frac{ ( n + 1 ) T }{ N } 
    }
    B( Y_n )
    \,
    dW_s
  \Big)
  .
\end{equation}
The stochastic processes $ Y^N $, $ N \in \N $,
are referred to as linear-implicit Euler approximations of \eqref{eq:SEE_intro}.

Strong convergence rates for numerical approximations
for SEEs of the form~\eqref{eq:SEE_intro} 
are well understood.
Weak convergence rates for numerical approximations of SEEs 
of the form \eqref{eq:SEE_intro}
have been investigated since about 12 years; cf.
\cite{s03, 
h03b, 
dd06, 
dp09, 
GeissertKovacsLarsson2009, 
h10c, 
Debussche2011, 
kll11, 
Brehier2012b, 
WangGan2013WeakHeatAdditiveNoise,
LindnerSchilling2013, 
KovacsLarssonLindgren2013BIT,
AnderssonLarsson2013,
BrehierKopec2013, 
AnderssonKruseLarsson2013, 
Kruse2014_PhD_Thesis, 
Brehier2014,
KovacsPrintems2014, 
Kopec2014_PhD_Thesis, 
Wang2014Weak, 
ConusJentzenKurniawan2014arXiv, 
Wang2014b, 
AnderssonKovacsLarsson2014}.
Except for Debussche \& De Bouard~\cite{dd06}, 
Debussche~\cite{Debussche2011},
Andersson \& Larsson~\cite{AnderssonLarsson2013},
and Conus et al.~\cite{ConusJentzenKurniawan2014arXiv}, 
all of the above cited weak convergence results assume, 
beside further assumptions, that the considered SEE is driven by additive noise. 
In Debussche \& De Bouard~\cite{dd06} 
weak convergence rates for the nonlinear Schr\"{o}dinger equation,
whose dominant linear operator generates a group (see Section~2 in \cite{dd06}) instead of 
only a semigroup as in the general setting of the SEE~\eqref{eq:SEE_intro}, are analyzed.
The method of proof in Debussche \& De Bouard~\cite{dd06} 
strongly exploits this property of 
the nonlinear Schr\"{o}dinger equation
(see Section~5.2 in \cite{dd06}).
Therefore, 
the method of proof in \cite{dd06}
can, in general, not be used to establish weak convergence
rates for the SEE~\eqref{eq:SEE_intro}.
In Debussche's seminal article \cite{Debussche2011}, 
essentially sharp weak convergence rates for 
\emph{linear-implicit Euler approximations}
(see \eqref{eq:linear_impl}) of 
SEEs of the form \eqref{eq:SEE_intro} are established 
under the hypothesis that $ \iota = 0 $ and that the second derivative of the
diffusion coefficient $ B $ satisfies the smoothing property
that there exists an $ L \in \R $ 
such that for all $ x, v, w \in H $ it holds 
that\footnote{Assumption~\eqref{intro_eq:affine} above slighlty differs from the original assumption in \cite{Debussche2011} 
as we believe that there is a small typo in equation (2.5) in \cite{Debussche2011};
see inequality (4.3) in the proof of Lemma~4.5 in \cite{Debussche2011} for details.}
\begin{equation}
\label{intro_eq:affine}
  \left\|
    B''( x )( v , w )
  \right\|_{ L(H) 
  }
  \leq 
  L
  \left\| v \right\|_{
    H_{ - \nicefrac{ 1 }{ 4 } } 
  }
  \left\| w 
  \right\|_{
    H_{ - \nicefrac{ 1 }{ 4 } }
  }
  .
\end{equation}
The article Andersson \& Larsson~\cite{AnderssonLarsson2013} also assumes \eqref{intro_eq:affine}
but establishes weak convergence rates for spatial approximations.
As pointed out in Remark~2.3 in Debussche~\cite{Debussche2011},
assumption~\eqref{intro_eq:affine}
is a serious restriction for SEEs of the form~\eqref{eq:SEE_intro}.
Roughly speaking, assumption~\eqref{intro_eq:affine}
imposes that the second derivative of the diffusion
coefficient function vanishes and thus that the diffusion coefficient function
is affine linear.
Remark~2.3 
in Debussche~\cite{Debussche2011} 
also asserts that
assumption~\eqref{intro_eq:affine} is crucial in the weak convergence proof 
in \cite{Debussche2011}, 
that assumption~\eqref{intro_eq:affine} is used in 
an essential way
in Lemma~4.5 in \cite{Debussche2011} and that
Lemma~4.5 in \cite{Debussche2011}, in turn, 
is used at many points in the weak convergence proof in \cite{Debussche2011}.
Debussche's article naturally suggests the problem of establishing essentially sharp
weak convergence rates in the case where Debussche's assumption~\eqref{intro_eq:affine}
is not satisfied.
In Conus~\cite{ConusJentzenKurniawan2014arXiv}
essentially sharp weak convergence rates have been
established without imposing Debussche's assumption~\eqref{intro_eq:affine}
in the case of spatial spectral Galerkin approximations.
To the best of our knowledge, it remained an open problem to establish
essentially sharp weak convergence rates for time-discrete numerical approximations
of the SEE~\eqref{eq:SEE_intro} without imposing Debussche's 
assumption~\eqref{intro_eq:affine}.
In this article we overcome this problem 
in the case of a class of time-discrete Euler-type approximation 
methods for SEEs (including exponential and linear-implicit Euler approximations as
special cases)
and, in particular, we establish essentially sharp weak convergence rates 
for linear-implicit Euler approximations of semilinear SEEs
with nonlinear diffusion coefficient functions. 
This is the subject of the following result, Theorem~\ref{intro:theorem}.
Theorem~\ref{intro:theorem} follows\footnote{with $ H = H_{ \iota } $ in the notation
of Corollary~\ref{cor:weak_convergence_irregular}} immediately from Corollary~\ref{cor:weak_convergence_irregular}
and Subsection~\ref{sec:linear_implicit}.

\begin{theorem}
\label{intro:theorem}
Assume the setting in the first paragraph 
of Section~\ref{sec:intro} 
and let 
$ \varphi \in C^5_b( H_\iota, \R ) $.
Then for every 
$ \varepsilon \in ( 0, \infty ) $ 
there exists a 
$ C \in \R $
such that for all $ N \in \N $
it holds that
\begin{equation}
  \left|
    \ES\big[ 
      \varphi( X_T )
    \big]
    -
    \ES\big[ 
      \varphi( Y^N_N )
    \big]
  \right|
\leq
    C \cdot
    N^{
      - ( 1 - \gamma - \varepsilon )
    }
  .
\end{equation}
\end{theorem}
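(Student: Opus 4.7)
The plan is to reduce the discrete weak error to a continuous one by introducing a semilinear integrated counterpart $\bar{Z}^N \colon [0,T] \times \Omega \to H_\iota$ of the linear-implicit Euler scheme, namely a continuous-time process that satisfies a mild equation of the form
\[
\bar{Z}^N_t = \mathcal{S}^N(t)\xi + \int_0^t \mathcal{S}^N(t-s)\, F(Y^N_{\lfloor sN/T\rfloor})\,ds + \int_0^t \mathcal{S}^N(t-s)\, B(Y^N_{\lfloor sN/T\rfloor})\,dW_s,
\]
where $\mathcal{S}^N(\cdot)$ is the step-approximation to $e^{A\cdot}$ built out of the resolvent $(\mathrm{Id}_H - \tfrac{T}{N}A)^{-1}$. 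The point of this construction is that $\bar{Z}^N$ agrees (up to the $\mathcal{S}^N$-action) with $Y^N$ at the grid points $nT/N$ while obeying a mild equation structurally identical to \eqref{eq:SEE_intro}, so that standard mild-calculus tools become available.

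Next I would introduce the Kolmogorov transition function $u(t,x) := \mathbb{E}[\varphi(X_{T-t})\mid X_0 = x]$ associated with the exact solution and write
\[
\mathbb{E}[\varphi(X_T)] - \mathbb{E}[\varphi(Y^N_N)] = \big(u(T,\xi) - \mathbb{E}[u(0,\bar{Z}^N_T)]\big) + \mathbb{E}[u(0,\bar{Z}^N_T) - \varphi(Y^N_N)].
\]
The second bracket is controlled by a standard strong estimate on $\bar{Z}^N_T - Y^N_N$, while the first is expanded via a mild It\^o formula applied to $t \mapsto u(T-t,\bar{Z}^N_t)$. Because $u$ solves the Kolmogorov equation of \eqref{eq:SEE_intro}, all regular terms cancel and the weak error reduces to a time integral whose integrand is the sum of three pieces: (i) a drift perturbation $(\partial_x u)[F(Y^N_{\lfloor \cdot\rfloor}) - F(\bar{Z}^N_\cdot)]$; (ii) a diffusion perturbation of the form $\tfrac{1}{2}(\partial_{xx} u)[B(Y^N_{\lfloor \cdot\rfloor})-B(\bar{Z}^N_\cdot),\, B(Y^N_{\lfloor \cdot\rfloor})+B(\bar{Z}^N_\cdot)]$ suitably symmetrised; and (iii) a semigroup--resolvent mismatch produced by $\mathcal{S}^N(\cdot)$ versus $e^{A\cdot}$.

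Each of these is then bounded using three ingredients: (a) regularity estimates for the derivatives $\partial^k u$, $k \le 5$, of the Kolmogorov function, which provide bounds in terms of weighted negative-Sobolev norms of their arguments with total weight strictly less than one (the analogue of Lemma~4.1 in Conus et al.~\cite{ConusJentzenKurniawan2014arXiv}); (b) strong convergence estimates for $Y^N_{\lfloor sN/T\rfloor} - \bar{Z}^N_s$ in $H_{-\gamma/2}$, obtained from It\^o and maximal regularity arguments applied to the mild equation for $\bar{Z}^N$; and (c) sharp discrete-semigroup smoothing bounds of the type $\|(-A)^{\rho}(e^{As} - (\mathrm{Id}_H-\tfrac{T}{N}A)^{-\lfloor sN/T\rfloor})\|_{L(H)} \lesssim N^{-1+\rho}$ for $\rho\in[0,1]$. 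Routine time-integration, combined with the bound $\|\varphi\|_{C^5_b(H_\iota,\R)}$, yields the rate $N^{-(1-\gamma-\varepsilon)}$.

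The main obstacle is controlling piece (ii) in the absence of Debussche's affine-linearity assumption \eqref{intro_eq:affine}. A direct second-order Taylor expansion of $B(Y^N_{\lfloor s\rfloor}) - B(\bar{Z}^N_s)$ generates a term $B''(\cdot)(e,e)$ in which each $e$ loses regularity of order $\gamma/2$, producing a total loss of $\gamma$ that cancels the $1-\gamma$ rate and forces one back to the affine-linear hypothesis. The resolution, mirroring the spectral Galerkin strategy of Conus et al., is to avoid a direct Taylor expansion of $B$ and instead apply the mild It\^o formula a \emph{second} time to the composite map $x \mapsto (\partial_{xx}u)(s,\bar{Z}^N_s)[B(x),B(x)]$, so that the regularity loss is absorbed into a single higher-order derivative of $u$ rather than being split across two independent factors. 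This is the technical heart of the proof and explains both the use of $C^5_b$ regularity on $\varphi$, $F$, $B$ and the restriction $\gamma \le 1/2$.
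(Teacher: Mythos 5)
Your outline does capture the paper's strategy in its essentials: interpolate the scheme in time, insert a semilinear integrated counterpart, expand the weak error through the Kolmogorov backward equation, and treat the diffusion term not by Taylor-expanding $B$ (which would force Debussche's condition \eqref{intro_eq:affine}) but by a further application of the mild It\^{o} formula to the composed maps built from $u_{0,1}F$ and $u_{0,2}(B\cdot,B\cdot)$, with the remaining strong errors measured in negative Sobolev norms; this is exactly the mechanism of Propositions~\ref{prop:weak_temporal_regularity_1st} and~\ref{prop:weak_temporal_regularity_2nd} together with Lemmas~\ref{lem:weak_regularity_F_2nd_term} and~\ref{lem:weak_regularity_B_2nd_term}. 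Two points, however, are genuine gaps rather than presentational shortcuts. First, your counterpart $\bar{Z}^N$ is defined with the discrete evolution family $\mathcal{S}^N$ inside the convolution, i.e.\ it is the continuous-time interpolation \eqref{eq:tildeY} of the scheme, not the paper's semilinear integrated counterpart \eqref{eq:barY}, which carries the exact semigroup $e^{(t-s)A}$. This matters in both readings. If $\bar{Z}^N$ really carries $\mathcal{S}^N$, then $\bar{Z}^N_T=Y^N_N$ and your second bracket is trivial, but the claimed cancellation of ``all regular terms'' against the Kolmogorov equation fails: that equation is generated by $A$, not by the scheme's evolution family, so your item (iii) is not a small remainder but contains precisely the terms whose full-order weak analysis is the hard part. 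If instead you meant $e^{(t-s)A}$, the cancellation is fine, but then $\bar{Z}^N_T\neq Y^N_N$ and the second bracket cannot be closed by ``a standard strong estimate'': in $H_\iota$ the strong distance between \eqref{eq:tildeY} and \eqref{eq:barY} is only of order roughly $(1-\gamma)/2$ (cf.\ Lemma~\ref{lem:euler_integrated_strong} with $\rho=0$), which would halve the rate; the full order $1-\gamma-\varepsilon$ for this term is obtained only weakly, via the mild It\^{o} formula and negative norms (Proposition~\ref{prop:weak_temporal_regularity_2nd}, Corollary~\ref{cor:weak_error_numerics-integrated}). The paper therefore needs both processes and the triangle inequality \eqref{eq:triangle_inequality_intro}; your plan collapses them into one expansion and thereby loses the step where the main difficulty sits. (Also, with your convention $u(t,x)=\ES[\varphi(X_{T-t})\,|\,X_0=x]$, your displayed decomposition telescopes to $\varphi(\xi)-\ES[\varphi(Y^N_N)]$, not to the weak error; presumably a slip, but it should be fixed.)

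Second, nothing in the proposal makes the It\^{o}/Kolmogorov step legitimate at the stated regularity. Here $F$ and $B$ map $H_\iota$ only into $H_{\iota-\gamma}$ and $HS(U,H_{\iota-\gamma/2})$, and the approximation processes take values in $H_\iota$, so the term $u_{0,1}(t,\bar{Y}_t)\big(A\bar{Y}_t+F(\cdot)\big)$ appearing in the expansion is not defined and the cancellation you invoke is purely formal. The paper resolves this with the mollification procedure taken from Conus et al.: replace $F,B$ by $e^{\kappa A}F$, $e^{\kappa A}B$ so that the counterpart process lives in $H_1$ (Section~\ref{sec:euler_integrated_mollified}), prove the weak rate for the mollified problem with constants that blow up as a negative power of $\kappa$ (Corollary~\ref{cor:mollified_weak_solution-num_komplete}), control the mollified-versus-unmollified discrepancy strongly (Proposition~\ref{prop:strong_convergence_numerics}, Corollary~\ref{prop:general_perturb}), and then optimize over $\kappa$ (Proposition~\ref{prop:weak_convergence_irregular}, Corollary~\ref{cor:weak_convergence_irregular}). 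This balancing is not a technicality: it is where the $\varepsilon$-loss in the exponent $1-\gamma-\varepsilon$ actually comes from, and your outline neither regularizes the nonlinearities nor offers any other source for that loss. You would also need the finiteness of the Kolmogorov-derivative constants you list in ingredient (a) uniformly in the mollification parameter, which is what the quantities $c^{(\kappa)}_{\delta_1,\dots,\delta_k}$ and $\tilde{c}^{(\kappa)}_{\dots}$ in Section~\ref{sec:setting_weak_convergence_irregular} encode.
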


Let us add a few comments regarding Theorem~\ref{intro:theorem}.
First, we would like to emphasize that 
in the general setting of Theorem~\ref{intro:theorem},
the weak convergence rate established
in Theorem~\ref{intro:theorem}
can \emph{essentially not be improved}.
More specifically, in 
Corollary~\ref{cor:weak_error_laplacian_lower_bound}
in Section~\ref{sec:lower_bound} below
we give for every 
$ \iota \in \R $,
$ \gamma \in [ 0 , \frac{ 1 }{ 2 } ] $
examples of
$ A \colon D(A) \subseteq H \to H $,
$ 
  F \colon H_{ \iota } \to 
  (
    \cap_{ r < \iota - \gamma }
    H_r
  )
$, 
$
  B \colon H_{ \iota } \to
  \operatorname{Lin}( U, 
    \cap_{ r < \iota - \nicefrac{ \gamma }{ 2 } }
    H_r
  )
$
and
$ 
  \varphi \in C^5_b( H_{ \iota } , \R ) 
$
with
$
  \forall \, r \in ( - \infty ,  \iota - \gamma )
  \colon
  \big[
    \big(
      H_{ \iota } \ni 
      v \mapsto 
      F( v ) \in H_r
    \big)
    \in 
    C^5_b( H_{ \iota } , H_r ) 
  \big]
$,
$
  \forall \, r \in ( - \infty ,  \iota - \frac{ \gamma }{ 2 } ), 
  v \in H_{ \iota }
  \colon
  \big[
    \big(
      U \ni u \mapsto 
      B( v ) u \in H_r
    \big)
    \in 
    HS( U, H_r )
  \big]
$,
$
  \forall \, r \in ( - \infty ,  \iota - \frac{ \gamma }{ 2 } )
  \colon
  \big[
    \big(
      H_{ \iota } \ni 
      v \mapsto 
      [ U \ni u \mapsto B( v ) u \in H_r ]
      \in
      HS( U, H_r )
    \big)
    \in 
    C^5_b( H_{ \iota } , HS( U, H_r ) ) 
  \big]
$
such that 
there exists a 
$ C \in (0,\infty) $
such that
for all $ N \in \N $
it holds that
\begin{equation}
  \left|
    \ES\big[ 
      \varphi( X_T )
    \big]
    -
    \ES\big[ 
      \varphi( Y^N_N )
    \big]
  \right|
\geq
    C \cdot
    N^{
      - ( 1 - \gamma )
    }
    .
\end{equation}
In addition, we emphasize that in the setting 
of Theorem~\ref{intro:theorem} it is well known 
that for every $ \varepsilon \in ( 0 , \infty ) $
there exists a
$ C \in \R $
such that
for all $ N \in \N $ it holds that
\begin{equation}
\label{eq:strong_rate}
  \big(
    \ES\big[ 
      \| X_T - Y^N_N \|^2_{ H_{ \iota } }
    \big]
  \big)^{ \nicefrac{ 1 }{ 2 } }
\leq
  C
  \cdot
  N^{
    - \left( \frac{ 1 - \gamma }{ 2 } - \varepsilon \right)
  }
  .
\end{equation}
The weak convergence rate 
$ 1 - \gamma - \varepsilon $
established
in Theorem~\ref{intro:theorem}
is thus \emph{twice the well known strong convergence
rate} $ \frac{ 1 - \gamma - \varepsilon }{ 2 } $ in \eqref{eq:strong_rate}.
Next we add that Theorem~\ref{intro:theorem} is, to the best of our knowledge, the first result in the literature
which establishes the essentially sharp weak convergence rate $ 1 - \gamma - \varepsilon $ for time-discrete numerical
approximations of the continuous version of \emph{the parabolic Anderson model} (see Subsection~\ref{sec:anderson} for details).

In the following we briefly outline a few key ideas in the proof 
of Theorem~\ref{intro:theorem} (and Corollary~\ref{cor:weak_convergence_irregular} respectively).
For simplicity we restrict ourself to the case $ \iota = 0 $. 
Our proof of Theorem~\ref{intro:theorem} is partially based on the 
proof of the weak convergence result in Conus et al.~\cite{ConusJentzenKurniawan2014arXiv}.
The first step in the proof 
of Theorem~\ref{intro:theorem} is to rewrite the time-discrete 
stochastic processes $ Y^N $, $ N \in \N $, 
(see \eqref{eq:linear_impl}) as appropriate time-continuous stochastic processes
(see \eqref{eq:tildeY} below).
More formally, 
let 
$ \lfloor \cdot \rfloor_h \colon \R \to \R $, $ h \in (0,\infty) $,
be the mappings with the property that for all 
$ h \in (0,\infty) $, 
$ t \in \R $
it holds that
$
  \lfloor t \rfloor_h = \max\!\big( ( - \infty, t ] \cap \{ 0, h, - h, 2 h, - 2 h, \dots \} \big)
$,
let 
$ 
  S^N \colon \left\{ ( t_1, t_2 ) \in [0,T]^2 \colon t_1 \leq t_2 \right\} \to L( H ) 
$,
$ N \in \N $,
be the mappings with the property that 
for all $ N \in \N $, $ ( t_1, t_2 ) \in [0,T]^2 $ with $ t_1 \leq t_2 $
it holds that
\begin{equation}
  S^N_{ t_1, t_2 }
  =
  \Big( 
    \operatorname{Id}_H - \, ( t_1 - \floor{ t_1 }{ T / N } ) \, A 
  \Big)
  \,
  \Big( 
    \operatorname{Id}_H - \, ( t_2 - \floor{ t_2 }{ T / N } ) \, A 
  \Big)^{ \! - 1 }
  \Big( 
    \operatorname{Id}_H - \, \tfrac{ T }{ N } \, A 
  \Big)^{
    -
    \,
    (
      \floor{ t_2 }{ T / N } - \floor{ t_1 }{ T / N }
    )
    \,
    N
    / 
    T
  }
\end{equation}
(cf., e.g., (142) in Da Prato et al.~\cite{DaPratoJentzenRoeckner2012}),
and
let $ \tilde{Y}^N \colon [0,T] \times \Omega \to H $, $ N \in \N $,
be $ ( \mathcal{F}_t )_{ t \in [0,T] } $-predictable stochastic processes with 
the property that for all $ N \in \N $, $ t \in [0,T] $
it holds $ P $-a.s.\ that
\begin{align}
\label{eq:tildeY}
&
  \tilde{Y}^N_t 
  =
  S^N_{ 0, t } \,
  \xi
\\ & 
\nonumber
  +
  \int_0^t
  S^N_{ s, t } 
  \,
  \big(
    \operatorname{Id}_H 
    - 
    ( s - \lfloor s \rfloor_{ T / N } ) \, A
  \big)^{ - 1 }
  F\big( 
    \tilde{Y}^N_{ \lfloor s \rfloor_{ T / N } } 
  \big) 
  \, ds
  +
  \int_0^t
  S^N_{ s, t } 
  \,
  \big(
    \operatorname{Id}_H 
    - 
    ( s - \lfloor s \rfloor_{ T / N } ) \, A
  \big)^{ - 1 }
  B\big( 
    \tilde{Y}^N_{ \lfloor s \rfloor_{ T / N } } 
  \big) \, dW_s
\end{align}
(cf.\ (143) in Da Prato et al.~\cite{DaPratoJentzenRoeckner2012}).
Note that for all $ N \in \N $, $ n \in \{ 0, \frac{ T }{ N }, \frac{ 2 T }{ N } , \dots, T \} $
it holds $ P $-a.s.\ that
$
  \tilde{Y}^N_{ n T / N } = Y^N_n
$.
Moreover, recall that the solution process $ X $ of the SEE~\eqref{eq:SEE_intro} 
satisfies that for all $ t \in [0,T] $ it holds $ P $-a.s.\ that
\begin{equation}
\label{eq:X_mild_intro}
  X_t 
  =
  e^{ t A } \, \xi
  +
  \int_0^t
  e^{ ( t - s ) A } F( X_s ) \, ds
  +
  \int_0^t
  e^{ ( t - s ) A } B( X_s ) \, dW_s
  .
\end{equation}
The next key step in the proof of Theorem~\ref{intro:theorem} is to use the idea in 
Conus et al.~\cite{ConusJentzenKurniawan2014arXiv} to plug an appropriate process in between
$
    \ES\big[ 
      \varphi( X_T )
    \big]
$
and
$
    \ES\big[ 
      \varphi( Y^N_N )
    \big]
    =
    \ES\big[ 
      \varphi( \tilde{Y}^N_T )
    \big]
$.
More formally, we use the triangle inequality to obtain that for all 
$ N \in \N $ it holds that
\begin{equation}
\label{eq:triangle_inequality_intro}
  \big|
    \ES\big[ 
      \varphi( X_T )
    \big]
    -
    \ES\big[ 
      \varphi( \tilde{Y}^N_T )
    \big]
  \big|
\leq 
  \big|
    \ES\big[ 
      \varphi( X_T )
    \big]
    -
    \ES\big[ 
      \varphi( \bar{Y}^N_T )
    \big]
  \big|
  +
  \big|
    \ES\big[ 
      \varphi( \bar{Y}^N_T )
    \big]
    -
    \ES\big[ 
      \varphi( \tilde{Y}^N_T )
    \big]
  \big|
\end{equation}
where $ \bar{Y}^N \colon [0,T] \times \Omega \to H $, $ N \in \N $,
are appropriate stochastic processes so that it is in some sense not so difficult anymore to 
estimate
$
  \big|
    \ES\big[ 
      \varphi( X_T )
    \big]
    -
    \ES\big[ 
      \varphi( \bar{Y}^N_T )
    \big]
  \big|
$
and
$
  \big|
    \ES\big[ 
      \varphi( \bar{Y}^N_T )
    \big]
    -
    \ES\big[ 
      \varphi( \tilde{Y}^N_T )
    \big]
  \big|
$
for $ N \in \N $.
The main difficulty and also a key difference of the proof of Theorem~\ref{intro:theorem} 
in this article
to the proof of the weak convergence result in Conus et al.~\cite{ConusJentzenKurniawan2014arXiv} 
is the appropriate choice of the processes $ \bar{Y}^N $, $ N \in \N $,
which we put in between.
In the case of Theorem~\ref{intro:theorem}
it turns out to be rather useful to choose $ \bar{Y}^N \colon [0,T] \times \Omega \to H $, $ N \in \N $,
such that for every $ N \in \N $, $ t \in [0,T] $ it holds $ \P $-a.s.\ that
\begin{equation}
\label{eq:barY}
  \bar{Y}^N_t 
  = 
  e^{ t A } 
  \,
  \xi
  +
  \int_0^t
  e^{ ( t - s ) A }
  \,
  F\big( 
    \tilde{Y}^N_{ \lfloor s \rfloor_{ T / N } } 
  \big) 
  \, ds
  +
  \int_0^t
  e^{ ( t - s ) A }
  \,
  B\big( 
    \tilde{Y}^N_{ \lfloor s \rfloor_{ T / N } } 
  \big) \, dW_s
  ,
\end{equation}
cf.\ \eqref{eq:barY} with \eqref{eq:tildeY} and \eqref{eq:X_mild_intro}.
In the remainder of this article we refer to $ \bar{Y}^N $, $ N \in \N $,
as \emph{semilinear integrated counterparts} of \eqref{eq:tildeY}.
In Proposition~\ref{prop:weak_temporal_regularity_2nd} 
(a key result of this article)
in Subsection~\ref{sec:weak_distance_semilinear}
the terms
$
  \big|
    \ES\big[ 
      \varphi( \bar{Y}^N_T )
    \big]
    -
    \ES\big[ 
      \varphi( \tilde{Y}^N_T )
    \big]
  \big|
$,
$ N \in \N $,
in \eqref{eq:triangle_inequality_intro}
are estimated in an appropriate way
by using the mild It\^{o} formula;
see 
Theorem~1 in Da Prato et al.~\cite{DaPratoJentzenRoeckner2012}.
More precisely, 
in Section~\ref{sec:mildcalc}
we generalize the mild It\^{o} formula in
Theorem~1 in Da Prato et al.~\cite{DaPratoJentzenRoeckner2012}
so that it applies also in the case of stopping times
instead of deterministic time points;
see Theorem~\ref{thm:ito} and
Corollary~\ref{cor:itoauto} in Subsection~\ref{sec:mildito}.
We then use Corollary~\ref{cor:itoauto} to derive 
in Proposition~\ref{prop:mild_ito_maximal_ineq}
in Subsection~\ref{sec:weak_estimates_terminal}
an estimate for the expectation of a smooth function composed
with an appropriate type of stochastic process which we call a mild It\^{o} process; 
see Definition~1 in Da Prato et al.~\cite{DaPratoJentzenRoeckner2012}
and, e.g., Definition~\ref{propdef} in Subsection~\ref{sec:mildito} below.
Next we recall that we have rewritten the time-discrete numerical approximation processes $ Y^N $, $ N \in \N $,
(see \eqref{eq:linear_impl}) as the time-continuous stochastic processes $ \tilde{Y}^N $, $ N \in \N $,
(see \eqref{eq:tildeY})
and we emphasize that $ \tilde{Y}^N $, $ N \in \N $, are mild It\^{o} processes;
see (142)--(146) in Da Prato et al.~\cite{DaPratoJentzenRoeckner2012}.
This allows us to apply the mild It\^{o} formula and
so also Proposition~\ref{prop:mild_ito_maximal_ineq} to $ \tilde{Y}^N $, $ N \in \N $.
Thereby we obtain an appropriate estimate
for the terms
$
  \big|
    \ES\big[ 
      \varphi( \bar{Y}^N_T )
    \big]
    -
    \ES\big[ 
      \varphi( \tilde{Y}^N_T )
    \big]
  \big|
$,
$ N \in \N $,
in Proposition~\ref{prop:weak_temporal_regularity_2nd} 
in Subsection~\ref{sec:weak_distance_semilinear}.
The mild It\^{o} formula has also been used in Conus et al.~\cite{ConusJentzenKurniawan2014arXiv}
to establish weak convergence rates for spatial spectral Galerkin approximations.
In this work the analysis is more involved than in Conus et al.~\cite{ConusJentzenKurniawan2014arXiv} as 
the numerical approximation processes $ \tilde{Y}^N $, $ N \in \N $, are not solution processes 
of SEEs of the form \eqref{eq:SEE_intro} but merely mild It\^{o} processes with two-parameter
evolution families (see Subsection~\ref{sec:semigroup_setting} and Section~\ref{sec:weak_temporal_regularity} 
below for more details).
For the estimation of the terms 
$
  \big|
    \ES\big[ 
      \varphi( X_T )
    \big]
    -
    \ES\big[ 
      \varphi( \bar{Y}^N_T )
    \big]
  \big|
$,
$ N \in \N $,
we use (as it is often the case in the case weak convergence analysis; 
see, e.g., R\"{o}\ss ler~\cite{r03}, Debussche~\cite{Debussche2011} and Conus et al.~\cite{ConusJentzenKurniawan2014arXiv})
the Kolmogorov backward equation associated to \eqref{eq:SEE_intro}
and we also use again the mild It\^{o} formula and its consequences respectively
(see Section~\ref{sec:weak_temporal_regularity} 
and Section~\ref{sec:euler_integrated_mollified} for details).
Combining these estimates with the in some sense non-standard
mollification procedure in Conus et al.~\cite{ConusJentzenKurniawan2014arXiv}
will allow us to complete the proof of Theorem~\ref{intro:theorem}
(see Section~\ref{sec:weak_convergence_irregular} for details).

\subsection{Examples}

In this section we illustrate Theorem~\ref{intro:theorem}
by two simple examples. In Subsection~\ref{sec:anderson}
we apply Theorem~\ref{intro:theorem} to
the continuous version of \emph{the parabolic Anderson model}
and in Subsection~\ref{sec:cahn} we apply Theorem~\ref{intro:theorem}
to a linear Cahn-Hilliard-Cook type equation.

\subsubsection{Parabolic Anderson model}
\label{sec:anderson}

Let $ H = L^2( (0,1) ; \R ) $
be the $ \R $-Hilbert space of equivalence classes
of Lebesgue square integrable functions from $ ( 0,1 ) $ to $ \R $, 
let 
$ T, \kappa, \delta, \nu \in (0,\infty) $, 
$ \xi \in H $, 
let 
$
  ( \Omega, \mathcal{F}, \P , ( \mathcal{F}_t )_{ t \in [0,T] } )
$
be a stochastic basis, 
let 
$
  ( W_t )_{ t \in [0,T] }
$
be a cylindrical $ \operatorname{Id}_H $-Wiener process
w.r.t.\ $ ( \mathcal{F}_t )_{ t \in [0,T] } $, 
let $ A \colon D(A) \subseteq H \rightarrow H $ 
be the Laplacian with Dirichlet boundary conditions on $ H $
multiplied by $ \nu $,
let
$  
  ( 
    H_r 
    ,
    \left< \cdot, \cdot \right>_{ H_r }
    ,$ $
    \left\| \cdot \right\|_{ H_r } 
  )
$, 
$ r \in \R $, 
be a family of interpolation spaces associated to $ - A $
(see, e.g., Theorem and Definition~2.5.32 in \cite{Jentzen2014SPDElecturenotes}), 
let 
$
  B \in 
  C( H , HS( H, H_{ - 1 / 4 - \delta } ) )
$
satisfy that for all 
$ v \in H $, 
$ u \in C( [0,1], \R ) $, 
$ x \in ( 0, 1 ) $ 
it holds that 
$
  \big(
    B(v) u
  \big)(x)
  =
  \kappa \cdot v(x) \cdot u(x)
$, 
and let 
$
  Y^N \colon 
  \{ 0, 1, \ldots, N \} \times \Omega
  \rightarrow H
$, $ N \in \N $, 
be stochastic processes which satisfy that
for all $ N \in \N $,
$ n \in \{ 0, 1, \dots, N - 1 \} $
it holds $ \P $-a.s.\ that
$ Y^N_0 = \xi $
and
$
  Y^N_{ n + 1 }
  =
  \big( 
    \operatorname{Id}_H - \frac{ T }{ N } A 
  \big)^{ - 1 }
  \big(
    Y_n
    +
    \int_{ \frac{ n T }{ N } }^{
      \frac{ ( n + 1 ) T }{ N } 
    }
    B( Y_n )
    \,
    dW_s
  \big)
$.
The above 
assumptions 
ensure (cf., e.g., Proposition 3 in Da Prato et al.~\cite{DaPratoJentzenRoeckner2012}, 
Theorem 4.3 in Brze{\'z}niak~\cite{b97b}, 
Theorem 6.2 in Van Neerven et al.~\cite{vvw08}) 
the existence of 
an 
$
  ( \mathcal{F}_t )_{ t \in [0,T] }
$-adapted continuous stochastic process
$ X \colon [0,T] \times \Omega \to H $ 
which satisfies that for all 
$ t \in [0,T] $ it holds $ P $-a.s.\ that
$
  X_t = e^{ t A } \, \xi + \int_0^t e^{ ( t - s ) A } B( X_s ) \, dW_s
$.
The stochastic process $ X $ is thus a solution process (of the 
continuous version) of \emph{the parabolic Anderson model}
\begin{align}
\label{eq:parabolic_Anderson}
&
  dX_t(x)
  = 
  \nu
  \,
  \tfrac{ \partial^2 }{ \partial x^2 } 
  X_t(x)
  \, dt 
  + 
  \kappa \,
  X_t(x)  
  \, dW_t(x)
  , 
\qquad
  X_t(0) = X_t(1) = 0,
\qquad
  X_0( x ) = \xi( x )
\end{align}
for $ x \in ( 0, 1 ) $, $ t \in [ 0, T ] $
(cf., e.g., Carmona \& Molchanov~\cite{CarmonaMolchanov1994}).
Theorem~\ref{intro:theorem} applies here with $ \gamma = \frac{ 1 }{ 2 } $.
More precisely, Theorem~\ref{intro:theorem} proves that 
for all 
$
  \varphi \in C^5_b( H, \R )
$,
$ \varepsilon \in ( 0, \infty ) $ 
there exists a
$ C \in \R $
such that for all $ N \in \N $
it holds that
\begin{equation}
  \left|
    \ES\big[ 
      \varphi( X_T )
    \big]
    -
    \ES\big[ 
      \varphi( Y^N_N )
    \big]
  \right|
\leq
    C
    \cdot
    N^{
      - ( \nicefrac{1}{2} - \varepsilon )
    }
  .
\end{equation}

\subsubsection{A linear Cahn-Hilliard-Cook type equation}
\label{sec:cahn}

Let $ H = L^2( (0,1) ; \R ) $
be the $ \R $-Hilbert space of equivalence classes
of Lebesgue square integrable functions from $ ( 0,1 ) $ to $ \R $, 
let 
$ T, \kappa, \delta \in (0,\infty) $, 
$ \xi \in H $, 
let 
$
  ( \Omega, \mathcal{F}, \P , ( \mathcal{F}_t )_{ t \in [0,T] } )
$
be a stochastic basis, 
let 
$
  ( W_t )_{ t \in [0,T] }
$
be a cylindrical $ \operatorname{Id}_H $-Wiener process
w.r.t.\ $ ( \mathcal{F}_t )_{ t \in [0,T] } $, 
let 
$ \mathcal{A} \colon D(\mathcal{A}) \subseteq H \rightarrow H $ 
be the Laplacian with Neumann boundary conditions on $ H $, 
let $ A \colon D(A) \subseteq H \rightarrow H $ 
be the linear operator with the property that 
$
  D( A ) = 
  D( \mathcal{A}^2 ) 
  =
  \left\{ 
    v \in D( \mathcal{A} ) \colon 
    \mathcal{A} v \in D( \mathcal{A} ) 
  \right\}
$
and with the property that for all 
$ v \in D( A ) $
it holds that
$
  A v
  =
  - \mathcal{A}^2 v - \mathcal{A} v - v
$, 
let
$  
  ( 
    H_r 
    ,
    \left< \cdot, \cdot \right>_{ H_r }
    ,$ $
    \left\| \cdot \right\|_{ H_r } 
  )
$, 
$ r \in \R $, 
be a family of interpolation spaces associated to $ - A $
(see, e.g., Theorem and Definition~2.5.32 in \cite{Jentzen2014SPDElecturenotes}), 
let 
$
  B \in 
  C( H, HS( H, H_{ -1/8 - \delta } ) )
$
satisfy that for all 
$ v \in H $, 
$ u \in C( [0,1], \R ) $, 
$ x \in ( 0, 1 ) $ 
it holds that 
$
  \big(
    B(v) u
  \big)(x)
  =
  \kappa \cdot v(x) \cdot u(x)
$, 
and let 
$
  Y^N \colon 
  \{ 0, 1, \ldots, N \} \times \Omega
  \rightarrow H
$, $ N \in \N $, 
be stochastic processes which satisfy that
for all $ N \in \N $,
$ n \in \{ 0, 1, \dots, N - 1 \} $
it holds $ \P $-a.s.\ that
$ Y^N_0 = \xi $
and
$
  Y^N_{ n + 1 }
  =
  \big( 
    \operatorname{Id}_H - \frac{ T }{ N } A 
  \big)^{ - 1 }
  \big(
    Y_n
    +
    Y_n 
    \,
    \frac{T}{N}
    +
    \int_{ \frac{ n T }{ N } }^{
      \frac{ ( n + 1 ) T }{ N } 
    }
    B( Y_n )
    \,
    dW_s
  \big)
$.
The above assumptions 
ensure (cf., e.g., Proposition 3 in Da Prato et al.~\cite{DaPratoJentzenRoeckner2012}, 
Theorem 4.3 in Brze{\'z}niak~\cite{b97b}, 
Theorem 6.2 in Van Neerven et al.~\cite{vvw08}) 
the existence of an 
$
  ( \mathcal{F}_t )_{ t \in [0,T] }
$-adapted continuous stochastic process
$ X \colon [0,T] \times \Omega \to H $ 
which satisfies that for all 
$ t \in [0,T] $ it holds $ P $-a.s.\ that
$
  X_t = e^{ t A } \, \xi
  +
  \int_0^t e^{ ( t - s ) A } X_s \, ds
  +
  \int_0^t e^{ ( t - s ) A } B( X_s ) \, dW_s
$.
The stochastic process $ X $ is thus a solution process of the 
\textit{linear Cahn-Hilliard-Cook type equation}
\begin{equation}
\label{eq:linear_Cahn-Hilliard-Cook}
\begin{split}
&
dX_t(x)= 
\big[
-\tfrac{\partial^4}{\partial x^4} \,
X_t(x)
-
\tfrac{\partial^2}{\partial x^2} \,
X_t(x)
\big]
\,dt 
+ 
\kappa \,
X_t(x) 
\,dW_t(x), 
\\ &
X'_t(0) = X'_t(1) = X^{(3)}_t(0) = X^{(3)}_t(1) = 0,
\qquad
X_0(x)=\xi(x)
\end{split}
\end{equation}
for $ x \in ( 0, 1 ) $, $ t \in [ 0, T ] $.
Theorem~\ref{intro:theorem} applies here with $ \gamma = \frac{ 1 }{ 4 } $.
More precisely, Theorem~\ref{intro:theorem} proves that 
for all 
$
  \varphi \in C^5_b( H, \R )
$,
$ \varepsilon \in ( 0, \infty ) $ 
there exists a
$ C \in \R $
such that for all $ N \in \N $
it holds that
\begin{equation}
  \left|
    \ES\big[ 
      \varphi( X_T )
    \big]
    -
    \ES\big[ 
      \varphi( Y^N_N )
    \big]
  \right|
\leq
    C
    \cdot
    N^{
      - ( \nicefrac{3}{4} - \varepsilon )
    }
  .
\end{equation}

\subsection{Notation}
\label{sec:notation}

Throughout this article the following notation is used. 
By $ \N = \{ 1, 2, 3, \dots \} $ the set of natural numbers is denoted and 
by $ \N_0 = \N \cup \{ 0 \} $ the union of $ \{ 0 \} $ and the set of natural numbers is denoted.
Moreover, for a set $ A $ we denote by 
$
  \operatorname{Id}_A \colon A \rightarrow A 
$ 
the identity mapping on $ A $, that is, it
holds for all $ a \in A $ that $ \operatorname{Id}_A(a) = a $. 
Furthermore, for a set $ A $ we denote by $ \grid(A) $ the power set of $ A $. 
Let 
$
  \mathcal{E}_r
  \colon
  [0,\infty)
  \rightarrow
  [0,\infty)
$,
$
  r \in (0,\infty)
$,
be the functions with the property that
for all $ x \in [0,\infty) $, $ r \in (0,\infty) $
it holds that
$
  \mathcal{E}_r(x)
  =
  \big[
    \sum^{ \infty }_{ n = 0 }
    \frac{ 
        x^{ 2 n }
        \,
        \Gamma(r)^n
    }{
      \Gamma( n r + 1 ) 
    }
  \big]^{
    \nicefrac{ 1 }{ 2 }
  }
$
(cf.\ Chapter~7 in \cite{h81} and Chapter~3 in \cite{Jentzen2014SPDElecturenotes}). 
In addition, let 
$ ( \cdot )^+, ( \cdot )^- \colon \R \to [ 0, \infty ) $ 
be the functions 
with the property that for all $ a \in \R $ it holds that
$
  a^+ = \max\{ a , 0 \} 
$ 
and 
$
  a^- = \max\{ - a , 0 \} 
$.
Moreover, for a number $ k \in \N_0 $ 
and normed $ \R $-vector spaces 
$
  ( E_i, \left\|\cdot\right\|_{E_i} )
$,
$ i \in \{ 1, 2 \} $, 
let
$ 
  \left| \cdot \right|_{ 
    \operatorname{Lip}^k( E_1, E_2 ) 
  }
  ,
  \left\| \cdot \right\|_{ 
    \operatorname{Lip}^k( E_1, E_2 ) 
  }
  \colon
  C^k( E_1, E_2 )
  \to [0,\infty]
$
be the mappings 
with the property that
for all $ f \in C^k( E_1, E_2 ) $
it holds that
\begin{equation}
  \left| f \right|_{
    \operatorname{Lip}^k( E_1, E_2 ) 
  }
  =
  \sup_{ 
    \substack{
      x, y \in E_1 ,
    \\
      x \neq y
    }
  }
  \tfrac{
    \left\| f^{ (k) }( x ) - f^{ (k) }( y ) \right\|_{ L^{ (k) }( E_1, E_2 ) }
  }{
    \left\| x - y \right\|_{ E_1 }
  }
  ,
\end{equation}
\begin{equation}
  \left\| f \right\|_{
    \operatorname{Lip}^k( E_1, E_2 )
  }
  =
  \left\| f( 0 ) \right\|_{ E_2 }
  +
  \sum_{ l = 0 }^k
  \left|
    f
  \right|_{
    \operatorname{Lip}^l( E_1, E_2 )
  }
  .
\end{equation}
Furthermore, 
for a number $ k \in \N_0 $
and normed $ \R $-vector spaces 
$
  ( E_i, \left\|\cdot\right\|_{E_i} )
$,
$ i \in \{ 1, 2 \} $, 
let 
$
  \operatorname{Lip}^k( E_1, E_2 )
$
be the set given by
$
  \operatorname{Lip}^k( E_1, E_2 )
  =
  \big\{ 
    f \in C^k( E_1, E_2 )
    \colon
    \| f \|_{
      \operatorname{Lip}^k( E_1, E_2 )
    }
    < \infty
  \big\}
$.
In addition, 
for a natural number $ k \in \N $
and normed $ \R $-vector spaces 
$
  ( E_i, \left\|\cdot\right\|_{E_i} )
$,
$ i \in \{ 1, 2 \} $, 
let 
$
  \left|
    \cdot
  \right|_{
    C^k_b(E_1,E_2)
  },
  \left\|
    \cdot
  \right\|_{ C^k_b(E_1,E_2) }
  \colon C^k(E_1,E_2)\rightarrow[0,\infty]
$
be the mappings with the property that
for all $ f \in C^k( E_1, E_2 ) $
it holds that
\begin{equation}
  | f |_{ 
    C^k_b( E_1, E_2)
  }
=
  \sup_{ x \in E_1 }
  \left\|
    f^{ ( k ) }( x ) 
  \right\|_{
    L^{ k }( E_1, E_2 ) 
  }
  ,
  \quad
  \| f \|_{
    C^k_b(E_1,E_2)
  }
=
  \| f(0) \|_{ E_2 } +
  \sum^k_{ l = 1 }
  | f |_{ C^l_b(E_1,E_2) }
\end{equation}
and let 
$ 
  C^k_b( E_1, E_2 ) 
$
be the set given by
$
  C^k_b( E_1, E_2 ) 
  =
  \big\{ 
    f \in C^k( E_1, E_2 )
    \colon
    \left\| f \right\|_{ C^k_b( E_1, E_2 ) } < \infty
  \big\}
$.
Moreover, for a normed $ \R $-vector space 
$
  ( U, \left\|\cdot\right\|_{ U } )
$ 
and a linear operator 
$ 
  A \colon D( A ) \subseteq U \to U 
$ 
we denote by
$ \operatorname{spectrum}( A ) \subseteq \mathbb{ C } $ 
the spectrum of $ A $. 
For sets $ A $ and $ B $ we denote by 
$ \mathbb{M}( A, B ) $ the set of all mappings from $ A $ to $ B $. 
In addition, for measurable spaces 
$
  ( \Omega_i,\mathcal{F}_i ) 
$,
$ i \in \{ 1, 2 \}
$, 
we denote by 
$ 
  \mathcal{M}( \mathcal{F}_1, \mathcal{F}_2 )
$ 
the set of all 
$
  \mathcal{F}_1
$/$
  \mathcal{F}_2 
$-measurable mappings.
For two separable $\R$-Hilbert spaces 
$
   (
     \check{H},
     \left< \cdot , \cdot \right>_{ \check{H} },
     \left\| \cdot \right\|_{ \check{H} }
   )
$ 
and 
$
   (
     \hat{H},
     \left< \cdot , \cdot \right>_{ \hat{H} },
     \left\| \cdot \right\|_{ \hat{H} }
   )
$
let
$
  \mathcal{S}( \hat{H}, \check{H} )
$
be the strong sigma algebra on
$ L( \hat{H}, \check{H} ) $
given by
$
  \mathcal{S}( \hat{H}, \check{H} )
  =
  \sigma_{
    L( \hat{H}, \check{H} )
  }(
    \cup_{ v \in \hat{H} }
    \cup_{ \mathcal{A} \in \mathcal{B}( \check{H} ) }
    \{
      A \in L( \hat{H}, \check{H} )
      \colon
      Av \in \mathcal{A}
    \}
  )
$
(see, e.g., Section~1.2 in Da Prato \citationand\ Zabczyk~\cite{dz92}).
Finally, let 
$ \lfloor \cdot \rfloor_h \colon \R \to \R $, 
$ h \in (0,\infty) $,
and
$ \ceil{ \cdot }{ h } \colon \R \to \R $,
$ h \in ( 0 , \infty) $,
be the mappings 
with the property that for all 
$ t \in \R $, $ h \in (0,\infty) $
it holds that
\begin{equation}
  \lfloor t \rfloor_h
  =
  \max\!\big(
    ( - \infty, t ]
    \cap 
    \{ 
      0, h, -h, 2 h , - 2 h , \dots 
    \}
  \big)
  ,
\end{equation}
\begin{equation}
  \ceil{ t }{ h }
  =
  \min\!\big(
    [ t, \infty )
    \cap 
    \{ 
      0, h, -h, 2 h , - 2 h , \dots 
    \}
  \big)
  .
\end{equation}

\subsection{General setting}
\label{sec:global_setting}
Throughout this article the following
setting is frequently used.
Let 
$ 
  ( H, \left< \cdot, \cdot \right>_H, \left\| \cdot \right\|_H ) 
$, 
$ 
  ( U, 
$
$
    \left< \cdot, \cdot \right>_U, 
$
$
  \left\| \cdot \right\|_U ) 
$, 
and 
$ 
  ( V, \left< \cdot, \cdot \right>_V, \left\| \cdot \right\|_V ) 
$
be separable $ \R $-Hilbert spaces, 
let $ \mathbb{U} \subseteq U $ be an orthonormal basis of $ U $, 
let 
$ 
  A \colon D(A) \subseteq H \to H 
$
be a generator of a strongly continuous analytic semigroup
with the property that
$ 
  \sup\!\big( 
    \text{Re}( \operatorname{spectrum}( A ) ) 
  \big) < 0 
$
(cf., e.g., Theorem~11.31 in Renardy \& Rogers~\cite{rr93}),
let
$  
  ( 
    H_r 
    ,
    \left< \cdot, \cdot \right>_{ H_r }
    ,$ $
    \left\| \cdot \right\|_{ H_r } 
  )
$, 
$ r \in \R $, 
be a family of interpolation spaces associated to $ - A $
(cf., e.g., Theorem and Definition~2.5.32 in \cite{Jentzen2014SPDElecturenotes} 
and Section 11.4.2 in Renardy \& Rogers~\cite{rr93}),
let 
$ T \in (0,\infty) $, 
let
$ 
  \angle 
  =
  \left\{
    ( t_1, t_2 ) \in [0,T]^2 \colon t_1 < t_2
  \right\}
$, 
let 
$
  ( \Omega, \mathcal{F}, \P , ( \mathcal{F}_t )_{ t \in [0,T] } )
$
be a stochastic basis,
and let 
$
  ( W_t )_{ t \in [0,T] }
$
be a cylindrical $ \operatorname{Id}_U $-Wiener process
w.r.t.\ $ ( \mathcal{F}_t )_{ t \in [0,T] } $.

\subsection{Evolution family setting}
\label{sec:semigroup_setting}

In Sections~\ref{sec:weak_temporal_regularity},
\ref{sec:euler_integrated_mollified} and \ref{sec:weak_convergence_irregular} 
below the following setting is also frequently used. 
Assume the setting in Section~\ref{sec:global_setting}, 
let 
$ h \in ( 0, \infty ) $,
$
  ( \groupC_r )_{ r \in \R } \subseteq [ 1, \infty )
$,
$
  ( \groupC_{ r, \rho } )_{ r, \rho \in \R } \subseteq [ 1, \infty )
$,
$
  ( \groupC_{ r, \tilde{r}, \rho } )_{ r, \tilde{r}, \rho \in \R } \subseteq [1,\infty)
$,
$
  R \in 
  \mathcal{M}\big(
    \mathcal{B}
    ( [0,T] )
    ,
    \mathcal{B}( L( H_{ - 1 } ) )
  \big)
$,
$
  S \in 
  \mathcal{M}\big(
    \mathcal{B}( \angle )
    ,
    \mathcal{B}( L( H_{ - 1 } ) )
  \big)
$
satisfy that
for all 
$ t_1, t_2, t_3 \in [0,T] $ 
with 
$ t_1 < t_2 < t_3 $ 
it holds that
$ 
  S_{ t_2, t_3 } S_{ t_1, t_2 } = S_{ t_1, t_3 } 
$
and that for all 
$ ( s, t ) \in \angle $, 
$ r, \rho \in [ 0, 1 ) $, 
$ \tilde{r} \in [ -1, 1 - r ) $
it holds that 
$ S_{ s, t }( H ) \subseteq H $, 
$ S_{ s, t } \, R_s ( H_{ - r } ) \subseteq H_{ \tilde{r} } $, 
$
  \|
    e^{ t A }
  \|_{ 
    L( H, H_\rho ) 
  }
  \leq
  \groupC_\rho \, t^{ -\rho }
$, 
$
  \|
    e^{ t A }
    -
    \operatorname{Id}_H
    \!
  \|_{ 
    L( H, H_{ -\rho } ) 
  }
  \leq
  \groupC_\rho \, t^\rho
$,
$
  \| S_{ s, t } \|_{ L( H ) }
  \leq
  \groupC_0
$, 
$
  \|
  S_{ s, t } \, R_s 
  \|_{ L( H_{ - r }, H ) }
  \leq
  \groupC_{ r } \,
  ( t - s )^{ - r }
$, 
$
  \|
    S_{ s, t }
    - 
    e^{ ( t - s )A } 
  \|_{ L( H, H_{ - r } ) }
  \leq
  \groupC_{ - r , \rho } 
  \, h^\rho \, ( t - s )^{ - ( \rho - r )^+ }
$, 
and 
$
  \|
    S_{ s, t } \, R_s 
    - 
    e^{ ( t - s ) A } 
  \|_{ L( H_{ - r }, H_{ \tilde{r} } ) }
  \leq
  \groupC_{ r, \tilde{r}, \rho } \, 
  h^\rho \, 
  ( t - s )^{ -( \rho + r + \tilde{r} )^+ }
$.

\subsection{Examples of evolution families}
\label{sec:evolution_examples}

In this subsection we provide two examples of evolution families which satisfy 
the assumptions in Subsection~\ref{sec:semigroup_setting}.

\subsubsection{Exponential Euler approximations}
Assume the setting in Section~\ref{sec:global_setting} 
and 
let 
$
  S \in 
  \mathcal{M}\big(
    \mathcal{B}( \angle )
    ,
    \mathcal{B}( L( H_{ - 1 } ) )
  \big)
$
and
$
  R^h \in 
  \mathcal{M}\big(
    \mathcal{B}
    ( [0,T] )
    ,
$
$
    \mathcal{B}( L( H_{ - 1 } ) )
  \big)
$, 
$ h \in ( 0, \infty ) $, 
satisfy that 
for all 
$ h \in ( 0, \infty ) $, 
$ t \in [ 0, T ] $, 
$ ( t_1, t_2 ) \in \angle $ it holds that 
$
  S_{ t_1, t_2 }
  =
  e^{ ( t_2 - t_1 )A }
$
and 
$
  R^h_t
  =
  e^{ ( t - \floor{t}{h} )A }
  .
$
Then it is well-known 
(see, e.g., Lemma~11.36 in Renardy \& Rogers \cite{rr93}) 
that there exist 
real numbers 
$
  ( \groupC_r )_{ r \in \R } \subseteq [ 1, \infty )
$ 
such that 
for all 
$ h \in ( 0, \infty ) $, 
$ t_1, t_2, t_3 \in [0,T] $ 
$ ( s, t ) \in \angle $, 
$ r, \rho \in [ 0, 1 ) $, 
$ \tilde{r} \in [ -1, 1 - r ) $
with 
$ t_1 < t_2 < t_3 $ 
it holds that 
$ 
  S_{ t_2, t_3 } S_{ t_1, t_2 } = S_{ t_1, t_3 }, 
$ 
$ S_{ s, t }( H ) \subseteq H $, 
$ S_{ s, t } \, R^h_s ( H_{ - r } ) \subseteq H_{ \tilde{r} } $, 
$
  \|
    e^{ t A }
  \|_{ 
    L( H, H_\rho ) 
  }
  \leq
  \groupC_\rho \, t^{ -\rho }
$, 
$
  \|
    e^{ t A }
    -
    \operatorname{Id}_H
    \!
  \|_{ 
    L( H, H_{ -\rho } ) 
  }
  \leq
  \groupC_\rho \, t^\rho
$,
$
  \| S_{ s, t } \|_{ L( H ) }
  \leq
  \groupC_0
$, 
$
  \|
  S_{ s, t } \, R^h_s 
  \|_{ L( H_{ - r }, H ) }
  \leq
  \groupC_{ r } \,
  ( t - s )^{ - r }
$, 
$
  \|
    S_{ s, t }
    - 
    e^{ ( t - s )A } 
  \|_{ L( H, H_{ - r } ) }
  = 0
$, 
and 
$
  \|
    S_{ s, t } \, R^h_s 
    - 
    e^{ ( t - s ) A } 
  \|_{ L( H_{ - r }, H_{ \tilde{r} } ) }
  \leq
  \groupC_{ r + \tilde{r} + \rho } \, 
  h^\rho \, 
  ( t - s )^{ -( \rho + r + \tilde{r} )^+ }
$.

\subsubsection{Linear-implicit Euler approximations}
\label{sec:linear_implicit}
Assume the setting in Section~\ref{sec:global_setting} 
and 
let 
$
  S^h \in 
  \mathcal{M}\big(
    \mathcal{B}( \angle )
    ,
    \mathcal{B}( L( H_{ - 1 } ) )
  \big)
$, 
$ h \in ( 0, \infty ) $, 
and
$
  R^h \in 
  \mathcal{M}\big(
    \mathcal{B}
    ( [0,T] )
    ,
    \mathcal{B}( L( H_{ - 1 } ) )
  \big)
$, 
$ h \in ( 0, \infty ) $, 
satisfy that 
for all 
$ h \in ( 0, \infty ) $, 
$ t \in [ 0, T ] $, 
$ ( t_1, t_2 ) \in \angle $ it holds that 
$
  S^h_{ t_1, t_2 }
  =
    \big( \operatorname{Id}_H - ( t_1 - \floor{ t_1 }{ h } ) \, A \big)
    \big( \operatorname{Id}_H - ( t_2 - \floor{ t_2 }{ h } ) \, A \big)^{ -1 }
    \big( \operatorname{Id}_H - hA \big)^{
      -
      (
      \floor{ t_2 }{ h } - \floor{ t_1 }{ h }
      )
      /
      h
    }
$
and 
$
  R^h_t
  =
    \big( \operatorname{Id}_H - ( t - \floor{ t }{ h } ) \, A \big)^{ -1 }
  .
$
Then 
there exist real numbers 
$
  ( \groupC_r )_{ r \in \R } \subseteq [ 1, \infty )
$,
$
  ( \groupC_{ r, \rho } )_{ r, \rho \in \R } \subseteq [ 1, \infty )
$,
$
  ( \groupC_{ r, \tilde{r}, \rho } )_{ r, \tilde{r}, \rho \in \R } \subseteq [1,\infty)
$
such that for all 
$ h \in ( 0, \infty ) $, 
$ t_1, t_2, t_3 \in [0,T] $, 
$ ( s, t ) \in \angle $, 
$ r, \rho \in [ 0, 1 ) $, 
$ \tilde{r} \in [ -1, 1 - r ) $
with 
$ t_1 < t_2 < t_3 $ 
it holds that
$ 
  S^h_{ t_2, t_3 } S^h_{ t_1, t_2 } = S^h_{ t_1, t_3 } 
$, 
$ S^h_{ s, t }( H ) \subseteq H $, 
$ S^h_{ s, t } \, R^h_s ( H_{ - r } ) \subseteq H_{ \tilde{r} } $, 
$
  \|
    e^{ t A }
  \|_{ 
    L( H, H_\rho ) 
  }
  \leq
  \groupC_\rho \, t^{ -\rho }
$, 
$
  \|
    e^{ t A }
    -
    \operatorname{Id}_H
    \!
  \|_{ 
    L( H, H_{ -\rho } ) 
  }
  \leq
  \groupC_\rho \, t^\rho
$,
$
  \| S^h_{ s, t } \|_{ L( H ) }
  \leq
  \groupC_0
$, 
$
  \|
  S^h_{ s, t } \, R^h_s 
  \|_{ L( H_{ - r }, H ) }
  \leq
  \groupC_{ r } \,
  ( t - s )^{ - r }
$, 
$
  \|
    S^h_{ s, t }
    - 
    e^{ ( t - s )A } 
  \|_{ L( H, H_{ - r } ) }
  \leq
  \groupC_{ - r , \rho } 
  \, h^\rho \, ( t - s )^{ - ( \rho - r )^+ }
$, 
and 
$
  \|
    S^h_{ s, t } \, R^h_s 
    - 
    e^{ ( t - s ) A } 
  \|_{ L( H_{ - r }, H_{ \tilde{r} } ) }
  \leq
  \groupC_{ r, \tilde{r}, \rho } \, 
  h^\rho \, 
  ( t - s )^{ -( \rho + r + \tilde{r} )^+ }
$.

\subsection{Acknowledgements}

We are very grateful to Sonja Cox for her considerable help 
in the proof of the statement in Subsection~\ref{sec:linear_implicit}.

\section{Strong a priori estimates for SPDEs}
\label{sec:strong_a_priori}

In this section we establish in Proposition~\ref{prop:strong_apriori_estimate} below
an a priori estimate (see \eqref{eq:apriori_estimate})
for an appropriate class of stochastic processes
(see \eqref{eq:apriori_assumption})
which includes solution processes of certain SEEs as special cases.
The proof of Proposition~\ref{prop:strong_apriori_estimate}
uses the generalized Gronwall lemma in Chapter~7 in Henry~\cite{h81}
(see, e.g., also Corollary 3.4.6 in \cite{Jentzen2014SPDElecturenotes}).
Related estimates can, e.g., be found in Proposition~2.5 in 
Andersson \& Jentzen~\cite{AnderssonJentzen2014}.

\subsection{Setting}
\label{sec:setting_strong_apriori}

Let 
$ 
  ( H, \left< \cdot, \cdot \right>_H, \left\| \cdot \right\|_H ) 
$
and
$ 
  ( U, \left< \cdot, \cdot \right>_U, \left\| \cdot \right\|_U ) 
$
be separable $ \R $-Hilbert spaces, 
let 
$ T \in (0,\infty) $, 
$ p \in [ 2, \infty ) $, 
$ \vartheta \in [ 0, 1 ) $, 
$ \driftC, \diffusionC \in [ 0, \infty ) $, 
let 
$
  ( \Omega, \mathcal{F}, \P , ( \mathcal{F}_t )_{ t \in [0,T] } )
$
be a stochastic basis, 
let 
$
  ( W_t )_{ t \in [0,T] }
$
be a cylindrical $ \operatorname{Id}_U $-Wiener process
w.r.t.\ $ ( \mathcal{F}_t )_{ t \in [0,T] } $, 
let
$
  X
  \colon
  [ 0, T ] \times \Omega
  \to H
$ 
be a stochastic process 
with 
$
  \sup_{ s \in [ 0, T ] } 
  \| X_s \|_{ \lpn{ p }{ \P }{ H } }
  < \infty
$, 
and for every $ t \in ( 0, T ] $ let 
$
  Y^t \colon [ 0, t ] \times \Omega
  \to H
$
and
$
  Z^t \colon [ 0, t ] \times \Omega
  \to HS( U, H )
$ 
be $ ( \mathcal{F}_s )_{ s \in [ 0, t ] } $-predictable stochastic processes 
which satisfy that for all 
$ s \in ( 0, t ) $ 
it holds that 
\begin{equation}
\label{eq:apriori_assumption}
    \|
    Y^t_s
    \|_{ \lpn{p}{\P}{H} }
    \leq
  \tfrac{
    \driftC
    \sup_{ u \in [ 0, s ] }
    \| X_u \|_{ \lpn{ p }{ \P }{ H } }
  }{
    ( t - s )^{ \vartheta }
  }
  \qquad 
  \text{and}
  \qquad
    \|
    Z^t_s
    \|_{ \lpn{p}{\P}{HS(U,H)} }
    \leq
  \tfrac{
    \diffusionC
    \sup_{ u \in [ 0, s ] }
    \| X_u \|_{ \lpn{ p }{ \P }{ H } }
  }{
    ( t - s )^{ \nicefrac{ \vartheta }{ 2 } }
  }
  .
\end{equation}

\subsection{A strong a priori estimate}

\begin{proposition}[A strong a priori estimate]
\label{prop:strong_apriori_estimate}
Assume the setting in Section~\ref{sec:setting_strong_apriori}. 
Then it holds for all $ t \in [0,T] $ that
$
  \P\big(
    \int_0^t
    \| Y^t_s \|_H
    +
    \| Z^t_s \|^2_{ HS( U, H ) }
    \, ds
    < \infty
  \big)
  = 1
$
and it holds that
\begin{equation}
\label{eq:apriori_estimate}
\begin{split}
&
  \sup_{ t \in [ 0, T ] }
  \| X_t \|_{ \lpn{ p }{ \P }{ H } }
\leq
  \sqrt{2} \,
  \mathcal{E}_{ ( 1 - \vartheta ) }
  \!\left[
    \tfrac{ 
      \driftC 
      \sqrt{2} \, 
      T^{ ( 1 - \vartheta ) } 
    }{ 
      \sqrt{ 1 - \vartheta } 
    }
    +
    \diffusionC
    \sqrt{ 
      p \, ( p - 1 ) 
      \,
      T^{ ( 1 - \vartheta ) } 
    }
  \right]
\\ & \cdot
  \sup_{ t \in [ 0, T ] }
  \left\|
    X_t
    -
    \left[
    \int^t_0
    Y^t_s
    \, ds
    +
    \int^t_0
    Z^t_s
    \, dW_s
    \right]
  \right\|_{ \lpn{ p }{ \P }{ H } }
\leq
  \left[
    1
    +
    \tfrac{
      \driftC \, T^{ ( 1 - \vartheta ) }
    }{ 
      ( 1 - \vartheta ) 
    }
    +
    \tfrac{
      \diffusionC 
      \sqrt{
        p \, ( p - 1 ) \, T^{ ( 1 - \vartheta ) }
      }
    }{ 
      \sqrt{ 
        2 \, ( 1 - \vartheta ) 
      }
    }
  \right]
\\ & \cdot
  \sqrt{2} \,
  \mathcal{E}_{ ( 1 - \vartheta ) }\!\left[
    \tfrac{ 
      \driftC 
      \sqrt{2} \, 
      T^{ ( 1 - \vartheta ) } 
    }{ 
      \sqrt{ 1 - \vartheta } 
    }
    +
    \diffusionC
    \sqrt{ 
      p \, ( p - 1 ) 
      \,
      T^{ ( 1 - \vartheta ) } 
    }
  \right]
  \sup_{ t \in [ 0, T ] }
  \|
    X_t
  \|_{ \lpn{ p }{ \P }{ H } }
  < \infty
  .
\end{split}
\end{equation}
\end{proposition}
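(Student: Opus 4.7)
The plan is to reduce the estimate to a single linear fractional Volterra inequality for $g(t):=\|X_t\|_{\lpn{p}{\P}{H}}$ and then to invoke the generalized Gronwall lemma of Henry (Chapter~7 in \cite{h81}). The $\P$-a.s.\ integrability claim is straightforward: the hypothesis (\ref{eq:apriori_assumption}), the standing assumption $\sup_{s\in[0,T]}\|X_s\|_{\lpn{p}{\P}{H}}<\infty$, and integrability of $s\mapsto(t-s)^{-\vartheta}$ on $[0,t]$ (using $\vartheta<1$) force $\int_0^t\|Y^t_s\|_{\lpn{p}{\P}{H}}\,ds<\infty$ and $\int_0^t\|Z^t_s\|_{\lpn{p}{\P}{HS(U,H)}}^2\,ds<\infty$, whence Fubini-Tonelli (together with $\|\cdot\|_{L^2(\P)}\leq\|\cdot\|_{L^p(\P)}$ for $p\geq 2$) yields pathwise integrability.

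For the main estimate, set $h(t):=\|X_t-\int_0^tY^t_s\,ds-\int_0^tZ^t_s\,dW_s\|_{\lpn{p}{\P}{H}}$. Combining the triangle inequality with Minkowski's integral inequality applied to $\int_0^tY^t_s\,ds$, the $L^p$-valued Burkholder-Davis-Gundy inequality with sharp constant $\sqrt{p(p-1)/2}$ applied to $\int_0^tZ^t_s\,dW_s$, and the bounds (\ref{eq:apriori_assumption}) yields
\begin{equation*}
g(t)\;\leq\;h(t)\;+\;\driftC\int_0^t(t-s)^{-\vartheta}g(s)\,ds\;+\;\diffusionC\sqrt{\tfrac{p(p-1)}{2}}\biggl(\int_0^t(t-s)^{-\vartheta}g(s)^2\,ds\biggr)^{\!1/2}.
\end{equation*}
The crucial unification step is to merge the two kernel integrals by Cauchy-Schwarz, namely $\int_0^t(t-s)^{-\vartheta}g(s)\,ds\leq\sqrt{T^{1-\vartheta}/(1-\vartheta)}\cdot(\int_0^t(t-s)^{-\vartheta}g(s)^2\,ds)^{1/2}$, so that both Volterra contributions fit under the same $L^2$-weighted norm with joint prefactor $K:=\driftC\sqrt{T^{1-\vartheta}/(1-\vartheta)}+\diffusionC\sqrt{p(p-1)/2}$.

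Squaring via $(a+b)^2\leq 2a^2+2b^2$ produces the standard linear fractional Volterra inequality
\begin{equation*}
g(t)^2\;\leq\;2\,h(t)^2\;+\;2K^2\int_0^t(t-s)^{-\vartheta}g(s)^2\,ds,
\end{equation*}
so Henry's lemma (applied after replacing $h(\cdot)^2$ by its non-decreasing envelope $\sup_{s\leq\cdot}h(s)^2$) gives $g(t)^2\leq 2\sup_{s\leq t}h(s)^2\cdot E_{1-\vartheta}(2K^2\Gamma(1-\vartheta)t^{1-\vartheta})$, where $E_r$ denotes the classical Mittag-Leffler function. The identity $\mathcal{E}_r(x)^2=E_r(\Gamma(r)x^2)$, read off directly from the definition in Subsection~\ref{sec:notation}, together with $\sqrt{a^2+b^2}\leq a+b$ for non-negative $a,b$, recast the bound as $g(t)\leq\sqrt{2}\,(\sup_{s\leq t}h(s))\cdot\mathcal{E}_{1-\vartheta}\bigl(\driftC\sqrt{2}\,T^{1-\vartheta}/\sqrt{1-\vartheta}+\diffusionC\sqrt{p(p-1)T^{1-\vartheta}}\bigr)$, which is the first claimed inequality.

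For the second (reverse) inequality in (\ref{eq:apriori_estimate}), the triangle inequality gives $h(t)\leq g(t)+\|\int_0^tY^t_s\,ds\|_{\lpn{p}{\P}{H}}+\|\int_0^tZ^t_s\,dW_s\|_{\lpn{p}{\P}{H}}$, and bounding $g(s)$ pointwise by its supremum inside the two weighted integrals immediately produces the factor $1+\driftC T^{1-\vartheta}/(1-\vartheta)+\diffusionC\sqrt{p(p-1)T^{1-\vartheta}/(2(1-\vartheta))}$; the concluding finiteness is the standing hypothesis. I expect the main obstacle to be purely notational: one must match the paper's square-root-normalized $\mathcal{E}_r$ to the classical $E_r$ appearing in Henry's lemma, and one must use the sharp-constant form of the $L^p$-BDG inequality to avoid losing factors of $\sqrt{p}$. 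The substantive mathematical idea is the Cauchy-Schwarz merger of the two Volterra kernels, which is what allows a \emph{single} Gronwall application to absorb both the drift and the diffusion contributions simultaneously.
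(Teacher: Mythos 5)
Your proposal follows essentially the same route as the paper: pathwise integrability from the kernel bounds and the standing moment assumption, the Minkowski decomposition around $h(t)$, the Burkholder--Davis--Gundy inequality with constant $\sqrt{p(p-1)/2}$, the Cauchy--Schwarz merger of the drift kernel into the quadratic kernel (the paper does exactly this in its estimate for $\int_0^t\|Y^t_s\|_{\lpn{p}{\P}{H}}\,ds$), squaring via $(a+b)^2\le 2a^2+2b^2$, Henry's generalized Gronwall lemma, and the triangle inequality for the second inequality in \eqref{eq:apriori_estimate}.

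One step is stated imprecisely and needs patching: assumption \eqref{eq:apriori_assumption} bounds $\|Y^t_s\|_{\lpn{p}{\P}{H}}$ and $\|Z^t_s\|_{\lpn{p}{\P}{HS(U,H)}}$ by the \emph{running supremum} $\sup_{u\in[0,s]}\|X_u\|_{\lpn{p}{\P}{H}}$, not by $g(s)=\|X_s\|_{\lpn{p}{\P}{H}}$, so your displayed Volterra inequality must carry $\bar g(s):=\sup_{u\in[0,s]}g(u)$ in the integrand. Consequently the squared inequality has $g(t)^2$ on the left but $\bar g(s)^2$ inside the integral, which is not yet in the standard form required by Henry's lemma; to apply it you must pass to $\bar g(t)^2$ on the left as well, and this requires the observation that for $t\le u$ one has $\int_0^{t}(t-s)^{-\vartheta}\bar g(s)^2\,ds\le\int_0^{u}(u-s)^{-\vartheta}\bar g(s)^2\,ds$ (proved by the change of variables $s\mapsto s-u+t$ and monotonicity of $\bar g$; this is the paper's inequality \eqref{eq:apriori_pre-gronwall}). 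Your device of taking the non-decreasing envelope of $h$ handles the forcing term but not this point about the unknown. Once that monotonicity step is inserted, your argument, including the identification $\mathcal{E}_r(x)^2=E_r(\Gamma(r)x^2)$ and the final factor for the second inequality, goes through and coincides with the paper's proof.
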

\begin{proof}
We first observe that 
\eqref{eq:apriori_assumption},
H\"{o}lder's inequality,
and the assumption that
$
  \sup_{ s \in [0,T] } \| X_s \|_{ L^p( \P; H ) }
$
$
  < \infty 
$
imply that for all 
$ t \in [ 0, T ] $ 
it holds that 
\begin{equation}
\label{eq:apriori_F}
\begin{split}
&
  \int^t_0
    \|
      Y^t_s
    \|_{ \lpn{ p }{ \P }{ H } }
    \,
  ds
\leq
  \driftC
  \int^t_0
  \frac{
    \sup_{ v \in [ 0, s ] }
    \|
      X_v
    \|_{ \lpn{ p }{ \P }{ H } }
  }{
    ( t - s )^\vartheta
  }
  \, ds
\\ & 
\leq
  \driftC
  \left[
  \frac{ t^{ ( 1 - \vartheta ) } }{ ( 1 - \vartheta ) }
  \int^{ t }_0
  \frac{
    \sup_{ v \in [ 0, s ] }
    \|
      X_v
    \|^2_{ \lpn{ p }{ \P }{ H } }
  }{
    ( t - s )^\vartheta
  }
  \, ds
  \right]^{ 1 / 2 }
  < \infty .
\end{split}
\end{equation}
In addition, we note that
\eqref{eq:apriori_assumption}
and again the assumption that
$
  \sup_{ s \in [0,T] } \| X_s \|_{ L^p( \P; H ) } < \infty 
$
show that for all 
$ t \in [ 0, T ] $ 
it holds that 
\begin{equation}
\label{eq:apriori_B}
\begin{split}
&
  \left[
    \tfrac{ 
      p \, ( p - 1 ) 
    }{ 
      2 
    }
    \int^t_0
      \|
        Z^t_s
      \|^2_{ 
        \lpn{ p }{ \P }{ HS( U, H ) } 
      }
      \,
    ds
    \,
  \right]^{ 1 / 2 }
\leq
  \diffusionC
  \left[
    \tfrac{
      p \, (p-1)
    }{
      2
    }
    \int^{ t }_0
    \frac{
      \sup_{ v \in [ 0, s ] }
      \|
        X_v
      \|^2_{ \lpn{ p }{ \P }{ H } }
    }{
      ( t - s )^\vartheta
    }
    \, ds
  \right]^{ 1 / 2 }
  < \infty
  .
\end{split}
\end{equation}
Combining~\eqref{eq:apriori_F}--\eqref{eq:apriori_B} and the assumption that 
$ p \geq 2 $ proves that 
for all $ t \in [0,T] $ it holds that
$
  \int_0^t
  \| Y^t_s \|_{ L^1( \P; H ) }
  +
  \| Z^t_s \|^2_{ L^2( \P; HS( U, H ) ) }
  \, ds
  < \infty
$.
This, in turn, shows that for all 
$ t \in [ 0, T ] $ 
it holds $ \P $-a.s.\ that 
\begin{equation}
\label{eq:finite_integral}
    \int_0^t
    \| Y^t_s \|_H
    +
    \| Z^t_s \|^2_{ HS( U, H ) }
    \, ds
    < \infty
    .
\end{equation}
It thus remains to prove \eqref{eq:apriori_estimate} to complete
the proof of Proposition~\ref{prop:strong_apriori_estimate}.
For this observe that \eqref{eq:apriori_F}--\eqref{eq:finite_integral}
and the Burkholder-Davis-Gundy type inequality 
in Lemma~7.7 in Da Prato \& Zabczyk~\cite{dz92} 
imply that for all $ t \in [0,T] $
it holds that
\begin{equation}
\label{eq:apriori_combined}
\begin{split}
&
  \left\|
    \int^t_0
    Y^t_s
    \, ds
  \right\|_{ \lpn{ p }{ \P }{ H } }
+
  \left\|
    \int^t_0
    Z^t_s
    \, dW_s
  \right\|_{ \lpn{ p }{ \P }{ H } }
\\ & \leq
  \left[
    \frac{ \driftC \, t^{ \nicefrac{( 1 - \vartheta )}{2} } }{ \sqrt{ 1 - \vartheta } }
    +
    \diffusionC \, 
    \sqrt{ 
      \frac{
        p \, ( p - 1 )
      }{
        2
      } 
    } 
    \,
  \right]
  \left[
    \int^t_0
    \frac{
      \sup_{ v \in [ 0, s ] }
      \|
        X_v
      \|^2_{ \lpn{ p }{ \P }{ H } }
    }{
      ( t - s )^\vartheta
    }
    \, ds
  \right]^{ 1 / 2 }.
\end{split}
\end{equation}
Next we observe (cf., e.g., \cite{HutzenthalerJentzenKurniawan2014})
that for all 
$ t, u \in [ 0, T ] $ 
with $ t \leq u $ 
it holds that 
\begin{equation}
\label{eq:apriori_pre-gronwall}
\begin{split}
&
  \int^{ t }_0
  \frac{
  \sup_{ v \in [ 0, s ] }
  \big\|
    X_v
  \big\|^2_{ \lpn{ p }{ \P }{ H } }
  }{
    ( t - s )^\vartheta
  }
  \, ds
=
  \int^{ u }_{ u - t }
  \frac{
  \sup_{ v \in [ 0, s - u + t ] }
  \big\|
    X_v
  \big\|^2_{ \lpn{ p }{ \P }{ H } }
  }{
    ( u - s )^\vartheta
  }
  \, ds
\\ & \leq
  \int^{ u }_{ u - t }
  \frac{
  \sup_{ v \in [ 0, s ] }
  \big\|
    X_v
  \big\|^2_{ \lpn{ p }{ \P }{ H } }
  }{
    ( u - s )^\vartheta
  }
  \, ds
\leq
  \int^{ u }_0
  \frac{
  \sup_{ v \in [ 0, s ] }
  \big\|
    X_v
  \big\|^2_{ \lpn{ p }{ \P }{ H } }
  }{
    ( u - s )^\vartheta
  }
  \, ds
  .
\end{split}
\end{equation}
Moreover, we note that the
Minkowski inequality ensures that for all 
$ t \in [ 0, T ] $ 
it holds that 
\begin{equation}
\begin{split}
\label{eq:apriori_decomposition}
&
  \| X_t \|_{ \lpn{ p }{ \P }{ H } }
\\ & \leq
  \Bigg\|
    X_t
    -
    \left[
    \int^t_0
    Y^t_s
    \, ds
    +
    \int^t_0
    Z^t_s
    \, dW_s
    \right]
  \Bigg\|_{ \lpn{ p }{ \P }{ H } }
+
  \Bigg\|
    \int^t_0
    Y^t_s
    \, ds
  \Bigg\|_{ \lpn{ p }{ \P }{ H } }
+
  \Bigg\|
    \int^t_0
    Z^t_s
    \, dW_s
  \Bigg\|_{ \lpn{ p }{ \P }{ H } }
  .
\end{split}
\end{equation}
Combining \eqref{eq:apriori_combined}--\eqref{eq:apriori_decomposition} with the fact that
$
  \forall \, a, b \in \R \colon \left( a + b \right)^2 \leq 2 a^2 + 2 b^2 
$
proves that for all $ u \in [ 0, T ] $ 
it holds that 
\allowdisplaybreaks
\begin{align}
&
  \sup_{ t \in [ 0, u ] }
  \| X_t \|^2_{ \lpn{ p }{ \P }{ H } }
\leq
  2 \, \sup_{ t \in [ 0, T ] }
  \Bigg\|
    X_t
    -
    \left[
    \int^t_0
    Y^t_s
    \, ds
    +
    \int^t_0
    Z^t_s
    \, dW_s
    \right]
  \Bigg\|^2_{ \lpn{ p }{ \P }{ H } }
\\ & +
  2\left[
  \frac{ \driftC \, T^{ \nicefrac{( 1 - \vartheta )}{2} } }{ \sqrt{ 1 - \vartheta } }
  +
  \diffusionC \, 
  \sqrt{ \frac{p\,( p - 1 )}{2} } \,
  \right]^2
  \int^u_0
  \frac{
  \sup_{ t \in [ 0, s ] }
  \big\|
    X_t
  \big\|^2_{ \lpn{ p }{ \P }{ H } }
  }{
    ( u - s )^\vartheta
  }
  \, ds
  .
\end{align}
This and the assumption that
$
  \sup_{ s \in [0,T] }
  \| X_s \|_{ L^p( \P; H ) } < \infty
$
together with 
the generalized Gronwall lemma in Chapter 7 in Henry~\cite{h81}
(see, e.g., also Corollary 3.4.6 in \cite{Jentzen2014SPDElecturenotes})
proves the first inequality in \eqref{eq:apriori_estimate}.
In the next step we note that 
\eqref{eq:apriori_combined}
implies that 
\begin{equation}
\begin{split}
&
  \sup_{ t \in [0,T] }
  \left\|
    X_t
    -
    \left[
    \int^t_0
    Y^t_s
    \, ds
    +
    \int^t_0
    Z^t_s
    \, dW_s
    \right]
  \right\|_{ 
    \lpn{ p }{ \P }{ H } 
  }
\\ & \leq
  \sup_{ t \in [0,T] }
  \left\|
    X_t
  \right\|_{
    \lpn{ p }{ \P }{ H } 
  }
  +
  \sup_{ t \in [0,T] }
  \left[
  \left\|
    \int^t_0
    Y^t_s
    \, ds
  \right\|_{
    \lpn{ p }{ \P }{ H } 
  }
  +
  \left\|
    \int^t_0
    Z^t_s
    \, dW_s
  \right\|_{
    \lpn{ p }{ \P }{ H } 
  }
  \right]
\\ & \leq
  \left[
  1
  +
  \frac{
  \driftC \, T^{ ( 1 - \vartheta ) }
  }{ ( 1 - \vartheta ) }
  +
  \diffusionC \,
  \sqrt{
  \frac{
    p \, ( p - 1 ) \, T^{ ( 1 - \vartheta ) }
  }{ 2 \, ( 1 - \vartheta ) }
  }
  \right]
  \sup_{ t \in [ 0, T ] }
  \big\|
    X_t
  \big\|_{ \lpn{ p }{ \P }{ H } }
  .
\end{split}
\end{equation}
This proves the second inequality 
in \eqref{eq:apriori_estimate}.
The third inequality in \eqref{eq:apriori_estimate}
is an immediate consequence of the 
assumption that
$
  \sup_{ s \in [0,T] }
  \| X_s \|_{
    L^p( \P; H )
  }
  < \infty
$.
The proof of Proposition~\ref{prop:strong_apriori_estimate}
is thus completed.
\end{proof}

\section{Strong perturbations for SPDEs}
\label{sec:strong_perturbation}

In this section we prove in 
Corollary~\ref{prop:general_perturb}
a perturbation estimate 
(see \eqref{eq:perturbation_estimate})
for an appropriate class of stochastic processes
which includes solution processes of certain SEEs as special cases.
Corollary~\ref{prop:general_perturb} 
follows immediately 
from Proposition~\ref{prop:strong_apriori_estimate} in 
Section~\ref{sec:strong_a_priori}.
Corollary~\ref{prop:general_perturb} extends 
the perturbation estimate in Proposition~2.5 in 
Andersson \& Jentzen~\cite{AnderssonJentzen2014}. 
Further related strong perturbation estimates for SEEs can, e.g., be found in Hutzenthaler \& Jentzen~\cite{HutzenthalerJentzen2014}.

\subsection{Setting}
\label{sec:setting_strong_perturbation}

Let 
$ 
  ( H, \left< \cdot, \cdot \right>_H, \left\| \cdot \right\|_H ) 
$
and
$ 
  ( U, \left< \cdot, \cdot \right>_U, \left\| \cdot \right\|_U ) 
$
be separable $ \R $-Hilbert spaces, 
let 
$ T \in (0,\infty) $, 
$ p \in [ 2, \infty ) $, 
$ \vartheta \in [ 0, 1 ) $, 
$ \driftC, \diffusionC \in [ 0, \infty ) $, 
let 
$
  ( \Omega, \mathcal{F}, \P , ( \mathcal{F}_t )_{ t \in [0,T] } )
$
be a stochastic basis, 
let 
$
  ( W_t )_{ t \in [0,T] }
$
be a cylindrical $ \operatorname{Id}_U $-Wiener process
w.r.t.\ $ ( \mathcal{F}_t )_{ t \in [0,T] } $, 
let
$
  X, \bar{X}
  \colon
  [ 0, T ] \times \Omega
  \to H
$ 
be 
stochastic processes 
with 
$
  \sup_{ s \in [ 0, T ] } 
  \| X_s - \bar{X}_s \|_{ \lpn{ p }{ \P }{ H } }
  < \infty
$, 
and for every $ t \in ( 0, T ] $ let 
$
  Y^t, \bar{Y}^t \colon [ 0, t ] \times \Omega
  \to H
$, 
$
  Z^t, \bar{Z}^t \colon [ 0, t ] \times \Omega
  \to HS( U, H )
$ 
be $ ( \mathcal{F}_s )_{ s \in [ 0, t ] } $-predictable stochastic processes 
such that it holds $\P$-a.s.\ that 
$
  \int^t_0
  \| Y^t_s \|_H
  +
  \| \bar{Y}^t_s \|_H
  +
  \| Z^t_s \|^2_{ HS( U, H ) }
  +
  \| \bar{Z}^t_s \|^2_{ HS( U, H ) }
  \, ds
  < \infty
$ 
and such that for all 
$ s \in ( 0, t ) $ 
it holds that 
\begin{equation}
\label{eq:difference_assumption}
    \|
    Y^t_s - \bar{Y}^t_s
    \|_{ \lpn{p}{\P}{H} }
    \leq
  \tfrac{
    \driftC
    \sup_{ u \in [ 0, s ] }
    \| X_u - \bar{X}_u \|_{ \lpn{ p }{ \P }{ H } }
  }{
    ( t - s )^{ \vartheta }
  }
    ,
\quad
    \|
    Z^t_s - \bar{Z}^t_s
    \|_{ \lpn{p}{\P}{HS(U,H)} }
    \leq
  \tfrac{
    \diffusionC
    \sup_{ u \in [ 0, s ] }
    \| X_u - \bar{X}_u \|_{ \lpn{ p }{ \P }{ H } }
  }{
    ( t - s )^{ \nicefrac{ \vartheta }{ 2 } }
  }
  .
\end{equation}

\subsection{Strong perturbation estimates}
\label{sec:strong_perturbation_estimates}

In the next result, Corollary~\ref{prop:general_perturb},
a certain strong perturbation estimate is presented.
Corollary~\ref{prop:general_perturb} is an immediate consequence of 
Proposition~\ref{prop:strong_apriori_estimate}.
Corollary~\ref{prop:general_perturb} is an extension of
the perturbation estimate in Proposition~2.5 in 
Andersson \& Jentzen~\cite{AnderssonJentzen2014}.

\begin{corollary}[A strong perturbation estimate]
\label{prop:general_perturb}
Assume the setting in Section~\ref{sec:setting_strong_perturbation}. 
Then 
\begin{equation}
\label{eq:perturbation_estimate}
\begin{split}
&
  \sup_{ t \in [ 0, T ] }
  \| X_t - \bar{X}_t \|_{ \lpn{ p }{ \P }{ H } }
\leq
  \sqrt{2} \,
  \mathcal{E}_{ ( 1 - \vartheta ) }\!\left[
    \tfrac{ 
      \driftC 
      \sqrt{2} \, 
      T^{ ( 1 - \vartheta ) } 
    }{ 
      \sqrt{ 1 - \vartheta } 
    }
    +
    \diffusionC
    \sqrt{ 
      p \, ( p - 1 ) 
      \,
      T^{ ( 1 - \vartheta ) } 
    }
  \right]
\\ & \cdot
  \sup_{ t \in [ 0, T ] }
  \left\|
    X_t
    -
    \left[
    \int^t_0
    Y^t_s
    \, ds
    +
    \int^t_0
    Z^t_s
    \, dW_s
    \right]
+
    \left[
    \int^t_0
    \bar{Y}^t_s
    \, ds
    +
    \int^t_0
    \bar{Z}^t_s
    \, dW_s
    \right]
    -
    \bar{X}_t
  \right\|_{ \lpn{ p }{ \P }{ H } }
\\ & \leq
  \sqrt{2} \,
  \mathcal{E}_{ ( 1 - \vartheta ) }\!\left[
    \tfrac{ 
      \driftC 
      \sqrt{2} \, 
      T^{ ( 1 - \vartheta ) } 
    }{ 
      \sqrt{ 1 - \vartheta } 
    }
    +
    \diffusionC
    \sqrt{ 
      p \, ( p - 1 ) 
      \,
      T^{ ( 1 - \vartheta ) } 
    }
  \right]
  \left[
  1
  +
  \tfrac{
    \driftC \, T^{ ( 1 - \vartheta ) }
  }{ 
    ( 1 - \vartheta ) 
  }
  +
  \diffusionC \,
  \sqrt{
    \tfrac{
      p \, ( p - 1 ) \, T^{ ( 1 - \vartheta ) }
    }{ 2 \, ( 1 - \vartheta ) }
  }
  \right]
\\ & 
  \cdot
  \sup_{ t \in [ 0, T ] }
  \|
    X_t
    -
    \bar{X}_t
  \|_{ \lpn{ p }{ \P }{ H } }
  < \infty
  .
\end{split}
\end{equation}
\end{corollary}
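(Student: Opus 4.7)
The plan is to recognize that Corollary~\ref{prop:general_perturb} is obtained from Proposition~\ref{prop:strong_apriori_estimate} by applying the proposition to the difference processes rather than to $X$, $Y^t$, $Z^t$ themselves. Concretely, I would define $\tilde X_t := X_t - \bar X_t$ for $t \in [0,T]$ and, for every $t \in (0,T]$, set $\tilde Y^t_s := Y^t_s - \bar Y^t_s$ and $\tilde Z^t_s := Z^t_s - \bar Z^t_s$ for $s \in [0,t]$.

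The first step is to check that $(\tilde X, \tilde Y^t, \tilde Z^t)_{t \in (0,T]}$ fits into the setting of Section~\ref{sec:setting_strong_apriori}. Predictability of $\tilde Y^t$ and $\tilde Z^t$ is inherited from the component processes, the boundedness hypothesis $\sup_{s \in [0,T]} \|\tilde X_s\|_{\lpn{p}{\P}{H}} < \infty$ is exactly the standing assumption on $X - \bar X$, and the $\P$-a.s.\ integrability hypothesis of Proposition~\ref{prop:strong_apriori_estimate} is a consequence of the joint integrability assumed on $Y^t, \bar Y^t, Z^t, \bar Z^t$. Crucially, the two estimates in \eqref{eq:difference_assumption} on $\|\tilde Y^t_s\|_{\lpn{p}{\P}{H}}$ and $\|\tilde Z^t_s\|_{\lpn{p}{\P}{HS(U,H)}}$ in terms of $\sup_{u \in [0,s]} \|\tilde X_u\|_{\lpn{p}{\P}{H}}$ are precisely the hypothesis \eqref{eq:apriori_assumption} required by Proposition~\ref{prop:strong_apriori_estimate}, with the same constants $\driftC$, $\diffusionC$, and exponent $\vartheta$.

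Having verified the hypotheses, I would invoke Proposition~\ref{prop:strong_apriori_estimate} for $(\tilde X, \tilde Y^t, \tilde Z^t)$ and use linearity of Bochner and It\^o integrals to rewrite
\begin{equation*}
  \tilde X_t - \int_0^t \tilde Y^t_s \, ds - \int_0^t \tilde Z^t_s \, dW_s
  = X_t - \left[\int_0^t Y^t_s \, ds + \int_0^t Z^t_s \, dW_s\right] + \left[\int_0^t \bar Y^t_s \, ds + \int_0^t \bar Z^t_s \, dW_s\right] - \bar X_t,
\end{equation*}
which is exactly the expression appearing in the middle line of \eqref{eq:perturbation_estimate}. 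The three inequalities of \eqref{eq:apriori_estimate} then translate term by term into the three inequalities of \eqref{eq:perturbation_estimate}; the finiteness at the end follows from the standing hypothesis $\sup_{s \in [0,T]} \|X_s - \bar X_s\|_{\lpn{p}{\P}{H}} < \infty$.

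There is no genuine obstacle here: the whole argument is the linearity trick that turns an a priori bound into a perturbation bound, plus bookkeeping to match the linear combinations of integrals on the right-hand side. The only minor care needed is to confirm the measurability and $\P$-a.s.\ integrability of the difference processes, both of which are immediate from the assumptions in Section~\ref{sec:setting_strong_perturbation}.
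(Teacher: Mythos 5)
Your proposal is correct and is exactly the argument the paper intends: the paper gives no separate proof, stating only that the corollary follows immediately from Proposition~\ref{prop:strong_apriori_estimate}, and your substitution of the difference processes $X-\bar X$, $Y^t-\bar Y^t$, $Z^t-\bar Z^t$ (whose hypotheses in Section~\ref{sec:setting_strong_perturbation} are tailored to match \eqref{eq:apriori_assumption}) together with linearity of the integrals is precisely that deduction.
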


As an application of Corollary~\ref{prop:general_perturb}
we establish in the next result, Corollary~\ref{cor:initial_perturbation},
an a priori estimate for the difference of two 
numerical approximation processes with possibly 
different initial values.
Corollary~\ref{cor:initial_perturbation} is an extension 
of Corollary~2.6 in Andersson \& Jentzen~\cite{AnderssonJentzen2014}.

\begin{corollary}
\label{cor:initial_perturbation}
Assume the setting in Section~\ref{sec:setting_strong_perturbation}, 
let 
$
  S \in
  \mathbbm{M}( [0,T], L(H) )
$,
and assume that for all 
$ t \in [ 0, T ] $ 
it holds $ \P $-a.s.\ that
\begin{equation}
  X_t
  =
    S_t\, X_0
  + 
    \int_0^t Y^t_s \, ds
  + 
    \int_0^t Z^t_s \, dW_s, 
    \qquad
  \bar{X}_t
  =
    S_t\, \bar{X}_0
  + 
    \int_0^t \bar{Y}^t_s \, ds
  + 
    \int_0^t \bar{Z}^t_s \, dW_s. 
\end{equation}
Then 
\begin{equation}
\begin{split}
&
  \sup_{ t \in [ 0, T ] }
  \| X_t - \bar{X}_t \|_{ \lpn{ p }{ \P }{ H } }
\\ & \leq
  \sqrt{2} \, 
  \left[
  \sup_{ t \in [ 0, T ] }
  \| S_t \|_{ L( H ) }
  \right]
  \|
    X_0
    -
    \bar{X}_0
  \|_{ \lpn{ p }{ \P }{ H } } 
  \,
  \mathcal{E}_{ ( 1 - \vartheta ) }
  \!\left[
    \tfrac{ 
      \driftC 
      \sqrt{2} 
      \, T^{ ( 1 - \vartheta ) } 
    }{ 
      \sqrt{ 1 - \vartheta } 
    }
    +
    \diffusionC
    \sqrt{ 
      p \, ( p - 1 ) 
      \,
      T^{ ( 1 - \vartheta ) } 
    }
  \right]
  .
\end{split}
\end{equation}
\end{corollary}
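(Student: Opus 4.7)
The plan is to apply Corollary~\ref{prop:general_perturb} directly, since the setting of Section~\ref{sec:setting_strong_perturbation} is already assumed and the extra mild-formula hypothesis on $X_t, \bar{X}_t$ allows one to identify the residual appearing in the first inequality of \eqref{eq:perturbation_estimate} with a purely deterministic term driven by the initial data.

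The first step is to substitute the given mild representations into the residual that appears on the right-hand side of the first inequality of \eqref{eq:perturbation_estimate}. More precisely, for every $t \in [0,T]$ it holds $\P$-a.s.\ that
\begin{equation}
  X_t - \left[\int_0^t Y^t_s \, ds + \int_0^t Z^t_s \, dW_s\right] + \left[\int_0^t \bar{Y}^t_s \, ds + \int_0^t \bar{Z}^t_s \, dW_s\right] - \bar{X}_t = S_t\bigl(X_0 - \bar{X}_0\bigr).
\end{equation}
This is the key cancellation that converts the abstract residual into something that can be estimated purely in terms of the initial condition.

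The second step is to bound the $L^p$-norm of this residual by submultiplicativity of the operator norm, namely $\|S_t(X_0 - \bar{X}_0)\|_{L^p(\P;H)} \leq \|S_t\|_{L(H)} \, \|X_0 - \bar{X}_0\|_{L^p(\P;H)}$, and then take the supremum in $t \in [0,T]$.

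The final step is simply to feed these two observations into the first inequality of Corollary~\ref{prop:general_perturb}, which yields the claimed Mittag-Leffler-type bound. There is essentially no analytic obstacle here: all of the Gronwall-type work has already been done in Proposition~\ref{prop:strong_apriori_estimate} and inherited by Corollary~\ref{prop:general_perturb}; the only thing one has to check is that the hypothesis $\sup_{s \in [0,T]} \|X_s - \bar{X}_s\|_{L^p(\P;H)} < \infty$ and the predictability/integrability assumptions from Section~\ref{sec:setting_strong_perturbation} are indeed in force, which is exactly what the statement assumes. So the proof is a two-line reduction, with the only nontrivial content being the identification of the residual as $S_t(X_0 - \bar{X}_0)$.
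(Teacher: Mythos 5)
Your proposal is correct and is exactly the intended argument: the paper itself presents Corollary~\ref{cor:initial_perturbation} as an immediate application of Corollary~\ref{prop:general_perturb}, and your identification of the residual in the first inequality of \eqref{eq:perturbation_estimate} as $S_t(X_0-\bar{X}_0)$, followed by the operator-norm bound and taking the supremum over $t\in[0,T]$, is precisely that reduction. All hypotheses of Corollary~\ref{prop:general_perturb} (finiteness of $\sup_{s\in[0,T]}\|X_s-\bar{X}_s\|_{L^p(\P;H)}$, predictability, integrability, and \eqref{eq:difference_assumption}) are indeed part of the setting in Section~\ref{sec:setting_strong_perturbation}, so nothing further needs to be verified.
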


\section{Strong convergence of mollified solutions for SPDEs}
\label{sec:strong_convergence}

In this section we establish in Proposition~\ref{prop:strong_convergence_numerics} below
an elementary a priori bound on the difference between 
a certain stochastic process 
and 
a mollified version of this process.
In the proof of Proposition~\ref{prop:strong_convergence_numerics} 
we use Corollary~\ref{prop:general_perturb}
from Section~\ref{sec:strong_perturbation} above.
Results related to Proposition~\ref{prop:strong_convergence_numerics} can, e.g., be found 
in Proposition~4.1 in Conus et al.~\cite{ConusJentzenKurniawan2014arXiv}
and 
in Lemma~2.8 in 
Andersson \& Jentzen~\cite{AnderssonJentzen2014}.

\subsection{Setting}
\label{sec:setting_strong_convergence}

Assume the setting in Section~\ref{sec:global_setting}, 
let 
$ p \in [ 2, \infty ) $, $ \vartheta \in [0,1) $, 
$
  \varPi \in \mathcal{M}\big( \mathcal{B}( [ 0, T ] ), \mathcal{B}( [ 0, T ] ) \big)
$,
$ 
  ( \groupC_r )_{ 
    r \in 
    [ 0, 1 ] 
  } 
  \subseteq [ 1 , \infty ) 
$, 
$
  F \in 
  \operatorname{Lip}^0( H , H_{ - \vartheta } ) 
$, 
$
  B \in 
  \operatorname{Lip}^0( 
    H, 
    HS( 
      U, 
      H_{ 
        - \nicefrac{ \vartheta }{ 2 } 
      } 
    ) 
  ) 
$, 
$ 
  L \in 
  \mathcal{M}\big( 
    \mathcal{B}( \angle ) , 
    \mathcal{B}( L( H_{ - 1 } ) )
  \big)
$ 
satisfy that
for all $ t \in [ 0, T ] $ 
it holds that 
$ \varPi( t ) \leq t $
and that for all 
$ ( s, t ) \in \big( \angle \cap (0,T]^2 \big) $, 
$ \rho \in [ 0, 1 ) $ 
it holds that 
$
  L_{ 0, t }( H ) \subseteq H
$, 
$
  L_{ s, t }( H_{ -\rho } ) \subseteq H
$, 
$
  \|
    e^{ t A }
  \|_{ 
    L( H ) 
  }
  \leq
  \groupC_0
$, 
$
  \|
    e^{ t A }
    -
    \operatorname{Id}_H
    \!
  \|_{ 
    L( H_{ \rho } , H ) 
  }
  \leq
  \groupC_\rho \, t^\rho
$, 
and  
$
  \|
    L_{ s, t }
  \|_{ L( H_{ - \rho }, H ) }
  \leq
  \groupC_{ \rho }
  \,
  ( t - s )^{ - \rho }
  ,
$ 
and let 
$ 
  Y^\kappa \colon [0,T] \times \Omega \to H
$, 
$ \kappa \in [ 0, T ] $, 
be $ ( \mathcal{F}_t )_{ t \in [0,T] } $-predictable 
stochastic processes which satisfy that
for all $ \kappa \in [0,T] $
it holds that
$
  \sup_{ t \in [0,T] }
  \| Y^\kappa_{\Pi(t)} \|_{ \lpn{p}{\P}{H} } 
  < \infty
$ 
and which satisfy 
that for all 
$ \kappa \in [0,T] $,
$ t \in (0,T] $ 
it holds $ \P $-a.s.\ that 
$
  Y^{ \kappa }_0 = Y^0_0
$
and 
\begin{equation}
  Y^\kappa_t
  = 
    L_{ 0, t } \, Y^\kappa_0 
  + 
    \int_0^t L_{s,t} \, e^{ \kappa A } F( Y^\kappa_{ \varPi(s) } ) \, ds
  + 
    \int_0^t L_{s,t} \, e^{ \kappa A } B( Y^\kappa_{ \varPi(s) } ) \, dW_s
  .
\end{equation}

\subsection{A priori bounds for the non-mollified process}
\label{secstrong:a_priori}

In this subsection we establish two elementary and essentially well-known a priori bounds 
for the processes $ Y^{ \kappa } $, $ \kappa \in [0,T] $,
from Subsection~\ref{sec:setting_strong_convergence}.
The first a priori bound is presented in Lemma~\ref{lem:strong_apriori_bound}
and the second a priori bound is given in Proposition~\ref{prop:numerics_Lp_bound} below.

\begin{lemma}
\label{lem:strong_apriori_bound}
Assume the setting in Section~\ref{sec:setting_strong_convergence} 
and let $ \kappa \in [ 0, T ] $. 
Then 
$
  \sup_{ t \in [0,T] }
  \| Y^\kappa_t \|_{ \lpn{p}{\P}{H} } 
$
$
  < \infty
  .
$ 
\end{lemma}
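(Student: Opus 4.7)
The plan is to bound each of the three terms in the mild representation of $Y^\kappa$ directly in $\lpn{p}{\P}{H}$ and verify that each bound is uniform in $t \in [0,T]$. Since the hypothesis $\sup_{t \in [0,T]} \|Y^\kappa_{\varPi(t)}\|_{\lpn{p}{\P}{H}} < \infty$ already furnishes a uniform $L^p$-bound on the process evaluated at the discretized times, no Gronwall-type iteration is needed: a pointwise estimate of the mild formulation suffices, and consequently Proposition~\ref{prop:strong_apriori_estimate} need not even be invoked here.

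First I set $M := \sup_{u \in [0,T]} \|Y^\kappa_{\varPi(u)}\|_{\lpn{p}{\P}{H}}$, which is finite by hypothesis. Since $\varPi$ takes values in $[0,T]$ and $\varPi(0) \leq 0$, one has $\varPi(0) = 0$, hence also $\|Y^\kappa_0\|_{\lpn{p}{\P}{H}} \leq M$. Combining the Lipschitz hypotheses $F \in \operatorname{Lip}^0(H, H_{-\vartheta})$ and $B \in \operatorname{Lip}^0(H, HS(U, H_{-\vartheta/2}))$ with the definition of the $\operatorname{Lip}^0$-norm yields, for every $s \in [0,T]$, the pointwise bounds
\begin{equation*}
\|F(Y^\kappa_{\varPi(s)})\|_{\lpn{p}{\P}{H_{-\vartheta}}} \leq \|F\|_{\operatorname{Lip}^0(H, H_{-\vartheta})}(1 + M)
\end{equation*}
and an analogous estimate for $B$ in $HS(U, H_{-\vartheta/2})$ in terms of $\|B\|_{\operatorname{Lip}^0}$ and $M$.

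Next, combining the assumed operator estimate $\|L_{s,t}\|_{L(H_{-\rho}, H)} \leq \groupC_\rho (t-s)^{-\rho}$ with the uniform boundedness of the analytic semigroup $(e^{\kappa A})_{\kappa \in [0,T]}$ on each interpolation space $H_r$ (a standard consequence of the framework in Section~\ref{sec:global_setting}) produces composite bounds of the form $\|L_{s,t} e^{\kappa A}\|_{L(H_{-\vartheta}, H)} \leq \tilde{C}(t-s)^{-\vartheta}$ and $\|L_{s,t} e^{\kappa A}\|_{L(H_{-\vartheta/2}, H)} \leq \tilde{C}(t-s)^{-\vartheta/2}$, with $\tilde{C}$ depending only on $T$, $\vartheta$, and the $\groupC_r$. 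I would then control the initial term by $\|L_{0,t} Y^\kappa_0\|_{\lpn{p}{\P}{H}} \leq \groupC_0 M$, the drift integral by Minkowski's integral inequality, and the stochastic integral by the Burkholder-Davis-Gundy type inequality of Lemma~7.7 in Da Prato \citationand\ Zabczyk~\cite{dz92}. Since $\vartheta \in [0,1)$, the singular kernels $(t-s)^{-\vartheta}$ are integrable on $[0,t]$, all three bounds are finite and uniform in $t \in [0,T]$, and summing them gives the claim.

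No substantive obstacle is expected; the estimate is essentially routine, which is consistent with the lemma being labelled ``elementary'' in the surrounding text. The only technical point worth verifying is the uniform boundedness of $e^{\kappa A}$ on the negative-order spaces $H_{-\vartheta}$ and $H_{-\vartheta/2}$; this is a standard consequence of analytic semigroup theory on interpolation scales and is implicit in the framework of Section~\ref{sec:global_setting}.
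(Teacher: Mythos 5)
Your proposal is correct and follows essentially the same route as the paper: a direct, pointwise-in-$t$ estimate of the mild formulation, bounding the initial term via $\|L_{0,t}\|_{L(H)}\leq\groupC_0$, the drift integral via the smoothing bound $\|L_{s,t}e^{\kappa A}\|_{L(H_{-\vartheta},H)}\lesssim (t-s)^{-\vartheta}$ together with the $\operatorname{Lip}^0$ estimate on $F$, and the stochastic integral via the Burkholder--Davis--Gundy type inequality of Lemma~7.7 in Da Prato \& Zabczyk, all controlled by $\sup_{t\in[0,T]}\|Y^\kappa_{\varPi(t)}\|_{\lpn{p}{\P}{H}}<\infty$ so that no Gronwall iteration (and no appeal to Proposition~\ref{prop:strong_apriori_estimate}) is needed. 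The one technical point you flag, uniform boundedness of $e^{\kappa A}$ on $H_{-\vartheta}$ and $H_{-\vartheta/2}$, is indeed immediate from the commutation of the semigroup with fractional powers of $-A$, and is exactly how the constant $\groupC_0$ enters the paper's estimate.
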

\begin{proof}
We observe that the Burkholder-Davis-Gundy type inequality 
in Lemma~7.7 in Da Prato \& Zabczyk~\cite{dz92} 
ensures that for all 
$ t \in [ 0, T ] $ 
it holds that 
\begin{equation}
\begin{split}
&
  \| Y^\kappa_t \|_{ \lpn{p}{\P}{H} }
\leq
  \| L_{ 0, t } \, Y^\kappa_0 \|_{ \lpn{p}{\P}{H} }
  +
    \int^t_0
  \|
    L_{s,t} \, e^{ \kappa A } F( Y^\kappa_{ \varPi(s) } )
  \|_{ \lpn{p}{\P}{H} }
    \, ds
\\ & +
  \left[
    \tfrac{ p \, ( p - 1 ) }{2}
    \int^t_0
  \|
    L_{s,t} \, e^{ \kappa A } B( Y^\kappa_{ \varPi(s) } )
  \|^2_{ \lpn{p}{\P}{ HS( U, H ) } }
    \, ds
  \right]^{ 1/2 }
\\ & \leq
  \groupC_0 \,
  \| Y^\kappa_0 \|_{ \lpn{p}{\P}{H} }
  +
  \int^t_0
  \tfrac{
    \groupC_0 \, \groupC_\vartheta
    \,
    \| F( Y^\kappa_{ \Pi(s) } ) \|_{ \lpn{p}{\P}{ H_{ -\vartheta } } }
  }{ ( t - s )^\vartheta }
  \, ds
\\ & \quad +
  \left[
    \tfrac{ p \, ( p - 1 ) }{2}
    \int^t_0
  \tfrac{
    |\groupC_0|^2 \, |\groupC_{ \nicefrac{\vartheta}{2} }|^2
    \,
    \| B( Y^\kappa_{ \Pi(s) } ) \|^2_{ \lpn{p}{\P}{ HS( U, H_{ -\nicefrac{\vartheta}{2} } ) } }
  }{ ( t - s )^\vartheta }
    \, ds
  \right]^{ 1/2 }
\\ & \leq
  \left[
    \groupC_0
    +
    \tfrac{
      \groupC_0 \, \groupC_\vartheta \, T^{ ( 1 - \vartheta ) } \,
      \| F \|_{ \operatorname{Lip}^0( H, H_{ -\vartheta } ) }
    }{ ( 1 - \vartheta ) }
    +
    \tfrac{
      \groupC_0 \, \groupC_{ \vartheta/2 }
      \sqrt{
        p \, ( p - 1 ) \, T^{ ( 1 - \vartheta ) }
      }
      \,
      \| B \|_{ \operatorname{Lip}^0( H, HS( U, H_{ -\vartheta/2 } ) ) }
    }{
      \sqrt{ 2 - 2 \vartheta }
    }
  \right]
\\ & \quad \cdot
  \sup_{ s \in [0,T] }
  \|
  \max\{
    1
    ,
    \| Y^\kappa_{ \Pi(s) } \|_H
  \}
  \|_{ \lpn{p}{\P}{\R} } 
  .
\end{split}
\end{equation}
This and the fact that 
$
  \sup_{ t \in [0,T] }
  \|
  \max\{
    1
    ,
    \| Y^\kappa_{ \Pi(t) } \|_H
  \}
  \|_{ \lpn{p}{\P}{\R} } 
  \leq
  1
  +
  \sup_{ t \in [0,T] }
  \|
  Y^\kappa_{ \Pi(t) }
  \|_{ \lpn{p}{\P}{H} } 
  < \infty
$
complete the proof of Lemma~\ref{lem:strong_apriori_bound}.
\end{proof}

In the next result, Proposition~\ref{prop:numerics_Lp_bound}, 
an a priori bound for the process $ Y^0 $ is established.
The proof of Proposition~\ref{prop:numerics_Lp_bound}
uses Corollary~\ref{prop:general_perturb} and Lemma~\ref{lem:strong_apriori_bound} above.

\begin{proposition}[An a priori bound for the non-mollified process]
\label{prop:numerics_Lp_bound}
Assume the setting in Section~\ref{sec:setting_strong_convergence}. 
Then 
\begin{equation}
\begin{split}
&
  \sup_{ t \in [0,T] }
  \| Y^0_t \|_{
   \lpn{ p }{ \P }{ H }
  }
\leq
  \sqrt{2} 
  \,
  \bigg[
    \sup_{ t \in ( 0, T ] }
    \max\!\big\{
      1 
      , 
      \| 
        L_{ 0, t } 
      \|_{ L(H) }
    \big\}
    \,
    \| 
      Y^0_0 
    \|_{ \lpn{p}{\P}{H} }
\\ & 
    +
    \tfrac{
      \groupC_\vartheta 
      \,
      T^{ ( 1 - \vartheta ) } \,
      \| F(0) \|_{ H_{ -\vartheta } }
    }{
      ( 1 - \vartheta )
    }
    +
    \tfrac{
      \groupC_{ \vartheta / 2 }
      \sqrt{
        p \, ( p - 1 ) \, T^{ ( 1 - \vartheta ) }
      }
      \,
      \| B(0) \|_{ HS( U, H_{ - \vartheta / 2 } ) }
    }{
      \sqrt{
        2 ( 1 - \vartheta )
      }
    }
  \bigg]
\\ & \cdot
  \mathcal{E}_{ ( 1 - \vartheta ) }\!\left[
    \tfrac{
      \sqrt{ 2 }
      \,
      \groupC_{ \vartheta }
      \,
      T^{ ( 1 - \vartheta ) }
      \,
      |
        F
      |_{
        \operatorname{Lip}^0( H, H_{ - \vartheta } )
      }
    }{
      \sqrt{1 - \vartheta}
    }
    +
    \groupC_{ 
      \vartheta / 2 
    }
    \sqrt{
      p \, (p-1) \, T^{ ( 1 - \vartheta ) }
    } \,
    |
      B
    |_{
      \operatorname{Lip}^0( H, HS( U, H_{ - \vartheta / 2 } ) )
    }
  \right]
  < \infty
  .
\end{split}
\end{equation}
\end{proposition}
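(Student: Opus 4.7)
The plan is to apply Corollary~\ref{prop:general_perturb} with a carefully chosen comparison process, so that the Mittag-Leffler prefactor in the perturbation estimate absorbs the Lipschitz contributions of $F$ and $B$, while the residual term captures only the initial condition together with the affine pieces $F(0)$ and $B(0)$. Before invoking the corollary, I first apply Lemma~\ref{lem:strong_apriori_bound} with $\kappa=0$ to conclude that $\sup_{t\in[0,T]}\|Y^0_t\|_{\lpn{p}{\P}{H}}<\infty$; this supplies the finiteness hypothesis on $\sup_t\|X_t-\bar X_t\|_{\lpn{p}{\P}{H}}$ required by the setting of Section~\ref{sec:setting_strong_perturbation} and also yields the final ``$<\infty$'' assertion in the proposition.

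In the setting of Section~\ref{sec:setting_strong_perturbation} I take $X=Y^0$, $\bar X\equiv 0$, and for every $t\in(0,T]$, $s\in[0,t]$ I set $Y^t_s=L_{s,t}F(Y^0_{\varPi(s)})$, $\bar Y^t_s=L_{s,t}F(0)$, $Z^t_s=L_{s,t}B(Y^0_{\varPi(s)})$, and $\bar Z^t_s=L_{s,t}B(0)$. The key point is that in the difference $Y^t_s-\bar Y^t_s=L_{s,t}[F(Y^0_{\varPi(s)})-F(0)]$ the constant $F(0)$ drops out, so only the Lipschitz semi-norm $|F|_{\operatorname{Lip}^0(H,H_{-\vartheta})}$ survives. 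Combining this with $\|L_{s,t}\|_{L(H_{-\vartheta},H)}\leq\groupC_\vartheta(t-s)^{-\vartheta}$, the inequality $\varPi(s)\leq s$, and the analogous estimate for $Z^t_s-\bar Z^t_s$ using $\|L_{s,t}\|_{L(H_{-\vartheta/2},H)}\leq\groupC_{\vartheta/2}(t-s)^{-\vartheta/2}$, the hypothesis~\eqref{eq:difference_assumption} is verified with $\driftC=\groupC_\vartheta|F|_{\operatorname{Lip}^0(H,H_{-\vartheta})}$ and $\diffusionC=\groupC_{\vartheta/2}|B|_{\operatorname{Lip}^0(H,HS(U,H_{-\vartheta/2}))}$, which are exactly the quantities appearing inside $\mathcal{E}_{(1-\vartheta)}$ in the statement.

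Applying Corollary~\ref{prop:general_perturb} then bounds $\sup_t\|Y^0_t\|_{\lpn{p}{\P}{H}}$ by $\sqrt{2}\,\mathcal{E}_{(1-\vartheta)}[\,\cdot\,]$ multiplied by the supremum over $t$ of $\|R_t\|_{\lpn{p}{\P}{H}}$, where after cancellation in the residual, using the mild equation defining $Y^0$, one obtains
\begin{equation*}
R_t=L_{0,t}\,Y^0_0+\int_0^t L_{s,t}F(0)\,ds+\int_0^t L_{s,t}B(0)\,dW_s.
\end{equation*}
The three summands are estimated separately: the initial part by $\sup_t\|L_{0,t}\|_{L(H)}\|Y^0_0\|_{\lpn{p}{\P}{H}}$; the Bochner integral by H\"older's inequality in time together with $\|L_{s,t}\|_{L(H_{-\vartheta},H)}\leq\groupC_\vartheta(t-s)^{-\vartheta}$, which produces the factor $\frac{\groupC_\vartheta T^{1-\vartheta}}{1-\vartheta}\|F(0)\|_{H_{-\vartheta}}$; and the stochastic integral by the Burkholder-type inequality in Lemma~7.7 of Da Prato \citationand\ Zabczyk~\cite{dz92} together with $\|L_{s,t}\|_{L(H_{-\vartheta/2},H)}\leq\groupC_{\vartheta/2}(t-s)^{-\vartheta/2}$, which produces the factor $\frac{\groupC_{\vartheta/2}\sqrt{p(p-1)T^{1-\vartheta}}}{\sqrt{2(1-\vartheta)}}\|B(0)\|_{HS(U,H_{-\vartheta/2})}$. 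Using $1\leq\max\{1,\|L_{0,t}\|_{L(H)}\}$ to match the $\max$ in the displayed bound then gives the stated inequality.

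The only conceptually delicate step is the choice of a \emph{nonzero} comparison $\bar Y^t,\bar Z^t$: the naive choice $\bar Y^t\equiv\bar Z^t\equiv 0$ would be legal but would force the full Lipschitz norm $\|F\|_{\operatorname{Lip}^0}$ (rather than the Lipschitz semi-norm plus a separate $F(0)$ contribution) into $\driftC$, and similarly for $\diffusionC$, giving a strictly coarser estimate than the one stated; aligning $\bar Y^t,\bar Z^t$ with the affine parts $F(0),B(0)$ is exactly what decouples the Lipschitz slope from the size of $F(0),B(0)$ in the Mittag-Leffler factor.
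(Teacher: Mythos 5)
Your proposal is correct and follows essentially the same route as the paper: finiteness via Lemma~\ref{lem:strong_apriori_bound}, then Corollary~\ref{prop:general_perturb} with $\bar{X}=0$, $\bar{Y}^t_s=L_{s,t}F(0)$, $\bar{Z}^t_s=L_{s,t}B(0)$, and the residual $L_{0,t}Y^0_0+\int_0^t L_{s,t}F(0)\,ds+\int_0^t L_{s,t}B(0)\,dW_s$ estimated by the triangle inequality and the Burkholder--Davis--Gundy type inequality in Lemma~7.7 of Da Prato \citationand\ Zabczyk~\cite{dz92}. The only (cosmetic) difference is that the paper first extends $L$ to the diagonal by an auxiliary map $\tilde{L}$ with $\tilde{L}_{t,t}=\operatorname{Id}_{H_{-1}}$ so that the comparison processes are defined at $s=t$, a technicality your choice ``$s\in[0,t]$'' glosses over but which does not affect any of the estimates.
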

\begin{proof}
Throughout this proof let 
$
  \tilde{L}
  \colon
  \{ ( t_1, t_2 ) \in [ 0, T ]^2 \colon t_1 \leq t_2 \}
  \to L( H_{-1} )
$ 
be the mapping with the property that for all 
$ ( t_1, t_2 ) \in \angle $,
$ v \in H_{ - 1 } $
it holds that 
$ \tilde{L}_{ t_1, t_2 } v = L_{ t_1, t_2 } v $ 
and with the property that for all $ t \in [ 0, T ] $ 
it holds that 
$ \tilde{L}_{t,t} = \operatorname{Id}_{ H_{ -1 } } $. 
Combining Corollary~\ref{prop:general_perturb} and Lemma~\ref{lem:strong_apriori_bound}
shows\footnote{with 
$ \bar{X}_t = 0 $,
$ \bar{Y}^t_s = \tilde{L}_{ s, t } F(0) $,
$ \bar{Z}^t_s = \tilde{L}_{ s, t } B(0) $
for $ s \in (0,t) $, $ t \in (0,T] $
in the notation of Corollary~\ref{prop:general_perturb}} 
that 
\begin{equation}
\begin{split}
\label{eq:numerics_lp_bound}
&
  \sup_{ t \in [ 0, T ] }
  \left\|
    Y^0_t
  \right\|_{ \lpn{ p }{ \P }{ H } }
\leq
  \sqrt{2}
  \sup_{ t \in [ 0, T ] }
  \left\|
    \tilde{L}_{ 0, t } \, Y^0_0
    +
    \int^t_0
    \tilde{L}_{ s, t } \,
    F(0)
    \, ds
    +
    \int^t_0
    \tilde{L}_{ s, t } \,
    B(0)
    \, dW_s
  \right\|_{ \lpn{ p }{ \P }{ H } }
\\ & \cdot
  \mathcal{E}_{ ( 1 - \vartheta ) }\!\left[
    \tfrac{ \sqrt{2} \, T^{( 1 - \vartheta )} }{ \sqrt{ 1 - \vartheta } }
    \groupC_\vartheta \,
    | F |_{ \operatorname{Lip}^0( H, H_{ -\vartheta } ) }
    +
    \sqrt{ p \, ( p - 1 ) \, T^{( 1 - \vartheta )} } \,
    \groupC_{ \nicefrac{ \vartheta }{ 2 } } \,
    | B |_{ 
      \operatorname{Lip}^0( H, HS( U, H_{ -\nicefrac{ \vartheta }{ 2 } } ) ) 
    }
  \right]
  .
\end{split}
\end{equation}
Combining \eqref{eq:numerics_lp_bound}
with the triangle inequality 
and the Burkholder-Davis-Gundy type inequality 
in Lemma~7.7 in Da Prato \& Zabczyk~\cite{dz92} 
completes the proof of Proposition~\ref{prop:numerics_Lp_bound}.
\end{proof}

\subsection{A strong convergence result}
\label{secstrong:strong}

\begin{proposition}[A bound on the difference between the mollified and the non-mollified processes]
\label{prop:strong_convergence_numerics}
Assume the setting in Section~\ref{sec:setting_strong_convergence} and let 
$ \kappa \in [ 0, T ] $, 
$ \rho \in [ 0, \frac{ 1 - \vartheta }{ 2 } ) $. 
Then 
\begin{align}
&
\label{eq:strong_convergence_numerics}
  \sup_{ t \in [0,T] }
  \left\|
    Y^0_t - Y^{ \kappa }_t
  \right\|_{ 
    \lpn{ p }{ \P }{ H } 
  }
\leq
  \tfrac{ 
    2 \, \kappa^\rho 
  }{ 
    T^\rho 
  }
  \bigg[
    \sup\nolimits_{ t \in ( 0, T ] }
    \max\!\big\{
      1
      , 
      \| 
        L_{ 0, t } 
      \|_{ L(H) }
    \big\}
    \,
    \max\{ 1, \| Y^0_0 \|_{ \lpn{p}{\P}{H} } \}
\nonumber
\\ & +
    \tfrac{
      \groupC_\rho 
      \, 
      \groupC_\vartheta 
      \, 
      \groupC_{ \rho + \vartheta }
      \,
      T^{ ( 1 - \vartheta ) }
      \,
      \| 
        F 
      \|_{ 
        \operatorname{Lip}^0( H, H_{ - \vartheta } ) 
      }
    }{
      ( 1 - \vartheta - \rho )
    }
    +
    \tfrac{
      \groupC_\rho \, 
      \groupC_{ \vartheta / 2 } \, 
      \groupC_{ \rho + \vartheta / 2 }
      \sqrt{
        p \, 
        ( p - 1 ) \, 
        T^{ ( 1 - \vartheta ) }
      }
      \,
      \| B \|_{ 
        \operatorname{Lip}^0( 
          H, HS( U, H_{ - \vartheta / 2 } ) 
        ) 
      }
    }{
      \sqrt{ 
        2 \, ( 1 - \vartheta - 2 \rho )
      }
    }
  \bigg]^2
\\ & \cdot
\nonumber
  \Big|
    \mathcal{E}_{ 
      ( 1 - \vartheta ) 
    }\Big[
      \tfrac{ 
        \sqrt{2} \, T^{( 1 - \vartheta )} \, 
        \groupC_0 \, \groupC_\vartheta 
      }{ 
        \sqrt{ 1 - \vartheta } 
      }
      | F |_{ 
        \operatorname{Lip}^0( H, H_{ -\vartheta } ) 
      }
      +
      \sqrt{ 
        p \, ( p - 1 ) \,
        T^{ 
          ( 1 - \vartheta ) 
        } 
      }
      \,
      \groupC_0 \, 
      \groupC_{ \vartheta / 2 } \,
      | B |_{ 
        \operatorname{Lip}^0( 
          H, 
          HS( U, H_{ - \vartheta / 2 } ) 
        ) 
      }
    \Big]
  \Big|^2
  .
\end{align}
\end{proposition}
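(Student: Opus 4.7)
The plan is to invoke the strong perturbation estimate of Corollary~\ref{prop:general_perturb} with $X = Y^0$, $\bar X = Y^\kappa$, and with predictable integrands chosen so that (a) the difference hypotheses~\eqref{eq:difference_assumption} follow directly from the Lipschitz properties of $F$ and $B$, while (b) the ``mild remainder'' appearing in the second inequality of~\eqref{eq:perturbation_estimate} reduces to a pure mollification source. Concretely, for each $t\in(0,T]$ and $s\in(0,t)$, set $Y^t_s = L_{s,t}F(Y^0_{\varPi(s)})$, $\bar Y^t_s = L_{s,t}F(Y^\kappa_{\varPi(s)})$, $Z^t_s = L_{s,t}B(Y^0_{\varPi(s)})$, and $\bar Z^t_s = L_{s,t}B(Y^\kappa_{\varPi(s)})$. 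Combining $\|L_{s,t}\|_{L(H_{-r},H)}\leq \groupC_r(t-s)^{-r}$ for $r\in\{\vartheta,\vartheta/2\}$ with the Lipschitz bounds on $F$ and $B$ verifies~\eqref{eq:difference_assumption} with $\driftC = \groupC_\vartheta|F|_{\operatorname{Lip}^0(H,H_{-\vartheta})}$ and $\diffusionC = \groupC_{\vartheta/2}|B|_{\operatorname{Lip}^0(H,HS(U,H_{-\vartheta/2}))}$, while the $\P$-a.s.\ integrability hypothesis of Section~\ref{sec:setting_strong_perturbation} follows from Lemma~\ref{lem:strong_apriori_bound} applied to both $Y^0$ and $Y^\kappa$.

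Since $Y^0_0 = Y^\kappa_0$, a direct computation shows that the mild remainder in Corollary~\ref{prop:general_perturb} simplifies to the mollification source
\begin{equation*}
D^\kappa_t \;=\; \int_0^t L_{s,t}(\operatorname{Id}_H - e^{\kappa A})F(Y^\kappa_{\varPi(s)})\,ds \;+\; \int_0^t L_{s,t}(\operatorname{Id}_H - e^{\kappa A})B(Y^\kappa_{\varPi(s)})\,dW_s.
\end{equation*}
The key step is then to estimate $\sup_t\|D^\kappa_t\|_{L^p(\P;H)}$ using the smoothing bound $\|\operatorname{Id}_H - e^{\kappa A}\|_{L(H_{-r},H_{-r-\rho})}\leq \groupC_\rho \kappa^\rho$ (a standard consequence, via the functional calculus of the analytic generator, of the hypothesis $\|e^{tA}-\operatorname{Id}_H\|_{L(H_\rho,H)}\leq \groupC_\rho t^\rho$ transferred down the interpolation scale), together with $\|L_{s,t}\|_{L(H_{-\vartheta-\rho},H)}\leq \groupC_{\vartheta+\rho}(t-s)^{-\vartheta-\rho}$ for the drift and $\|L_{s,t}\|_{L(H_{-\vartheta/2-\rho},H)}\leq \groupC_{\vartheta/2+\rho}(t-s)^{-\vartheta/2-\rho}$ for the diffusion, the latter handled via the Burkholder--Davis--Gundy inequality in Lemma~7.7 of Da~Prato \& Zabczyk. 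Writing $\kappa^\rho\, t^{1-\vartheta-\rho} = (\kappa/T)^\rho\, T^{1-\vartheta-\rho}\,(t/T)^{1-\vartheta-\rho}$ accounts for the prefactor $\kappa^\rho/T^\rho$; the restriction $\rho\in[0,\tfrac{1-\vartheta}{2})$ is imposed precisely so that the stochastic integral's kernel $(t-s)^{-\vartheta-2\rho}$ is integrable on $(0,t)$, yielding the drift constant $T^{1-\vartheta}/(1-\vartheta-\rho)$ and the diffusion constant $\sqrt{T^{1-\vartheta}/(2(1-\vartheta-2\rho))}$ inside the bracket of~\eqref{eq:strong_convergence_numerics}.

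To replace the quantities $\|F(Y^\kappa_{\varPi(s)})\|_{L^p(\P;H_{-\vartheta})}$ and $\|B(Y^\kappa_{\varPi(s)})\|_{L^p(\P;HS(U,H_{-\vartheta/2}))}$ appearing in the estimate of $D^\kappa_t$ by quantities depending only on the data, I would apply the proof of Proposition~\ref{prop:numerics_Lp_bound} to $Y^\kappa$ in place of $Y^0$: that proof carries over verbatim with $L$ replaced by $L\,e^{\kappa A}$, and since $\|e^{\kappa A}\|_{L(H)}\leq \groupC_0$ uniformly in $\kappa\in[0,T]$, the resulting bound has the same Mittag--Leffler form as~\eqref{eq:strong_convergence_numerics} with $\max\{1,\|Y^0_0\|_{L^p(\P;H)}\}$ in place of the initial value (after using $\|F(v)\|_{H_{-\vartheta}}\leq \|F\|_{\operatorname{Lip}^0}\max\{1,\|v\|_H\}$ and its analogue for $B$). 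Multiplying this uniform-in-$\kappa$ a priori bound against the source estimate on $D^\kappa_t$ produces the \emph{square} of the bracket on the right-hand side of~\eqref{eq:strong_convergence_numerics} as well as the square of the Mittag--Leffler factor, while the first Mittag--Leffler factor originates from Corollary~\ref{prop:general_perturb}. The main technical obstacle is the bookkeeping of interpolation indices: one must ensure that $L_{s,t}(\operatorname{Id}_H-e^{\kappa A})F(\cdot)$ lives in $H$ with the correct smoothing rate --- which requires $\vartheta+\rho<1$ for the drift --- and simultaneously that the diffusion contribution is square-integrable in time, giving the tighter constraint $\vartheta/2+\rho<1/2$, i.e.\ the stated range $\rho<(1-\vartheta)/2$, and then extracting the common factor $\kappa^\rho$ uniformly from both contributions.
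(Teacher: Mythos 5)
Your proposal is correct in substance and rests on the same machinery as the paper's proof -- Corollary~\ref{prop:general_perturb}, the smoothing estimate for $\operatorname{Id}_H-e^{\kappa A}$ transferred along the interpolation scale, the Burkholder--Davis--Gundy type inequality from Lemma~7.7 in Da~Prato \& Zabczyk, an a priori $L^p$ bound of the type of Proposition~\ref{prop:numerics_Lp_bound}, and the Mittag--Leffler factor -- but you use the mirror-image decomposition. The paper keeps $e^{\kappa A}$ inside \emph{both} comparison integrands, i.e.\ it takes $Y^t_s=L_{s,t}\,e^{\kappa A}F(Y^0_{\varPi(s)})$ and $\bar Y^t_s=L_{s,t}\,e^{\kappa A}F(Y^\kappa_{\varPi(s)})$ (similarly for the diffusion), so the remainder becomes $\int_0^t L_{s,t}(\operatorname{Id}_H-e^{\kappa A})F(Y^0_{\varPi(s)})\,ds+\int_0^t L_{s,t}(\operatorname{Id}_H-e^{\kappa A})B(Y^0_{\varPi(s)})\,dW_s$; the source is then carried by the \emph{non-mollified} process, which lets the paper cite Proposition~\ref{prop:numerics_Lp_bound} verbatim (it is stated only for $Y^0$), while the Lipschitz constants $\groupC_\vartheta\,|e^{\kappa A}F|_{\operatorname{Lip}^0}\le \groupC_0\groupC_\vartheta|F|_{\operatorname{Lip}^0}$ reproduce exactly the Mittag--Leffler factor in \eqref{eq:strong_convergence_numerics}. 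Your placement (no $e^{\kappa A}$ in the integrands) puts the source on $Y^\kappa$, which is equally viable structurally but forces you, as you acknowledge, to rerun the proof of Proposition~\ref{prop:numerics_Lp_bound} for $Y^\kappa$ with $L$ replaced by $L\,e^{\kappa A}$; this inserts an extra factor $\groupC_0$ in front of the $\|F(0)\|_{H_{-\vartheta}}$ and $\|B(0)\|_{HS(U,H_{-\vartheta/2})}$ terms of the bracket (your first Mittag--Leffler factor drops a $\groupC_0$, which is harmless since $\groupC_0\ge 1$, but that does not compensate inside the bracket). Because the setting of Section~\ref{sec:setting_strong_convergence} imposes no relation such as $\groupC_0\le\groupC_\rho\,\groupC_{\rho+\vartheta}$, your final estimate has the same form as \eqref{eq:strong_convergence_numerics} but is not literally dominated term by term by its right-hand side; to obtain the displayed constants exactly you should either adopt the paper's placement of the mollifier or be content with the stated bound with $\groupC_\rho\groupC_{\rho+\vartheta}$ and $\groupC_\rho\groupC_{\rho+\vartheta/2}$ replaced by $\max\{\groupC_0,\groupC_\rho\groupC_{\rho+\vartheta}\}$ and $\max\{\groupC_0,\groupC_\rho\groupC_{\rho+\vartheta/2}\}$, which is all that is needed downstream.
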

\begin{proof}
First of all, we observe that Lemma~\ref{lem:strong_apriori_bound} 
allows us to apply Corollary~\ref{prop:general_perturb} 
to obtain\footnote{with 
$ \bar{X}_t = Y^\kappa_t $, 
$ \bar{Y}^t_s = L_{ s, t } \, e^{ \kappa A } F( Y^\kappa_{\varPi(s)} ) $, 
$ \bar{Z}^t_s = L_{ s, t } \, e^{ \kappa A } B( Y^\kappa_{\varPi(s)} ) $ 
for $ s \in ( 0, t ) $, $ t \in ( 0, T ] $ 
in the notation of Corollary~\ref{prop:general_perturb}} that 
\begin{equation}\label{eq:strong_converge_num_decompose}
\begin{split}
&
  \sup_{ t \in [0,T] }
  \left\|
    Y^0_t - Y^\kappa_t
  \right\|_{ \lpn{ p }{ \P }{ H } }
\\ & \leq
  \mathcal{E}_{ ( 1 - \vartheta ) }
  \!\left[
    \groupC_\vartheta
    \,
    | 
      e^{ \kappa A } F 
    |_{ 
      \operatorname{Lip}^0( H, H_{ -\vartheta } ) 
    }
    \tfrac{ 
      \sqrt{2} \, 
      T^{
        ( 1 - \vartheta )
      } 
    }{ 
      \sqrt{ 1 - \vartheta } 
    }
    +
    \groupC_{ 
      \nicefrac{ \vartheta }{ 2 } 
    } 
    \,
    | e^{ \kappa A } B |_{ 
      \operatorname{Lip}^0( 
        H, 
        HS( 
          U, 
          H_{ - \vartheta / 2 } 
        ) 
      ) 
    }
    \sqrt{ 
      p \,
      ( p - 1 ) \,
      T^{ 
        ( 1 - \vartheta )
      } 
    }
  \right]
\\ & \cdot
  \sqrt{2}
  \sup_{ t \in [ 0, T ] }
  \left\|
    \int^t_0
    L_{ s, t } \,
    \big( \operatorname{Id}_H - e^{ \kappa A } \big)
    F( Y^0_{ \varPi( s ) } )
    \, ds
    +
    \int^t_0
    L_{ s, t } \,
    \big( \operatorname{Id}_H - e^{ \kappa A } \big)
    B( Y^0_{ \varPi( s ) } )
    \, dW_s
  \right\|_{ \lpn{ p }{ \P }{ H } }
  .
\end{split}
\end{equation}
Moreover, we observe that for all 
$ t \in ( 0, T ] $ 
it holds that 
\begin{equation}\label{eq:strong_converge_num_F}
\begin{split}
&
  \left\|
    \int^t_0
    L_{ s, t } 
    \left( 
      \operatorname{Id}_H - e^{ \kappa A } 
    \right)
    F( Y^0_{ \varPi( s ) } )
    \, ds
  \right\|_{ \lpn{ p }{ \P }{ H } }
\leq
  \int^t_0
  \frac{
    \groupC_\rho \, \groupC_{ \rho + \vartheta } \, \kappa^\rho
  }{ ( t - s )^{ ( \rho + \vartheta ) } } \,
  \| F( Y^0_{ \varPi( s ) } ) \|_{ \lpn{ p }{ \P }{ H_{ -\vartheta } } }
  \, ds
\\ & \leq
  \tfrac{
    \groupC_\rho \, \groupC_{ \rho + \vartheta } \,
    t^{ ( 1 - \vartheta - \rho ) }
  }{
    ( 1 - \vartheta - \rho )
  }
  \,
  \| F \|_{ \operatorname{Lip}^0( H, H_{ -\vartheta } ) }
    \sup_{ s \in [ 0, T ] }
  \max\!\left\{
    1,
    \| Y^0_s \|_{ \lpn{ p }{ \P }{ H } }
  \right\}
  \kappa^\rho
  .
\end{split}
\end{equation}
In addition, the Burkholder-Davis-Gundy type inequality 
in Lemma~7.7 in 
Da Prato \& Zabczyk~\cite{dz92} implies 
that for all $ t \in ( 0, T ] $ it holds that 
\begin{equation}\label{eq:strong_converge_num_B}
\begin{split}
&
  \Bigg\|
    \int^t_0
    L_{ s, t } \,
    \big( 
      \operatorname{Id}_H - e^{ \kappa A } 
    \big)
    B( Y^0_{ \varPi( s ) } )
    \, dW_s
  \Bigg\|_{ \lpn{ p }{ \P }{ H } }
\\ & \leq
  \left[
  \tfrac{ p \, ( p - 1 ) }{ 2 }
  \int^t_0
  \frac{
    |
      \groupC_\rho \, 
      \groupC_{ \rho + \vartheta / 2 } \, 
      \kappa^\rho
    |^2
  }{ 
    ( t - s )^{ 
      ( 2 \rho + \vartheta ) 
    } 
  } \,
  \| 
    B( Y^0_{ \varPi( s ) } ) 
  \|^2_{ 
    \lpn{ p }{ \P }{ 
      HS( U, H_{ - \vartheta / 2 } ) 
    } 
  }
  \, ds
  \right]^{ 1 / 2 }
\\ & \leq
  \tfrac{
    \groupC_\rho \, 
    \groupC_{ \rho + \vartheta / 2 }
    \sqrt{ 
      p \, ( p - 1 ) \,
      t^{ ( 1 - \vartheta - 2 \rho ) } 
    }
  }{
    \sqrt{ 2 - 2 \vartheta - 4 \rho }
  }
  \,
  \| B \|_{ 
    \operatorname{Lip}^0( 
      H, 
      HS( U, H_{ - \vartheta / 2 } ) 
    ) 
  }
  \sup_{ s \in [ 0, T ] }
  \max\!\left\{
    1,
    \| Y^0_s \|_{ \lpn{ p }{ \P }{ H } }
  \right\}
  \kappa^\rho.
\end{split}
\end{equation}
Putting~\eqref{eq:strong_converge_num_F}
and~\eqref{eq:strong_converge_num_B}
into~\eqref{eq:strong_converge_num_decompose} 
yields that
\begin{equation}\label{eq:strong_converge_num}
\begin{split}
&
  \sup_{ t \in [0,T] }
  \left\|
    Y^0_t - Y^\kappa_t
  \right\|_{ \lpn{ p }{ \P }{ H } }
\leq
  \sqrt{2} \,
  \kappa^\rho 
  \sup_{ t \in [ 0, T ] }
  \max\!\left\{
    1,
    \| Y^0_t \|_{ \lpn{ p }{ \P }{ H } }
  \right\}
\\ & \cdot
  \mathcal{E}_{ ( 1 - \vartheta ) }\!\left[
    \tfrac{ 
      \sqrt{2} \, 
      T^{ ( 1 - \vartheta ) } \, 
      \groupC_0 \, 
      \groupC_\vartheta 
    }{ 
      \sqrt{ 1 - \vartheta } 
    }
    | F |_{ 
      \operatorname{Lip}^0( H, H_{ - \vartheta } ) 
    }
    +
    \sqrt{ 
      p \, ( p - 1 ) \,
      T^{ ( 1 - \vartheta ) } 
    }
    \,
    \groupC_0 \, 
    \groupC_{ \vartheta / 2 } \,
    | B |_{ 
      \operatorname{Lip}^0( 
        H, 
        HS( U, H_{ - \vartheta / 2 } ) 
      ) 
    }
  \right]
\\ & \cdot
  \left[
    \tfrac{
      \groupC_\rho \, 
      \groupC_{ \rho + \vartheta } \,
      T^{ ( 1 - \vartheta - \rho ) }
    }{
      ( 1 - \vartheta - \rho )
    }
    \,
    \| F \|_{ \operatorname{Lip}^0( H, H_{ -\vartheta } ) }
    +
    \tfrac{
      \groupC_\rho \, 
      \groupC_{ \rho + \vartheta / 2 } 
      \sqrt{ 
        p \, ( p - 1 ) \,
        T^{ ( 1 - \vartheta - 2 \rho ) } 
      }
    }{
      \sqrt{ 2 - 2 \vartheta - 4 \rho }
    }
    \,
    \| B \|_{ 
      \operatorname{Lip}^0( 
        H, 
        HS( U, H_{ - \vartheta / 2 } ) 
      ) 
    }
  \right]
  .
\end{split}
\end{equation}
Combining 
Proposition~\ref{prop:numerics_Lp_bound} 
and \eqref{eq:strong_converge_num} 
proves that
\begin{equation}
\begin{split}
&
  \left\|
    Y^0_T - Y^{ \kappa }_T
  \right\|_{ \lpn{ p }{ \P }{ H } }
\leq
  2 
  \, 
  \kappa^\rho 
  \,
  \bigg[
    \sup_{ t \in ( 0, T ] }
    \max\!\big\{
      1
      , 
      \| 
        L_{ 0, t } 
      \|_{ L(H) }
    \big\}
    \,
    \max\{ 1, \| Y^0_0 \|_{ \lpn{p}{\P}{H} } \}
\\ & +
    \tfrac{
      \groupC_\vartheta \,
      T^{ ( 1 - \vartheta ) } \,
      \| F(0) \|_{ H_{ -\vartheta } }
    }{
      ( 1 - \vartheta )
    }
    +
    \tfrac{
      \groupC_{ \vartheta / 2 }
      \sqrt{
        p \, (p - 1) \, 
        T^{ ( 1 - \vartheta ) }
      }
      \,
      \| B(0) \|_{ HS( U, H_{ - \vartheta / 2 } ) }
    }{
      \sqrt{
        2 - 2 \vartheta
      }
    }
  \bigg]
\\ & \cdot
  \bigg[
    \tfrac{
      \groupC_\rho \, 
      \groupC_{ \rho + \vartheta } \,
      T^{ ( 1 - \vartheta - \rho ) } \,
      \| F \|_{ \operatorname{Lip}^0( 
        H, H_{ - \vartheta } ) 
      }
    }{
      ( 1 - \vartheta - \rho )
    }
    +
    \tfrac{
      \groupC_\rho 
      \, 
      \groupC_{ 
        \rho 
        + 
        \vartheta / 2 
      }
      \sqrt{ 
        p \, ( p - 1 ) \,
        T^{ ( 1 - \vartheta - 2 \rho ) } 
      }
      \,
      \| B \|_{ 
        \operatorname{Lip}^0( 
          H, HS( U, H_{ - \vartheta / 2 } ) 
        ) 
      }
    }{
      \sqrt{ 2 - 2 \vartheta - 4 \rho }
    }
  \bigg]
\\ & \cdot
  \bigg|
    \mathcal{E}_{ ( 1 - \vartheta ) }\bigg[
      \tfrac{ 
        \sqrt{2} \, 
        T^{ ( 1 - \vartheta ) } \, 
        \groupC_0 \, 
        \groupC_\vartheta \,
        | F |_{ 
          \operatorname{Lip}^0( H, H_{ - \vartheta } ) 
        }
      }{ 
        \sqrt{ 1 - \vartheta } 
      }
      +
      \sqrt{ 
        p \, ( p - 1 ) \, 
        T^{ ( 1 - \vartheta ) } 
      }
      \,
      \groupC_0 \, 
      \groupC_{ \vartheta / 2 } \,
      | B |_{ 
        \operatorname{Lip}^0( 
          H, 
          HS( U, H_{ - \vartheta / 2 } ) 
        ) 
      }
    \bigg]
  \bigg|^2
  .
\end{split}
\end{equation}
Hence, we obtain that
\begin{equation}
\label{eq:strong_convergence_conclude}
\begin{split}
&
  \left\|
    Y^0_T - Y^{ \kappa }_T
  \right\|_{ \lpn{ p }{ \P }{ H } }
\leq
  \tfrac{
    2 
    \, 
    \kappa^\rho 
  }{
    T^{ \rho }
  }
  \,
  \bigg[
    \sup_{ t \in ( 0, T ] }
    \max\!\big\{
      1
      , 
      \| 
        L_{ 0, t } 
      \|_{ L(H) }
    \big\}
    \,
    \max\{ 1, \| Y^0_0 \|_{ \lpn{p}{\P}{H} } \}
\\ & +
    \tfrac{
      \groupC_\vartheta \,
      T^{ ( 1 - \vartheta ) } \,
      \| F(0) \|_{ H_{ -\vartheta } }
    }{
      ( 1 - \vartheta )
    }
    +
    \tfrac{
      \groupC_{ \vartheta / 2 }
      \sqrt{
        p \, (p - 1) \, 
        T^{ ( 1 - \vartheta ) }
      }
      \,
      \| B(0) \|_{ HS( U, H_{ - \vartheta / 2 } ) }
    }{
      \sqrt{
        2 - 2 \vartheta
      }
    }
  \bigg]
\\ & \cdot
  \bigg[
    \tfrac{
      \groupC_\rho \, 
      \groupC_{ \rho + \vartheta } \,
      T^{ ( 1 - \vartheta ) } \,
      \| F \|_{ \operatorname{Lip}^0( 
        H, H_{ - \vartheta } ) 
      }
    }{
      ( 1 - \vartheta - \rho )
    }
    +
    \tfrac{
      \groupC_\rho 
      \, 
      \groupC_{ 
        \rho 
        + 
        \vartheta / 2 
      }
      \sqrt{ 
        p \, ( p - 1 ) \,
        T^{ ( 1 - \vartheta ) } 
      }
      \,
      \| B \|_{ 
        \operatorname{Lip}^0( 
          H, HS( U, H_{ - \vartheta / 2 } ) 
        ) 
      }
    }{
      \sqrt{ 2 - 2 \vartheta - 4 \rho }
    }
  \bigg]
\\ & \cdot
  \bigg|
    \mathcal{E}_{ ( 1 - \vartheta ) }\bigg[
      \tfrac{ 
        \sqrt{2} \, 
        T^{ ( 1 - \vartheta ) } \, 
        \groupC_0 \, 
        \groupC_\vartheta \,
        | F |_{ 
          \operatorname{Lip}^0( H, H_{ - \vartheta } ) 
        }
      }{ 
        \sqrt{ 1 - \vartheta } 
      }
      +
      \sqrt{ 
        p \, ( p - 1 ) \, 
        T^{ ( 1 - \vartheta ) } 
      }
      \,
      \groupC_0 \, 
      \groupC_{ \vartheta / 2 } \,
      | B |_{ 
        \operatorname{Lip}^0( 
          H, 
          HS( U, H_{ - \vartheta / 2 } ) 
        ) 
      }
    \bigg]
  \bigg|^2
  .
\end{split}
\end{equation}
This implies 
\eqref{eq:strong_convergence_numerics}.
The proof of 
Proposition~\ref{prop:strong_convergence_numerics} 
is thus completed.
\end{proof}

\section{Mild stochastic calculus}
\label{sec:mildcalc}

In Theorem~1 in Da Prato et al.~\cite{DaPratoJentzenRoeckner2012} 
a new -- somehow mild -- It\^{o} type formula has been proposed
and this formula has been called mild It\^{o} formula.
The mild It\^{o} formula suggested in Theorem~1 in Da Prato et al.~\cite{DaPratoJentzenRoeckner2012}
has been proved from the deterministic starting time point $ t_0 \in [0,\infty) $
to the deterministic end time point $ t \in [t_0,\infty) $.
In Theorem~\ref{thm:ito} in this section we generalize this 
mild It\^{o} formula by allowing the end time point $ t \in [ t_0, \infty) $ to be a stopping time.
We then use Theorem~\ref{thm:ito} to derive
a mild Dynkin-type formula in Corollary~\ref{cor:mild_dynkin_formula}.
This mild Dynkin-type formula, in turn, is used in Proposition~\ref{prop:mild_ito_maximal_ineq} below
to derive suitable estimates for expectations of compositions of smooth functions 
and mild It\^{o} processes.
Proposition~\ref{prop:mild_ito_maximal_ineq} is used intensively 
in the proof of the weak convergence result in Theorem~\ref{intro:theorem}
(see Section~\ref{sec:weak_temporal_regularity} for details).

\subsection{Setting}
\label{sec:setting_mild_calculus}

Throughout this section we assume the following setting.
Let $ t_0 \in [ 0, \infty ) $, 
$ T \in ( t_0, \infty ) $, 
$
   \angle =
   \left\{
     (t_1, t_2) \in [ t_0, T ]^2 \colon
     t_1 < t_2
   \right\}
$,
let 
$
  ( \Omega, \mathcal{F}, \P , ( \mathcal{F}_t )_{ t \in [ t_0, T ] } )
$
be a stochastic basis,
let
$
  \left( W_t \right)_{ t \in [ t_0, T ] }
$
be a cylindrical $ \operatorname{Id}_U $-Wiener
process w.r.t.\
$
   ( \mathcal{F}_t )_{ t \in [ t_0, T ] }
$,
let
$
   (
     \check{H},
     \left< \cdot , \cdot \right>_{ \check{H} },
     \left\| \cdot \right\|_{ \check{H} }
   )
$,
$
   (
     \tilde{H},
     \left< \cdot , \cdot \right>_{ \tilde{H} },
     \left\| \cdot \right\|_{ \tilde{H} }
   )
$,
$
   (
     \hat{H},
     \left< \cdot , \cdot \right>_{ \hat{H} },
     \left\| \cdot \right\|_{ \hat{H} }
   )
$, 
$
   (
     U,
     \left< \cdot , \cdot \right>_U,
     \left\| \cdot \right\|_U
   )
$, 
and 
$
   ( V,
     \left< \cdot, \cdot \right>_V,
     \left\| \cdot \right\|_V
   )
$
be separable
$  \R $-Hilbert spaces
with
$
   \check{H}
   \subseteq
   \tilde{H}
   \subseteq
   \hat{H}
$
continuously and densely, 
and let
$
   \mathbb{U} \subseteq U
$
be an orthonormal basis of
$ U $.

\subsection{Mild It\^{o} processes}
\label{sec:mildprocesses}

For the convenience of the reader we recall the 
notion of a mild It\^{o} process; see Definition~1 in Da Prato et al.~\cite{DaPratoJentzenRoeckner2012}.

\begin{definition}[Mild It\^{o} process]
\label{propdef}
Assume the setting in Section~\ref{sec:setting_mild_calculus},
let
$
   S \colon
   \angle
   \rightarrow
   L( \hat{H}, \check{H} )
$
be a
$
\mathcal{B}( \angle )
$/$
\mathcal{S}( \hat{H}, \check{H} )
$-measurable
mapping such
that for all $ t_1, t_2, t_3 \in [ t_0, T ] $
with $ t_1 < t_2 < t_3 $
it holds that
$
   S_{ t_2, t_3 }
   S_{ t_1, t_2 }
=
   S_{ t_1, t_3 }
$,
and let
$
   Y \colon [ t_0, T ]
   \times \Omega
   \rightarrow \hat{H}
$,
$
   Z \colon [ t_0, T ]
   \times \Omega
   \rightarrow HS( U, \hat{H} )
$,
and
$
   X \colon [ t_0, T ]
   \times \Omega
   \rightarrow \tilde{H}
$
be
$ ( \mathcal{F}_t )_{ t \in [ t_0, T ] } $-predictable stochastic processes
such that for all $ t \in ( t_0, T ] $
it holds $ \P $-a.s.\ that
$
     \int_{ t_0 }^{ t }
     \|
       S_{ s, t } Y_s
     \|_{ \check{H} }
     +
     \|
       S_{ s, t } Z_s
     \|_{ HS( U, \check{H} ) }^2
     \,
     ds < \infty
$
and
\begin{equation}
\label{eq:mildito}
   X_t
=
   S_{ t_0, t } \,
   X_{ t_0 }
   +
   \int_{ t_0 }^t
     S_{ s, t } \,
     Y_s
   \, ds
   +
   \int_{ t_0 }^t
     S_{ s, t } \,
     Z_s
   \, dW_s
   .
\end{equation}
Then we call
$
   X
$
a {mild It\^{o} process}
(with {evolution family} $S$,
{mild drift} $ Y $, 
and {mild diffusion} $ Z $).
\end{definition}

\begin{lemma}[Regularization of mild It\^{o} processes]
\label{lem:transformation}
Assume the setting in Section~\ref{sec:setting_mild_calculus}
and let
$
   X \colon [ t_0, T ] \times \Omega
   \rightarrow \tilde{H}
$
be a mild It{\^o} process
with evolution family
$
   S \colon \angle
   \rightarrow L( \hat{H}, \check{H} )
$,
mild drift
$
   Y \colon [ t_0, T ] \times
   \Omega \rightarrow \hat{H}
$, 
and mild
diffusion
$
   Z \colon [ t_0, T ] \times
   \Omega \rightarrow
   HS(U,\hat{H})
$.
Then 
\begin{enumerate}[(i)]
\item 
\label{lem:transformation_i}
there exists an up to indistinguishability unique
continuous stochastic process 
$
  \bar{X} \colon [ t_0, T ] \times \Omega \rightarrow \check{H}
$ 
with
$
  \forall \, 
  t \in [ t_0, T ) 
  \colon
  \P\big(
    \bar{X}_t
    =
    S_{ t, T } X_t
  \big) = 1
$

\item
\label{lem:transformation_ii}
and 
for all 
continuous stochastic process 
$
  \bar{X} \colon [ t_0, T ] \times \Omega \rightarrow \check{H}
$ 
with 
$ 
  \forall \, t \in [ t_0, T ) \colon
  \P\big(
    \bar{X}_t
    =
    S_{ t, T } X_t
  \big) = 1
$
and all $ t \in [t_0,T] $
it holds that
$ \bar{X} $ 
is 
$
   ( \mathcal{F}_s )_{ s \in [ t_0, T ] }
$-predictable,
it holds $ \P $-a.s.\ that
$
  \bar{X}_T = X_T
$
and it holds $ \P $-a.s.\ that
\begin{equation}
   \bar{X}_t
=
   S_{ t_0, T } \,
   X_{ t_0 }
   +
   \int_{ t_0 }^t
     S_{ s, T } \,
     Y_s
   \, ds
   +
   \int_{ t_0 }^t
     S_{ s, T } \,
     Z_s
   \, dW_s
   .
\end{equation}
\end{enumerate}
\end{lemma}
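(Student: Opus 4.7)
\textbf{Proof plan for Lemma~\ref{lem:transformation}.}

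The strategy is to construct a candidate $\bar{X}$ explicitly via the integral formula displayed in~(ii), verify that it is continuous into $\check{H}$, show that it coincides almost surely with $S_{t,T}X_t$ for every $t\in[t_0,T)$ and with $X_T$ at $t=T$, and finally deduce uniqueness and predictability. More precisely, I would define, for $t\in[t_0,T]$,
\begin{equation*}
   \bar{X}_t
   \;=\;
   S_{t_0,T}\, X_{t_0}
   +
   \int_{t_0}^{t} S_{s,T}\, Y_s\, ds
   +
   \int_{t_0}^{t} S_{s,T}\, Z_s\, dW_s,
\end{equation*}
where the right-hand integrals are chosen as fixed continuous modifications.

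First I would check that both integrals are well defined as $\check{H}$-valued (Bochner, resp.\ stochastic) integrals. Since $S\colon\angle\to L(\hat{H},\check{H})$ is $\mathcal{B}(\angle)/\mathcal{S}(\hat{H},\check{H})$-measurable and $Y,Z$ are predictable into $\hat{H}$ and $HS(U,\hat{H})$ respectively, the processes $s\mapsto S_{s,T}Y_s$ and $s\mapsto S_{s,T}Z_s$ inherit predictability (joint measurability in $(s,\omega)$ composed with evaluation of the operator). The integrability requirement of Definition~\ref{propdef} applied at the end-point $t=T$ gives $\int_{t_0}^T\|S_{s,T}Y_s\|_{\check{H}}+\|S_{s,T}Z_s\|_{HS(U,\check{H})}^2\,ds<\infty$ a.s., so the Bochner integral is continuous in $t$ with values in $\check{H}$, and the stochastic integral admits a continuous $\check{H}$-valued modification (this is a standard consequence of the Burkholder--Davis--Gundy type inequality, e.g.\ Proposition~7.3/Lemma~7.7 in Da Prato \& Zabczyk~\cite{dz92}). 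Adaptedness of $\bar{X}_t$ then follows since each summand is adapted, and combined with path-continuity this yields the $(\mathcal{F}_t)$-predictability stated in~(ii).

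Next I would verify the identification $\bar{X}_t=S_{t,T}X_t$ a.s.\ for $t\in[t_0,T)$. Applying the bounded operator $S_{t,T}\in L(\hat{H},\check{H})$ to the defining mild It\^{o} equation~\eqref{eq:mildito} for $X_t$, commuting $S_{t,T}$ with the Bochner and the stochastic integral (using boundedness plus predictability, cf.\ e.g.\ Section~4.2 in \cite{dz92}), and exploiting the evolution-family relation $S_{t,T}S_{s,t}=S_{s,T}$ for $t_0\le s<t<T$, one obtains almost surely
\begin{equation*}
   S_{t,T}X_t
   \;=\;
   S_{t_0,T}X_{t_0}
   + \int_{t_0}^{t} S_{s,T}Y_s\,ds
   + \int_{t_0}^{t} S_{s,T}Z_s\,dW_s
   \;=\;
   \bar{X}_t.
\end{equation*}
Taking $t=T$ directly in Definition~\ref{propdef} yields $\bar{X}_T=X_T$ a.s. This proves existence in~(i) and the two a.s.\ identities in~(ii).

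Uniqueness up to indistinguishability is then immediate: if $\bar{X}$ and $\tilde{X}$ are two continuous $\check{H}$-valued processes satisfying $\P(\bar{X}_t=S_{t,T}X_t)=\P(\tilde{X}_t=S_{t,T}X_t)=1$ for every $t\in[t_0,T)$, then on a countable dense subset $D\subset[t_0,T)$ there is a single $\P$-null set outside of which $\bar{X}_t=\tilde{X}_t$ for all $t\in D$; continuity of both paths extends the equality to all of $[t_0,T]$, giving indistinguishability. The main technical obstacle is the careful justification of the commutation of $S_{t,T}$ with the two integrals (which requires the measurability/predictability of $s\mapsto S_{s,T}Y_s$ and $s\mapsto S_{s,T}Z_s$ in $\check{H}$ rather than only $\hat{H}$) and the existence of a continuous $\check{H}$-valued modification of the stochastic integral; everything else is bookkeeping via the evolution-family property and the a.s.\ uniqueness of continuous modifications.
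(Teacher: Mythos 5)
Your proposal is correct and follows essentially the same route as the paper: construct $\bar{X}$ by the integral formula with $S_{s,T}$ in the integrands (well defined by the mild It\^{o} integrability condition at $t=T$), identify $\bar{X}_t=S_{t,T}X_t$ for $t\in[t_0,T)$ by applying $S_{t,T}$ to \eqref{eq:mildito} and using the evolution-family relation, and obtain uniqueness from the fact that two continuous processes agreeing $\P$-a.s.\ at each fixed time are indistinguishable. The extra details you supply (predictability of $s\mapsto S_{s,T}Y_s$, continuous modification of the stochastic integral) are exactly the points the paper leaves implicit, so no gap remains.
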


\begin{proof}
The assumption that $ X $ is a mild It\^{o} process, in particular, 
ensures that it holds $ \P $-a.s.\ that
$
  \int_{ t_0 }^T
  \left\| S_{ s, T } Y_s \right\|_{
    \check{H}
  }
  +
  \left\| S_{ s, T } Z_s \right\|_{
    HS( U, \check{H} )
  }^2
  ds < \infty
$.
This implies that 
there exists a
continuous stochastic 
process 
$ 
  \bar{X} \colon [ t_0, T ] \times \Omega \to \check{H}
$
such that for all 
$ t \in [ t_0, T ] $
it holds $ \P $-a.s.\ that
\begin{equation}
\label{eq:semiX}
   \bar{X}_t
   =
   S_{ t_0, T } \, X_{ t_0 }
   +
   \int_{ t_0 }^t
   S_{ s, T } \, Y_s \, ds
   +
   \int_{ t_0 }^t
   S_{ s, T } \, Z_s \, dW_s
   .
\end{equation}
Next observe that Definition~\ref{propdef}
ensures that
for all $ t \in ( t_0, T ) $
it holds $ \P $-a.s.\ that
\begin{equation}
\begin{split}
&
   S_{ t_0, T } \, X_{ t_0 }
   +
   \int_{ t_0 }^t
   S_{ s, T } \, Y_s \, ds
   +
   \int_{ t_0 }^t
   S_{ s, T } \, Z_s \, dW_s
\\ & =
   S_{ t, T }
   \left(
     S_{ t_0, t } \, X_{ t_0 }
   +
     \int_{ t_0 }^t
     S_{ s, t } \,
     Y_s \, ds
   +
     \int_{ t_0 }^t
     S_{ s, t } \,
     Z_s \, dW_s
   \right)
=
   S_{ t, T } \,
   X_t
   .
\end{split}
\end{equation}
This establishes that
for all $ t \in [ t_0, T ) $
it holds $ \P $-a.s.\ that
\begin{equation}
\label{eq:fact}
\begin{split}
&
   S_{ t_0, T } \, X_{ t_0 }
   +
   \int_{ t_0 }^t
   S_{ s, T } \, Y_s \, ds
   +
   \int_{ t_0 }^t
   S_{ s, T } \, Z_s \, dW_s
=
   S_{ t, T } \,
   X_t
   .
\end{split}
\end{equation}
Combining \eqref{eq:fact}
with \eqref{eq:semiX}
shows that
there exists a
continuous stochastic 
process 
$ 
  \bar{X} \colon [ t_0, T ] \times \Omega \to \check{H}
$
such that for all 
$ t \in [ t_0, T ) $
it holds $ \P $-a.s.\ that
\begin{equation}
\label{eq:semiX_2}
  \bar{X}_t 
  =
   S_{ t_0, T } \, X_{ t_0 }
   +
   \int_{ t_0 }^t
   S_{ s, T } \, Y_s \, ds
   +
   \int_{ t_0 }^t
   S_{ s, T } \, Z_s \, dW_s
  = S_{ t, T } X_t 
  .
\end{equation}
Moreover, observe that for all 
\emph{continuous} stochastic processes 
$ \bar{X}, \bar{Y} \colon [0,T] \times \Omega \to \check{H} $
with
$
  \forall \, t \in [t_0,T)
  \colon
  \P\big(
    \bar{X}_t = \bar{Y}_t
  \big) = 1
$
it holds that
$
  \P\big(
    \forall \, t \in [ t_0, T ] \colon
    \bar{X}_t = \bar{Y}_t
  \big)
  = 1
$.
Combining this with \eqref{eq:semiX_2} proves Item~\eqref{lem:transformation_i}.
Item~\eqref{lem:transformation_ii} is an immediate
consequence from 
\eqref{eq:semiX_2}
and Item~\eqref{lem:transformation_i}.
The proof of Lemma~\ref{lem:transformation}
is thus completed.
\end{proof}

\subsection{Mild It{\^o} formula for stopping times}
\label{sec:mildito}

\begin{theorem}[Mild It{\^o} formula]
\label{thm:ito}
Assume the setting in Section~\ref{sec:setting_mild_calculus},
let
$
   X \colon [ t_0, T ] \times \Omega
   \rightarrow \tilde{H}
$
be a mild It{\^o} process
with evolution family
$
   S \colon \angle
   \rightarrow L( \hat{H}, \check{H} )
$,
mild drift
$
   Y \colon [ t_0, T ] \times
   \Omega \rightarrow \hat{H}
$, 
and mild
diffusion
$
   Z \colon [ t_0, T ] \times
   \Omega \rightarrow
   HS(U,\hat{H})
$,
let
$
  \bar{X} \colon [ t_0, T ] \times \Omega \rightarrow \check{H}
$ 
be a continuous stochastic process
with 
$
  \forall \, t \in [ t_0, T ) \colon
  \P\big(
    \bar{X}_t
     =
     S_{ t, T } 
     X_t
  \big) = 1
$
(see Lemma~\ref{lem:transformation}),
let
$ r \in [ t_0, T ) $,
$
   \varphi
   =
   (
   \varphi(t,x)
   )_{ t \in [ r, T ],\, x \in \check{H} }
   \in C^{1,2}(
     [ r, T ] \times \check{H}, V
   )
$
and let
$ \tau \colon \Omega \rightarrow [ r, T ] $
be an 
$
   ( \mathcal{F}_t )_{ t \in [ r, T ] }
$-stopping time.
Then it holds $ \P $-a.s.\ that
\begin{equation}
\label{eq:well1}
     \int_r^{ T }
     \left\|
  (
    \tfrac{ \partial }{ \partial x }
    \varphi
  )
    ( s, S_{ s, T } X_s )
       S_{ s, T } Y_s
     \right\|_V
     +
     \left\|
  (
    \tfrac{ \partial }{ \partial x }
    \varphi
  )
    ( s, S_{ s, T } X_s )
       S_{ s, T } Z_s
     \right\|_{ HS(U, V ) }^2
     ds
     < \infty
     ,
\end{equation}
\begin{equation}
\label{eq:well2}
     \int_r^{ T }
     \left\|
  (
    \tfrac{ \partial }{ \partial t }
    \varphi
  )
    ( s, S_{ s, T } X_s )
     \right\|_V
     +
     \|
  (
    \tfrac{ \partial^2 }{ \partial x^2 }
    \varphi
  )
    ( s, S_{ s, T } X_s )
     \|_{ L^{(2)}( \check{H}, V ) } \,
     \|
       S_{ s, T } Z_s
     \|_{ HS(U, \check{H} ) }^2 \,
     ds
     < \infty
     ,
\end{equation}
\begin{equation}
\label{eq:itoformel_start}
\begin{split}
&
   \varphi( \tau, \bar{X}_\tau )
  =
   \varphi( r,
     S_{ r, T }
     X_{ r }
   )
   +
   \int_{ r }^\tau
  (
    \tfrac{ \partial }{ \partial t }
    \varphi
  )
    ( s, S_{ s, T } X_s )
   \, ds
   +
   \int_{ r }^\tau
  (
    \tfrac{ \partial }{ \partial x } \varphi
    )
    ( s, S_{ s, T } X_s )
    \,
   S_{ s, T} \,Y_s
   \, ds
\\&+
   \int_{ r }^\tau
  (
    \tfrac{ \partial }{ \partial x } \varphi
    )
    ( s, S_{ s, T } X_s )
   \, S_{ s, T } \, Z_s
   \, dW_s
   +
   \tfrac{1}{2}
   \sum_{ u \in \mathbb{U} }
   \int_{ r }^\tau
  (
    \tfrac{ \partial^2 }{ \partial x^2 } \varphi
    )
    ( s, S_{ s, T } X_s )
   \left(
     S_{ s, T }
     Z_s u ,
     S_{ s, T }
     Z_s u
   \right) ds
   .
\end{split}
\end{equation}
\end{theorem}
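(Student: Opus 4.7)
The plan is to reduce the stopped mild It\^{o} formula to a \emph{standard} It\^{o} formula for the ``regularized'' process $\bar{X}$ furnished by Lemma~\ref{lem:transformation}, evaluated at the stopping time $\tau$. By Item~\eqref{lem:transformation_ii} of Lemma~\ref{lem:transformation}, $\bar{X}\colon[t_0,T]\times\Omega\to\check{H}$ is a continuous, $(\mathcal{F}_t)$-predictable process satisfying
\begin{equation*}
  \bar{X}_t
  =
  S_{t_0,T}\,X_{t_0}
  +
  \int_{t_0}^t S_{s,T}\,Y_s\,ds
  +
  \int_{t_0}^t S_{s,T}\,Z_s\,dW_s
  \qquad \P\text{-a.s.}
\end{equation*}
Hence $\bar{X}$ is an ordinary (non-mild) It\^o process in the separable Hilbert space $\check{H}$ with drift $(S_{s,T}Y_s)_{s\in[r,T]}$ and diffusion $(S_{s,T}Z_s)_{s\in[r,T]}$. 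Since Definition~\ref{propdef} applied at the deterministic endpoint $T$ yields $\int_r^T\|S_{s,T}Y_s\|_{\check{H}}+\|S_{s,T}Z_s\|^2_{HS(U,\check{H})}\,ds<\infty$ $\P$-a.s., the standard stochastic integration framework for $\check{H}$-valued integrands applies to $\bar{X}$.

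\medskip

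Next I would verify \eqref{eq:well1} and \eqref{eq:well2}. Continuity of $\bar{X}$ on the \emph{compact} interval $[r,T]$ together with $\varphi\in C^{1,2}([r,T]\times\check{H},V)$ implies that the pathwise-continuous maps $s\mapsto(\partial_t\varphi)(s,\bar{X}_s)$, $s\mapsto(\partial_x\varphi)(s,\bar{X}_s)\in L(\check{H},V)$, $s\mapsto(\partial_x^2\varphi)(s,\bar{X}_s)\in L^{(2)}(\check{H},V)$ are $\P$-a.s.\ bounded on $[r,T]$ with random bounds. Because $\bar{X}_s=S_{s,T}X_s$ $\P$-a.s.\ for every $s\in[r,T)$ and $\{T\}$ has Lebesgue measure zero, these bounds transfer to the integrands appearing in \eqref{eq:well1}--\eqref{eq:well2}; combining them with $\int_r^T\|S_{s,T}Y_s\|_{\check{H}}+\|S_{s,T}Z_s\|^2_{HS(U,\check{H})}\,ds<\infty$ gives the claimed $\P$-a.s.\ finiteness.

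\medskip

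Now I would apply the classical It\^o formula for continuous Hilbert space--valued semimartingales (see, e.g., Theorem~2.4 in Brze{\'z}niak~\cite{b97b} or Theorem~4.32 in Da Prato \& Zabczyk~\cite{dz92}) to $\varphi$ composed with $\bar{X}$ on the deterministic interval $[r,t]$ for arbitrary $t\in[r,T]$, obtaining
\begin{equation*}
\begin{split}
\varphi(t,\bar{X}_t)
&=\varphi(r,\bar{X}_r)
+\int_r^t(\tfrac{\partial}{\partial s}\varphi)(s,\bar{X}_s)\,ds
+\int_r^t(\tfrac{\partial}{\partial x}\varphi)(s,\bar{X}_s)\,S_{s,T}Y_s\,ds
\\
&\quad
+\int_r^t(\tfrac{\partial}{\partial x}\varphi)(s,\bar{X}_s)\,S_{s,T}Z_s\,dW_s
+\tfrac12\sum_{u\in\mathbb{U}}\int_r^t(\tfrac{\partial^2}{\partial x^2}\varphi)(s,\bar{X}_s)(S_{s,T}Z_su,S_{s,T}Z_su)\,ds.
\end{split}
\end{equation*}
Since $\bar{X}_r=S_{r,T}X_r$ $\P$-a.s., the initial value matches the right-hand side of \eqref{eq:itoformel_start}. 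To replace $\bar{X}_s$ by $S_{s,T}X_s$ inside the Lebesgue integrals I would invoke Fubini together with the fact that for each $s\in[r,T)$ the identity holds $\P$-a.s.; in the stochastic integral I would use that $\bar{X}$ is an (up-to-indistinguishability) predictable modification agreeing with $s\mapsto S_{s,T}X_s$ on $[r,T)$, so the two integrands coincide $d\P\otimes ds$-a.e.

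\medskip

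The remaining step is optional stopping: evaluating the formula at the stopping time $\tau\le T$. For the three Lebesgue integrals this is immediate because they are defined pathwise. For the stochastic integral one uses the standard identity $\int_r^t\mathbf{1}_{[r,\tau]}(s)\,\Phi_s\,dW_s=\int_r^{t\wedge\tau}\Phi_s\,dW_s$ and evaluates at $t=T$, which is valid for any locally square-integrable predictable integrand and any $(\mathcal{F}_s)_{s\in[r,T]}$-stopping time $\tau$ with values in $[r,T]$. Combining the identities yields \eqref{eq:itoformel_start}. I expect the main technical nuisance to be the bookkeeping at the boundary $s=T$ (where $\bar{X}_T=X_T$ rather than $S_{T,T}X_T$, the latter being undefined) and the verification that the identification $\bar{X}_s=S_{s,T}X_s$ may be used inside both Lebesgue and stochastic integrals; both are handled as above since $\{T\}$ is Lebesgue-null and the relevant integrands coincide $d\P\otimes ds$-a.e.\ on $[r,T)\times\Omega$.
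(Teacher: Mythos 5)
Your proposal is correct and follows essentially the same route as the paper: both reduce the claim to the standard Hilbert-space It\^{o} formula applied to the regularized process $\bar{X}$ from Lemma~\ref{lem:transformation} and then replace $\bar{X}_s$ by $S_{s,T}X_s$ in the integrands using the a.s.\ identification on $[r,T)$ (a $d\P\otimes ds$-a.e.\ argument). The only differences are that you verify \eqref{eq:well1}--\eqref{eq:well2} and the stopping-time evaluation by hand, whereas the paper cites Theorem~1 and Lemma~1 of Da Prato et al.\ and applies the standard It\^{o} formula of Brze\'{z}niak, Van Neerven, Veraar \& Weis directly at the stopping time.
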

\begin{proof}[Proof
of Theorem~\ref{thm:ito}]
First of all, we note that Theorem 1 in Da Prato et al.~\cite{DaPratoJentzenRoeckner2012} 
establishes \eqref{eq:well1} and \eqref{eq:well2}. 
It thus remains to prove \eqref{eq:itoformel_start}.
For this let 
$
  \varphi_{1,0} \colon [ r, T ] \times \check{H} \to V 
$, 
$
  \varphi_{0,1} \colon [ r, T ] \times \check{H} \to L( \check{H}, V ) 
$, 
$
  \varphi_{0,2} \colon [ r, T ] \times \check{H} \to L^{(2)}( \check{H}, V ) 
$
be the functions with the property that 
for all 
$ t \in [ r, T ] $, 
$ x, v_1, v_2 \in \check{H} $ 
it holds that 
$
  \varphi_{1,0}( t, x )
  =
  \big(\tfrac{ \partial }{ \partial t } \varphi\big)( t, x)
  ,
$
$
  \varphi_{0,1}( t, x ) \, v_1
  =
  \big(\tfrac{ \partial }{ \partial x } \varphi\big)( t, x ) \, v_1
  ,
$
and
$
  \varphi_{0,2}( t, x )( v_1, v_2 )
  =
  \big(
    \tfrac{ \partial^2 }{ \partial x^2 } \varphi
  \big)( t, x )( v_1, v_2 )
$.
Then note that 
Item~\eqref{lem:transformation_ii} of Lemma~\ref{lem:transformation} and the standard It{\^o} formula in Theorem~2.4 in
Brze\'{z}niak, Van Neerven, Veraar \citationand\ Weis~\cite{bvvw08}
show that 
it holds $ \P $-a.s.\ that
\begin{equation}
\label{eq:itoformel2inProof}
\begin{split}
&
   \varphi( \tau, \bar{X}_{ \tau } )
=
   \varphi( r, \bar{X}_{ r } )
   +
   \int_{r}^{ \tau }
   \varphi_{1,0}( s, \bar{X}_{ s } )
   \, ds
   +
   \int_{r}^{ \tau }
   \varphi_{0,1}( s, \bar{X}_{ s } ) \,
   S_{ s, T } \, Y_s \, ds
\\&+
   \int_{r}^{ \tau }
   \varphi_{0,1}( s, \bar{X}_{ s } ) \,
   S_{ s, T } \, Z_s \, dW_s
   +
   \tfrac{1}{2}
   \sum_{ u \in \mathbb{U} }
   \int_{r}^{ \tau }
   \varphi_{0,2}( s,
     \bar{X}_{ s }
   )
   \left(
     S_{ s, T } \, Z_s \, u ,
     S_{ s, T } \, Z_s \, u
   \right) ds
   .
\end{split}
\end{equation}
Combining this with 
Lemma~1 in Da Prato et al.~\cite{DaPratoJentzenRoeckner2012}
and with the fact that 
$ 
  \forall \, t \in [ t_0, T ) 
  \colon
  \P\big(
    \bar{X}_t = S_{ t, T } \, X_t
  \big)
  = 1
$ 
shows that 
it holds $ \P $-a.s.\ that 
\begin{equation}
\label{eq:itoformel_fixed}
\begin{split}
&
   \varphi( \tau , \bar{X}_{ \tau } )
  =
   \varphi( r,
     S_{ r, T }
     X_{ r }
   )
   +
   \int_{ r }^{ \tau }
   \varphi_{1,0}( s,
     S_{ s, T }
     X_s
   ) \, ds
   +
   \int_{ r }^{ \tau }
   \varphi_{0,1}( s,
     S_{ s, T }
     X_s
   ) \,
   S_{ s, T} \,
   Y_s \, ds
\\&+
   \int_{ r }^{ \tau }
   \varphi_{0,1}( s,
     S_{ s, T }
     X_s
   ) \,
   S_{ s, T } \,
   Z_s \, dW_s
   +
   \tfrac{1}{2}
   \sum_{ u \in \mathbb{U} }
   \int_{ r }^{ \tau }
   \varphi_{0,2}( s,
     S_{ s, T }
     X_s
   )
   \left(
     S_{ s, T }
     Z_s u ,
     S_{ s, T }
     Z_s u
   \right) ds
   .
\end{split}
\end{equation}
The proof of Theorem~\ref{thm:ito}
is thus completed.
\end{proof}

\begin{definition}[Extended mild Kolmogorov operators]
Assume the setting in Section~\ref{sec:setting_mild_calculus}, 
let 
$
   S \colon
   \angle
   \rightarrow
   L( \hat{H}, \check{H} )
$
be a
$
\mathcal{B}( \angle )
$/$
\mathcal{S}( \hat{H}, \check{H} )
$-measurable
mapping such
that for all $ t_1, t_2, t_3 \in [ t_0, T ] $
with $ t_1 < t_2 < t_3 $
it holds that
$
   S_{ t_2, t_3 }
   S_{ t_1, t_2 }
=
   S_{ t_1, t_3 }
$, 
and let 
$ (t_1, t_2) \in \angle $.
Then we denote by 
$
  \mathcal{L}^S_{ t_1, t_2 }
  \colon
  C^2( \check{H}, V )
  \rightarrow
  C( \tilde{H} \times \hat{H} \times HS( U, \hat{H} ), V )
$ 
the function with the property that for all 
$ \varphi \in C^2( \check{H}, V ) $, 
$ x \in \tilde{H} $, 
$ y \in \hat{H} $, 
$ z \in HS( U, \hat{H} ) $ 
it holds that 
\begin{equation}
  \big(
   \mathcal{L}^S_{ t_1, t_2 } \varphi
  \big)( x, y, z )
  =
  \varphi'( S_{ t_1, t_2 } \, x )
  \, S_{ t_1, t_2 } \, y
  +
  \tfrac{ 1 }{ 2 }
  \sum_{ u \in \mathbb{U} }
  \varphi''( S_{ t_1, t_2 } \, x )
  ( S_{ t_1, t_2 } \, z \, u, S_{ t_1, t_2 } \, z \, u )
  .
\end{equation}
\end{definition}

The next corollary of 
Theorem~\ref{thm:ito} specialises Theorem~\ref{thm:ito}
to the case 
where
$ r = t_0 $
and where 
the test function
$
   ( \varphi(t, x) )_{
     t \in [ t_0, T ], \,
     x \in \check{H}
   }
   \in
   C^{1,2}(
     [ t_0, T ] \times \check{H}, V
   )
$
depends on $ x \in \check{H} $ only.
\begin{corollary}
\label{cor:itoauto}
Assume the setting in Section~\ref{sec:setting_mild_calculus},
let
$
   X \colon [ t_0, T ] \times \Omega
   \rightarrow \tilde{H}
$
be a mild It{\^o} process
with evolution family
$
   S \colon \angle
   \rightarrow L( \hat{H}, \check{H} )
$,
mild drift
$
   Y \colon [ t_0, T ] \times
   \Omega \rightarrow \hat{H}
$, 
and mild
diffusion
$
   Z \colon [ t_0, T ] \times
   \Omega \rightarrow
   HS(U,\hat{H})
$, 
let
$
  \bar{X} \colon [ t_0, T ] \times \Omega \rightarrow \check{H}
$ 
be a continuous stochastic process
with 
$
  \forall \, t \in [ t_0, T ) \colon
  \P\big(
    \bar{X}_t
    =
    S_{ t, T } X_t
  \big) = 1
$
(see Lemma~\ref{lem:transformation}),
let
$
   \varphi \in C^2(
     \check{H}, V
   )
$, 
and let 
$ \tau \colon \Omega \rightarrow [ t_0, T ] $ 
be an
$
   ( \mathcal{F}_t )_{ t \in [ t_0, T ] }
$-stopping time.
Then it holds
$ \P $-a.s.\ that 
$
     \int_{ t_0 }^{ T }
     \left\|
       \varphi'( S_{ s, T } X_s )
       S_{ s, T } Y_s
     \right\|_V
     +
     \left\|
       \varphi'( S_{ s, T } X_s )
       S_{ s, T } Z_s
     \right\|_{ HS(U, V ) }^2 \,
     +
     \|
       \varphi''( S_{ s, T } X_s )
     \|_{ L^{(2)}( \check{H}, V ) } \,
     \|
       S_{ s, T } Z_s
     \|_{ HS(U, \check{H} ) }^2 \,
$
$
     ds
     < \infty
$
and it holds $\P$-a.s.\ that 
\begin{equation}
\label{eq:mild_ito_type_independent}
\begin{split}
   \varphi( \bar{X}_\tau )
& =
   \varphi(
     S_{ t_0, T }
     X_{ t_0 }
   )
   +
   \int_{ t_0 }^\tau
     (
       \mathcal{L}^S_{ s, T } \varphi
     )( X_s, Y_s, Z_s )
   \, ds
+
   \int_{ t_0 }^\tau
   \varphi'(
     S_{ s, T }
     X_s
   ) \,
   S_{ s, T } \,
   Z_s \, dW_s
   .
\end{split}
\end{equation}
\end{corollary}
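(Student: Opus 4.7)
The plan is to obtain Corollary~\ref{cor:itoauto} as a direct specialization of Theorem~\ref{thm:ito} in which the time variable is absent from the test function and the initial time equals $t_0$. First I would introduce the auxiliary mapping $\tilde{\varphi}\colon [t_0,T]\times\check{H}\to V$ defined by $\tilde{\varphi}(t,x)=\varphi(x)$ for all $(t,x)\in[t_0,T]\times\check{H}$. Since $\varphi\in C^2(\check{H},V)$ and $\tilde{\varphi}$ is constant in $t$, it follows immediately that $\tilde{\varphi}\in C^{1,2}([t_0,T]\times\check{H},V)$ with $(\tfrac{\partial}{\partial t}\tilde{\varphi})(t,x)=0$, $(\tfrac{\partial}{\partial x}\tilde{\varphi})(t,x)=\varphi'(x)$, and $(\tfrac{\partial^2}{\partial x^2}\tilde{\varphi})(t,x)=\varphi''(x)$, so that $\tilde\varphi$ is an admissible test function for Theorem~\ref{thm:ito}.

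Next I would apply Theorem~\ref{thm:ito} to the mild It\^o process $X$, the continuous modification $\bar{X}$, the test function $\tilde\varphi$, the starting time $r=t_0$, and the $(\mathcal{F}_t)_{t\in[t_0,T]}$-stopping time $\tau$ (which is in particular an $(\mathcal{F}_t)_{t\in[t_0,T]\cap[r,T]}$-stopping time since $r=t_0$). The integrability statements \eqref{eq:well1} and \eqref{eq:well2} from Theorem~\ref{thm:ito} instantly yield the claimed $\P$-a.s.\ finiteness: the term in \eqref{eq:well2} involving $\|(\tfrac{\partial}{\partial t}\tilde\varphi)(s,S_{s,T}X_s)\|_V$ vanishes identically, while the remaining summands are precisely the three summands appearing in the integrability assertion of Corollary~\ref{cor:itoauto}.

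Finally I would substitute $\tilde\varphi$ into the mild It\^o formula \eqref{eq:itoformel_start}. The integral with the time derivative vanishes, and the deterministic Riemann integral involving $\varphi'(S_{s,T}X_s)S_{s,T}Y_s$ together with the half-sum of the second-derivative integrals assemble, by the very definition of $\mathcal{L}^S_{s,T}\varphi$, into the single Bochner integral $\int_{t_0}^{\tau}(\mathcal{L}^S_{s,T}\varphi)(X_s,Y_s,Z_s)\,ds$. The stochastic integral in \eqref{eq:itoformel_start} carries over unchanged and produces the last summand in \eqref{eq:mild_ito_type_independent}. This establishes \eqref{eq:mild_ito_type_independent} and thereby completes the proof.

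There is no genuine obstacle here: the entire argument is repackaging Theorem~\ref{thm:ito} under the simplifying choices $r=t_0$ and $\partial_t\tilde\varphi\equiv 0$. The only point requiring a small amount of care is the cosmetic one of checking that the two $ds$-integrals (drift and trace term) combine exactly into $(\mathcal{L}^S_{s,T}\varphi)(X_s,Y_s,Z_s)\,ds$, which is a direct matching of terms with the definition of the extended mild Kolmogorov operator.
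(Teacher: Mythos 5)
Your proposal is correct and follows exactly the route the paper intends: the corollary is obtained by specializing Theorem~\ref{thm:ito} to $r=t_0$ and the time-independent test function $\tilde\varphi(t,x)=\varphi(x)$, with the vanishing $\partial_t$-term and the merging of the drift and trace integrals into $(\mathcal{L}^S_{s,T}\varphi)(X_s,Y_s,Z_s)$ handled just as you describe. No gaps.
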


\subsection{Mild Dynkin-type formula}

Under suitable additional assumptions (see Corollary~\ref{cor:mild_dynkin_formula} below),
the stochastic integral 
in \eqref{eq:mild_ito_type_independent} is integrable 
and centered.
This is the subject of the following result.

\begin{corollary}[Mild Dynkin-type formula]
\label{cor:mild_dynkin_formula}
Assume the setting in Section~\ref{sec:setting_mild_calculus},
let
$
   X \colon [ t_0, T ] \times \Omega
   \rightarrow \tilde{H}
$
be a mild It{\^o} process
with evolution family
$
   S \colon \angle
   \rightarrow L( \hat{H}, \check{H} )
$,
mild drift
$
   Y \colon [ t_0, T ] \times
   \Omega \rightarrow \hat{H}
$, 
and mild
diffusion
$
   Z \colon [ t_0, T ] \times
   \Omega \rightarrow
   HS(U,\hat{H})
$, 
let
$
  \bar{X} \colon [ t_0, T ] \times \Omega \rightarrow \check{H}
$ 
be a continuous stochastic process
with 
$
  \forall \, t \in [ t_0, T ) \colon
  \P\big(
    \bar{X}_t
    =
    S_{ t, T } X_t
  \big) = 1
$
(see Lemma~\ref{lem:transformation}),
let
$
   \varphi \in C^2(
     \check{H}, V
   )
$,
and let 
$ \tau \colon \Omega \rightarrow [ t_0, T ] $ 
be an
$
   ( \mathcal{F}_t )_{ t \in [ t_0, T ] }
$-stopping time with the property
that 
$
   \ES\big[
   |
   \int_{ t_0 }^\tau
   \|
   \varphi'(
     S_{ s, T }
     X_s
   ) \,
   S_{ s, T } \,
   Z_s
   \|^2_{ HS( U, V ) }
   \, ds
   |^{1/2}
   \big]
   +
  \min\!\big\{
    \ES\big[
    \|
      \varphi( S_{ t_0, T } X_{ t_0 } )
      + 
      \int_{ t_0 }^\tau
      (
        \mathcal{L}^S_{ s, T } \varphi
      )( X_s, Y_s, Z_s )
      \, ds
    \|_V
  \big]
 ,
$
$
    \ES\big[
      \| 
      \varphi(
        \bar{X}_{ \tau }
      )
     \|_V
   \big]
 \big\}
 < \infty
$. 
Then
$
  \ES\big[
    \| \varphi(\bar{X}_\tau) \|_V
    +
    \|
      \varphi( S_{ t_0, T } X_{ t_0 } )
      + 
      \int_{ t_0 }^\tau
      (
        \mathcal{L}^S_{ s, T } \varphi
      )( X_s, Y_s, Z_s )
      \, ds
    \|_V
  \big]
  < \infty
$
and 
\begin{equation}
\label{eq:mild_dynkin_formula}
\begin{split}
&
   \ES\big[
     \varphi( \bar{X}_\tau )
   \big]
=
   \ES\big[
   \varphi(
     S_{ t_0, T }
     X_{ t_0 }
   )
+
   \smallint\nolimits_{ t_0 }^\tau
     (
       \mathcal{L}^S_{ s, T } \varphi
     )( X_s, Y_s, Z_s )
   \, ds
   \big]
   .
\end{split}
\end{equation}
\end{corollary}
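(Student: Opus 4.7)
The plan is to deduce Corollary~\ref{cor:mild_dynkin_formula} from the mild It\^o formula established in Corollary~\ref{cor:itoauto}. The strategy has three steps: first reduce the claim to the identification of the expectation of the stochastic integral term; second, show this stochastic integral is in $L^1$ and centered by combining the Burkholder-Davis-Gundy inequality with a localization argument; third, propagate integrability between the two sides of the resulting $\P$-a.s.\ identity using the \texttt{min}-assumption.

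First I would invoke Corollary~\ref{cor:itoauto} to obtain that it holds $\P$-a.s.\ that
\begin{equation}
\label{eq:planito}
\varphi(\bar{X}_\tau)
=
\varphi(S_{t_0,T}X_{t_0})
+
\int_{t_0}^{\tau} (\mathcal{L}^S_{s,T}\varphi)(X_s, Y_s, Z_s)\, ds
+
M_\tau,
\end{equation}
where I set $M_t := \int_{t_0}^{t} \varphi'(S_{s,T}X_s)\,S_{s,T}Z_s\, dW_s$ for $t \in [t_0, T]$. The crucial observation is that the assumption $\ES\bigl[\bigl|\int_{t_0}^{\tau} \|\varphi'(S_{s,T}X_s)\,S_{s,T}Z_s\|^2_{HS(U,V)}\, ds\bigr|^{1/2}\bigr] < \infty$ is precisely what the Burkholder-Davis-Gundy type inequality (e.g.\ Lemma~7.7 in Da Prato \citationand\ Zabczyk~\cite{dz92}) requires to control $\ES[\|M_\tau\|_V]$.

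Next I would show $\ES[\|M_\tau\|_V] < \infty$ and $\ES[M_\tau] = 0$. The BDG inequality applied to the stopped process $(M_{t\wedge\tau})_{t \in [t_0,T]}$ yields
\begin{equation}
\ES\bigl[\sup\nolimits_{t \in [t_0,T]} \|M_{t\wedge\tau}\|_V\bigr]
\leq
C\,\ES\Big[\Big(\int_{t_0}^{\tau} \|\varphi'(S_{s,T}X_s)\,S_{s,T}Z_s\|^2_{HS(U,V)}\, ds\Big)^{1/2}\Big]
< \infty.
\end{equation}
This bound, together with the standard localization argument (choose a localizing sequence $(\tau_n)$ of stopping times making the integrand bounded so that $\ES[M_{\tau \wedge \tau_n}] = 0$, then pass to the limit via dominated convergence using the above integrable majorant), establishes that $\ES[M_\tau] = 0$. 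This is the one analytical step that requires genuine care.

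Finally, having $M_\tau \in L^1(\P;V)$ with $\ES[M_\tau] = 0$, I would use \eqref{eq:planito} and the triangle inequality to transfer integrability: if one of $\ES[\|\varphi(\bar{X}_\tau)\|_V]$ or $\ES[\|\varphi(S_{t_0,T}X_{t_0}) + \int_{t_0}^{\tau}(\mathcal{L}^S_{s,T}\varphi)(X_s,Y_s,Z_s)\, ds\|_V]$ is finite, then the other is as well, since they differ by the $L^1$ random variable $M_\tau$. Taking expectations in \eqref{eq:planito} and using $\ES[M_\tau]=0$ then yields the Mild Dynkin-type formula~\eqref{eq:mild_dynkin_formula}. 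The main obstacle is the centeredness of $M_\tau$ at the stopping time $\tau$; everything else is essentially bookkeeping around the $\P$-a.s.\ identity from Corollary~\ref{cor:itoauto}.
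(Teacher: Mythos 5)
Your argument is correct and follows essentially the same route as the paper, which obtains the corollary directly from Corollary~\ref{cor:itoauto} together with the Burkholder--Davis--Gundy inequality (applied with exponent $1$ to the stopped stochastic integral, plus the localization and integrability-transfer bookkeeping you spell out). One small point: the $L^1$-bound on $\sup_t\|M_{t\wedge\tau}\|_V$ needs the genuine BDG inequality valid for all exponents $p\in(0,\infty)$ (e.g.\ Problem~3.29 in Karatzas \& Shreve, as the paper cites), not Lemma~7.7 in Da Prato \& Zabczyk, which only covers moments of order at least $2$.
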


Corollary~\ref{cor:mild_dynkin_formula} is an immediate consequence 
of Corollary~\ref{cor:itoauto} 
and, e.g., of the Burkholder-Davis-Gundy inequality 
in Problem 3.29 in Karatzas \& Shreve~\cite{ks91}.

\subsection{Weak estimates for terminal values of mild It\^{o} processes}
\label{sec:weak_estimates_terminal}

\begin{proposition}
\label{prop:mild_ito_stopping_limit}
Assume the setting in Section~\ref{sec:setting_mild_calculus},
let
$
   X \colon [ t_0, T ] \times \Omega
   \rightarrow \tilde{H}
$
be a mild It{\^o} process
with evolution family
$
   S \colon \angle
   \rightarrow L( \hat{H}, \check{H} )
$,
mild drift
$
   Y \colon [ t_0, T ] \times
   \Omega \rightarrow \hat{H}
$, 
and mild
diffusion
$
   Z \colon [ t_0, T ] \times
   \Omega \rightarrow
   HS(U,\hat{H})
   ,
$
let
$
   \varphi \in C^2(
     \check{H}, V
   )
   ,
$
and assume that 
$
  \big\{
  \| 
    \varphi(
    S_{ t_0, T } \, X_{ t_0 }
    +
    \int^\tau_{ t_0 }
    S_{ s, T } \, Y_s 
    \, ds
    +
    \int^\tau_{ t_0 }
    S_{ s, T } \, Z_s 
    \, dW_s
    ) 
  \|_V
  \colon
    (\mathcal{F}_t)_{ t \in [ t_0, T ] }\text{-stopping time }
    \tau \colon \Omega \rightarrow [ t_0, T ]
  \big\}
$ 
is uniformly $ \P $-integrable.
Then it holds that 
$
  \ES\big[
    \|
      \varphi(X_T)
    \|_V
    +
    \|
      \varphi( S_{ t_0, T } X_{ t_0 } )
    \|_V
  \big]
  < \infty
$ 
and 
\begin{equation}
\label{eq:mild_ito_stopping_limit}
\begin{split}
&
  \big\|
    \ES\big[
      \varphi( X_T )
    \big]
  \big\|_V
\leq
  \big\|
    \ES\big[
      \varphi( S_{ t_0, T } X_{ t_0 } )
    \big]
  \big\|_V
  + 
  \smallint_{ t_0 }^T
  \E\left[
    \left\|
      (
        \mathcal{L}^S_{ s, T } \varphi
      )( X_s, Y_s, Z_s )
    \right\|_V
  \right]
  ds
  .
\end{split}
\end{equation}
\end{proposition}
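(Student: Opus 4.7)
The plan is to reduce the assertion to Corollary~\ref{cor:mild_dynkin_formula} via a standard localisation and then pass to the limit using the uniform integrability hypothesis. First I would invoke Lemma~\ref{lem:transformation} to obtain the continuous process $\bar{X} \colon [t_0,T] \times \Omega \to \check{H}$ satisfying $\P(\bar{X}_T = X_T) = 1$ and
\begin{equation}
  \bar{X}_t
  =
  S_{t_0,T}\, X_{t_0}
  +
  \int_{t_0}^t S_{s,T}\, Y_s \, ds
  +
  \int_{t_0}^t S_{s,T}\, Z_s \, dW_s
  \qquad \P\text{-a.s.\ for all } t \in [t_0,T].
\end{equation}
Since $\bar{X}$ is continuous, it is indistinguishable from the right-hand side above, so for every $(\mathcal{F}_t)_{t\in[t_0,T]}$-stopping time $\tau \colon \Omega \to [t_0,T]$ the random variable $\varphi(\bar{X}_\tau)$ coincides $\P$-a.s.\ with the one inside the uniform integrability hypothesis. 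Specialising that hypothesis to the constant stopping times $\tau = T$ and $\tau = t_0$ then yields the integrability claim $\ES[\|\varphi(X_T)\|_V + \|\varphi(S_{t_0,T}\, X_{t_0})\|_V] < \infty$.

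Next I would introduce the localising sequence of $(\mathcal{F}_t)_{t\in[t_0,T]}$-stopping times
\begin{equation}
  \tau_n
  =
  \inf\!\Big\{ t \in [t_0,T] \colon \int_{t_0}^t \|\varphi'(S_{s,T}\, X_s)\, S_{s,T}\, Z_s\|^2_{HS(U,V)} \, ds \geq n \Big\} \wedge T, \qquad n \in \N,
\end{equation}
which satisfy $\tau_n \uparrow T$ $\P$-a.s.\ by the $\P$-a.s.\ finiteness statement in Corollary~\ref{cor:itoauto}. By construction it holds that $\ES\big[\big|\int_{t_0}^{\tau_n} \|\varphi'(S_{s,T}\, X_s)\, S_{s,T}\, Z_s\|^2_{HS(U,V)}\, ds\big|^{1/2}\big] \leq \sqrt{n}$, and $\ES[\|\varphi(\bar{X}_{\tau_n})\|_V] < \infty$ by the uniform integrability hypothesis, so the integrability conditions of Corollary~\ref{cor:mild_dynkin_formula} are satisfied with $\tau = \tau_n$. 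The corollary therefore yields
\begin{equation}
  \ES[\varphi(\bar{X}_{\tau_n})]
  =
  \ES[\varphi(S_{t_0,T}\, X_{t_0})]
  +
  \ES\!\Big[\int_{t_0}^{\tau_n}(\mathcal{L}^S_{s,T} \varphi)(X_s, Y_s, Z_s) \, ds\Big],
\end{equation}
and after taking norms in $V$ and applying the triangle inequality on the right-hand side,
\begin{equation}
  \|\ES[\varphi(\bar{X}_{\tau_n})]\|_V
  \leq
  \|\ES[\varphi(S_{t_0,T}\, X_{t_0})]\|_V
  +
  \ES\!\Big[\int_{t_0}^{\tau_n} \|(\mathcal{L}^S_{s,T} \varphi)(X_s, Y_s, Z_s)\|_V \, ds\Big].
\end{equation}

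Finally I would pass to the limit $n \to \infty$. Continuity of $\bar{X}$ together with $\tau_n \uparrow T$ $\P$-a.s.\ yields $\bar{X}_{\tau_n} \to \bar{X}_T = X_T$ $\P$-a.s., and the uniform integrability of $\{\varphi(\bar{X}_{\tau_n})\}_{n \in \N}$ (inherited from the hypothesis via the identification in Lemma~\ref{lem:transformation}) combined with Vitali's convergence theorem gives $\ES[\varphi(\bar{X}_{\tau_n})] \to \ES[\varphi(X_T)]$ in $V$. For the right-hand side, the monotone convergence theorem applied to the non-negative pathwise integrand $\mathbbm{1}_{[t_0,\tau_n]}(s)\,\|(\mathcal{L}^S_{s,T}\varphi)(X_s,Y_s,Z_s)\|_V$ together with Tonelli's theorem produce the desired limit $\int_{t_0}^T \ES[\|(\mathcal{L}^S_{s,T}\varphi)(X_s,Y_s,Z_s)\|_V] \, ds$, establishing \eqref{eq:mild_ito_stopping_limit}. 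The main obstacle is to arrange the localisation so that, simultaneously, Corollary~\ref{cor:mild_dynkin_formula} becomes applicable \emph{and} the stopped terminal values $\bar{X}_{\tau_n}$ remain in the family controlled by the uniform integrability hypothesis; the latter is the subtle point and is secured by Lemma~\ref{lem:transformation}, which identifies $\bar{X}_{\tau_n}$ $\P$-a.s.\ with the random variable for which uniform integrability is assumed, making Vitali's theorem applicable.
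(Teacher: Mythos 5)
Your proposal is correct and follows essentially the same route as the paper's own proof: the same localising stopping times $\tau_n$, the same use of Lemma~\ref{lem:transformation} to identify $\bar{X}_{\tau_n}$ with the random variables covered by the uniform integrability hypothesis, Corollary~\ref{cor:itoauto} for $\tau_n \uparrow T$, Corollary~\ref{cor:mild_dynkin_formula} on each $\tau_n$, and a uniform-integrability limit passage. The only cosmetic difference is that you compute the limit of the right-hand side via monotone convergence, whereas the paper simply bounds $\int_{t_0}^{\tau_n}$ by $\int_{t_0}^{T}$ and takes a limsup; both yield \eqref{eq:mild_ito_stopping_limit}.
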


\begin{proof}
First of all, we observe that 
the assumption that the set
$
  \big\{
  \| 
    \varphi(
    S_{ t_0, T } \, X_{ t_0 }
    +
    \int^\tau_{ t_0 }
    S_{ s, T } \, Y_s 
    \, ds
    +
    \int^\tau_{ t_0 }
    S_{ s, T } \, Z_s 
    \, dW_s
    ) 
  \|_V
  \colon
    ( \mathcal{F}_t )_{ t \in [ t_0, T ] }\text{-stopping time }
    \tau \colon \Omega \rightarrow [ t_0, T ]
  \big\}
$ 
is uniformly $ \P $-integrable
ensures that
$
  \ES\big[
    \|
      \varphi(X_T)
    \|_V
    +
    \|
      \varphi( S_{ t_0, T } X_{ t_0 } )
    \|_V
  \big]
  < \infty
$.
It thus remains to prove \eqref{eq:mild_ito_stopping_limit}.
For this let 
$
  \tau_n \colon \Omega \rightarrow [ t_0, T ]
$, 
$ n \in \N $, 
be the functions 
with the property that for all $ n \in \N $ it holds that 
\begin{equation}
\label{eq:taun_def}
  \tau_n
=
  \inf\!\left(
    \{ T \}
    \cup
    \left\{
    t \in [ t_0, T ] \colon
   \smallint_{ t_0 }^t
   \|
   \varphi'(
     S_{ s, T }
     X_s
   ) \,
   S_{ s, T } \,
   Z_s
   \|^2_{ HS( U, V ) }
   \, ds
   \geq n
    \right\}
  \right)
\end{equation}
and let 
$
  \bar{X} \colon [ t_0, T ] \times \Omega \rightarrow \check{H}
$ 
be a continuous stochastic process 
with the property that
$
  \forall \, 
  t \in [ t_0, T ) 
  \colon
  \P\big(
    \bar{X}_t
    =
    S_{ t, T } X_t
  \big) = 1
$.
Note that Item~\eqref{lem:transformation_i} of Lemma~\ref{lem:transformation} ensures that 
$ \bar{X} $ does indeed exist.
Moreover, observe that
for all $ n \in \N $
it holds that
$ \tau_n $ 
is an $ ( \mathcal{F}_t )_{ t \in [ t_0, T ] } $-stopping time.
Next note that 
Corollary~\ref{cor:itoauto} shows that 
it holds $\P$-a.s.\ that 
$
   \smallint_{ t_0 }^T
   \|
   \varphi'(
     S_{ s, T }
     X_s
   ) \,
   S_{ s, T } \,
   Z_s
   \|^2_{ HS( U, V ) }
   \, ds
   < \infty
$.
This, in turn, establishes that it holds $ \P $-a.s.\ that 
$
  \lim_{ n \rightarrow \infty } \tau_n
  =
  T
$.
In addition, note that Item~\eqref{lem:transformation_ii} of 
Lemma~\ref{lem:transformation} together with the assumption that the set 
$
  \big\{ 
    \|
      \varphi( 
    S_{ t_0, T } \, X_{ t_0 }
    +
    \int^\tau_{ t_0 }
    S_{ s, T } \, Y_s 
    \, ds
    +
    \int^\tau_{ t_0 }
    S_{ s, T } \, Z_s 
    \, dW_s
      )
    \|_V
    \colon
    ( \mathcal{F}_t )_{ t \in [ t_0, T ] }\text{-stopping time }
    \tau \colon \Omega \rightarrow [ t_0, T ]
  \big\}
$
is uniformly $ \P $-integrable
ensures that the set
$  
  \{
    \| 
      \varphi( \bar{X}_{ \tau_n } )
    \|_V
    \colon
    n \in \N
  \}
$
is uniformly $ \P $-integrable.
This and \eqref{eq:taun_def}
establish that for all $ n \in \N $ 
it holds that
$
  \ES\big[
    \|
      \varphi( \bar{X}_{ \tau_n } )
    \|_V
  \big]
  +
  \ES\big[
    \int_0^{ \tau_n }
    \|
      \varphi'( S_{ s, T } S_{ s, T } Z_s )
    \|_{ HS( U, V ) }^2
    \,
    ds
  \big]
  < \infty
$.
We can thus apply Corollary~\ref{cor:mild_dynkin_formula}
to obtain that for all $ n \in \N $
it holds that
\begin{equation}
   \ES\big[
     \varphi( \bar{X}_{ \tau_n } )
   \big]
=
   \ES\big[
   \varphi(
     S_{ t_0, T }
     X_{ t_0 }
   )
+
   \smallint\nolimits_{ t_0 }^{ \tau_n }
     (
       \mathcal{L}^S_{ s, T } \varphi
     )( X_s, Y_s, Z_s )
   \, ds
   \big]
   .
\end{equation}
The triangle inequality hence proves that 
\begin{equation}
  \limsup_{ n \to \infty }
  \big\|
    \ES\big[
      \varphi( \bar{X}_{ \tau_n } )
    \big]
  \big\|_V
\leq
  \big\|
    \ES\big[
    \varphi(
      S_{ t_0, T }
      X_{ t_0 }
    )
  \big]\|_V  
  +
  \smallint\nolimits_{ t_0 }^T
  \ES\big[
    \|
      (
        \mathcal{L}^S_{ s, T } \varphi
      )( X_s, Y_s, Z_s )
    \|_V
  \big]
  \, ds
  .
\end{equation}
This together with the 
uniform $ \P $-integrability of 
$
  \{
    \| 
      \varphi( \bar{X}_{ \tau_n } )
    \|_V
    \colon
    n \in \N
  \}
$
proves \eqref{eq:mild_ito_stopping_limit}.
The proof of Proposition~\ref{prop:mild_ito_stopping_limit}
is thus completed.
\end{proof}

\begin{proposition}[Test functions with at most polynomial growth]
\label{prop:mild_ito_maximal_ineq}
Assume the setting in Section~\ref{sec:setting_mild_calculus}, 
let
$
   X \colon [ t_0, T ] \times \Omega
   \rightarrow \tilde{H}
$
be a mild It{\^o} process
with evolution family
$
   S \colon \angle
   \rightarrow L( \hat{H}, \check{H} )
$,
mild drift
$
   Y \colon [ t_0, T ] \times
   \Omega \rightarrow \hat{H}
$, 
and mild
diffusion
$
   Z \colon [ t_0, T ] \times
   \Omega \rightarrow
   HS(U,\hat{H})
$, 
and let 
$ p \in [ 0, \infty ) $, 
$
   \varphi \in C^2(
     \check{H}, V
   )
$ 
satisfy 
$
  \sup_{ x \in \check{H} }
  \big[
    \| \varphi(x) \|_V
    ( 1 + \| x \|^p_{ \check{H} } )^{ - 1 }
  \big]
  < \infty
$ 
and 
$
  \| S_{ t_0, T } X_{ t_0 } \|_{ \check{H} }
  +
  \int^T_{ t_0 }
  \| S_{ s, T } Y_s \|_{ \check{H} }
  \, ds
  +
  \big[
    \int^T_{ t_0 }
    \| S_{ s, T } Z_s \|^2_{ HS( U, \check{H} ) }
    \, ds
  \big]^{ 1 / 2 }
  \in
  \lpn{p}{\P}{\R}
$.
Then 
it holds that
$
  \ES\big[
    \|
      \varphi(X_T)
    \|_V
    +
    \|
      \varphi( S_{ t_0, T } X_{ t_0 } )
    \|_V
  \big]
  < \infty
$ 
and
\begin{equation}
\begin{split}
&
  \big\|
    \ES\big[
      \varphi( X_T )
    \big]
  \big\|_V
\leq
  \big\|
    \ES\big[
      \varphi( S_{ t_0, T } X_{ t_0 } )
    \big]
  \big\|_V
  +
  \smallint_{ t_0 }^T
  \E\left[
    \left\|
      (
        \mathcal{L}^S_{ s, T } \varphi
      )( X_s, Y_s, Z_s )   
    \right\|_V
  \right]
  ds
  .
\end{split}
\end{equation}
\end{proposition}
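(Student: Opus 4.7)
The plan is to deduce Proposition~\ref{prop:mild_ito_maximal_ineq} from Proposition~\ref{prop:mild_ito_stopping_limit} by verifying that the polynomial growth of $\varphi$ together with the $\lpn{p}{\P}{\R}$-integrability of the three driving quantities implies the uniform $\P$-integrability hypothesis appearing there.

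First I would invoke Lemma~\ref{lem:transformation} to produce a continuous $\check H$-valued stochastic process $\bar X\colon [t_0,T]\times\Omega\to \check H$ that for every $t\in[t_0,T]$ satisfies $\P$-a.s.\ that
\begin{equation*}
  \bar X_t = S_{t_0,T}\,X_{t_0} + \int_{t_0}^t S_{s,T}\,Y_s\,ds + \int_{t_0}^t S_{s,T}\,Z_s\,dW_s,
\end{equation*}
so that for every $(\mathcal{F}_t)_{t\in[t_0,T]}$-stopping time $\tau\colon\Omega\to[t_0,T]$ the random variable inside the norm in the uniform integrability condition of Proposition~\ref{prop:mild_ito_stopping_limit} coincides $\P$-a.s.\ with $\bar X_\tau$. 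Writing $\Psi := \sup_{t\in[t_0,T]} \|\bar X_t\|_{\check H}$, the task is then reduced to showing $\Psi \in \lpn{p}{\P}{\R}$.

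To control $\Psi$ in $\lpn{p}{\P}{\R}$ I would combine the triangle inequality in $L^p(\P;\R)$ (or, for $p\in(0,1)$, the sub-additivity $(a+b+c)^p\leq a^p+b^p+c^p$) with the Burkholder-Davis-Gundy inequality for Hilbert-space-valued stochastic integrals, valid for arbitrary $p\in(0,\infty)$ (see, e.g., Problem~3.29 in Karatzas \citationand\ Shreve~\cite{ks91}):
\begin{equation*}
  \bigl\| \sup\nolimits_{t\in[t_0,T]} \|\smallint_{t_0}^t S_{s,T}\,Z_s\,dW_s\|_{\check H}\bigr\|_{\lpn{p}{\P}{\R}} \leq C_p\, \bigl\| \bigl(\smallint_{t_0}^T \|S_{s,T}\,Z_s\|^2_{HS(U,\check H)}\,ds\bigr)^{1/2}\bigr\|_{\lpn{p}{\P}{\R}}.
\end{equation*}
Pointwise monotonicity shows that each of $\|S_{t_0,T}X_{t_0}\|_{\check H}$ and $\int_{t_0}^T\|S_{s,T}Y_s\|_{\check H}\,ds$ also lies in $\lpn{p}{\P}{\R}$, and putting the three bounds together yields $\Psi\in\lpn{p}{\P}{\R}$, i.e.\ $\Psi^p\in\lpn{1}{\P}{\R}$. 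The polynomial growth of $\varphi$ furnishes a constant $K := \sup_{x\in\check H} \|\varphi(x)\|_V (1+\|x\|_{\check H}^p)^{-1}\in[0,\infty)$ such that, for every stopping time $\tau$, it holds $\P$-a.s.\ that $\|\varphi(\bar X_\tau)\|_V \leq K(1+\|\bar X_\tau\|_{\check H}^p)\leq K(1+\Psi^p)$.

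Since the right-hand side is an integrable random variable that does not depend on $\tau$, the family appearing in the uniform integrability hypothesis of Proposition~\ref{prop:mild_ito_stopping_limit} is dominated by an integrable random variable and hence uniformly $\P$-integrable. Proposition~\ref{prop:mild_ito_stopping_limit} then applies and delivers both the claimed finiteness $\ES[\|\varphi(X_T)\|_V + \|\varphi(S_{t_0,T}X_{t_0})\|_V]<\infty$ and the claimed inequality. The case $p=0$ is trivial, as $\varphi$ is then bounded and the dominating variable can be taken to be a constant. I do not expect a genuine obstacle; the only point requiring a moment of care is invoking BDG for an arbitrary exponent $p\in(0,\infty)$, which is standard.
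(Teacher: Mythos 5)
Your proposal is correct and follows essentially the same route as the paper: produce the continuous process $\bar{X}$ via Lemma~\ref{lem:transformation}, dominate $\|\varphi(\bar{X}_\tau)\|_V$ uniformly over stopping times by an integrable random variable using the polynomial growth of $\varphi$, the $\lpn{p}{\P}{\R}$-assumptions, and the Burkholder-Davis-Gundy inequality (Problem~3.29 in Karatzas \citationand\ Shreve) applied to the supremum of the stochastic integral, and then invoke Proposition~\ref{prop:mild_ito_stopping_limit}. The only cosmetic difference is that you bound $\sup_{t}\|\bar{X}_t\|_{\check{H}}$ in $\lpn{p}{\P}{\R}$ first and then apply the growth bound, whereas the paper applies the growth bound pathwise and then takes expectations, which is the same estimate in a different order.
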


\begin{proof}
Throughout this proof let 
$
  \bar{X} \colon [ t_0, T ] \times \Omega \rightarrow \check{H}
$ 
be a continuous stochastic process 
with 
$
  \forall \, t \in [ t_0, T ) \colon
  \P\big(
    \bar{X}_t
    =
    S_{ t, T } X_t
  \big) = 1
$.
Item~\eqref{lem:transformation_i} of Lemma~\ref{lem:transformation}
ensures that $ \bar{X} $ does indeed exist.
In addition,
we observe that Item~\eqref{lem:transformation_ii} of Lemma~\ref{lem:transformation} also 
implies that for all $ t \in [ t_0, T ] $ it holds $\P$-a.s.\ that 
\begin{equation}
\label{eq:path_varbar}
\begin{split}
&
  \| \varphi( \bar{X}_t ) \|_V
\leq
  \left[
    \sup_{ x \in \check{H} }
    \frac{
      \| \varphi(x) \|_V
    }{
      ( 1 + \|x\|^p_{ \check{H} } )
    }
  \right]
  \left( 
    1 + \| \bar{X}_t \|^p_{ \check{H} } 
  \right)
\\ & \leq
  3^p 
  \left[
    \sup_{ x \in \check{H} }
    \frac{
      \| \varphi(x) \|_V
    }{
      ( 1 + \|x\|^p_{ \check{H} } )
    }
  \right]
  \left(
    1
    +
    \|
      S_{ t_0, T } X_{ t_0 }
    \|^p_{ \check{H} }
    +
  \left|
    \int^T_{ t_0 }
    \|
      S_{ s, T } Y_s
    \|_{ \check{H} }
    \, ds
  \right|^p
  +
  \left\|
  \int^t_{ t_0 }
    S_{ s, T } \, Z_s
    \, dW_s
  \right\|^p_{ \check{H} }
  \right)
  .
\end{split}
\end{equation}
Moreover, e.g., 
the Burkholder-Davis-Gundy inequality 
in Problem 3.29 in Karatzas \& Shreve~\cite{ks91} 
shows that there exists a real number 
$ C \in [ 0, \infty ) $ such that 
\begin{equation}
\label{eq:maximal_ineq}
\begin{split}
&
  \E\left[
  \sup_{ t \in [ t_0, T ] }
  \left\|
    \int^t_{ t_0 }
    S_{ s, T } \, Z_s
    \, dW_s
  \right\|^p_{ \check{H} }
  \right]
\leq
  C
  \,
  \E\left[
  \left|
    \int^T_{ t_0 }
    \|
    S_{ s, T } \, Z_s
    \|^2_{ HS( U, \check{H} ) }
    \, ds
  \right|^{ p/2 }
  \right]
  .
\end{split}
\end{equation}
Combining \eqref{eq:path_varbar} and \eqref{eq:maximal_ineq} yields that there exists a real number 
$ C \in [ 0, \infty ) $ such that 
\begin{equation}
\label{eq:varphi_bound}
\begin{split}
&
  \E\left[
  \sup_{ t \in [ t_0, T ] }
  \left\|
    \varphi( \bar{X}_t )
  \right\|_V
  \right]
\\ & \leq
  C
  \Bigg(
    1
    +
    \E\left[
    \|
    S_{ t_0, T } \, X_{ t_0 }
    \|^p_{ \check{H} }
    \right]
    +
   \E\left[
    \left|
    \int^T_{ t_0 }
    \|
    S_{ s, T } \, Y_s
    \|_{ \check{H} }
    \, ds
    \right|^p 
    \right]
+
 \E\left[
  \left|
    \int^T_{ t_0 }
    \|
    S_{ s, T } \, Z_s
    \|^2_{ HS( U, \check{H} ) }
    \, ds
  \right|^{ p/2 }
  \right]
  \Bigg)
  .
\end{split}
\end{equation}
In the next step we combine \eqref{eq:varphi_bound} with the assumption that
$
  \| S_{ t_0, T } X_{ t_0 } \|_{ \check{H} }
  +
  \int^T_{ t_0 }
  \| S_{ s, T } Y_s \|_{ \check{H} }
  \, ds
  +
  \big[
    \int^T_{ t_0 }
    \| S_{ s, T } Z_s \|^2_{ HS( U, \check{H} ) }
    \, ds
  \big]^{ 1 / 2 }
  \in
  \lpn{p}{\P}{\R}
$
to obtain that
$
  \E\left[
  \sup_{ t \in [ t_0, T ] }
  \|
    \varphi( \bar{X}_t )
  \|_V
  \right]
  < \infty
$.
Item~\eqref{lem:transformation_ii}
of
Lemma~\ref{lem:transformation}
hence proves that
$
  \ES\big[
  \sup_{ t \in [ t_0, T ] }
  \|
    \varphi( 
      S_{ t_0, T } X_{ t_0 }
      +
      \int_{ t_0 }^t
      S_{ s, t } Y_s \, ds
      +
      \int_{ t_0 }^t
      S_{ s, t } Z_s \, dW_s
    )
  \|_V
  \big]
  < \infty
$.
Combining this with Proposition~\ref{prop:mild_ito_stopping_limit}
completes the proof of 
Proposition~\ref{prop:mild_ito_maximal_ineq}.
\end{proof}

\section{Weak temporal regularity and analysis of the weak distance between Euler-type 
approximations of SPDEs and their semilinear integrated counterparts}
\label{sec:weak_temporal_regularity}

In this section we establish a weak temporal regularity result in
Proposition~\ref{prop:weak_temporal_regularity_1st} below.
In addition, we prove a weak approximation result 
in Proposition~\ref{prop:weak_temporal_regularity_2nd} below.
The proofs of 
Proposition~\ref{prop:weak_temporal_regularity_1st}
and 
Proposition~\ref{prop:weak_temporal_regularity_2nd}
use Proposition~\ref{prop:mild_ito_maximal_ineq}
which, in turn, is established by an application 
of the mild It\^{o} formula.

\subsection{Setting}
\label{sec:setting_weak_temporal_regularity}

Assume the setting in Section~\ref{sec:semigroup_setting},
let 
$ \vartheta \in [0,1) $, 
$
  F \in 
  \operatorname{Lip}^0( H , H_{ - \vartheta } ) 
$, 
$
  B \in 
  \operatorname{Lip}^0( 
    H, 
    HS( 
      U, 
      H_{ 
        - \vartheta / 2 
      } 
    ) 
  ) 
$, 
$ p \in [ 2, \infty ) $, 
let 
$ 
  ( B^b )_{ b \in \mathbb{U} } \subseteq C( H, H_{ -\nicefrac{\vartheta}{2} } ) 
$ 
be the functions with the property that for all 
$ v \in H $, 
$ b \in \mathbb{U} $ 
it holds that 
$
     B^b( v ) 
    = 
      B( 
        v 
      )
      \,
      b
$, 
let $ \varsigma_{ F, B } \in \R $ 
be a real number given by 
$
  \varsigma_{ F, B }
  =
    \max\{
      1,
      \| F \|_{ \operatorname{Lip}^0( H, H_{ -\vartheta } ) },
      \| B \|^2_{ \operatorname{Lip}^0( H, HS( U, H_{ - \vartheta / 2 } ) ) } 
    \}
$, 
let 
$ 
  Y, \bar{Y} \colon [0,T] \times \Omega \to H
$ 
be 
$
  ( \mathcal{F}_t )_{ t \in [0,T] }
$-predictable stochastic processes
such that
$
  \| Y_0 \|_{ \lpn{p}{\P}{H} } 
  < \infty
$,
such that
$
  \bar{Y}_0 = Y_0
$,
and such that 
for all $ t \in (0,T] $ 
it holds $ \P $-a.s.\ that 
\begin{equation} 
  Y_t
  = 
    S_{ 0, t }\, Y_0 
  + 
    \int_0^t S_{ s, t }\, R_s\, F( Y_{ \floor{ s }{ h } } ) \, ds
  + 
    \int_0^t S_{ s, t }\, R_s\, B( Y_{ \floor{ s }{ h } } ) \, dW_s, 
\end{equation} 
\begin{equation} 
  \bar{Y}_t
  = 
    e^{ t A } \, \bar{Y}_0 
  + 
    \int_0^t e^{ ( t - s )A } \, F( Y_{ \floor{ s }{ h } } ) \, ds
  + 
    \int_0^t e^{ ( t - s )A } \, B( Y_{ \floor{ s }{ h } } ) \, dW_s 
    ,
\end{equation}
and let 
$
  ( K_r )_{ r \in [ 0, \infty ) }
  \subseteq
  [ 0, \infty ]
$ 
be extended real numbers
which satisfy that
for all $ r \in [0,\infty) $
it holds that
$
  K_r
  =
  \sup_{ s, t \in [ 0, T ] }
  \ES\big[
    \max\{
    1,
    \| \bar{Y}_s \|^r_H,
    \| Y_t \|^r_H
    \}
  \big]
$.

\subsection{Weak temporal regularity of semilinear integrated Euler-type approximations}

In Proposition~\ref{prop:weak_temporal_regularity_1st} below
we establish a weak temporal regularity result for the process $ \bar{Y} $
in Subsection~\ref{sec:setting_weak_temporal_regularity}.
The proof of Proposition~\ref{prop:weak_temporal_regularity_1st} uses the
following elementary result.

\begin{lemma}
\label{lem:Kp_estimate}
Assume the setting in Section~\ref{sec:setting_weak_temporal_regularity}. Then 
\begin{align}
\label{eq:Kp_estimate}
&
  \sup_{ r \in [ 0, p ] }
  K_r
  =
  K_p
\\ & \leq
\nonumber
  \left[
    \groupC_0 \,
    \max\{
      1
      ,
      \| Y_0 \|_{\lpn{p}{\P}{H}}
    \}
    +
    \tfrac{
    \groupC_\vartheta \,
    \|F\|_{\operatorname{Lip}^0(H, H_{-\vartheta})} \,
    T^{ ( 1 - \vartheta ) }
    }{( 1 - \vartheta )}
  +
  \tfrac{
    \groupC_{\nicefrac{\vartheta}{2}} \,
    \sqrt{p \, (p-1) \, T^{ ( 1 - \vartheta ) }} \,
    \|B\|_{\operatorname{Lip}^0(H, HS( U, H_{-\nicefrac{\vartheta}{2}} ))}
  }{\sqrt{2 - 2 \vartheta}}
  \right]^{2p}
\\ & \cdot
\nonumber
  2^{ ( \frac{p}{2}+1 ) }
  \left|
  \mathcal{E}_{ ( 1 - \vartheta ) }\!\left[
    \tfrac{
      \sqrt{ 2 }
      \,
      \groupC_{ \vartheta }
      \,
      T^{ ( 1 - \vartheta ) }
      \,
      |
        F
      |_{
        \operatorname{Lip}^0( H, H_{ - \vartheta } )
      }
    }{
      \sqrt{1 - \vartheta}
    }
    +
    \groupC_{ 
      \nicefrac{\vartheta}{2}
    }
    \sqrt{
      p \, (p-1) \, T^{ ( 1 - \vartheta ) }
    } \,
    |
      B
    |_{
      \operatorname{Lip}^0( H, HS( U, H_{ - \nicefrac{\vartheta}{2} } ) )
    }
  \right]
  \right|^p
  < \infty
  .
\end{align}
\end{lemma}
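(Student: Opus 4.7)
The plan is to reduce the bound on $K_p$ to separate $L^p$-estimates on $Y$ and $\bar{Y}$ and to extract both from earlier results in the paper. First observe that for every $a \in [0, \infty)$ and every $0 \leq r_1 \leq r_2$ one has $\max\{1, a^{r_1}\} \leq \max\{1, a^{r_2}\}$; applying this coordinatewise shows that $r \mapsto K_r$ is non-decreasing on $[0, \infty)$, so $\sup_{r \in [0, p]} K_r = K_p$. Writing $M_Y = \sup_{t \in [0,T]} \| Y_t \|_{\lpn{p}{\P}{H}}$ and $M_{\bar Y} = \sup_{t \in [0,T]} \| \bar Y_t \|_{\lpn{p}{\P}{H}}$, the elementary inequality $\max\{1, a, b\} \leq 1 + a + b$ together with linearity of expectation and taking suprema over $s, t \in [0, T]$ yields
\[
  K_p
  \leq
  1 + M_{\bar Y}^p + M_Y^p
  .
\]

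The bound on $M_Y$ follows from Proposition~\ref{prop:numerics_Lp_bound} applied with $\kappa = 0$, $\varPi(s) = \floor{s}{h}$, and $L_{s,t} = S_{s,t} R_s$. All assumptions of Section~\ref{sec:setting_strong_convergence} are satisfied by the evolution family hypotheses in Section~\ref{sec:semigroup_setting}; in particular, $\| L_{0, t} \|_{L(H)} \leq \groupC_0$ is the special case $r = 0$ of $\| S_{s,t} R_s \|_{L(H_{-r}, H)} \leq \groupC_r (t - s)^{-r}$. The auxiliary finiteness hypothesis $\sup_t \| Y_{\floor{t}{h}} \|_{\lpn{p}{\P}{H}} < \infty$ can be verified inductively over the finitely many grid intervals $[k h, (k + 1) h] \cap [0, T]$, starting from the assumed bound $\| Y_0 \|_{\lpn{p}{\P}{H}} < \infty$. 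Using $\| F(0) \|_{H_{-\vartheta}} \leq \| F \|_{\operatorname{Lip}^0(H, H_{-\vartheta})}$ and $\| B(0) \|_{HS(U, H_{-\vartheta/2})} \leq \| B \|_{\operatorname{Lip}^0(H, HS(U, H_{-\vartheta/2}))}$ to enlarge the first factor in Proposition~\ref{prop:numerics_Lp_bound} then gives $M_Y \leq \sqrt{2} \, A \, E$, where $A$ and $E$ stand for the bracketed quantity and the Mittag-Leffler factor on the right-hand side of~\eqref{eq:Kp_estimate}, respectively.

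For $M_{\bar Y}$ Proposition~\ref{prop:numerics_Lp_bound} is not directly applicable since the integrands in the mild form of $\bar Y$ involve $Y$ instead of $\bar Y$. Instead I would estimate $\| \bar Y_t \|_{\lpn{p}{\P}{H}}$ directly by applying the Minkowski inequality, the bounds $\| e^{(t-s)A} \|_{L(H_{-\vartheta}, H)} \leq \groupC_\vartheta (t - s)^{-\vartheta}$ and $\| e^{(t-s)A} \|_{L(H_{-\vartheta/2}, H)} \leq \groupC_{\vartheta/2} (t - s)^{-\vartheta/2}$, the Burkholder-Davis-Gundy inequality in Lemma~7.7 in Da Prato \citationand\ Zabczyk~\cite{dz92}, and the pointwise growth estimates $\| F(y) \|_{H_{-\vartheta}} \leq \| F \|_{\operatorname{Lip}^0(H, H_{-\vartheta})} \max\{1, \| y \|_H\}$ and its analogue for $B$. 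Computing the resulting singular integrals produces an estimate of the shape $M_{\bar Y} \leq \groupC_0 \| Y_0 \|_{\lpn{p}{\P}{H}} + B (1 + M_Y)$, where $B$ collects the last two summands of $A$; chaining this with $M_Y \leq \sqrt{2} A E$ and using $A \geq 1$, $E \geq 1$, and $B \leq A$ produces $M_{\bar Y} \leq c \, A^2 E$ for an explicit numerical constant $c$. Inserting both bounds into $K_p \leq 1 + M_{\bar Y}^p + M_Y^p$, absorbing summands of lower order in $A$ into $A^{2p} E^p$ using $A, E \geq 1$, and collecting the resulting powers of $2$ then delivers~\eqref{eq:Kp_estimate}; finiteness is immediate because $A < \infty$ by hypothesis and the Mittag-Leffler function is real-valued. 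The substantive analytic input is already packaged inside Proposition~\ref{prop:numerics_Lp_bound}; the main obstacle in executing this plan is the bookkeeping of numerical constants needed to fit both the $M_Y^p$ and $M_{\bar Y}^p$ contributions under the single prefactor $2^{p/2 + 1}$.
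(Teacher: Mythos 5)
Your route is the same as the paper's: monotonicity of $r\mapsto\max\{1,\|x\|_H^r\}$ for the equality, the splitting of $K_p$ into contributions from $Y$ and $\bar{Y}$, an inductive grid-point argument plus Proposition~\ref{prop:numerics_Lp_bound} (with $\kappa=0$, $\varPi(s)=\floor{s}{h}$, $L_{s,t}=S_{s,t}R_s$) for $\sup_t\|Y_t\|_{\lpn{p}{\P}{H}}$, and a direct Minkowski/Burkholder--Davis--Gundy estimate for $\sup_t\|\bar{Y}_t\|_{\lpn{p}{\P}{H}}$; this is exactly how the paper proceeds. Two points, however, deserve attention. First, a small identification slip: since $Y_t=S_{0,t}Y_0+\dots$, the initial-value operator in Proposition~\ref{prop:numerics_Lp_bound} must be taken as $L_{0,t}=S_{0,t}$ (with $\|S_{0,t}\|_{L(H)}\leq\groupC_0$ from the evolution-family assumptions), not as the $r=0$ case of $\|S_{s,t}R_s\|_{L(H_{-r},H)}\leq\groupC_r(t-s)^{-r}$, which would correspond to $L_{0,t}=S_{0,t}R_0$ and misrepresent $Y$ unless $R_0=\operatorname{Id}_H$; the paper defines $L$ piecewise for precisely this reason.

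Second, and more substantively, the final constant: with your additive decomposition $K_p\leq 1+M_{\bar Y}^p+M_Y^p$ and the chained bound $M_{\bar Y}\leq\groupC_0\|Y_0\|_{\lpn{p}{\P}{H}}+B(1+M_Y)\leq(2+\sqrt{2})\,A^2E$ (in your notation $A$, $E$ for the bracket and the $\mathcal{E}_{(1-\vartheta)}$ factor), the resulting prefactor grows like $(2+\sqrt{2})^p$, which cannot be absorbed into $2^{(p/2+1)}$; so the bookkeeping you defer is not merely tedious but, as routed, fails to reach the stated bound. The paper's assembly avoids this by never passing through $1+M_Y$ at the level of real numbers: it bounds $\sup_t\|\bar Y_t\|_{\lpn{p}{\P}{H}}$ by the bracket (which is $\leq A$) times $\sup_t\|\max\{1,\|Y_t\|_H\}\|_{\lpn{p}{\P}{\R}}$, and only then raises to the $p$-th power, so that the single exponential-in-$p$ factor beyond $A^{2p}E^p$ is the $2^{p/2}$ coming from the one application of Proposition~\ref{prop:numerics_Lp_bound} ($M_Y\leq\sqrt{2}\,A E$), while the additive ``$+1$'' terms are absorbed using $A,E\geq 1$ at the cost of bounded powers of $2$ only. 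To complete your argument you should restructure the $\bar Y$ estimate in this multiplicative form (i.e.\ reproduce the analogue of the paper's bound $\sup_t\|\bar Y_t\|_{\lpn{p}{\P}{H}}\leq A\,\sup_t\|\max\{1,\|Y_t\|_H\}\|_{\lpn{p}{\P}{\R}}$) rather than chain through $M_{\bar Y}\leq cA^2E$ with $c>\sqrt{2}$.
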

\begin{proof}
First of all, we observe that the equality in~\eqref{eq:Kp_estimate} follows from the fact that 
for all $ x \in H $, $ r, s \in [ 0, \infty ) $ with $ r \leq s $ 
it holds that 
$
  \max\{
    1
    ,
    \| x \|^r_H
  \}
  \leq
  \max\{
    1
    ,
    \| x \|^s_H
  \}
$.
Moreover, we note that the second inequality in~\eqref{eq:Kp_estimate} is an immediate consequence 
from the assumption that 
$
  \| Y_0 \|_{ \lpn{p}{\P}{H} }
  < \infty
$. 
It thus remains to prove the first inequality in~\eqref{eq:Kp_estimate}. 
For this, we observe that the Burkholder-Davis-Gundy type inequality 
in Lemma~7.7 in Da Prato \& Zabczyk~\cite{dz92} ensures that for all 
$ k \in \{ 1, 2, \ldots, \nicefrac{\floor{T}{h}}{h} \} $ 
it holds that 
\begin{equation}
\label{eq:numerics_bound_induction}
\begin{split}
&
  \| Y_{kh} \|_{ \lpn{p}{\P}{H} }
\\ & \leq
  \| S_{ 0, kh } \, Y_0 \|_{ \lpn{p}{\P}{H} }
  + 
    \left\|
    \int_0^{kh} 
    S_{ s, kh }\, R_s\, F( Y_{ \floor{ s }{ h } } ) 
    \, ds
    \right\|_{ \lpn{p}{\P}{H} }
  + 
    \left\|
    \int_0^{kh} 
    S_{ s, kh }\, R_s\, B( Y_{ \floor{ s }{ h } } ) 
    \, dW_s
    \right\|_{ \lpn{p}{\P}{H} } 
\\ & \leq
  \groupC_0 \, 
  \| Y_0 \|_{ \lpn{p}{\P}{H} }
  +
  \int^{kh}_0
  \|
  S_{ s, kh }\, R_s\, F( Y_{ \floor{ s }{ h } } )
  \|_{ \lpn{p}{\P}{H} }
  \, ds
\\ & +
  \left[
  \tfrac{p \, (p-1)}{2}
  \int^{kh}_0
  \|
  S_{ s, kh }\, R_s\, B( Y_{ \floor{ s }{ h } } )
  \|^2_{ \lpn{p}{\P}{HS(U,H)} }
  \, ds
  \right]^{1/2}
\\ & \leq
  \groupC_0 \, 
  \| Y_0 \|_{ \lpn{p}{\P}{H} }
  +
  \groupC_\vartheta \,
  \|F\|_{\operatorname{Lip}^0(H, H_{-\vartheta})}
  \left[
  \max_{ j \in \{ 0, 1, \ldots, k -1 \} }
  \big\|
  \max\{
    1
    ,
    \| Y_{ jh } \|_H
  \}
  \big\|_{ \lpn{p}{\P}{\R} }
  \right]
  \int^{kh}_0
  \tfrac{1}{( kh - s )^\vartheta}
  \, ds
\\ & +
  \groupC_{\nicefrac{\vartheta}{2}} \,
  \|B\|_{\operatorname{Lip}^0(H, HS( U, H_{-\nicefrac{\vartheta}{2}} ))}
  \left[
  \max_{ j \in \{ 0, 1, \ldots, k -1 \} }
  \big\|
  \max\{
    1
    ,
    \| Y_{ jh } \|_H
  \}
  \big\|_{ \lpn{p}{\P}{\R} }
  \right]
  \left[
  \tfrac{p \, (p-1)}{2}
  \int^{kh}_0
  \tfrac{1}{( kh - s )^\vartheta}
  \, ds
  \right]^{1/2}
\\ & \leq
  \left[
    \groupC_0
    +
    \tfrac{
    \groupC_\vartheta \,
    \|F\|_{\operatorname{Lip}^0(H, H_{-\vartheta})} \,
    |kh|^{ ( 1 - \vartheta ) }
    }{( 1 - \vartheta )}
  +
  \tfrac{
    \groupC_{\nicefrac{\vartheta}{2}} \,
    \sqrt{p \, (p-1) \, |kh|^{ ( 1 - \vartheta ) }} \,
    \|B\|_{\operatorname{Lip}^0(H, HS( U, H_{-\nicefrac{\vartheta}{2}} ))}
  }{\sqrt{2 - 2 \vartheta}}
  \right]
\\ & \cdot
  \max_{ j \in \{ 0, 1, \ldots, k -1 \} }
  \big\|
  \max\{
    1
    ,
    \| Y_{ jh } \|_H
  \}
  \big\|_{ \lpn{p}{\P}{\R} }
\end{split}
\end{equation}
This and the assumption that 
$
  \| Y_0 \|_{ \lpn{p}{\P}{H} }
  < \infty
$
allow us to conclude inductively that 
\begin{equation*}
  \sup_{ t \in [ 0, T ] }
  \| Y_{\floor{t}{h}} \|_{ \lpn{p}{\P}{H} }
  =
  \max_{ k \in \{ 0, 1, \ldots, \nicefrac{\floor{T}{h}}{h} \} }
  \| Y_{kh} \|_{ \lpn{p}{\P}{H} }
  < \infty
  . 
\end{equation*}
We can hence apply Proposition~\ref{prop:numerics_Lp_bound} 
to obtain\footnote{with 
$ \kappa = 0 $, 
$ L_{0,t} = S_{0,t} $, 
$ L_{s,t} = S_{s,t} \, R_s $, 
$ \Pi(s) = \floor{s}{h} $
for $ (s,t) \in ( \angle \cap (0,T]^2 ) $ 
in the notation of Proposition~\ref{prop:numerics_Lp_bound}} 
that
\begin{equation}
\label{eq:numerics_bound}
\begin{split}
&
  \sup_{t \in [ 0, T ]}
  \| Y_t \|_{ \lpn{p}{\P}{H} }
  \leq
  \sqrt{2}
\\ & \cdot
  \left[
    \groupC_0 \,
    \| Y_0 \|_{ \lpn{p}{\P}{H} }
    +
    \tfrac{
      \groupC_\vartheta \,
      T^{ ( 1 - \vartheta ) }
      \| F(0) \|_{ H_{ -\vartheta } }
    }{
      ( 1 - \vartheta )
    }
      +
      \groupC_{ \nicefrac{ \vartheta }{ 2 } }
      \sqrt{
      \tfrac{
      p \, (p-1) \, T^{ ( 1 - \vartheta ) }
      }{
      ( 2 - 2 \vartheta )
      }
      }
      \| B(0) \|_{ HS( U, H_{ -\nicefrac{\vartheta}{2} } ) }
  \right]
\\ & \cdot
  \mathcal{E}_{ ( 1 - \vartheta ) }\!\left[
    \tfrac{
      \sqrt{ 2 }
      \,
      \groupC_{ \vartheta }
      \,
      T^{ ( 1 - \vartheta ) }
      \,
      |
        F
      |_{
        \operatorname{Lip}^0( H, H_{ - \vartheta } )
      }
    }{
      \sqrt{1 - \vartheta}
    }
    +
    \groupC_{ 
      \nicefrac{\vartheta}{2}
    }
    \sqrt{
      p \, (p-1) \, T^{ ( 1 - \vartheta ) }
    } \,
    |
      B
    |_{
      \operatorname{Lip}^0( H, HS( U, H_{ - \nicefrac{\vartheta}{2} } ) )
    }
  \right]
  .
\end{split}
\end{equation}
Next we note that the Burkholder-Davis-Gundy type inequality 
in Lemma~7.7 in Da Prato \& Zabczyk~\cite{dz92} shows that 
\begin{equation}
\label{eq:integrated_numerics_bound}
\begin{split}
&
  \sup_{ t \in [ 0, T ] }
  \| \bar{Y}_t \|_{\lpn{p}{\P}{H}}
  \leq
  \sup_{ t \in [0,T] }
  \big\|
  \max\{
    1
    ,
    \| Y_t \|_H
  \}
  \big\|_{ \lpn{p}{\P}{\R} }
\\ & \cdot
  \left[
    \groupC_0
    +
    \tfrac{
    \groupC_\vartheta \,
    \|F\|_{\operatorname{Lip}^0(H, H_{-\vartheta})} \,
    T^{ ( 1 - \vartheta ) }
    }{( 1 - \vartheta )}
  +
  \tfrac{
    \groupC_{\nicefrac{\vartheta}{2}} \,
    \sqrt{p \, (p-1) \, T^{ ( 1 - \vartheta ) }} \,
    \|B\|_{\operatorname{Lip}^0(H, HS( U, H_{-\nicefrac{\vartheta}{2}} ))}
  }{\sqrt{2 - 2 \vartheta}}
  \right]
  .
\end{split}
\end{equation}
Moreover, we observe that for all $ s, t \in [ 0, T ] $ 
it holds that 
\begin{equation}
\begin{split}
  \ES\big[
    \max\{
    1,
    \| \bar{Y}_s \|^p_H,
    \| Y_t \|^p_H
    \}
  \big]
& \leq
  \ES\big[
  \| \bar{Y}_s \|^p_H
  \big]
  +
  \ES\big[
  \max\{
    1
    ,
    \| Y_t \|^p_H
  \}
  \big]
\\ & \leq 
  \sup_{ u \in [0,T] }
  \| \bar{Y}_u \|^p_{ \lpn{p}{\P}{H} }
  +
  \sup_{ u \in [0,T] }
  \big\|
  \max\{
    1
    ,
    \| Y_u \|_H
  \}
  \big\|^p_{ \lpn{p}{\P}{\R} }
  .
\end{split}
\end{equation}
This together with \eqref{eq:numerics_bound} and \eqref{eq:integrated_numerics_bound} 
proves the first inequality in~\eqref{eq:Kp_estimate}. 
The proof of Lemma~\ref{lem:Kp_estimate} is thus completed.
\end{proof}

\begin{proposition}
\label{prop:weak_temporal_regularity_1st}
Assume the setting in Section~\ref{sec:setting_weak_temporal_regularity} and let 
$ \psiC \in [ 0, \infty ) $, 
$ \power \in [ 0, \infty ) \cap ( -\infty, p - 3 ] $, 
$ \rho \in [ 0, 1 - \vartheta ) $, 
$ \psi = ( \psi(x,y) )_{ x, y \in H } \in C^2( H \times H, V ) $ 
satisfy that for all 
$ x_1, x_2, y \in H $, 
$ i, j \in \{ 0, 1, 2 \} $
with 
$ i + j \leq 2 $
it holds that 
\begin{equation*}
\begin{split}
  \big\|
    \big(
     \tfrac{ \partial^{ (i + j) } }{ \partial x^i \partial y^j  }
     \psi
    \big)
    ( x_1, y )
  -
    \big(
     \tfrac{ \partial^{ (i + j) } }{ \partial x^i \partial y^j  }
     \psi
    \big)
    ( x_2, y )
  \big\|_{ L^{ (i + j) }( H, V ) }
& \leq
  \psiC
  \max\{ 1, \| x_1 \|^\power_H, \| x_2 \|^\power_H, \| y \|^\power_H \}
  \left\| 
    x_1 - x_2 
  \right\|_H
  .
\end{split}
\end{equation*}
Then for all 
$ ( s, t ) \in \angle $ 
it holds that 
$
  \ES\big[
    \| 
      \psi( \bar{Y}_t, Y_s )
      -
      \psi( \bar{Y}_s, Y_s ) 
    \|_V
  \big]
  < \infty
$ 
and 
\begin{equation}
\label{eq:prop_temporal_reg}
\begin{split}
&
  \left\|
  \E\left[
    \psi(
      \bar{Y}_{ t } ,
      Y_{ s }
    )
  -
    \psi(
      \bar{Y}_{ s } ,
      Y_{ s }
    )
  \right]
  \right\|_V
\leq
  \psiC \,
  | \groupC_0 |^{ ( \power + 1 ) } \, 
  | \groupC_\rho |^2 \,
  \varsigma_{ F, B } \,
  K_{ \power + 3 } 
  \,
  ( t - s )^\rho
\\ & \cdot
  \bigg[
    \tfrac{
      2^{ \rho }
    }{
      t^{ \rho }
    }
    +
    \tfrac{
      \left(
        2 \, 
        \groupC_\vartheta 
        +
        \groupC_{ \rho + \vartheta }
        +
        2 \, 
        | \groupC_{ \vartheta / 2 } |^2
        + 
        2 \,
        \groupC_{ \rho + \vartheta / 2 }
        \,
        \groupC_{ \vartheta / 2 }
      \right)
      \,
      s^{ ( 1 - \vartheta - \rho ) }
      +
      \left( 
        \groupC_\vartheta 
        +
        \frac{ 1 }{ 2 }
        | \groupC_{ \nicefrac{ \vartheta }{ 2 } } |^2 
      \right)
      \,
      \left| t - s \right|^{ ( 1 - \vartheta - \rho ) }
    }{ 
      ( 1 - \vartheta - \rho ) }
  \bigg]
  .
\end{split}
\end{equation}
\end{proposition}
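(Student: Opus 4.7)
The plan is to split
\begin{equation*}
\psi(\bar Y_t, Y_s) - \psi(\bar Y_s, Y_s) = \big[\psi(\bar Y_t, Y_s) - \psi(e^{(t-s)A}\bar Y_s, Y_s)\big] + \big[\psi(e^{(t-s)A}\bar Y_s, Y_s) - \psi(\bar Y_s, Y_s)\big],
\end{equation*}
call the two pieces $I$ and $II$, and estimate each via a separate application of the mild Dynkin-type estimate of Proposition~\ref{prop:mild_ito_maximal_ineq}. A preliminary observation, used throughout, is that sending $x_2 \to 0$ in the hypothesized Lipschitz estimate and passing to the limit yields the polynomial-growth bounds $\|\psi_x(x,y)\|_{L(H,V)} + \|\psi_{xx}(x,y)\|_{L^{(2)}(H,V)} \leq 2\psiC\max\{1,\|x\|_H^{\power+1},\|y\|_H^{\power+1}\}$; these are essential because the hypothesis provides no stand-alone bound on the derivatives of $\psi$ at zero.

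For $I$, I restart the mild formulation of $\bar Y$ on $[s,t]$, viewing $\bar Y$ as a mild It\^{o} process with initial value $\bar Y_s$ at time $s$, end time $t$, evolution family $e^{(u-v)A}$, and mild drift and diffusion $F(Y_{\lfloor \cdot\rfloor_h})$ and $B(Y_{\lfloor \cdot\rfloor_h})$. Conditioning on $\mathcal{F}_s$ (so that $Y_s$ becomes deterministic) and applying Proposition~\ref{prop:mild_ito_maximal_ineq} with the test function $H\ni x\mapsto \psi(x,Y_s)\in V$ bounds $\|\ES[I]\|_V$ by a time integral over $[s,t]$ of $\ES\big[\|\psi_x(\bar X_u,Y_s)\,e^{(t-u)A}F(Y_{\lfloor u\rfloor_h})\|_V + \tfrac{1}{2}\|\psi_{xx}(\bar X_u,Y_s)\|_{L^{(2)}(H,V)}\|e^{(t-u)A}B(Y_{\lfloor u\rfloor_h})\|_{HS(U,H)}^2\big]$, with $\bar X_u := e^{(t-s)A}\bar Y_s + \int_s^u e^{(t-v)A}F(Y_{\lfloor v\rfloor_h})\,dv + \int_s^u e^{(t-v)A}B(Y_{\lfloor v\rfloor_h})\,dW_v$. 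Combining the polynomial-growth bounds, the smoothing estimates $\|e^{(t-u)A}\|_{L(H_{-\vartheta},H)}\leq\groupC_\vartheta(t-u)^{-\vartheta}$ and $\|e^{(t-u)A}\|_{L(H_{-\vartheta/2},H)}^2\leq|\groupC_{\vartheta/2}|^2(t-u)^{-\vartheta}$, and the elementary inequality $(t-u)^{-\vartheta}\leq (t-s)^\rho(t-u)^{-\vartheta-\rho}$ (valid since $u\geq s$) to extract the $(t-s)^\rho$ factor, one obtains $\int_s^t (t-u)^{-\vartheta-\rho}\,du = (t-s)^{1-\vartheta-\rho}/(1-\vartheta-\rho)$, reproducing the $|t-s|^{1-\vartheta-\rho}/(1-\vartheta-\rho)$ contribution in the bracket of~\eqref{eq:prop_temporal_reg}.

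For $II$, the crucial observation is that $e^{(t-s)A}\bar Y_s$ and $\bar Y_s$ are the values at time $s$ of two mild It\^{o} processes with different end times, namely $u\mapsto e^{(t-u)A}\bar Y_u$ (end time $t$) and $u\mapsto e^{(s-u)A}\bar Y_u$ (end time $s$), respectively. Applying the mild Dynkin formula (Corollary~\ref{cor:mild_dynkin_formula} with the constant stopping time $\tau=s$) on $[0,s]$ to each of these and subtracting expresses $\ES[II]$ as an initial-value difference $\ES[\psi(e^{tA}\bar Y_0,Y_s)-\psi(e^{sA}\bar Y_0,Y_s)]$ plus an integral over $[0,s]$ of the difference of the two mild Kolmogorov integrands. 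The initial-value term is controlled via the Lipschitz bound on $\psi$ together with the combined estimate $\|(e^{(t-s)A}-\operatorname{Id}_H)e^{sA}\|_{L(H)} \leq 2^\rho\,\groupC_0\,|\groupC_\rho|^2\,(t-s)^\rho\,t^{-\rho}$, obtained by taking the pointwise minimum of the trivial bound $\leq 2\groupC_0$ and the smoothing bound $\leq |\groupC_\rho|^2(t-s)^\rho s^{-\rho}$ and case-splitting on whether $s\leq t/2$ or $s>t/2$; this yields the $2^\rho/t^\rho$ contribution. Each term in the integrand difference telescopes into a Lipschitz-of-$\psi_x$-or-$\psi_{xx}$ factor plus a semigroup-difference factor, and the $(t-s)^\rho$ factor is extracted via operator estimates of the form $\|(e^{(t-s)A}-\operatorname{Id}_H)e^{(s-u)A}\|_{L(H)}\leq|\groupC_\rho|^2(t-s)^\rho(s-u)^{-\rho}$ and $\|(e^{(t-s)A}-\operatorname{Id}_H)e^{(s-u)A}\|_{L(H_{-\vartheta},H)} \leq \groupC_\rho\groupC_{\rho+\vartheta}(t-s)^\rho(s-u)^{-(\rho+\vartheta)}$ (and their $\vartheta/2$ analogues, paired with the direct bound $\leq 2\groupC_{\vartheta/2}(s-u)^{-\vartheta/2}$ via a geometric-mean step so as to avoid any spurious restriction of the form $2\rho+\vartheta<1$); integration over $u\in[0,s]$ then produces the $s^{1-\vartheta-\rho}/(1-\vartheta-\rho)$ contribution.

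The main obstacle is the analysis of $II$: the naive approach (Lipschitz of $\psi$ applied to $\|(e^{(t-s)A}-\operatorname{Id}_H)\bar Y_s\|_H$ with the stochastic-integral piece of $\bar Y_s$ estimated via the It\^{o} isometry) produces only a $(t-s)^{\rho/2}$ factor, because the square root in the isometry halves the $(t-s)^\rho$ present in the operator norm of the semigroup difference; the double application of the mild It\^{o} formula circumvents this loss by placing the $(t-s)^\rho$-bearing operator norms directly inside expectations rather than inside a squared $L^2$ norm. The remaining work is careful bookkeeping to absorb all the occurring moments into the single bound $K_{\power+3}$, with the exponent $\power+3$ being forced by the diffusion contribution in $I$ (which couples the polynomial-degree-$(\power+1)$ bound on $\psi_{xx}$ with the quadratic dependence $\|B(Y_{\lfloor u\rfloor_h})\|_{HS(U,H_{-\vartheta/2})}^2\leq\|B\|_{\operatorname{Lip}^0(H,HS(U,H_{-\vartheta/2}))}^2\max\{1,\|Y_{\lfloor u\rfloor_h}\|_H^2\}$), while the prefactor $|\groupC_0|^{\power+1}$ arises from the repeated use of $\|e^{(t-u)A}\|_{L(H)}\leq\groupC_0$ inside the polynomial-growth arguments of $\psi_x$ and $\psi_{xx}$.
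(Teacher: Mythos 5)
Your overall strategy is the paper's own: two nested applications of the mild It\^{o}/Dynkin machinery (Proposition~\ref{prop:mild_ito_maximal_ineq}), one on $[s,t]$ producing the drift/diffusion integrals that give the $|t-s|^{(1-\vartheta-\rho)}$ contribution, and one on $[0,s]$ for the critical term $\ES[\psi(e^{(t-s)A}\bar{Y}_s,Y_s)-\psi(\bar{Y}_s,Y_s)]$, exactly to avoid the $(t-s)^{\nicefrac{\rho}{2}}$ loss you correctly diagnose; the min/case-split giving the $2^\rho/t^\rho$ term is also the paper's. There is, however, a genuine gap in your treatment of $II$. The identity you claim from Corollary~\ref{cor:mild_dynkin_formula} is false as written: $Y_s$ is measurable only at the terminal time of $[0,s]$, so it cannot appear frozen in an initial-value term $\ES[\psi(e^{tA}\bar{Y}_0,Y_s)-\psi(e^{sA}\bar{Y}_0,Y_s)]$, and with a frozen random second argument the stochastic integral in the It\^{o} expansion over $[0,s]$ is no longer centered (neither conditioning on $\mathcal{F}_s$ nor on $\sigma(Y_s)$ repairs this, since the integrand would anticipate). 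The correct application --- the one the paper carries out --- runs the pair $(\bar{Y}_u,Y_u)_{u\in[0,s]}$ as a two-component mild It\^{o} process (semigroup in the first slot, $S_{\cdot,\cdot}$ together with $R_\cdot$ in the second) with the deterministic test function $(x,y)\mapsto\psi(e^{(t-s)A}x,y)-\psi(x,y)$. This produces the initial term $\ES[\psi(e^{tA}Y_0,S_{0,s}Y_0)-\psi(e^{sA}Y_0,S_{0,s}Y_0)]$ (harmless for your estimate), but also integrand contributions involving $\tfrac{\partial}{\partial y}\psi$, $\tfrac{\partial^2}{\partial y^2}\psi$ and the mixed derivative $\tfrac{\partial^2}{\partial x\,\partial y}\psi$ composed with $S_{r,s}R_rF(Y_{\lfloor r\rfloor_h})$ and $S_{r,s}R_rB^b(Y_{\lfloor r\rfloor_h})$, which are absent from your sketch (you only invoke Lipschitz continuity of $\psi_x$ and $\psi_{xx}$). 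These terms form a substantial part of the actual proof (the functions $\tilde{F}_{r,s,t}$, $\tilde{B}_{r,s,t}$ there) and must be telescoped using the hypothesis that \emph{all} derivatives up to order two are Lipschitz in the first variable; they do close with the constants in the bracket of~\eqref{eq:prop_temporal_reg}, but your argument as written does not account for them.

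A smaller point concerns $I$: Proposition~\ref{prop:mild_ito_maximal_ineq} is stated for deterministic test functions and unconditional expectations, so ``conditioning on $\mathcal{F}_s$'' with the random test function $x\mapsto\psi(x,Y_s)$ is not a direct application; you would either need a conditional variant (justifiable by a freezing argument, with some work) or, as the paper does, enlarge the state space to $H\times H\times H$ and carry $Y_s$ and $\bar{Y}_s$ as constant components of the mild It\^{o} process on $[s,t]$, which renders the test function $(x,y,z)\mapsto\psi(x,y)-\psi(z,y)$ deterministic. With these two repairs, your quantitative steps (extraction of $(t-s)^\rho$, the geometric-mean splitting for the diffusion terms to avoid a condition of the type $2\rho+\vartheta<1$, and the moment bookkeeping into $K_{\power+3}$ with prefactor $|\groupC_0|^{(\power+1)}$) match the paper's.
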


\begin{proof}
Throughout this proof 
let 
$
  ( g_r )_{ r \in [ 0, \infty ) }
  \subseteq
  C( H, \R )
$ 
be the functions
with the property that
for all $ r \in [0,\infty) $, $ x \in H $
it holds that
$
  g_r( x )
  =
  \max\{
  1, \| x \|^r_H
  \}
$ 
and let 
$
  \psi_{1,0} \colon H \times H \to L( H, V ) 
$, 
$
  \psi_{0,1} \colon H \times H \to L( H, V ) 
$, 
$
  \psi_{2,0} \colon H \times H \to L^{(2)}( H, V ) 
$, 
$
  \psi_{0,2} \colon H \times H \to L^{(2)}( H, V ) 
$, 
$
  \psi_{1,1} \colon H \times H \to L^{(2)}( H, V ) 
$
be the functions with the property that 
for all 
$ x, y, v_1, v_2 \in H $ 
it holds that 
$
  \psi_{1,0}( x, y ) \, v_1
  =
  \big(\tfrac{ \partial }{ \partial x } \psi( x, y )\big)( v_1 )
$
and
\begin{equation}
  \psi_{0,1}( x, y ) \, v_1
  =
  \big(\tfrac{ \partial }{ \partial y } \psi( x, y )\big)( v_1 )
  ,
  \qquad
  \psi_{2,0}( x, y )( v_1, v_2 )
  =
  \big(
    \tfrac{ \partial^2 }{ \partial x^2 } \psi( x, y )
  \big)( v_1, v_2 )
  ,
\end{equation}
\begin{equation}
  \psi_{0,2}( x, y )( v_1, v_2 )
  =
  \big(
    \tfrac{ \partial^2 }{ \partial y^2 } \psi( x, y )
  \big)( v_1, v_2 )
  ,
  \qquad
  \psi_{ 1, 1 }( x, y )( v_1, v_2 )
  =
  \big(
    \tfrac{ \partial }{ \partial y } 
    \tfrac{ \partial }{ \partial x } 
    \psi( x, y )
  \big)( v_1, v_2 )
  .
\end{equation} 
Next we observe that Lemma~\ref{lem:Kp_estimate}
and the assumption that $ q \leq p - 3 $ ensure that $ K_{ q + 1 } \leq K_{ q + 3 } < \infty $.
Combining this with the fact that
\begin{equation}
  \forall \, x_1, x_2, y \in H \colon
  \| \psi( x_1, y ) - \psi( x_2, y ) \|_V
  \leq 
  2 \, \eta
  \max\!\big\{ 
    1, \| x_1 \|^{ q + 1 }_H , \| x_2 \|^{ q + 1 }_H , \| y \|^{ q + 1 }_H 
  \big\}
\end{equation}
shows that 
for all $ ( s, t ) \in \angle $ 
it holds that
$
  \ES\big[
    \| 
      \psi( \bar{Y}_t, Y_s )
      -
      \psi( \bar{Y}_s, Y_s ) 
    \|_V
  \big]
  < \infty
$.
It thus remains to prove \eqref{eq:prop_temporal_reg}.
To do so, we make use of a consequence of
the mild It\^{o} formula 
in 
Corollary~\ref{cor:itoauto} above.
More formally,
an application of 
Proposition~\ref{prop:mild_ito_maximal_ineq}
shows\footnote{with 
$ t_0 = s $, 
$ T = t $, 
$ \check{H} = H \times H \times H $, 
$ p = q + 1 $, 
and
$ \varphi( x, y, z ) = \psi( x, y ) - \psi( z, y ) $ 
for $ (x,y,z) \in \check{H} $
in the notation of 
Proposition~\ref{prop:mild_ito_maximal_ineq}} that for all 
$ ( s, t ) \in \angle $ 
it holds that 
$
  \ES\big[
    \| 
      \psi( e^{ ( t - s )A } \, \bar{Y}_s, Y_s )
      -
      \psi( \bar{Y}_s, Y_s ) 
    \|_V
  \big]
  < \infty
$ 
and 
\begin{equation}
\label{eq:mild_ito_outer_1st}
\begin{split}
&
  \left\|
  \E\left[
    \psi(
      \bar{Y}_{ t } ,
      Y_{ s }
    )
  -
    \psi(
      \bar{Y}_{ s } ,
      Y_{ s }
    )
  \right]
  \right\|_V
  \leq
  \left\|\E\left[
    \psi(
      e^{ ( t - s )A } \, \bar{Y}_{ s } ,
      Y_{ s }
    \big)
    -
    \psi(
      \bar{Y}_{ s } ,
      Y_{ s }
    )
  \right]\right\|_V
\\ & +
  \int^{ t }_{ s }
  \E\left[
  \left\|
    \psi_{1,0}(
      e^{ ( t - r )A } \, \bar{Y}_r,
      Y_{ s }
    )
    \,
    e^{ ( t - r )A } 
    F(
      Y_{ \floor{ r }{ h } }
    )
  \right\|_V
  \right]
  dr
\\ & +
  \int^{ t }_{ s }
  \E\left[\left\|
   \tfrac{ 1 }{ 2 }
  {\smallsum\limits_{ b \in \mathbb{ U } }}
    \psi_{2,0}(
      e^{ ( t - r )A } \, \bar{Y}_r ,
      Y_{ s }
    )\big(
      e^{ ( t - r ) A } \,
      B^b( Y_{ \floor{ r }{ h } } )
    ,
      e^{ ( t - r ) A } \,
      B^b( Y_{ \floor{ r }{ h } } )
  \big)
  \right\|_V\right]
  dr .
\end{split}
\end{equation}
In the following we establish suitable estimates 
for the three summands appearing on right hand side of \eqref{eq:mild_ito_outer_1st}.
Combining these estimates with \eqref{eq:mild_ito_outer_1st} will then
allow us to establish \eqref{eq:prop_temporal_reg}.
We begin with the second and the third summands on 
the right hand side of \eqref{eq:mild_ito_outer_1st}.
We note that the assumption that
$
  \forall \, x_1, x_2 , y \in H \colon
$
\begin{equation}
  \left\|
    \psi( x_1 , y )
    -
    \psi( x_2, y )
  \right\|_V
  \leq
  \psiC
  \max\{
    1,
    \| x_1 \|^\power_H
    ,
    \| x_2 \|^\power_H
    ,
    \| y \|^\power_H
  \}
  \,
  \| x_1 - x_2 \|_H
\end{equation}
implies that
$
  \forall \, x , y \in H \colon
  \|
    \psi_{ 1, 0 }( x, y )
  \|_{ L( H, V ) }
  \leq
  \psiC
  \max\{
    1,
    \| x \|^\power_H,
    \| y \|^\power_H
  \}
$.
This, in turn, proves that for all 
$ ( r, t ) \in \angle $, 
$ u, v, w \in H $ 
it holds that 
\begin{equation}
\begin{split}
\label{eq:drift_1st_mild_ito}
&
  \left\|
    \psi_{ 1 , 0 }(
      e^{ ( t - r ) A } \, u ,
      v
    )
    \, e^{ ( t - r ) A } \,
    F( w )
  \right\|_V
\\ & \leq
  \eta
  \left| C_0 \right|^{ \power }
  \max\!\left\{ 
    1 ,
    \| u \|_H^{ \power }
    ,
    \| v \|_H^{ \power }
  \right\}
  \| 
    e^{ ( t - r ) A }
  \|_{
    L( H, H_{ \vartheta } )
  }
  \left\| F( w ) \right\|_{ H_{ - \vartheta } }
\\ & \leq
  \frac{
    \psiC \,
    | \groupC_0 |^\power \, 
    \groupC_\vartheta
    \max\{
      1, \| u \|^\power_H, \| v \|^\power_H
    \}
    \| F \|_{ \operatorname{Lip}^0( H, H_{ -\vartheta } ) }   
    \,
    g_1( w )
  }{
    ( t - r )^\vartheta
  }
  .
\end{split}
\end{equation}
Next we observe that the assumption that
$
  \forall \, x_1, x_2 , y \in H \colon
$
\begin{equation}
  \left\|
    \psi_{ 1,0 }( x_1 , y )
    -
    \psi_{ 1,0 }( x_2 , y )
  \right\|_{ 
    L( H, V ) 
  }
  \leq
  \psiC
  \max\{
    1,
    \| x_1 \|^\power_H
    ,
    \| x_2 \|^\power_H
    ,
    \| y \|^\power_H
  \}
  \| x_1 - x_2 \|_H
\end{equation}
shows that
$
  \forall \, x , y \in H \colon
  \|
    \psi_{ 2, 0 }( x, y )
  \|_{ L^{ (2) }( H, V ) }
  \leq
  \psiC
  \max\{
    1,
    \| x \|^\power_H,
    \| y \|^\power_H
  \}
$.
This, in turn, proves that for all 
$ ( r, t ) \in \angle $, 
$ u, v, w \in H $ 
it holds that 
\begin{equation}
\begin{split}
\label{eq:diffusion_1st_mild_ito}
&
  \tfrac{ 1 }{ 2 }
  \smallsum\limits_{ b \in \mathbb{ U } }
  \displaystyle
  \left\|
    \psi_{2,0}\big(
     e^{ ( t - r ) A } \, u,
    v
    \big)
  \big(
    e^{ ( t - r ) A } \,
    B^b( w )
    ,
    e^{ ( t - r ) A } \,
    B^b( w )
  \big)
  \right\|_V
\\ & \leq 
  \psiC \,
  | \groupC_0 |^{ \power }
  \,
  \max\{
    1, \| u \|^\power_H, \| v \|^\power_H
  \}
  \,
  \|
    e^{ ( t - r ) A }
    B( w )
  \|^2_{
    HS( U, H )
  }
\\ & \leq
  \frac{
    \psiC \,
    | \groupC_0 |^\power
    \,
    \frac{ 1 }{ 2 } \,
    | \groupC_{ \nicefrac{ \vartheta }{ 2 } } |^2
    \max\{
      1, \| u \|^\power_H, \| v \|^\power_H
    \}
    \,
    \| B \|^2_{ \operatorname{Lip}^0( H, HS( U, H_{ - \vartheta / 2 } ) ) } \,
    g_2( w )
  }{
    ( t - r )^\vartheta
  }
  .
\end{split}
\end{equation}
Furthermore, we note
that H\"{o}lder's inequality implies that 
for all 
$ r, l \in ( 0, \infty ) $, 
$ s, t \in [ 0, T ] $ 
it holds that 
\begin{equation}
\begin{split}
&
  \ES\!\left[
    \max\!\left\{
      1,
      \| \bar{Y}_s \|^r_H,
      \| Y_t \|^r_H
    \right\}
    g_l( Y_{ \floor{ s }{ h } } )
  \right]
\\ & \leq
  \left(
    \sup_{ u, v \in [ 0, T ] }
    \left\|
    \max\!\left\{
      1,
      \| \bar{Y}_u \|^r_H,
      \| Y_v \|^r_H
    \right\}
    \right\|_{ \lpn{ 1 + \nicefrac{l}{r} }{ \P }{ \R } }
  \right)
  \left(
    \sup_{ u \in [ 0, T ] }
    \left\|
    \max\!\left\{
      1,
      \| Y_u \|^l_H
    \right\}
    \right\|_{ \lpn{ 1 + \nicefrac{r}{l} }{ \P }{ \R } }
  \right)
\\ & \leq
  | K_{ r + l } |^{ \frac{ 1 }{ 1 + \nicefrac{l}{r} } }
  \,
  | K_{ r + l } |^{ \frac{ 1 }{ 1 + \nicefrac{r}{l} } }
  =
  K_{ r + l }
  .
\end{split}
\end{equation}
This and the fact that for all 
$ l \in [0,\infty) $
it holds that 
$
  \sup_{ s \in [0,T] }
  \ES\big[
    g_l( Y_{ \floor{ s }{h} } )
  \big]
  \leq
  K_l
$ 
prove that
for all 
$ r, l \in [ 0, \infty ) $, 
$ s, t \in [ 0, T ] $ 
it holds that 
\begin{equation}
\label{eq:optimal_moment_1}
\begin{split}
&
  \ES\!\left[
    \max\!\left\{
      1,
      \| \bar{Y}_s \|^r_H,
      \| Y_t \|^r_H
    \right\}
    g_l( Y_{ \floor{ s }{ h } } )
  \right]
\leq
  K_{ r + l }
  .
\end{split}
\end{equation}
Combining \eqref{eq:drift_1st_mild_ito}, \eqref{eq:diffusion_1st_mild_ito}, 
and \eqref{eq:optimal_moment_1} 
implies that for all 
$ ( s, t ) \in \angle $ 
it holds that 
\begin{equation}
\label{eq:outer_summands}
\begin{split}
&
  \int^{ t }_{ s }
  \E\left[\left\|
    \psi_{1,0}(
      e^{ ( t - r )A } \, \bar{Y}_r,
      Y_{ s }
    )
    \,
    e^{ ( t - r )A } 
    F(
      Y_{ \floor{ r }{ h } }
    )
  \right\|_V\right]
  dr
\\ & +
  {\displaystyle
  \int^{ t }_{ s }}
  \E\left[\left\|
   \tfrac{ 1 }{ 2 }
  {\smallsum\limits_{ b \in \mathbb{ U } }}
    \psi_{2,0}(
      e^{ ( t - r )A } \, \bar{Y}_r ,
      Y_{ s }
    )\big(
      e^{ ( t - r ) A } \,
      B^b( Y_{ \floor{ r }{ h } } )
    ,
      e^{ ( t - r ) A } \,
      B^b( Y_{ \floor{ r }{ h } } )
  \big)
  \right\|_V\right]
  dr
\\ & \leq
    \psiC \,
    | \groupC_0 |^\power
    \left(
      \groupC_\vartheta
      \,
      \| F \|_{
        \operatorname{Lip}^0( H, H_{ - \vartheta } )
      }
      +
      \tfrac{ 1 }{ 2 }
      \,
      | \groupC_{ \nicefrac{ \vartheta }{ 2 } } |^2 
      \,
      \| B \|^2_{ \operatorname{Lip}^0( H, HS( U, H_{ - \vartheta / 2 } ) ) } 
    \right)
    K_{ \power + 2 }
    \int_s^t
    \frac{
      1
    }{
      ( t - r )^{ \vartheta }
    }
    \,
    dr
\\ & \leq
  \frac{
    \psiC \,
    | \groupC_0 |^\power
    \left(
      \groupC_\vartheta
      +
      \frac{ 1 }{ 2 }
      | \groupC_{ \nicefrac{ \vartheta }{ 2 } } |^2 
    \right)
    \varsigma_{ F, B } 
    \,
    K_{ \power + 2 }
    \left( t - s \right)^{ ( 1 - \vartheta ) }
  }{
    ( 1 - \vartheta )
  }
  .
\end{split}
\end{equation}
Inequality~\eqref{eq:outer_summands} provides us an
appropriate estimate for the second and the third summand on 
the right hand side of \eqref{eq:mild_ito_outer_1st}.
It thus remains to provide a suitable estimate 
for the first summand on the right hand side of 
\eqref{eq:mild_ito_outer_1st}.
For this we will employ Proposition~\ref{prop:mild_ito_maximal_ineq} again
and this will allow us to obtain an appropriate upper bound for  
$
  \big\|
  \ES\big[
    \psi(
      e^{ ( t - s ) A } \, \bar{Y}_s ,
      Y_s
    )
    -
    \psi(
      \bar{Y}_{ s } ,
      Y_{ s }
    )
  \big]
  \big\|_V
$ 
for 
$ ( s, t ) \in \angle $.
More formally, let 
$ \tilde{F}_{ r, s, t } \colon H \times H \times H \to V $, 
$ r \in [ 0, s ) $, 
$ s \in ( 0, t ) $, 
$ t \in ( 0, T ] $, 
be the functions with the property that 
for all 
$ t \in ( 0, T ] $, 
$ s \in ( 0, t ) $, 
$ r \in [ 0, s ) $, 
$ u,v, w \in H $ 
it holds that 
\begin{equation}
\begin{split}
\label{eq:tilde_F_def_1st}
  \tilde{F}_{ r,s,t }(u,v,w)
& =
  \psi_{1,0}\big(
    e^{ ( t - r )A } \, u,
    S_{ r, s } \, v
  \big) \,
  e^{ ( t - r )A } \, F( w )
  -
  \psi_{1,0}\big(
    e^{ ( s - r )A } \, u,
    S_{ r, s } \, v
  \big) \,
  e^{ ( s - r )A } \, F( w )
\\ & 
  +
  \left[
    \psi_{0,1}\big(
      e^{ ( t - r )A } \, u,
      S_{ r, s } \, v
    \big)
    -
    \psi_{0,1}\big(
      e^{ ( s - r )A } \, u,
      S_{ r, s } \, v
    \big)
  \right]
  S_{ r, s } \, R_r \, F( w )
\end{split}
\end{equation} 
and let 
$ \tilde{B}_{ r, s, t } \colon H \times H \times H \to V $, 
$ r \in [ 0, s ) $, 
$ s \in ( 0, t ) $, 
$ t \in ( 0, T ] $, 
be the functions with the property that 
for all 
$ t \in ( 0, T ] $, 
$ s \in ( 0, t ) $, 
$ r \in [ 0, s ) $, 
$ u,v, w \in H $ 
it holds that 
\begin{equation}
\begin{split}
&
  \tilde{B}_{r,s,t}( u, v, w )
=
  \tfrac{1}{2}
  \smallsum\limits_{ b \in \mathbb{U} }
  \psi_{2,0}\big(
     e^{ ( t - r )A } \, u,
     S_{ r, s } \, v
  \big)
  \big(
  e^{ ( t - r )A } \, B^b( w ),
  e^{ ( t - r )A } \, B^b( w )
  \big)
\\ &
-
  \tfrac{1}{2}
  \smallsum\limits_{ b \in \mathbb{U} }
  \psi_{2,0}\big(
     e^{ ( s - r )A } \, u,
     S_{ r, s } \, v
  \big)
  \big(
  e^{ ( s - r )A } \, B^b( w ),
  e^{ ( s - r )A } \, B^b( w )
  \big)
\\ & 
  +
  \tfrac{1}{2}
  \smallsum\limits_{ b \in \mathbb{U} }
  \left[
    \psi_{0,2}\big(
      e^{ ( t - r )A } \, u,
      S_{ r, s } \, v
    \big)
    -
    \psi_{0,2}\big(
      e^{ ( s - r )A } \, u,
      S_{ r, s } \, v
    \big)
  \right]\!\big(
    S_{ r, s } \, R_r \, B^b( w )
    ,
    S_{ r, s } \, R_r \, B^b( w )
  \big)
\\ & +
  \smallsum\limits_{ b \in \mathbb{U} }
  \psi_{1,1}\big(
     e^{ ( t - r )A } \, u,
     S_{ r, s } \, v
  \big)
  \big(
  e^{ ( t - r )A } \, B^b( w ),
  S_{ r, s } \, R_r \, B^b( w )
  \big)
\\ & 
-
  \smallsum\limits_{ b \in \mathbb{U} }
  \psi_{1,1}\big(
     e^{ ( s - r )A } \, u,
     S_{ r, s } \, v
  \big)
  \big(
  e^{ ( s - r )A } \, B^b( w ),
  S_{ r, s } \, R_r \, B^b( w )
  \big)
  .
\end{split}
\end{equation}
An application of Proposition~\ref{prop:mild_ito_maximal_ineq}
then shows\footnote{with 
$ t_0 = 0 $, 
$ T = s $, 
$ \check{H} = H \times H $, 
$ p = q + 1 $,
and
$
  \varphi(x,y)
  =
  \psi( e^{ ( t - s )A } x, y )
  -
  \psi( x, y )
$ 
for $ ( x, y ) \in \check{H} $
in the notation of Proposition~\ref{prop:mild_ito_maximal_ineq}} 
that for all 
$ t \in (0,T] $, $ s \in (0,t) $
it holds that 
\begin{equation}
\label{eq:mild_ito_inner_1st}
\begin{split}
&
  \left\|\E\left[
    \psi(
      e^{ ( t - s )A } \, \bar{Y}_{ s } ,
      Y_{ s }
    )
    -
    \psi(
      \bar{Y}_{ s } ,
      Y_{ s }
    )
  \right]\right\|_V
\leq
  \E\left[\left\|
    \psi\big(
      e^{ t A } \, Y_0
      ,
      S_{ 0, s } \, Y_0
    \big)
    -
    \psi\big(
      e^{ s A } \, Y_0
      ,
      S_{ 0, s } \, Y_0
    \big)
  \right\|_V\right]
\\ & +
  \int^{ s }_0
  \ES\big[\|
    \tilde{F}_{r,s,t}\big(
      \bar{Y}_r , 
      Y_r , 
      Y_{ \floor{ r }{ h } }
    \big)
  \|_V\big]
  \,
  dr
  +
  \int^{ s }_0
  \ES\big[\|
    \tilde{B}_{r,s,t}\big(
      \bar{Y}_r , 
      Y_r , 
      Y_{ \floor{ r }{ h } }
    \big)
  \|_V\big]
  \,
  dr
  .
\end{split}
\end{equation}
In the next step we estimate the summands on the right hand side of~\eqref{eq:mild_ito_inner_1st}. 
We observe that for all 
$ t \in (0,T] $, $ s \in (0,t) $
it holds that 
\begin{equation}
\begin{split}
&
  \left\|
    \psi\big(
    e^{ t A } \, Y_0,
    S_{ 0, s } \, Y_0
    \big)
    -
    \psi\big(
    e^{ s A } \, Y_0,
    S_{ 0, s } \, Y_0
    \big)
  \right\|_V
\\ & \leq 
  \eta
  \max\!\left\{ 
    1 ,
    \| e^{ t A } \, Y_0 \|_H^{ \power }
    ,
    \| S_{ 0, s } \, Y_0 \|_H^{ \power }
  \right\}
  \| e^{ t A } \, Y_0 - e^{ s A } \, Y_0 \|_H
\\ & \leq
  \psiC \,
  |\groupC_0|^{ \power }
  \,
  g_\power( Y_0 ) \,
  \| e^{ t A } - e^{ s A } \|_{ L( H ) }
  \,
  \| Y_0 \|_H
\leq
  \psiC
  \,
  | \groupC_0 |^{ \power }
  \,
  g_{ \power + 1 }( Y_0 ) \,
  \tfrac{
    | \groupC_\rho |^2 \,
    ( t - s )^\rho
  }{
    s^{ \rho }
  }
  .
\end{split}
\end{equation}
This and the fact that 
$
  \ES\big[ 
    g_{ \power + 1 }( Y_0 )
  \big]
  \leq
  K_{ \power + 1 }
$ 
imply that for all 
$ t \in (0,T] $, $ s \in (0,t) $
it holds that 
\begin{equation}
\label{eq:1st_summand_inner}
\begin{split}
&
  \E\left[
  \|
    \psi\big(
      e^{ t A } \, Y_0
      ,
      S_{ 0, s } \, Y_0
    \big)
    -
    \psi\big(
      e^{ s A } \, Y_0
      ,
      S_{ 0, s } \, Y_0
    \big)
  \|_V
  \right]
\leq
  \psiC
  \,
  | \groupC_0 |^{ \power }
  \,
  K_{ \power + 1 } \,
  \tfrac{
    |\groupC_\rho|^2
  }{
    s^{ \rho }
  }
  \left( t - s \right)^{ \rho }
  .
\end{split}
\end{equation}
Inequality \eqref{eq:1st_summand_inner} provides us an appropriate estimate for 
the first summand on the right hand side of~\eqref{eq:mild_ito_inner_1st}. 
In the next step we establish a suitable bound for the second summand on the right hand side of~\eqref{eq:mild_ito_inner_1st}. 
Note that for all 
$ t \in ( 0, T ] $, 
$ s \in ( 0, t ) $, 
$ r \in [ 0, s ) $, 
$ u,v, w \in H $ 
it holds that 
\begin{equation}
\label{eq:mild_ito_2nd_F_A}
\begin{split}
&
  \left\|
    \psi_{1,0}\big(
     e^{ ( t - r )A } \, u,
     S_{ r, s } \, v
    \big) 
    \,
    e^{ ( t - r )A } \, F( w )
    -
    \psi_{1,0}\big(
     e^{ ( s - r )A } \, u,
     S_{ r, s } \, v
    \big) 
    \,
    e^{ ( s - r )A } \, F( w )
  \right\|_V
\\ & \leq
  \left\|
    \big[
      \psi_{1,0}\big(
         e^{ ( t - r )A } \, u,
         S_{ r, s } \, v
      \big)
      -
      \psi_{1,0}\big(
        e^{ ( s - r )A } \, u,
        S_{ r, s } \, v
      \big)
    \big]
    \,
    e^{ ( t - r )A } \, F( w )
  \right\|_V
\\ & +
  \left\|
    \psi_{1,0}\big(
     e^{ ( s - r )A } \, u,
     S_{ r, s } \, v
    \big) \,
    e^{ ( s - r )A } 
    \left(
      e^{ ( t - s )A } - \operatorname{Id}_H 
    \right)
    F( w )
  \right\|_V
\\ & \leq
  \psiC
  \,
  \max\!\big\{
    1
    , 
    \| e^{ ( t - r ) A } u \|^\power_H
    , 
    \| e^{ ( s - r ) A } u \|^\power_H
    , 
    \| S_{ r, s } v \|^\power_H
  \big\}
  \left\|
    \left[ 
      e^{ ( t - r ) A } - e^{ ( s - r ) A } 
    \right] 
    u
  \right\|_H
  \| 
    e^{ ( t - r ) A } F( w )
  \|_{
    H
  }
\\ & +
    \psiC 
    \,
    \max\!\big\{
      1, \| e^{ ( s - r ) A } u \|^\power_H, \| S_{ r, s } v \|^\power_H
    \big\}
    \,
    \left\| 
      e^{ ( s - r ) A }
      \left(
        e^{ ( t - s ) A }
        -
        \operatorname{Id}_H
      \right)
      F( w )
    \right\|_H
\\ & \leq
  \frac{
    \psiC
    \,
    | \groupC_0 |^\power
    \max\!\big\{
      1, \| u \|^\power_H, \| v \|^\power_H
    \big\}
    \,
    | \groupC_\rho |^2
    \,
    ( t - s )^\rho
    \,
    \| u \|_H 
    \,
    \groupC_\vartheta
    \,
    \| F \|_{ \operatorname{Lip}^0( H, H_{ -\vartheta } ) }
    \, 
    g_1( w )
  }{
    ( s - r )^{ \rho }
    \,
    ( t - r )^{ \vartheta }
  }
\\ & +
  \frac{
    \psiC \,
    | \groupC_0 |^\power
    \max\!\big\{
      1, \| u \|^\power_H, \| v \|^\power_H
    \big\}
    \,
    \groupC_{ \rho + \vartheta } 
    \,
    \groupC_\rho 
    \,
    ( t - s )^\rho
    \,
    \| F \|_{ \operatorname{Lip}^0( H, H_{ -\vartheta } ) } 
    \,
    g_1( w )
  }{
    ( s - r )^{ ( \rho + \vartheta ) }
  }
  ,
\end{split}
\end{equation}
\begin{equation}
\label{eq:mild_ito_2nd_F_B}
\begin{split}
&
  \left\|
    \left[
      \psi_{0,1}\big(
        e^{ ( t - r )A } \, u
        ,
        S_{ r, s } \, v
      \big)
      -
      \psi_{0,1}\big(
        e^{ ( s - r )A } \, u
        ,
        S_{ r, s } \, v
      \big)
    \right]
    S_{ r, s } \, R_r \, F( w )
  \right\|_V
\\ & \leq
    \psiC
    \,
    | \groupC_0 |^\power
    \max\!\big\{
      1, \| u \|^\power_H, \| v \|^\power_H
    \big\}
    \,
    \|
      e^{ ( t - r ) A } \, u 
      - 
      e^{ ( s - r ) A } \, u
    \|_H
    \,
    \| S_{ r, s } R_r \|_{ 
      L(
        H_{ - \vartheta }
        ,
        H
      )
    }
    \,
    \| F( w ) \|_{
      H_{ - \vartheta }
    }
\\ & \leq
  \frac{
    \psiC
    \,
    | \groupC_0 |^\power
    \max\!\big\{
      1, \| u \|^\power_H, \| v \|^\power_H
    \big\}
    \,
    | \groupC_\rho |^2
    \,
    ( t - s )^\rho
    \,
    \| u \|_H 
    \,
    \groupC_\vartheta
    \,
    \| F \|_{ \operatorname{Lip}^0( H, H_{ -\vartheta } ) } 
    \,
    g_1( w )
  }{
    ( s - r )^{ ( \rho + \vartheta ) }
  }
\\ & \leq
  \frac{
    \psiC
    \,
    | \groupC_0 |^\power
    \,
    | \groupC_\rho |^2
    \,
    \groupC_\vartheta
    \max\!\big\{
      1, \| u \|^{ \power + 1 }_H, \| v \|^{ \power + 1 }_H
    \big\}
    \,
    \| F \|_{ \operatorname{Lip}^0( H, H_{ -\vartheta } ) } 
    \,
    g_1( w )
    \,
    ( t - s )^\rho
  }{
    ( s - r )^{ ( \rho + \vartheta ) }
  }
  .
\end{split}
\end{equation}
Inequalities~\eqref{eq:mild_ito_2nd_F_A} and \eqref{eq:mild_ito_2nd_F_B} prove that for all 
$ t \in ( 0, T ] $, 
$ s \in ( 0, t ) $, 
$ r \in [ 0, s ) $, 
$ u,v, w \in H $ 
it holds that 
\begin{equation}
\begin{split}
&
  \| 
    \tilde{F}_{r,s,t}( u, v, w ) 
  \|_V
\\ & \leq
  \psiC
  \,
  | \groupC_0 |^{ \power }
  \left[
    \frac{
      | \groupC_\rho |^2
      \,
      \groupC_\vartheta
    }{
      ( s - r )^{ \rho }
      \,
      ( t - r )^{ \vartheta }
    }
    +
    \frac{ 
      \groupC_{ \rho + \vartheta } 
      \,
      \groupC_\rho 
    }{
      ( s - r )^{ ( \rho + \vartheta ) }
    }
    +
    \frac{
      | \groupC_\rho |^2
      \,
      \groupC_\vartheta
    }{
      ( s - r )^{ ( \rho + \vartheta ) }
    }
  \right]
\\ & \quad \cdot
  \max\!\big\{
    1, \| u \|^{\power+1}_H, \| v \|^{\power+1}_H
  \big\}
  \,
  \| F \|_{ \operatorname{Lip}^0( H, H_{ -\vartheta } ) } 
  \,
  g_1( w )
  \,
  ( t - s )^\rho
\\ & \leq
  \psiC
  \,
  | \groupC_0 |^{ \power }
  \left[
    \frac{
      \groupC_\rho
      \,
      (
        2 \, \groupC_\rho \groupC_\vartheta
        +
        \groupC_{ \rho + \vartheta }
      )
    }{
      ( s - r )^{ ( \rho + \vartheta ) }
    } 
  \right]
  \max\!\big\{
    1, \| u \|^{\power+1}_H, \| v \|^{\power+1}_H
  \big\}
  \,
  \| F \|_{ \operatorname{Lip}^0( H, H_{ -\vartheta } ) } 
  \,
  g_1( w )
  \,
  ( t - s )^\rho
  .
\end{split}
\end{equation}
This and \eqref{eq:optimal_moment_1} 
prove that for all 
$ t \in (0,T] $, $ s \in (0,t) $
it holds that 
\begin{equation}
\label{eq:F_summand_inner}
\begin{split}
&
  \int^{ s }_0
  \ES\big[\|
    \tilde{F}_{r,s,t}\big(
      \bar{Y}_r , 
      Y_r , 
      Y_{ \floor{ r }{ h } }
    \big)
  \|_V\big]
  \,
  dr
\\ & \leq
  \frac{
    \psiC
    \,
    | \groupC_0 |^q
    \,
    \groupC_\rho
    \,
    \big(
      2 \, \groupC_\rho \, \groupC_\vartheta 
      +
      \groupC_{ \rho + \vartheta }
    \big)
  }{
    ( 1 - \vartheta - \rho )
  }
  \,
  \| F \|_{ \operatorname{Lip}^0( H, H_{ -\vartheta } ) } 
  \,
  K_{ \power + 2 }
  \,
  ( t - s )^\rho 
  \,
  s^{ ( 1 - \vartheta - \rho ) }
  .
\end{split}
\end{equation}
Next we provide an appropriate bound for the third summand on the right hand side of~\eqref{eq:mild_ito_inner_1st}. 
Observe that 
for all 
$ t \in ( 0, T ] $, 
$ s \in ( 0, t ) $, 
$ r \in [ 0, s ) $, 
$ u,v, w \in H $ 
it holds that 
\allowdisplaybreaks
\begin{align}
\label{eq:prop_temporal_reg_termB1}
&
\nonumber
  \Big\|
  \smallsum\limits_{ b \in \mathbb{U} }
  \psi_{2,0}\big(
     e^{ ( t - r )A } \, u,
     S_{ r, s } \, v
  \big)
  \big(
  e^{ ( t - r )A } \, B^b( w ),
  e^{ ( t - r )A } \, B^b( w )
  \big)
\\ &
\nonumber
  -
  \smallsum\limits_{ b \in \mathbb{U} }
  \psi_{2,0}\big(
     e^{ ( s - r )A } \, u,
     S_{ r, s } \, v
  \big)
  \big(
  e^{ ( s - r )A } \, B^b( w ),
  e^{ ( s - r )A } \, B^b( w )
  \big)
  \Big\|_V
\\ & \leq
\nonumber
  \smallsum\limits_{ b \in \mathbb{U} }
  \left\|
    \big[
      \psi_{2,0}\big(
         e^{ ( t - r )A } \, u,
         S_{ r, s } \, v
      \big)
      -
      \psi_{2,0}\big(
        e^{ ( s - r )A } \, u,
        S_{ r, s } \, v
      \big)
    \big]
    \big(
      e^{ ( t - r )A } \, B^b( w )
      ,
      e^{ ( t - r )A } \, B^b( w )
    \big)
  \right\|_V
\\ & +
  \smallsum\limits_{ b \in \mathbb{U} }
  \left\|
    \psi_{2,0}\big(
      e^{ ( s - r )A } \, u
      ,
      S_{ r, s } \, v
    \big)
    \big(
      (
        e^{ ( t - r )A } + e^{ ( s - r )A }
      )
      \, 
      B^b( w )
      ,
      e^{ ( s - r ) A } 
      \,
      (
        e^{ ( t - s ) A } - \operatorname{Id}_H 
      )
      \,
      B^b( w )
    \big)
  \right\|_V
\\ & \leq
\nonumber
  \frac{
    \psiC
    \,
    | \groupC_0 |^\power
    \max\{
      1, \| u \|^\power_H, \| v \|^\power_H
    \}
    \,
    | \groupC_\rho |^2
    \,
    ( t - s )^\rho
    \,
    \| u \|_H 
    \,
    | \groupC_{ \nicefrac{ \vartheta }{ 2 } } |^2
    \,
    \| B \|^2_{ \operatorname{Lip}^0( H, HS( U, H_{ - \vartheta / 2 } ) ) }
    \, 
    g_2( w )
  }{
    ( s - r )^{ \rho }
    \,
    ( t - r )^{ \vartheta }
  }
\\ & +
\nonumber
  \frac{
    \psiC 
    \,
    | \groupC_0 |^\power
    \max\{
      1, \| u \|^\power_H, \| v \|^\power_H
    \}
    \,
    2 
    \,
    \groupC_{ \nicefrac{ \vartheta }{ 2 } } 
    \, 
    \groupC_{ \rho + \nicefrac{ \vartheta }{ 2 } } 
    \,
    \groupC_\rho
    \,
    ( t - s )^\rho
    \,
    \| B \|^2_{ 
      \operatorname{Lip}^0( 
        H, 
        HS( U, H_{ - \vartheta / 2 } ) 
      ) 
    }
    \,
    g_2( w )
  }{
    ( s - r )^{ ( \rho + \vartheta ) }
  }
  ,
\end{align}
\begin{equation}
\begin{split}
&
  \Big\|
    \smallsum\limits_{ b \in \mathbb{U} }
    \left[
      \psi_{0,2}\big(
        e^{ ( t - r )A } \, u,
        S_{ r, s } \, v
      \big)
      -
      \psi_{0,2}\big(
        e^{ ( s - r )A } \, u,
        S_{ r, s } \, v
      \big)
    \right]
    \!
    \big(
      S_{ r, s } \, R_r \, B^b( w )
      ,
      S_{ r, s } \, R_r \, B^b( w )
    \big)
  \Big\|_V
\\ & \leq
  \frac{
    \psiC
    \,
    | \groupC_0 |^\power
    \max\!\big\{
      1, \| u \|^\power_H, \| v \|^\power_H
    \big\}
    \,
    | \groupC_\rho |^2
    \,
    ( t - s )^\rho
    \,
    \| u \|_H 
    \,
    | \groupC_{ \nicefrac{ \vartheta }{ 2 } } |^2
    \,
    \| B \|^2_{ \operatorname{Lip}^0( H, HS( U, H_{ -\nicefrac{ \vartheta }{ 2 } } ) ) }\,
    g_2( w )
  }{
    ( s - r )^{ ( \rho + \vartheta ) }
  }
  ,
\end{split}
\end{equation}
\allowdisplaybreaks
\begin{align}
\label{eq:prop_temporal_reg_termB3}
&
\nonumber
  \Big\|
  \smallsum\limits_{ b \in \mathbb{U} }
  \psi_{1,1}\big(
     e^{ ( t - r )A } \, u,
     S_{ r, s } \, v
  \big)
  \big(
  e^{ ( t - r )A } \, B^b( w ),
  S_{ r, s } \, R_r \, B^b( w )
  \big)
\\ & 
\nonumber
-
  \smallsum\limits_{ b \in \mathbb{U} }
  \psi_{1,1}\big(
     e^{ ( s - r )A } \, u,
     S_{ r, s } \, v
  \big)
  \big(
    e^{ ( s - r )A } \, B^b( w )
    ,
    S_{ r, s } \, R_r \, B^b( w )
  \big)
  \Big\|_V
\\ & \leq
\nonumber
  \smallsum\limits_{ b \in \mathbb{U} }
  \left|
  \big[
    \psi_{1,1}\big(
      e^{ ( t - r )A } \, u
      ,
      S_{ r, s } \, v
    \big)
    -
    \psi_{1,1}\big(
      e^{ ( s - r )A } \, u
      ,
      S_{ r, s } \, v
    \big)
  \big]
  \big(
    e^{ ( t - r )A } \, B^b( w ),
    S_{ r, s } \, R_r \, B^b( w )
  \big)
  \right|
\\ & \quad +
  \smallsum\limits_{ b \in \mathbb{U} }
  \left|
    \psi_{1,1}\big(
      e^{ ( s - r )A } \, u,
      S_{ r, s } \, v
    \big)
    \big(
      e^{ ( s - r )A } 
      \,
      (
        e^{ ( t - s ) A } - \operatorname{Id}_H
      ) 
      \,
      B^b( w )
      , 
      S_{ r, s } \, 
      R_r \, 
      B^b( w )
    \big)
  \right|
\\ & \leq
\nonumber
  \frac{
    \psiC
    \,
    | \groupC_0 |^\power
    \max\{
      1, \| u \|^\power_H, \| v \|^\power_H
    \}
    \,
    | \groupC_\rho |^2
    \,
    ( t - s )^\rho
    \,
    \| u \|_H 
    \,
    | \groupC_{ \nicefrac{ \vartheta }{ 2 } } |^2
    \,
    \| B \|^2_{ 
      \operatorname{Lip}^0( H, HS( U, H_{ -\nicefrac{ \vartheta }{ 2 } } ) ) 
    }
    \, 
    g_2( w )
  }{
    ( s - r )^{ 
      ( \rho + \nicefrac{ \vartheta }{ 2 } ) 
    }
    \,
    ( t - r )^{ 
      \nicefrac{ \vartheta }{ 2 } 
    }
  }
\\ & \quad +
\nonumber
  \frac{
    \psiC 
    \,
    | \groupC_0 |^\power
    \max\{
      1, \| u \|^\power_H, \| v \|^\power_H
    \}
    \,
    \groupC_{ \rho + \nicefrac{ \vartheta }{ 2 } } 
    \,
    \groupC_\rho
    \,
    ( t - s )^\rho
    \,
    \groupC_{ \nicefrac{ \vartheta }{ 2 } } 
    \,
    \| B \|^2_{ 
      \operatorname{Lip}^0( H, HS( U, H_{ - \vartheta / 2 } ) ) 
    }
    \,
    g_2( w )
  }{
    ( s - r )^{ ( \rho + \vartheta ) }
  }
  .
\end{align}
Inequalities~\eqref{eq:prop_temporal_reg_termB1}--\eqref{eq:prop_temporal_reg_termB3} 
imply that 
for all 
$ t \in ( 0, T ] $, 
$ s \in ( 0, t ) $, 
$ r \in [ 0, s ) $, 
$ u,v, w \in H $ 
it holds that 
\begin{equation}
\begin{split}
&
  \| 
    \tilde{B}_{r,s,t}( u, v, w ) 
  \|_V
\leq
  \psiC 
  \,
  | \groupC_0 |^\power
  \max\!\big\{
    1, \| u \|^{ \power + 1 }_H, \| v \|^{ \power + 1 }_H
  \big\}
  \left(
    t - s 
  \right)^{ \rho }
  \| B \|^2_{ 
    \operatorname{Lip}^0( H, HS( U, H_{ - \vartheta / 2 } ) ) 
  }
  \,
  g_2( w )
\\ & 
  \cdot
  \left[
    \tfrac{
      \frac{ 1 }{ 2 } 
      \,
      | C_{ \rho } |^2
      \,
      | \groupC_{ \nicefrac{ \vartheta }{ 2 } } |^2
    }{
      ( s - r )^{ \rho }
      \,
      ( t - r )^{ \vartheta }
    }
    +
    \tfrac{
      \frac{ 1 }{ 2 } 
      \,
      2 
      \,
      \groupC_{ \nicefrac{ \vartheta }{ 2 } } 
      \, 
      \groupC_{ \rho + \nicefrac{ \vartheta }{ 2 } } 
      \,
      \groupC_\rho
    }{
      ( s - r )^{ ( \rho + \vartheta ) }
    }
    +
    \tfrac{
      \frac{ 1 }{ 2 } 
      \,
      | C_{ \rho } |^2
      \,
      | \groupC_{ \nicefrac{ \vartheta }{ 2 } } |^2
    }{
      ( s - r )^{ ( \rho + \vartheta ) }
    }
    +
    \tfrac{
      | C_{ \rho } |^2
      \,
      | \groupC_{ \nicefrac{ \vartheta }{ 2 } } |^2
    }{
      ( s - r )^{ 
        ( 
          \rho + 
          \nicefrac{ \vartheta }{ 2 }
        ) 
      }
      \,
      ( t - r )^{
        \nicefrac{ \vartheta }{ 2 }
      }
    }
    +
    \tfrac{
      \groupC_{ \rho + \nicefrac{ \vartheta }{ 2 } } 
      \, 
      \groupC_\rho
      \,
      \groupC_{ \nicefrac{ \vartheta }{ 2 } } 
    }{
      ( s - r )^{ ( \rho + \vartheta ) }
    }
  \right]
\\ & \leq
  \psiC 
  \,
  | \groupC_0 |^\power
  \max\!\big\{
    1, \| u \|^{ \power + 1 }_H, \| v \|^{ \power + 1 }_H
  \big\}
  \left(
    t - s 
  \right)^{ \rho }
  \| B \|^2_{ 
    \operatorname{Lip}^0( H, HS( U, H_{ - \vartheta / 2 } ) ) 
  }
  \,
  g_2( w )
\\ & \cdot
  \frac{
    2 
    \, 
    \groupC_\rho 
    \,
    \groupC_{ 
      \nicefrac{ \vartheta }{ 2 } 
    }
    \,
    \big(
      \groupC_\rho 
      \, 
      \groupC_{ \nicefrac{ \vartheta }{ 2 } } 
      + 
      \groupC_{ 
        \rho + \nicefrac{ \vartheta }{ 2 } 
      }
    \big)
  }{ 
    ( s - r )^{ ( \rho + \vartheta ) } 
  }
  .
\end{split}
\end{equation}
This and \eqref{eq:optimal_moment_1} prove that for all 
$ t \in (0,T] $, $ s \in (0,t) $
it holds that 
\begin{equation}\label{eq:B_summand_inner}
\begin{split}
&
  \int^{ s }_0
  \big\|
  \ES\big[
    \tilde{B}_{r,s,t}(
      \bar{Y}_r, 
      Y_r, 
      Y_{ \floor{ r }{ h } }
    )
  \big]
  \big\|_V
  \,
  dr
\\ & \leq
    \psiC 
    \,
    | \groupC_0 |^\power
    \,
    K_{ \power + 3 }
    \,
  ( t - s )^\rho 
  \, 
  \| B \|^2_{ 
    \operatorname{Lip}^0( H, HS( U, H_{ -\nicefrac{ \vartheta }{ 2 } } ) ) 
  }
  \,
  \frac{
    2 
    \, 
    \groupC_\rho 
    \,
    \groupC_{ \nicefrac{ \vartheta }{ 2 } }
    \,
    \big(
      \groupC_\rho \, \groupC_{ \nicefrac{ \vartheta }{ 2 } } 
      + 
      \groupC_{ \rho + \nicefrac{ \vartheta }{ 2 } }
    \big)
    \,
    s^{ ( 1 - \vartheta - \rho ) }
  }{ 
    ( 1 - \vartheta - \rho ) 
  }
  .
\end{split}
\end{equation}
Combining \eqref{eq:mild_ito_inner_1st}
with the estimates 
\eqref{eq:1st_summand_inner}, \eqref{eq:F_summand_inner}, 
and \eqref{eq:B_summand_inner} 
yields that 
for all $ t \in (0,T] $, $ s \in (0,t) $ it holds that 
\begin{equation}
\label{eq:prop_temporal_reg_term1A}
\begin{split}
&
  \left\|
  \E\left[
    \psi(
      e^{ ( t - s )A } \, \bar{Y}_{ s } ,
      Y_{ s }
    )
    -
    \psi(
      \bar{Y}_{ s } ,
      Y_{ s }
    )
  \right]
  \right\|_V
\leq
  \psiC
  \,
  | \groupC_0 |^{ \power }
  \,
  K_{ \power + 1 } \,
  \tfrac{
    |\groupC_\rho|^2
  }{
    s^{ \rho }
  }
  \left( t - s \right)^{ \rho }
\\ & 
  +
  \frac{
    \psiC
    \,
    | \groupC_0 |^q
    \,
    \groupC_\rho
    \left(
      2 \, \groupC_\rho \groupC_\vartheta 
      +
      \groupC_{ \rho + \vartheta }
    \right)
  }{
    ( 1 - \vartheta - \rho )
  }
  \,
  \| F \|_{ \operatorname{Lip}^0( H, H_{ -\vartheta } ) } 
  \,
  K_{ \power + 2 }
  \,
  ( t - s )^\rho 
  \,
  s^{ ( 1 - \vartheta - \rho ) }
\\ & +
  \frac{
    \psiC 
    \,
    | \groupC_0 |^\power
    \,
    2 
    \, 
    \groupC_\rho 
    \,
    \groupC_{ \nicefrac{ \vartheta }{ 2 } }
    \,
    \big(
      \groupC_\rho \, \groupC_{ \nicefrac{ \vartheta }{ 2 } } 
      + 
      \groupC_{ \rho + \nicefrac{ \vartheta }{ 2 } }
    \big)
  }{ 
    ( 1 - \vartheta - \rho ) 
  }
  \, 
  \| B \|^2_{ 
    \operatorname{Lip}^0( H, HS( U, H_{ -\nicefrac{ \vartheta }{ 2 } } ) ) 
  }
  \,
    K_{ \power + 3 }
    \,
  ( t - s )^\rho 
    \,
    s^{ ( 1 - \vartheta - \rho ) }
\\ & \leq
  \psiC
  \,
  | \groupC_0 |^q
  \,
  \varsigma_{ F, B }
  \,
  K_{ q + 3 }
  \left( t - s \right)^{ \rho }
  \left[ 
    \tfrac{ 
      | C_{ \rho } |^2 
    }{
      s^{ \rho }
    }
    +
    \tfrac{
      \groupC_\rho
      \,
      \left(
        2 \, \groupC_\rho \groupC_\vartheta 
        +
        \groupC_{ \rho + \vartheta }
        +
        2 \, 
        \groupC_\rho \, | \groupC_{ \vartheta / 2 } |^2
        + 
        2 \,
        \groupC_{ \rho + \vartheta / 2 }
        \,
        \groupC_{ \vartheta / 2 }
      \right)
      \,
      s^{
        ( 1 - \vartheta - \rho )
      }
    }{
      ( 1 - \vartheta - \rho )
    }
  \right]
\\ & \leq
  \psiC
  \,
  | \groupC_{ \rho } |^2 
  \,
  | \groupC_0 |^q
  \,
  \varsigma_{ F, B }
  \,
  K_{ q + 3 }
  \left( t - s \right)^{ \rho }
  \left[ 
    \tfrac{ 
      1
    }{
      s^{ \rho }
    }
    +
    \tfrac{
      \left(
        2 \, 
        \groupC_\vartheta 
        +
        \groupC_{ \rho + \vartheta }
        +
        2 \, 
        | \groupC_{ \vartheta / 2 } |^2
        + 
        2 \,
        \groupC_{ \rho + \vartheta / 2 }
        \,
        \groupC_{ \vartheta / 2 }
      \right)
      \,
      s^{
        ( 1 - \vartheta - \rho )
      }
    }{
      ( 1 - \vartheta - \rho )
    }
  \right]
  .
\end{split}
\end{equation}
In addition, we note that 
for all $ (s,t) \in \angle $ 
it holds that 
\begin{equation}
\label{eq:initial_value_t=0_1st}
\begin{split}
&
  \left\|
  \E\left[
    \psi(
      e^{ (t - s) A } \, \bar{Y}_s ,
      Y_s
    )
    -
    \psi(
      \bar{Y}_s ,
      Y_s
    )
  \right]
  \right\|_V
\\ & \leq
  \eta
  \,
  \E\left[ 
    \max\!\left\{ 
      1 ,
      \| e^{ ( t - s ) A } \bar{Y}_s \|^{ \power }_H
      ,
      \| \bar{Y}_s \|^{ \power }_H
      ,
      \| Y_s \|^{ \power }_H
    \right\}
    \|
      e^{ ( t - s ) A } - \operatorname{Id}_H 
    \|_{ L(H) }
    \,
    \| 
      \bar{Y}_s
    \|_H
  \right]
\\ & \leq
  \eta
  \left|
  C_0
  \right|^{ ( \power + 1 ) }
  \E\left[ 
    \max\!\left\{ 
      1 ,
      \| \bar{Y}_s \|^{ \power }_H
      ,
      \| Y_s \|^{ \power }_H
    \right\}
    \| 
      \bar{Y}_s 
    \|_H
  \right]
\\ & \leq
  \eta
  \left|
  C_0
  \right|^{ ( \power + 1 ) }
  \E\left[ 
    \max\!\left\{ 
      1 ,
      \| \bar{Y}_s \|^{ \power + 1 }_H
      ,
      \| Y_s \|^{ \power + 1 }_H
    \right\}
  \right]
\leq
  \psiC \, 
  | \groupC_0 |^{ ( \power + 1 ) }
  K_{ \power + 1 }
  .
\end{split}
\end{equation}
Combining this with 
\eqref{eq:prop_temporal_reg_term1A}
proves that for all
$ ( s, t ) \in \angle $
it holds that
\begin{equation}
\begin{split}
&
  \left\|
  \E\left[
    \psi(
      e^{ (t - s) A } \, \bar{Y}_s ,
      Y_s
    )
    -
    \psi(
      \bar{Y}_s ,
      Y_s
    )
  \right]
  \right\|_V
\\ & \leq
  \psiC
  \,
  | \groupC_{ \rho } |^2 
  \,
  | \groupC_0 |^{ ( \power + 1 ) }
  \,
  \varsigma_{ F, B }
  \,
  K_{ q + 3 }
  \left[
    \min\!\left\{ 
      1 ,
      \tfrac{ 
        \left( t - s \right)^{ \rho }
      }{
        s^{ \rho }
      }
    \right\}
    +
    \tfrac{
      \left( t - s \right)^{ \rho }
      \,
      \left(
        2 \, 
        \groupC_\vartheta 
        +
        \groupC_{ \rho + \vartheta }
        +
        2 \, 
        | \groupC_{ \vartheta / 2 } |^2
        + 
        2 \,
        \groupC_{ \rho + \vartheta / 2 }
        \,
        \groupC_{ \vartheta / 2 }
      \right)
      \,
      s^{
        ( 1 - \vartheta - \rho )
      }
    }{
      ( 1 - \vartheta - \rho )
    }
  \right]
\\ & =
  \psiC
  \,
  | \groupC_{ \rho } |^2 
  \,
  | \groupC_0 |^{ ( \power + 1 ) }
  \,
  \varsigma_{ F, B }
  \,
  K_{ q + 3 }
\\ & \quad \cdot
  \left[
    \mathbbm{1}_{
      [ \frac{ t }{ 2 } , T ]
    }( s )
    \cdot
      \tfrac{ 
        \left( t - s \right)^{ \rho }
      }{
        s^{ \rho }
      }
    +
    \mathbbm{1}_{
      [ 0, \frac{ t }{ 2 } )
    }( s )
    \cdot
    \tfrac{
      \left( t - s \right)^{ \rho }
    }{
      \left( t - s \right)^{ \rho }
    }
    +
    \tfrac{
        \left( t - s \right)^{ \rho }
        \,
      \left(
        2 \, 
        \groupC_\vartheta 
        +
        \groupC_{ \rho + \vartheta }
        +
        2 \, 
        | \groupC_{ \vartheta / 2 } |^2
        + 
        2 \,
        \groupC_{ \rho + \vartheta / 2 }
        \,
        \groupC_{ \vartheta / 2 }
      \right)
      \,
      s^{
        ( 1 - \vartheta - \rho )
      }
    }{
      ( 1 - \vartheta - \rho )
    }
  \right]
\\ & \leq
  \psiC
  \,
  | \groupC_{ \rho } |^2 
  \,
  | \groupC_0 |^{ ( \power + 1 ) }
  \,
  \varsigma_{ F, B }
  \,
  K_{ q + 3 }
  \left[
      \tfrac{ 
        \left( t - s \right)^{ \rho }
      }{
        \left( t / 2 \right)^{ \rho }
      }
    +
    \tfrac{
        \left( t - s \right)^{ \rho }
        \,
      \left(
        2 \, 
        \groupC_\vartheta 
        +
        \groupC_{ \rho + \vartheta }
        +
        2 \, 
        | \groupC_{ \vartheta / 2 } |^2
        + 
        2 \,
        \groupC_{ \rho + \vartheta / 2 }
        \,
        \groupC_{ \vartheta / 2 }
      \right)
      \,
      s^{
        ( 1 - \vartheta - \rho )
      }
    }{
      ( 1 - \vartheta - \rho )
    }
  \right]
  .
\end{split}
\end{equation}
Combining this, \eqref{eq:outer_summands}, 
and \eqref{eq:mild_ito_outer_1st} establishes that
for all $ ( s, t ) \in \angle $
it holds that
\begin{equation}
\begin{split}
\label{eq:mild_ito_2nd_case_1st}
&
  \left\|
  \E\left[
    \psi\big(
    \bar{Y}_{ t },
    Y_{ s }
    \big)
    -
    \psi\big(
    \bar{Y}_{ s },
    Y_{ s }
    \big)
  \right]
  \right\|_V
\leq
  \psiC \,
  | \groupC_0 |^{ ( \power + 1 ) } \, 
  | \groupC_\rho |^2
  \,
  \varsigma_{ F, B } \,
  K_{ \power + 3 } 
  \,
  ( t - s )^\rho
\\ & \cdot
  \bigg[
    \big|
      \tfrac{
        2
      }{
        t
      }
    \big|^\rho
    +
    \tfrac{
      \left(
        2 
        \, 
        \groupC_\vartheta 
        +
        \groupC_{ \rho + \vartheta }
        +
        2 
        \, 
        | \groupC_{ \vartheta / 2 } |^2
        + 
        2 
        \,
        \groupC_{ \rho + \vartheta / 2 }
        \,
        \groupC_{ \vartheta / 2 }
      \right)
      \,
      s^{ ( 1 - \vartheta - \rho ) }
      +
      \left( 
        \groupC_\vartheta 
        +
        \frac{ 1 }{ 2 }
        | \groupC_{ \nicefrac{ \vartheta }{ 2 } } |^2 
      \right)
      \,
      \left| 
        t - s 
      \right|^{ 
        ( 1 - \vartheta - \rho ) 
      }
    }{ 
      ( 1 - \vartheta - \rho ) 
    }
  \bigg]
  .
\end{split}
\end{equation}
The proof of Proposition~\ref{prop:weak_temporal_regularity_1st} is thus completed.
\end{proof}

\subsection{Analysis of the weak distance between Euler-type approximations and their semilinear integrated counterparts}
\label{sec:weak_distance_semilinear}

\begin{lemma}[Analysis of the analytically weak distance between Euler-type approximations and their semilinear integrated counterparts]
\label{lem:euler_integrated_strong}
Assume the setting in Section~\ref{sec:setting_weak_temporal_regularity} 
and let 
$ \rho \in [ 0, 1 ) $, 
$ 
  \varrho \in 
  \big[ 
    0, 
    1 - [ \frac{ 1 + \vartheta }{ 2 } - \rho ]^+ 
  \big) 
$, 
$ t \in (0,T] $.
Then 
\begin{align}
&
  \| Y_t - \bar{ Y }_t \|_{ \lpn{ p }{ \P }{ H_{ -\rho } } }
\leq
  \left| 
    K_p 
  \right|^{ 
    \frac{ 1 }{ p } 
  } 
  h^\varrho
\\ & \cdot
\nonumber
  \left[
    \tfrac{
      \groupC_{ -\rho, \varrho } 
    }{ 
      t^{ ( \varrho - \rho )^+ } 
    }
    +
    \tfrac{
      \groupC_{ \vartheta, -\rho, \varrho } 
      \, 
      t^{ 
        1 - ( \vartheta + \varrho - \rho )^+ 
      }
      \|
        F
      \|_{
        \operatorname{Lip}^0( H, H_{ -\vartheta } )
      } 
    }{ 
      ( 1 - ( \vartheta + \varrho - \rho )^+ ) 
    }
    +
    \tfrac{
      \groupC_{ \vartheta / 2 , - \rho, \varrho } 
      \,
      \sqrt{ 
        p \, (p - 1) 
        \, 
        t^{ 
          1 - ( \vartheta + 2 \varrho - 2 \rho )^+ 
        } 
      }
      \,
      \|
        B
      \|_{
        \operatorname{Lip}^0(
          H, HS( U, H_{ - \vartheta / 2 } )
        )
      } 
  }{ \sqrt{ 2 - 2( \vartheta + 2\varrho - 2\rho )^+ } }
  \right]
  .
\end{align}
\end{lemma}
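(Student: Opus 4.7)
The plan is to subtract the mild representations of $Y_t$ and $\bar{Y}_t$, pass to $\lpn{p}{\P}{H_{-\rho}}$-norms, and apply the evolution-family bounds of Subsection~\ref{sec:semigroup_setting} termwise. Using $\bar{Y}_0 = Y_0$, the identity
\begin{equation}
\label{eq:plan_diff}
  Y_t - \bar{Y}_t
  =
  ( S_{ 0, t } - e^{ t A } ) \, Y_0
  +
  \int_0^t ( S_{ s, t } \, R_s - e^{ ( t - s ) A } ) \, F( Y_{ \floor{ s }{ h } } ) \, ds
  +
  \int_0^t ( S_{ s, t } \, R_s - e^{ ( t - s ) A } ) \, B( Y_{ \floor{ s }{ h } } ) \, dW_s
\end{equation}
is immediate, and the Minkowski inequality together with the Burkholder--Davis--Gundy type inequality (Lemma~7.7 in Da Prato \citationand\ Zabczyk~\cite{dz92}) then dominates $\| Y_t - \bar{Y}_t \|_{ \lpn{p}{\P}{H_{-\rho}} }$ by the sum of three explicit integrals, which I denote $I_0$, $I_F$, and $I_B$.

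Next I would dispatch each summand by a single evolution-family estimate from Subsection~\ref{sec:semigroup_setting}. For $I_0$, the bound $\| S_{ 0, t } - e^{ t A } \|_{ L( H, H_{ -\rho } ) } \leq \groupC_{ -\rho, \varrho } \, h^{ \varrho } \, t^{ -( \varrho - \rho )^+ }$ together with $\| Y_0 \|_{ \lpn{p}{\P}{H} } \leq | K_p |^{ 1 / p }$ suffices. For $I_F$, I would use $\| S_{ s, t } \, R_s - e^{ ( t - s ) A } \|_{ L( H_{-\vartheta}, H_{-\rho} ) } \leq \groupC_{ \vartheta, -\rho, \varrho } \, h^{ \varrho } \, ( t - s )^{ -( \vartheta + \varrho - \rho )^+ }$, the sublinear growth $\| F( v ) \|_{ H_{ -\vartheta } } \leq \| F \|_{ \operatorname{Lip}^0( H, H_{ -\vartheta } ) } \max\{ 1 , \| v \|_H \}$, and $\| \max\{ 1, \| Y_{ \floor{ s }{ h } } \|_H \} \|_{ \lpn{p}{\P}{\R} } \leq | K_p |^{ 1 / p }$. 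For $I_B$, the analogous estimate with source $H_{ -\vartheta/2 }$ gives $\| S_{ s, t } \, R_s - e^{ ( t - s ) A } \|_{ L( H_{ -\vartheta/2 }, H_{ -\rho } ) } \leq \groupC_{ \vartheta/2, -\rho, \varrho } \, h^{ \varrho } \, ( t - s )^{ -( \vartheta/2 + \varrho - \rho )^+ }$, which combines with the BDG factor $\sqrt{ p ( p - 1 ) / 2 }$ and the analogous Hilbert--Schmidt growth bound on $B$.

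Computing the remaining deterministic integrals then produces exactly the factors $t^{ 1 - ( \vartheta + \varrho - \rho )^+ } / ( 1 - ( \vartheta + \varrho - \rho )^+ )$ and $\sqrt{ t^{ 1 - ( \vartheta + 2 \varrho - 2 \rho )^+ } / ( 2 - 2 ( \vartheta + 2 \varrho - 2 \rho )^+ ) }$ appearing in the statement, where I use that $2 ( \vartheta/2 + \varrho - \rho )^+ = ( \vartheta + 2 \varrho - 2 \rho )^+$. Both denominators are positive precisely because of the hypothesis $\varrho < 1 - [ ( 1 + \vartheta ) / 2 - \rho ]^+$ (the binding constraint comes from the diffusion term), and summing the three contributions yields the asserted inequality.

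The only genuine obstacle is bookkeeping: one must match the target regularity $-\rho$ to the triple-indexed constant $\groupC_{ r, \tilde{r}, \varrho }$ from Subsection~\ref{sec:semigroup_setting} by choosing $r = \vartheta$ respectively $r = \vartheta / 2$ together with $\tilde{r} = -\rho$, and then verify that each singularity exponent $( \varrho - \rho )^+$, $( \vartheta + \varrho - \rho )^+$, $( \vartheta + 2 \varrho - 2 \rho )^+$ stays strictly below $1$ so that the integrals converge. No Gronwall argument, mild It\^{o} formula, or application of the a priori or perturbation results of Sections~\ref{sec:strong_a_priori}--\ref{sec:strong_convergence} is required for this lemma.
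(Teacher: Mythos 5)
Your proposal is correct and follows essentially the same route as the paper's proof: the same three-term decomposition (initial value, drift, noise), the same evolution-family constants $\groupC_{-\rho,\varrho}$, $\groupC_{\vartheta,-\rho,\varrho}$, $\groupC_{\nicefrac{\vartheta}{2},-\rho,\varrho}$, the Burkholder--Davis--Gundy type inequality from Lemma~7.7 in Da Prato \citationand\ Zabczyk, and the bound of $F$, $B$, and $Y_0$ through $|K_p|^{1/p}$. Your exponent bookkeeping, including that the diffusion term is the binding constraint on $\varrho$, also matches the paper, so no gap remains.
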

\begin{proof}
First of all, we observe that 
\begin{equation}\label{eq:strong_integral_decomposition}
\begin{split}
&
  \left\|
    Y_t - \bar{Y}_t
  \right\|_{
    \lpn{p}{\P}{H_{ -\rho }}
  }
\leq
  \left\|
    \int^t_0
    \left(
      S_{ s, t } \, R_s - e^{ ( t - s ) A }
    \right)
    F( Y_{ \floor{ s }{ h } } )
    \, ds
  \right\|_{
    \lpn{p}{\P}{H_{ -\rho }}
  }
\\ & +
  \left\|
    \int^t_0
    \left(
      S_{ s, t } \, R_s 
      - 
      e^{ ( t - s ) A }
    \right)
    B( Y_{ \floor{ s }{ h } } )
    \, dW_s
  \right\|_{
    \lpn{p}{\P}{ H_{ - \rho } }
  }
+
  \left\|
    \left( 
      S_{ 0, t } - e^{ t A } 
    \right)
    Y_0
  \right\|_{
    \lpn{p}{\P}{H_{ -\rho }}
  }
  .
\end{split}
\end{equation}
Next we note that 
\begin{equation}\label{eq:analytically_weak_initial}
\begin{split}
&
  \left\|
    ( 
      S_{ 0, t } - e^{ t A } 
    )
    Y_0
  \right\|_{
    \lpn{p}{\P}{ H_{ - \rho } }
  }
\leq
  \| S_{ 0, t } - e^{ t A } \|_{ L( H, H_{-\rho} ) } \,
  \| Y_0 \|_{ \lpn{ p }{ \P }{ H } }
\leq
  \frac{
    \groupC_{ -\rho, \varrho } 
  }{ 
    t^{ ( \varrho - \rho )^+ } 
  }
  \, \| Y_0 \|_{ \lpn{ p }{ \P }{ H } } \, h^\varrho
\\ & \leq
  \frac{
  \groupC_{ -\rho, \varrho } 
  }{ t^{ ( \varrho - \rho )^+ } } \,
  | K_p |^{ \frac{ 1 }{ p } } \, h^\varrho
\end{split}
\end{equation}
and 
\begin{equation}
\label{eq:strong_integral_decomposition_2nd}
\begin{split}
&
  \left\|
    \int^t_0
    \left(
      S_{ s, t } \, R_s - e^{ ( t - s ) A }
    \right)
    F( Y_{ \floor{ s }{ h } } )
    \, ds
  \right\|_{
    \lpn{p}{\P}{ H_{ - \rho } }
  }
\leq
  \int^t_0
  \left\|
    (
      S_{ s, t } \, R_s - e^{ ( t - s ) A }
    )
    F( Y_{ \floor{ s }{ h } } )
  \right\|_{
    \lpn{p}{\P}{H_{ -\rho }}
  }
  ds
\\ & \leq
  \int^t_0
  \frac{
    \groupC_{ \vartheta, -\rho, \varrho } 
    \, h^\varrho
    \,
    \|
      F( 
        Y_{ \floor{ s }{ h } } 
      )
    \|_{
      \lpn{p}{\P}{H_{ -\vartheta }}
    }
  }{ 
    ( t - s )^{ ( \vartheta + \varrho - \rho )^+ } 
  }
  \, ds
\leq
  \frac{
    \groupC_{ \vartheta, -\rho, \varrho } 
    \, 
    t^{ 1 - ( \vartheta + \varrho - \rho )^+ }
  }{ 
    ( 
      1 - ( \vartheta + \varrho - \rho )^+ 
    ) 
  }
  \left\|
    F
  \right\|_{
    \operatorname{Lip}^0( H, H_{ -\vartheta } )
  }
  | K_p |^{ \frac{ 1 }{ p } } \,
  h^\varrho
  .
\end{split}
\end{equation}
Moreover, observe that the Burkholder-Davis-Gundy type inequality 
in Lemma~7.7 in Da Prato \& Zabczyk~\cite{dz92} proves that 
\begin{equation}
\label{eq:strong_integral_decomposition_3rd}
\begin{split}
  &\left\|
  \int^t_0
  (
  S_{ s, t } \, R_s - e^{ ( t - s ) A }
  )
  B\big( Y_{ \floor{ s }{ h } } \big)
  \, dW_s
  \right\|_{
  \lpn{p}{\P}{H_{ -\rho }}
  }
\\ & \leq
  \left[
    \frac{ p \, ( p - 1 ) }{ 2 }
    \int^t_0
    \left\|
      \left(
        S_{ s, t } \, R_s - e^{ ( t - s ) A }
      \right)
      B( Y_{ \floor{ s }{ h } } )
    \right\|_{
      \lpn{p}{\P}{
        HS( U, H_{ -\rho } )
      }
    }^2
    ds
  \right]^{ 1 / 2 }
\\ & \leq
  \left[
    \frac{ p \, ( p - 1 ) }{ 2 }
    \int^t_0
    \frac{
    |\groupC_{ \nicefrac{ \vartheta }{ 2 }, -\rho, \varrho }|^2 \, h^{2\varrho} \,
    \|
      B( Y_{ \floor{ s }{ h } } )
    \|_{
      \lpn{p}{\P}{
        HS( U, H_{ -\nicefrac{ \vartheta }{ 2 } } )
      }
    }^2
    }{ ( t - s )^{ ( \vartheta + 2\varrho - 2\rho )^+ } }
    \, ds
  \right]^{ 1 / 2 }
\\ & \leq
  \frac{
  \groupC_{ \nicefrac{ \vartheta }{ 2 }, -\rho, \varrho } 
  \sqrt{ p \, (p-1) \, t^{ 1 - ( \vartheta + 2\varrho - 2\rho )^+ } }
  }{ \sqrt{ 2 - 2( \vartheta + 2\varrho - 2\rho )^+ } }
  \left\|
  B
  \right\|_{
  \operatorname{Lip}^0(
  H, HS( U, H_{ -\nicefrac{ \vartheta }{ 2 } } )
  )
  }
  | K_p |^{ \frac{ 1 }{ p } } \, h^\varrho
  .
\end{split}
\end{equation}
Combining \eqref{eq:strong_integral_decomposition}--\eqref{eq:strong_integral_decomposition_3rd} 
completes the proof of Lemma~\ref{lem:euler_integrated_strong}.
\end{proof}

\begin{proposition}[Weak distance between Euler-type approximations and 
their semilinear integrated counterparts]
\label{prop:weak_temporal_regularity_2nd}
Assume the setting in Section~\ref{sec:setting_weak_temporal_regularity} and let 
$ \psiC \in [ 0, \infty ) $, 
$ \power \in [ 0, \infty ) \cap ( -\infty, p - 3 ] $, 
$ \rho \in [ 0, 1 - \vartheta ) $, 
$ \psi = ( \psi(x,y) )_{ x, y \in H } \in C^2( H \times H, V ) $ 
satisfy that for all 
$ x, y_1, y_2 \in H $, 
$ i, j \in \N_0 $ 
with 
$ i + j \leq 2 $
it holds that 
\begin{equation*}
  \big\|
    \big(
     \tfrac{ \partial^{ ( i + j ) } }{ \partial x^i \partial y^j  }
     \psi
    \big)
    ( x, y_1 )
  -
    \big(
     \tfrac{ \partial^{ ( i + j ) } }{ \partial x^i \partial y^j  }
     \psi
    \big)
    ( x, y_2 )
  \big\|_{ L^{ ( i + j ) }( H, V ) }
\leq
  \psiC
  \,
  \max\{ 1, \| x \|^\power_H, \| y_1 \|^\power_H, \| y_2 \|^\power_H \}
  \,
  \| y_1 - y_2 \|_H
  .
\end{equation*}
Then for all $ t \in ( 0, T ] $ it holds that 
$
  \ES\big[
    \| 
      \psi( \bar{Y}_t, Y_t )
      -
      \psi( \bar{Y}_t, \bar{Y}_t ) 
    \|_V
  \big]
  < \infty
$ 
and 
\begin{equation}
\label{eq:weak_distance}
\begin{split}
&
  \left\|
  \E\left[
    \psi\big(
    \bar{Y}_{ t },
    Y_{ t }
    \big)
    -
    \psi\big(
    \bar{Y}_{ t },
    \bar{Y}_{ t }
    \big)
  \right]\right\|_V
\leq
  \psiC
  \,
  | \groupC_0 |^\power
  \,
  \varsigma_{ F, B } \,
  K_{ \power + 3 } \, h^\rho
\\ & \cdot
  \Bigg[
    \tfrac{
      \groupC_{ 0, \rho }
    }{
      t^\rho
    }
    +
    \tfrac{ t^{ ( 1 - \vartheta - \rho ) } }{ ( 1 - \vartheta - \rho ) }
  \bigg(
      \groupC_{ \vartheta, 0, \rho } 
    + 
      2 \, \groupC_{ \nicefrac{ \vartheta }{ 2 } } \, 
      \groupC_{ \nicefrac{ \vartheta }{ 2 }, 0, \rho }
    +
      2 \, ( | \groupC_{ \nicefrac{ \vartheta }{ 2 } } |^2 + \groupC_\vartheta ) \, \groupC_{ 0, \rho }
    +
      2 \, ( | \groupC_{ \nicefrac{ \vartheta }{ 2 } } |^2 + \groupC_\vartheta ) 
      \, \groupC_\rho
\\ & \cdot
  \Big[
    \groupC_{ -\rho, \rho }
    +
    \tfrac{
    \groupC_{ \vartheta, -\rho, \rho } \, t^{ ( 1 - \vartheta ) } \,
    \| F \|_{ \operatorname{Lip}^0( H, H_{ -\vartheta } ) }
    }{ ( 1 - \vartheta ) }
    +
    \tfrac{
    \groupC_{ \nicefrac{\vartheta}{2}, -\rho, \rho } \, 
    \sqrt{ (\power+3)(\power+2) \, t^{ ( 1 - \vartheta ) } }
    \| B \|_{ \operatorname{Lip}^0( H, HS( U, H_{ -\nicefrac{\vartheta}{2} } ) ) }
    }{
    \sqrt{ 2 - 2 \vartheta }
    }
  \Big]
  \bigg)
  \Bigg]
  .
\end{split}
\end{equation}
\end{proposition}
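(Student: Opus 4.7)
My plan is to apply Proposition~\ref{prop:mild_ito_maximal_ineq} to a product mild It\^o process. I form the joint process $(\bar{Y}_s, Y_s)_{s \in [0, t]}$ on $H \oplus H$: by the mild representations of $Y$ and $\bar{Y}$ and the semigroup/cocycle properties of $e^{\cdot A}$ and $S$, this is a mild It\^o process with product evolution family $\tilde{S}_{s, r}(x, y) = (e^{(r - s)A} x, S_{s, r} y)$, joint mild drift $(F(Y_{\floor{s}{h}}), R_s F(Y_{\floor{s}{h}}))$ and joint mild diffusion $(B(Y_{\floor{s}{h}}), R_s B(Y_{\floor{s}{h}}))$. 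As test function I take
\[ \varphi(x, y) := \psi(x, y) - \psi(x, x), \]
so that $\varphi(\bar{Y}_t, Y_t) = \psi(\bar{Y}_t, Y_t) - \psi(\bar{Y}_t, \bar{Y}_t)$ is precisely the quantity on the left of~\eqref{eq:weak_distance}. The second-argument Lipschitz hypothesis with $i = j = 0$ yields polynomial growth of $\varphi$ of degree $\power + 1$; combined with $\power \leq p - 3$ and Lemma~\ref{lem:Kp_estimate}, this supplies the $L^p$-moment assumption of Proposition~\ref{prop:mild_ito_maximal_ineq}.

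Proposition~\ref{prop:mild_ito_maximal_ineq} then splits the left-hand side of~\eqref{eq:weak_distance} into an initial contribution $\| \E[ \psi(e^{tA} Y_0, S_{0, t} Y_0) - \psi(e^{tA} Y_0, e^{tA} Y_0) ] \|_V$ plus the time integral of the joint mild Kolmogorov operator $\tilde{\mathcal{L}}^{\tilde{S}}_{s, t} \varphi$. The initial contribution is controlled by the Lipschitz hypothesis and $\| S_{0, t} - e^{tA} \|_{L(H)} \leq \groupC_{0, \rho} h^\rho t^{-\rho}$, producing the $\groupC_{0, \rho}/t^\rho$-term of~\eqref{eq:weak_distance}. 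For the integrand I expand $\varphi_x(x, y) = \psi_{1, 0}(x, y) - \psi_{1, 0}(x, x) - \psi_{0, 1}(x, x)$, $\varphi_y(x, y) = \psi_{0, 1}(x, y)$, and the corresponding second-order partials, and set $X := e^{(t-s)A} \bar{Y}_s$, $Y := S_{s, t} Y_s$. After regrouping, every summand in $\tilde{\mathcal{L}}^{\tilde{S}}_{s, t} \varphi$ falls into one of two types. Type~(II) pairs a bounded partial derivative among $\psi_{0, 1}$, $\psi_{1, 1}$ or $\psi_{0, 2}$ -- each bounded by $\psiC \max\{1, \|X\|^\power, \|Y\|^\power\}$ via the Lipschitz hypothesis applied to $\psi$, $\psi_{1, 0}$ and $\psi_{0, 1}$ respectively -- with an evolution-family difference $(S_{s, t} R_s - e^{(t-s)A}) F$ or $(S_{s, t} R_s - e^{(t-s)A}) B^b$, and is controlled directly by $\| S_{s, t} R_s - e^{(t-s)A} \|_{L(H_{-\vartheta}, H)} \leq \groupC_{\vartheta, 0, \rho} h^\rho (t-s)^{-(\rho + \vartheta)}$ and its $\vartheta/2$-analogue. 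Type~(I) is the family of differences $\psi_{i, j}(X, Y) - \psi_{i, j}(X, X)$ (with $i + j \leq 2$) paired with the semigroup-smoothed drift or diffusion.

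The main obstacle is Type~(I). A naive application of the Lipschitz hypothesis would produce the factor $\| Y - X \|_H = \| S_{s, t} Y_s - e^{(t-s)A} \bar{Y}_s \|_H$, whose best strong estimate in $H$-norm is only $h^{(1-\vartheta)/2}$, insufficient for $\rho$ approaching $1 - \vartheta$. To recover the full weak rate I will exploit the explicit mild It\^o representation
\[ S_{s, t} Y_s - e^{(t-s)A} \bar{Y}_s = (S_{0, t} - e^{tA}) Y_0 + \smallint_0^s (S_{r, t} R_r - e^{(t-r)A}) F \, dr + \smallint_0^s (S_{r, t} R_r - e^{(t-r)A}) B \, dW_r \]
and, critically, measure it in the weakened norm $H_{-\rho}$: the estimates $\| S_{r, t} R_r - e^{(t-r)A} \|_{L(H_{-\vartheta}, H_{-\rho})} \leq \groupC_{\vartheta, -\rho, \rho} h^\rho (t-r)^{-\vartheta}$ (and the $\vartheta/2$-analogue) combined with the Burkholder--Davis--Gundy inequality yield $\| S_{s, t} Y_s - e^{(t-s)A} \bar{Y}_s \|_{L^{\power+3}(\P; H_{-\rho})} = O(h^\rho)$ with prefactor exactly matching the inner square bracket of~\eqref{eq:weak_distance} (this is essentially Lemma~\ref{lem:euler_integrated_strong} with $\varrho = \rho$). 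The mismatch between the $H$-valued Lipschitz hypothesis and this weaker estimate is bridged by exploiting the smoothing $\| e^{(t-s)A} \|_{L(H_{-\rho}, H)} \leq \groupC_\rho (t-s)^{-\rho}$ carried by the companion drift/diffusion factor, which trades $\| Y - X \|_H$ for $(t-s)^{-\rho} \| Y - X \|_{H_{-\rho}}$ and introduces an additional $(t-s)^{-\rho}$-singularity. Combined with the pre-existing $(t-s)^{-\vartheta}$ arising from $\| e^{(t-s)A} \|_{L(H_{-\vartheta}, H)}$, this produces an integrand with integrable singularity $(t-s)^{-\vartheta - \rho}$ precisely when $\rho + \vartheta < 1$; its $s$-integration delivers the $t^{1-\vartheta-\rho}/(1-\vartheta-\rho)$-factor. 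Collecting the Type~(I) and Type~(II) contributions, inserting the moment bound $K_{\power + 3}$ from Lemma~\ref{lem:Kp_estimate}, and gathering constants yields~\eqref{eq:weak_distance}.
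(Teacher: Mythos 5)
Your overall architecture is the paper's: the test function $\varphi(x,y)=\psi(x,y)-\psi(x,x)$ on $H\times H$, the application of Proposition~\ref{prop:mild_ito_maximal_ineq} to the joint mild It\^{o} process, the initial-value term handled via $\|S_{0,t}-e^{tA}\|_{L(H)}\leq \groupC_{0,\rho}\,h^\rho\,t^{-\rho}$, and your Type-(II) terms handled via $\|S_{s,t}R_s-e^{(t-s)A}\|_{L(H_{-\vartheta},H)}$ and its $\nicefrac{\vartheta}{2}$-analogue all coincide with the paper's proof. The flaw is in your central Type-(I) mechanism. The Lipschitz hypothesis delivers the multiplicative factor $\|S_{s,t}Y_s-e^{(t-s)A}\bar{Y}_s\|_H$, on which no operator acts, and you propose to ``trade'' this $H$-norm for $(t-s)^{-\rho}\|S_{s,t}Y_s-e^{(t-s)A}\bar{Y}_s\|_{H_{-\rho}}$ by invoking the smoothing carried by the companion drift/diffusion factor. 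No such trade exists: the estimate $\|e^{(t-s)A}\|_{L(H_{-\rho},H)}\leq \groupC_\rho\,(t-s)^{-\rho}$ applies only to vectors to which $e^{(t-s)A}$ is actually applied, the smoothing of the companion factor $e^{(t-s)A}F(Y_{\floor{s}{h}})$ (resp.\ $e^{(t-s)A}B^b(Y_{\floor{s}{h}})$) is already spent absorbing the $H_{-\vartheta}$- (resp.\ $H_{-\nicefrac{\vartheta}{2}}$-) norm of the nonlinearity, and for a general $x\in H$ there is no inequality of the form $\|x\|_H\leq c\,(t-s)^{-\rho}\|x\|_{H_{-\rho}}$. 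Hence your planned $O(h^\rho)$ bound on the transported difference in $L^{\power+3}(\P;H_{-\rho})$ does not control the factor that the Lipschitz hypothesis actually produces, and the step as written fails.

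The repair is the decomposition the paper performs \emph{before} estimating the Lipschitz factor: write $S_{s,t}Y_s-e^{(t-s)A}\bar{Y}_s=(S_{s,t}-e^{(t-s)A})Y_s+e^{(t-s)A}(Y_s-\bar{Y}_s)$. The first summand is bounded in $H$ by $\groupC_{0,\rho}\,h^\rho\,(t-s)^{-\rho}\,\|Y_s\|_H$, and only in the second summand does the smoothing legitimately give $\groupC_\rho\,(t-s)^{-\rho}\,\|Y_s-\bar{Y}_s\|_{H_{-\rho}}$; then Lemma~\ref{lem:euler_integrated_strong} with $\varrho=\rho$ — applied to $Y_s-\bar{Y}_s$, which is what it bounds, not to the transported difference — together with H\"{o}lder estimates for the mixed moments (as in the paper's moment inequalities involving $K_{\power+3}$) supplies the $O(h^\rho)$ factor, and the resulting $(t-s)^{-(\rho+\vartheta)}$ singularity is integrable since $\rho<1-\vartheta$, yielding the $t^{(1-\vartheta-\rho)}/(1-\vartheta-\rho)$ factor. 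With this one-line correction your argument becomes essentially identical to the paper's proof.
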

\begin{proof}
Throughout this proof 
let 
$
  ( g_r )_{ r \in [ 0, \infty ) }
  \subseteq
  C( H, \R )
$ 
be the functions
with the property that
for all $ r \in [0,\infty) $, $ x \in H $
it holds that
$
  g_r( x )
  =
  \max\{
  1, \| x \|^r_H
  \}
$
and let 
$
  \psi_{1,0} \colon H \times H \to L( H, V ) 
$, 
$
  \psi_{0,1} \colon H \times H \to L( H, V ) 
$, 
$
  \psi_{2,0} \colon H \times H \to L^{(2)}( H, V ) 
$, 
$
  \psi_{0,2} \colon H \times H \to L^{(2)}( H, V ) 
$, 
$
  \psi_{1,1} \colon H \times H \to L^{(2)}( H, V ) 
$
be the functions with the property that 
for all 
$ x, y, v_1, v_2 \in H $ 
it holds that 
$
  \psi_{1,0}( x, y ) \, v_1
  =
  \big(\tfrac{ \partial }{ \partial x } \psi( x, y )\big)( v_1 )
$
and
\begin{equation}
  \psi_{0,1}( x, y ) \, v_1
  =
  \big(\tfrac{ \partial }{ \partial y } \psi( x, y )\big)( v_1 )
  ,
  \qquad
  \psi_{2,0}( x, y )( v_1, v_2 )
  =
  \big(
    \tfrac{ \partial^2 }{ \partial x^2 } \psi( x, y )
  \big)( v_1, v_2 )
  ,
\end{equation}
\begin{equation}
  \psi_{0,2}( x, y )( v_1, v_2 )
  =
  \big(
    \tfrac{ \partial^2 }{ \partial y^2 } \psi( x, y )
  \big)( v_1, v_2 )
  ,
  \qquad
  \psi_{ 1, 1 }( x, y )( v_1, v_2 )
  =
  \big(
    \tfrac{ \partial }{ \partial y } 
    \tfrac{ \partial }{ \partial x } 
    \psi( x, y )
  \big)( v_1, v_2 )
  .
\end{equation} 
Next we observe that Lemma~\ref{lem:Kp_estimate}
and the assumption that $ q \leq p - 3 $ ensure that $ K_{ q + 1 } \leq K_{ q + 3 } < \infty $.
Combining this with the fact that
\begin{equation}
  \forall \, x, y_1, y_2 \in H \colon
  \| 
    \psi( x, y_1 ) 
    - 
    \psi( x, y_2 ) 
  \|_V
  \leq 
  2 \, \psiC
  \max\!\big\{ 
    1, \| x \|^{ q + 1 }_H , \| y_1 \|^{ q + 1 }_H , \| y_2 \|^{ q + 1 }_H 
  \big\}
\end{equation}
shows that for all $ t \in ( 0, T ] $ it holds that 
$
  \ES\big[
    \| 
      \psi( \bar{Y}_t, Y_t )
      -
      \psi( \bar{Y}_t, \bar{Y}_t ) 
    \|_V
  \big]
  < \infty
$.
It thus remains to prove \eqref{eq:weak_distance}.
To do so, we make use of a consequence of the mild It\^{o} formula in Corollary~\ref{cor:itoauto},
that is, we will apply Proposition~\ref{prop:mild_ito_maximal_ineq} above. 
For this let 
$ \tilde{ F }_{ s, t } \colon H \times H \times H \to V $, 
$ ( s, t ) \in \angle $, 
be the functions with the property that 
for all 
$ ( s, t ) \in \angle $, 
$ u, v, w \in H $ 
it holds that 
\begin{equation}
\begin{split}
  \tilde{F}_{ s, t }( u, v, w )
& =
  \big[
  \psi_{1,0}
  \big(
  e^{ ( t - s )A } \, u,
  S_{ s, t } \, v
  \big)
  -
  \psi_{1,0}
  \big(
  e^{ ( t - s )A } \, u,
  e^{ ( t - s )A } \, u
  \big)
  \big]
  \,
  e^{ ( t - s )A } \, F( w )
\\ & +
  \psi_{0,1}
  \big(
  e^{ ( t - s )A } \, u,
  S_{ s, t } \, v
  \big)
  \,
  S_{ s, t } \, R_s \, F( w )
  -
  \psi_{0,1}
  \big(
  e^{ ( t - s )A } \, u,
  e^{ ( t - s )A } \, u
  \big)
  \,
  e^{ ( t - s )A } \, F( w )
\end{split}
\end{equation}
and let 
$ \tilde{ B }_{ s, t } \colon H \times H \times H \to V $, 
$ ( s, t ) \in \angle $, 
be the functions with the property that 
for all 
$ ( s, t ) \in \angle $, 
$ u, v, w \in H $ 
it holds that 
\begin{equation}
\begin{split}
&
  \tilde{B}_{ s, t }( u, v, w )
\\ & =
  \tfrac{ 1 }{ 2 }
  \smallsum\limits_{ b \in \mathbb{ U 	} }
  \big[
  \psi_{ 2,0 }
  \big(
  e^{ ( t - s )A } \, u,
  S_{ s, t } \, v
  \big)
  -
  \psi_{ 2,0 }
  \big(
  e^{ ( t - s )A } \, u,
  e^{ ( t - s )A } \, u
  \big)
  \big]
  \big(
    e^{ ( t - s )A } \, B^b( w ),
    e^{ ( t - s )A } \, B^b( w )
  \big)
\\ & +
  \tfrac{ 1 }{ 2 }
  \smallsum\limits_{ b \in \mathbb{ U 	} }
  \psi_{ 0,2 }
  \big(
  e^{ ( t - s )A } \, u,
  S_{ s, t } \, v
  \big)
  \big(
    S_{ s, t } \, R_s \, B^b( w ),
    S_{ s, t } \, R_s \, B^b( w )
  \big)
\\ & 
-
  \tfrac{ 1 }{ 2 }
  \smallsum\limits_{ b \in \mathbb{ U 	} }
  \psi_{ 0,2 }
  \big(
  e^{ ( t - s )A } \, u,
  e^{ ( t - s )A } \, u
  \big)
  \big(
    e^{ ( t - s )A } \, B^b( w ),
    e^{ ( t - s )A } \, B^b( w )
  \big)
\\ & +
  \smallsum\limits_{ b \in \mathbb{ U } }
  \psi_{ 1,1 }
  \big(
  e^{ ( t - s )A } \, u,
  S_{ s, t } \, v
  \big)
  \big(
    e^{ ( t - s )A } \, B^b( w ),
    S_{ s, t } \, R_s \, B^b( w )
  \big)
\\ & 
  -
  \smallsum\limits_{ b \in \mathbb{ U } }
  \psi_{ 1,1 }
  \big(
  e^{ ( t - s )A } \, u,
  e^{ ( t - s )A } \, u
  \big)
  \big(
    e^{ ( t - s )A } \, B^b( w ),
    e^{ ( t - s )A } \, B^b( w )
  \big)
  .
\end{split}
\end{equation}
An application of Proposition~\ref{prop:mild_ito_maximal_ineq}
then shows\footnote{with 
$ t_0 = 0 $, 
$ T = t $, 
$ \check{H} = H \times H $, 
$ p = q + 1 $, 
and
$ \varphi(x,y) = \psi(x,y) - \psi(x,x) $ 
for $ (x,y) \in \check{H} $ 
in the notation of Proposition~\ref{prop:mild_ito_maximal_ineq}} 
that for all $ t \in ( 0, T ] $ it holds that 
\begin{equation}
\label{eq:mild_ito_2nd}
\begin{split}
&
  \left\|
  \E\left[
    \psi\big(
      \bar{Y}_{ t } ,
      Y_{ t }
    \big)
    -
    \psi\big(
      \bar{Y}_{ t } ,
      \bar{Y}_{ t }
    \big)
  \right]\right\|_V
\leq
  \E\left[
  \|
    \psi\big(
    e^{ tA } \, Y_0,
    S_{ 0, t } \, Y_0
    \big)
    -
    \psi\big(
    e^{ tA } \, Y_0,
    e^{ tA } \, Y_0
    \big)
  \|_V
  \right]
\\ & +
  \int^{ t }_0
  \E\left[
    \big\|
      \tilde{F}_{ s, t }\big(
        \bar{Y}_s, Y_s, Y_{ \floor{ s }{ h } }
      \big)
    \big\|_V
  \right]
  +
  \E\left[
    \big\|
      \tilde{B}_{ s, t }\big(
        \bar{Y}_s, Y_s, Y_{ \floor{ s }{ h } }
      \big)
    \big\|_V
  \right]
  ds
  .
\end{split}
\end{equation}
In the following we establish suitable estimates for the two summands on the right hand side of \eqref{eq:mild_ito_2nd}. 
We begin with the first summand on the right hand side of \eqref{eq:mild_ito_2nd}. 
Observe that for all $ t \in ( 0, T ] $ it holds that 
\begin{equation}
\begin{split}
&
  \left\|
    \psi\big(
    e^{ tA } \, Y_0,
    S_{ 0, t } \, Y_0
    \big)
    -
    \psi\big(
    e^{ tA } \, Y_0,
    e^{ tA } \, Y_0
    \big)
  \right\|_V
\\ & \leq
  \psiC
    \max\{ 1, \| S_{ 0, t } \, Y_0 \|^\power_H, \| e^{ tA } \, Y_0 \|^\power_H \}
    \| ( S_{ 0, t } - e^{ tA } ) Y_0 \|_H
\\ & \leq
  \psiC \,
  | \groupC_0 |^\power \,
  g_q( Y_0 ) \,
  \| S_{ 0, t } - e^{ tA } \|_{ L(H) } \,
  \| Y_0 \|_H
\leq
  \frac{
    \psiC
    \,
    |\groupC_0|^\power
    \,
    \groupC_{ 0, \rho }
    \,
    g_{ \power + 1 }( Y_0 ) \,
    h^\rho
  }{ t^\rho }
  .
\end{split}
\end{equation}
This and the fact that 
$
  \ES\big[ 
    g_{ \power + 1 }( Y_0 )
  \big]
  \leq
  K_{ \power + 1 }
$ 
imply that for all $ t \in ( 0, T ] $ it holds that 
\begin{equation}\label{eq:initial_value_part_2nd}
\begin{split}
&
  \E\left[
  \|
    \psi\big(
    e^{ tA } \, Y_0,
    S_{ 0, t } \, Y_0
    \big)
    -
    \psi\big(
    e^{ tA } \, Y_0,
    e^{ tA } \, Y_0
    \big)
  \|_V
  \right]
\leq
  \frac{
    \psiC
    \,
    |\groupC_0|^\power
    \,
    \groupC_{ 0, \rho }
    \,
    K_{ \power + 1 } 
    \,
    h^\rho
  }{ t^\rho }
  .
\end{split}
\end{equation}
Now we will estimate the second summand on the right hand side of~\eqref{eq:mild_ito_2nd}. 
Observe that for all 
$ ( s, t ) \in \angle $, 
$ u, v, w \in H $ 
it holds that 
\begin{equation}
\label{eq:tilde_F_1st_decompose}
\begin{split}
&
  \left\|
  \big[
    \psi_{1,0}\big(
      e^{ ( t - s )A } \, u,
      S_{ s, t } \, v
    \big)
    -
    \psi_{1,0}\big(
      e^{ ( t - s )A } \, u,
      e^{ ( t - s )A } \, u
    \big)
  \big]
  \, e^{ ( t - s ) A } \, F( w )
  \right\|_V
\\ & \leq
  \psiC
  \max\!\big\{
    1,
    \| S_{ s, t } \, v \|^\power_H,
    \| e^{ ( t - s )A } \, u \|^\power_H
  \big\}
  \|
  S_{ s, t } \, v
  -
  e^{ ( t - s )A } \, u
  \|_H
  \,
  \|
  e^{ ( t - s )A } \, F( w )
  \|_H
\\ & \leq
  \frac{
    \psiC
    \,
    | \groupC_0 |^\power
    \max\{
      1,
      \| u \|^\power_H,
      \| v \|^\power_H
    \}
    \big[
      \| ( S_{ s, t }  - e^{ ( t - s )A } ) v \|_H
      +
      \| e^{ ( t - s )A } ( v - u ) \|_H
    \big]
    \,
    \groupC_\vartheta 
    \,
    \| F( w ) \|_{ H_{ -\vartheta } }
  }{
    ( t - s )^\vartheta
  }
\\ & \leq
  \frac{
    \psiC
    \,
    | \groupC_0 |^\power
    \,
    \groupC_\vartheta
    \max\!\big\{
      1,
      \| u \|^\power_H,
      \| v \|^\power_H
    \big\}
    \,
    \| F \|_{ \operatorname{Lip}^0( H, H_{ -\vartheta } ) }
    \, g_1( w )
  }{
    ( t - s )^\vartheta
  }
\\ & \quad \cdot
  \Bigg[
    \| S_{ s, t } - e^{ ( t - s )A } \|_{ L( H ) }
    \,
    \| v \|_H
  +
  \frac{
    \groupC_\rho
    \| v - u \|_{ H_{ -\rho } }
  }{
    ( t - s )^\rho
  }
  \Bigg]
  .
\end{split}
\end{equation}
Moreover, we note that the assumption that 
$
  \forall x, y_1, y_2 \in H \colon 
$
\begin{equation}
  \|
    \psi
    ( x, y_1 )
  -
    \psi
    ( x, y_2 )
  \|_V
\leq
  \psiC
  \max\!\big\{ 
    1, \| x \|^\power_H, \| y_1 \|^\power_H, \| y_2 \|^\power_H 
  \big\}
  \,
  \| y_1 - y_2 \|_H
\end{equation}
implies that 
for all $ x, y \in H $ 
it holds that
$
  \| \psi_{ 0, 1 }( x, y ) \|_{ L( H, V ) }
  \leq
  \psiC \,
  \max\{
    1
    ,
    \| x \|^\power_H
    ,
    \| y \|^\power_H
  \}
$.
This, in turn, proves that for all 
$ ( s, t ) \in \angle $, 
$ u, v, w \in H $ 
it holds that 
\allowdisplaybreaks
\begin{align}
\label{eq:tilde_F_2nd_decompose}
&
  \left\|
  \psi_{0,1}
  \big(
  e^{ ( t - s )A } \, u,
  S_{ s, t } \, v
  \big)
  \,
  S_{ s, t } \, R_s \, F( w )
  -
  \psi_{0,1}
  \big(
  e^{ ( t - s )A } \, u,
  e^{ ( t - s )A } \, u
  \big)
  \,
  e^{ ( t - s )A } \, F( w )
  \right\|_V
\\ & \leq
\nonumber
  \left\|
  \psi_{0,1}
  \big(
  e^{ ( t - s )A } \, u,
  S_{ s, t } \, v
  \big)
  \big[
    S_{ s, t } \, R_s
    -
    e^{ ( t - s )A }
  \big]
  F( w )
  \right\|_V
\\ & +
\nonumber
  \left\|
  \big[
    \psi_{0,1}
    \big(
    e^{ ( t - s )A } \, u,
    S_{ s, t } \, v
    \big)
    -
    \psi_{0,1}
    \big(
    e^{ ( t - s )A } \, u,
    e^{ ( t - s )A } \, u
    \big)
  \big]
  \,
  e^{ ( t - s )A } \, F( w )
  \right\|_V
\\ & \leq
\nonumber
  \psiC \,
  \max\!\big\{
    1,
    \| S_{ s, t } \, v \|^\power_H,
    \| e^{ ( t - s )A } \, u \|^\power_H
  \big\} 
  \,
  \big\|
  \big[
    S_{ s, t } \, R_s
    -
    e^{ ( t - s )A }
  \big]
  F( w )
  \big\|_H
\\ & +
\nonumber
  \psiC \,
  \max\!\big\{
    1,
    \| S_{ s, t } \, v \|^\power_H,
    \| e^{ ( t - s )A } \, u \|^\power_H
  \big\}
  \,
  \|
    S_{ s, t } \, v
    -
    e^{ ( t - s )A } \, u
  \|_H 
  \,
  \| e^{ ( t - s )A } \, F(w) \|_H
\\ & \leq
\nonumber
  \psiC \,
  | \groupC_0 |^\power
  \max\{
    1,
    \| u \|^\power_H,
    \| v \|^\power_H
  \}
  \,
  \| F \|_{ \operatorname{Lip}^0( H, H_{ -\vartheta } ) }
  \, g_1( w )
\\ & \cdot
\nonumber
  \Bigg[
    \|
    S_{ s, t } \, R_s
    -
    e^{ ( t - s )A }
    \|_{ L( H_{ -\vartheta }, H ) }
    +
    \frac{
    \groupC_\vartheta
    }{
    ( t - s )^\vartheta
    }
    \left[
      \| S_{ s, t } - e^{ ( t - s )A } \|_{ L( H ) }
      \,
      \| v \|_H
      +
      \frac{
        \groupC_\rho
        \,
        \| v - u \|_{ H_{ -\rho } }
      }{
        ( t - s )^\rho
      }
    \right]
  \Bigg]
  .
\end{align}
Inequalities \eqref{eq:tilde_F_1st_decompose} and \eqref{eq:tilde_F_2nd_decompose} imply that for all 
$ ( s, t ) \in \angle $, 
$ u, v, w \in H $ 
it holds that 
\begin{equation}\label{eq:absolute_tilde_F_2nd}
\begin{split}
  \| 
    \tilde{F}_{ s, t }( u, v, w )
  \|_V
& \leq
  \psiC
  \,
  | \groupC_0 |^\power
  \max\!\big\{
    1,
    \| u \|^\power_H ,
    \| v \|^\power_H
  \big\}
  \,
  \| F \|_{ \operatorname{Lip}^0( H, H_{ -\vartheta } ) }
  \;
  g_1( w )
\\ & \quad \cdot
  \Bigg[
    \left[
    \frac{
      \groupC_{ \vartheta, 0, \rho } + 2 \, \groupC_\vartheta \, \groupC_{ 0, \rho } \, \| v \|_H
    }{ 
      ( t - s )^{ ( \rho + \vartheta ) } 
    }
    \right]
    h^\rho
    +
    \frac{
      2 \, \groupC_\vartheta \, \groupC_\rho \,
      \| v - u \|_{ H_{ -\rho } }
    }{
      { ( t - s )^{ ( \rho + \vartheta ) } }
    }
  \Bigg]
  .
\end{split}
\end{equation}
Next we observe that for all 
$ ( s, t ) \in \angle $, 
$ u, v, w \in H $ 
it holds that 
\begin{equation}
\label{eq:weak_distance_B_1}
\begin{split}
&
  \smallsum\limits_{ b \in \mathbb{ U 	} }
  \big\|
  \big[
  \psi_{2,0}
  \big(
  e^{ ( t - s )A } \, u,
  S_{ s, t } \, v
  \big)
  -
  \psi_{2,0}
  \big(
  e^{ ( t - s )A } \, u,
  e^{ ( t - s )A } \, u
  \big)
  \big]
  \big(
    e^{ ( t - s )A } \, B^b( w ),
    e^{ ( t - s )A } \, B^b( w )
  \big)
  \big\|_V
\\ & \leq
  \frac{
    \psiC
    \,
    | \groupC_0 |^\power
    \,
    |
      \groupC_{ \nicefrac{ \vartheta }{ 2 } }
    |^2
    \max\{
      1,
      \| u \|^\power_H,
      \| v \|^\power_H
    \}
    \,
    \| B \|^2_{ \operatorname{Lip}^0( H, HS( U, H_{ -\nicefrac{ \vartheta }{ 2 } } ) ) }
    \; 
    g_2( w )
  }{
    ( t - s )^\vartheta
  }
\\ & \quad \cdot
  \Bigg[
    \| S_{ s, t } - e^{ ( t - s )A } \|_{ L( H ) }
    \,
    \| v \|_H
    +
    \frac{
      \groupC_\rho \,
      \| v - u \|_{ H_{ -\rho } }
    }{
      ( t - s )^\rho
    }
  \Bigg]
  ,
\end{split}
\end{equation}
\allowdisplaybreaks
\begin{align}
&
  \smallsum\limits_{ b \in \mathbb{ U 	} }
  \big\|
  \psi_{0,2}
  \big(
  e^{ ( t - s )A } \, u,
  S_{ s, t } \, v
  \big)
  \big(
    S_{ s, t } \, R_s \, B^b( w ),
    S_{ s, t } \, R_s \, B^b( w )
  \big)
\\ & 
\nonumber
  -
  \psi_{0,2}
  \big(
  e^{ ( t - s )A } \, u,
  e^{ ( t - s )A } \, u
  \big)
  \big(
    e^{ ( t - s )A } \, B^b( w ),
    e^{ ( t - s )A } \, B^b( w )
  \big)
  \big\|_V
\\ & \leq
\nonumber
  \smallsum\limits_{ b \in \mathbb{ U 	} }
  \big\|
    \psi_{0,2}\big(
      e^{ ( t - s )A } \, u,
      S_{ s, t } \, v
    \big)
    \big(
      [S_{ s, t } \, R_s
      +
      e^{ ( t - s )A }]
      \, B^b( w ),
      [S_{ s, t } \, R_s
      -
      e^{ ( t - s )A }]
      \, B^b( w )
    \big)
  \big\|_V
\\ & +
\nonumber
  \smallsum\limits_{ b \in \mathbb{ U 	} }
  \big\|
  \big[
    \psi_{0,2}\big(
      e^{ ( t - s )A } \, u,
      S_{ s, t } \, v
    \big)
    -
    \psi_{0,2}\big(
      e^{ ( t - s )A } \, u,
      e^{ ( t - s )A } \, u
    \big)
  \big]
  \big(
    e^{ ( t - s )A } \, B^b( w ),
    e^{ ( t - s )A } \, B^b( w )
  \big)
  \big\|_V
\\ & \leq
\nonumber
  \psiC
  \,
  | \groupC_0 |^\power
    \max\{
      1,
      \| u \|^\power_H,
      \| v \|^\power_H
    \}
  \Big[
  \|
    S_{ s, t } \, v
    -
    e^{ ( t - s )A } \, u
  \|_H
  \,
  \|
    e^{ ( t - s )A } \, B( w )
  \|^2_{ HS( U, H ) }
\\ & +
\nonumber
  \|
    (S_{ s, t } \, R_s
    +
    e^{ ( t - s )A })
    \, B( w )
  \|_{ HS( U, H ) }
  \,
  \|
    (S_{ s, t } \, R_s
    -
    e^{ ( t - s )A })
    \, B( w )
  \|_{ HS( U, H ) }
  \Big]
\\ & \leq
\nonumber
  \psiC
  \,
  | \groupC_0 |^\power
  \max\{
    1,
    \| u \|^\power_H,
    \| v \|^\power_H
  \}
  \,
  \| B \|^2_{ \operatorname{Lip}^0( H, HS( U, H_{ -\nicefrac{ \vartheta }{ 2 } } ) ) } \,
  g_2( w )
\\ & \quad \cdot
\nonumber
  \Bigg[
  \|
    S_{ s, t } \, R_s
    +
    e^{ ( t - s )A }
  \|_{ L( H_{ -\nicefrac{ \vartheta }{ 2 } }, H ) }
  \,
  \|
    S_{ s, t } \, R_s
    -
    e^{ ( t - s )A }
  \|_{ L( H_{ -\nicefrac{ \vartheta }{ 2 } }, H ) }
\\ & \quad +
\nonumber
  \frac{
    | \groupC_{ \nicefrac{ \vartheta }{ 2 } } |^2
  }{
    ( t - s )^\vartheta
  }
    \left[
      \| S_{ s, t } - e^{ ( t - s )A } \|_{ L( H ) }
      \,
      \| v \|_H
      +
      \frac{
        \groupC_\rho
        \,
        \| v - u \|_{ H_{ -\rho } }
      }{
        ( t - s )^\rho
      }
    \right]
  \Bigg],
\end{align}
and 
\allowdisplaybreaks
\begin{align}
\label{eq:weak_distance_B_3}
&
  \smallsum\limits_{ b \in \mathbb{ U } }
  \big\|
  \psi_{1,1}
  \big(
  e^{ ( t - s )A } \, u,
  S_{ s, t } \, v
  \big)
  \big(
    e^{ ( t - s )A } \, B^b( w ),
    S_{ s, t } \, R_s \, B^b( w )
  \big)
\\ & 
\nonumber
  -
  \psi_{1,1}
  \big(
  e^{ ( t - s )A } \, u,
  e^{ ( t - s )A } \, u
  \big)
  \big(
    e^{ ( t - s )A } \, B^b( w ),
    e^{ ( t - s )A } \, B^b( w )
  \big)
  \big\|_V
\\ & \leq
\nonumber
  \smallsum\limits_{ b \in \mathbb{ U } }
  \big\|
  \psi_{1,1}\big(
    e^{ ( t - s )A } \, u,
    S_{ s, t } \, v
  \big)
  \big(
    e^{ ( t - s )A } \, B^b( w ),
    [S_{ s, t } \, R_s
    -
    e^{ ( t - s )A }
    ] \, B^b( w )
  \big)
  \big\|_V
\\ & +
\nonumber
  \smallsum\limits_{ b \in \mathbb{ U } }
  \big\|
  \big[
  \psi_{1,1}
  \big(
  e^{ ( t - s )A } \, u,
  S_{ s, t } \, v
  \big)
  -
  \psi_{1,1}
  \big(
  e^{ ( t - s )A } \, u,
  e^{ ( t - s )A } \, u
  \big)\big]
  \big(
    e^{ ( t - s )A } \, B^b( w ),
    e^{ ( t - s )A } \, B^b( w )
  \big)
  \big\|_V
\\ & \leq
\nonumber
  \psiC
  \,
  | \groupC_0 |^\power
    \max\{
      1,
      \| u \|^\power_H,
      \| v \|^\power_H
    \}
  \Big[
  \|
    S_{ s, t } \, v
    -
    e^{ ( t - s )A } \, u
  \|_H
  \,
  \|
    e^{ ( t - s )A } \, B( w )
  \|^2_{ HS( U, H ) }
\\ & +
\nonumber
  \|
  e^{ ( t - s )A } \, B( w )
  \|_{ HS( U,H ) }
  \,
  \|
    [S_{ s, t } \, R_s
    -
    e^{ ( t - s )A }
    ] \, B( w )
  \|_{ HS( U,H ) }
  \Big]
\\ & \leq
\nonumber
  \psiC
  \,
  | \groupC_0 |^\power
    \max\{
      1,
      \| u \|^\power_H,
      \| v \|^\power_H
    \}
    \,
  \| B \|^2_{ \operatorname{Lip}^0( H, HS( U, H_{ -\nicefrac{ \vartheta }{ 2 } } ) ) } 
  \,
  g_2( w )
\\ & \cdot
\nonumber
  \Bigg[
  \frac{
    \groupC_{ \nicefrac{ \vartheta }{ 2 } }
    \,
    \|
      S_{ s, t } \, R_s
      -
      e^{ ( t - s )A }
    \|_{ L( H_{ -\nicefrac{ \vartheta }{ 2 } }, H ) }
  }{
    ( t - s )^{ \nicefrac{ \vartheta }{ 2 } }
  }
  +
  \frac{
    | \groupC_{ \nicefrac{ \vartheta }{ 2 } } |^2
  }{
    ( t - s )^\vartheta
  }
    \left[
    \| S_{ s, t } - e^{ ( t - s )A } \|_{ L( H ) }
    \,
    \| v \|_H
    +
    \frac{
      \groupC_\rho
      \,
      \| v - u \|_{ H_{ -\rho } }
    }{
      ( t - s )^\rho
    }
    \right]
  \Bigg]
  .
\end{align}
Combining \eqref{eq:weak_distance_B_1}--\eqref{eq:weak_distance_B_3} implies 
that for all $ ( s, t ) \in \angle $, 
$ u, v, w \in H $ 
it holds that 
\begin{equation}\label{eq:absolute_tilde_B_2nd}
\begin{split}
&
  \|\tilde{B}_{ s, t }( u, v, w )\|_V
\\ & \leq
  2 
  \, 
  \psiC
  \,
  | \groupC_0 |^\power
  \,
  \groupC_{ \nicefrac{ \vartheta }{ 2 } }
  \max\{
    1,
    \| u \|^\power_H,
    \| v \|^\power_H
  \}
  \,
  \| B \|^2_{ \operatorname{Lip}^0( H, HS( U, H_{ -\nicefrac{ \vartheta }{ 2 } } ) ) } 
  \,
  g_2( w )
\\ & \cdot
  \Bigg[
    \left[
      \frac{
       \groupC_{ \nicefrac{ \vartheta }{ 2 }, 0, \rho } + \groupC_{ \nicefrac{ \vartheta }{ 2 } } \, \groupC_{ 0, \rho } \, \| v \|_H
      }{
       ( t - s )^{ ( \rho + \vartheta ) }
      }
    \right]
    h^\rho
+
    \frac{
      \groupC_{ \nicefrac{ \vartheta }{ 2 } } \, \groupC_\rho
      \,
      \| v - u \|_{ H_{ -\rho } }
    }{
      ( t - s )^{ ( \rho + \vartheta ) }
    }
  \Bigg].
\end{split}
\end{equation}
Next observe that \eqref{eq:absolute_tilde_F_2nd} and \eqref{eq:absolute_tilde_B_2nd} show that for all 
$ ( s, t ) \in \angle $, 
$ u, v, w \in H $ 
it holds that 
\begin{equation}\label{eq:absolute_sum_FB_tilde_2nd}
\begin{split}
&
  \|
    \tilde{F}_{ s, t }( u, v, w )
  \|_V
  +
  \|
    \tilde{B}_{ s, t }( u, v, w )
  \|_V
\leq
  \psiC
  \,
  | \groupC_0 |^\power
    \max\{
      1,
      \| u \|^\power_H,
      \| v \|^\power_H
    \}
    \,
  \varsigma_{ F, B } 
  \,
  g_2( w )
\\ & \cdot
  \Bigg[
    \left[
      \frac{
        \groupC_{ \vartheta, 0, \rho } + 2 \, \groupC_{ \nicefrac{ \vartheta }{ 2 } } \, \groupC_{ \nicefrac{ \vartheta }{ 2 }, 0, \rho }
        +
        2 \, ( | \groupC_{ \nicefrac{ \vartheta }{ 2 } } |^2 + \groupC_\vartheta ) \, \groupC_{ 0, \rho }
      }{
        ( t - s )^{ ( \rho + \vartheta ) }
      } 
    \right] 
    g_1( v ) \, h^\rho
    +
    \frac{
      2 
      \, 
      ( 
        | \groupC_{ \nicefrac{ \vartheta }{ 2 } } |^2 + \groupC_\vartheta 
      ) 
      \, 
      \groupC_\rho \, 
      \| v - u \|_{ 
        H_{ -\rho } 
      }
    }{
      ( t - s )^{ ( \rho + \vartheta ) }
    }
  \Bigg]
  .
\end{split}
\end{equation}
In addition, note that H\"{o}lder's inequality ensures that 
for all 
$ r \in ( 0, \infty ) $, 
$ s \in [ 0, T ] $ 
it holds that 
\begin{equation}
\label{eq:optimal_moment_2}
\begin{split}
&
  \ES\big[
  \max\!\big\{
    1,
    \| \bar{Y}_s \|^r_H,
    \| Y_s \|^r_H
  \big\} \,
  g_2\big( Y_{ \floor{ s }{ h } } \big)
  \big]
\\ & \leq
  \left(
  \sup_{ u, v \in [ 0, T ] }
  \big\|\!
  \max\!\big\{
    1,
    \| \bar{Y}_u \|^r_H,
    \| Y_v \|^r_H
  \big\}
  \big\|_{ \lpn{ 1 + \nicefrac{2}{r} }{ \P }{ \R } }
  \right)
  \left(
  \sup_{ u \in [ 0, T ] }
  \big\|
  \!
  \max\!\big\{
    1,
    \| Y_u \|^2_H
  \big\}
  \big\|_{ \lpn{ 1 + \nicefrac{r}{2} }{ \P }{ \R } }
  \right)
\\ & \leq
  | K_{ r + 2 } |^{ \frac{ 1 }{ 1 + \nicefrac{2}{r} } }
  \,
  | K_{ r + 2 } |^{ \frac{ 1 }{ 1 + \nicefrac{r}{2} } }
  = K_{ r + 2 },
\end{split}
\end{equation}
\begin{equation}
\begin{split}
  \ES\big[
    g_2(
      Y_{ \floor{ s }{ h } }
    )
    \,
    \|
      Y_s - \bar{Y}_s
    \|_{ H_{ -\rho } }
  \big]
& \leq
  \|
    g_2(
      Y_{ \floor{ s }{ h } }
    )
  \|_{ \lpn{ \nicefrac{ 3 }{ 2 } }{ \P }{ \R } } 
  \,
  \|
    Y_s - \bar{Y}_s
  \|_{ \lpn{ 3 }{ \P }{ H_{ -\rho } } }
\\ & \leq
  | K_{ 3 } |^{ \nicefrac{ 2 }{ 3 } } 
  \left(
  \sup_{ u \in [ 0, T ] }
  \big\|
      Y_u - \bar{Y}_u
  \big\|_{ \lpn{ 3 }{ \P }{ H_{ -\rho } } }
  \right),
\end{split}
\end{equation}
and 
\begin{equation}\label{eq:optimal_moment_3}
\begin{split}
&
    \ES\big[
      \max\!\big\{
        1 ,
        \| \bar{Y}_s \|^r_H ,
        \| Y_s \|^r_H
      \big\}
      \,
      g_2(
        Y_{ \floor{ s }{ h } }
      )
      \,
      \|
        Y_s - \bar{Y}_s
      \|_{ H_{ -\rho } }
    \big]
\\ & \leq
  \|
    \max\{
      1
      ,
      \| \bar{Y}_s \|^r_H
      ,
      \| Y_s \|^r_H
    \}
  \|_{ \lpn{ 1 + \nicefrac{ 3 }{ r } }{ \P }{ \R } } 
  \,
  \|
    g_2(
      Y_{ \floor{ s }{ h } }
    )
  \|_{ 
    \lpn{ \nicefrac{ ( r + 3 ) }{ 2 } }{ \P }{ \R } 
  } 
  \,
  \|
      Y_s - \bar{Y}_s
  \|_{ \lpn{ r + 3 }{ \P }{ H_{ -\rho } } }
\\ & \leq
  | K_{ r + 3 } |^{ \frac{ r + 2 }{ r + 3 } } 
  \left(
    \sup_{ u \in [ 0, T ] }
    \big\|
      Y_u - \bar{Y}_u
    \big\|_{ \lpn{ r + 3 }{ \P }{ H_{ -\rho } } }
  \right)
  .
\end{split}
\end{equation}
Combining 
\eqref{eq:absolute_sum_FB_tilde_2nd}--\eqref{eq:optimal_moment_3} 
with
Lemma~\ref{lem:euler_integrated_strong} 
and the fact that 
$ 
  1 - ( \frac{ 1 + \vartheta }{ 2 } - \rho )^+ > \rho 
$ 
yields that for all $ t \in ( 0, T ] $ it holds that 
\begin{equation}
\label{eq:integrated_absolute_sum_FB_tilde_2nd_conclude}
\begin{split}
&
  \int^{ t }_0
  \E\left[
    \big\|
      \tilde{F}_{ s, t }\big(
        \bar{Y}_s, Y_s, Y_{ \floor{ s }{ h } }
    \big)
    \big\|_V
  \right]
  +
  \E\left[
    \big\|
      \tilde{B}_{ s, t }\big(
        \bar{Y}_s, Y_s, Y_{ \floor{ s }{ h } }
      \big)
    \big\|_V
  \right]
  ds
\leq
  \tfrac{
    \psiC
    \,
    | \groupC_0 |^\power
    \,
    \varsigma_{ F, B } 
    \,
    K_{ \power + 3 } 
    \, 
    h^\rho
    \,
    t^{ ( 1 - \vartheta - \rho ) }
  }{ 
    ( 1 - \vartheta - \rho ) 
  }
\\ & \cdot
  \Bigg[
    \groupC_{ \vartheta, 0, \rho } + 2 \, \groupC_{ \nicefrac{ \vartheta }{ 2 } } \, \groupC_{ \nicefrac{ \vartheta }{ 2 }, 0, \rho }
    +
    2 \, ( | \groupC_{ \nicefrac{ \vartheta }{ 2 } } |^2 + \groupC_\vartheta ) \, \groupC_{ 0, \rho }
    +
    2 \, 
    ( | \groupC_{ \nicefrac{ \vartheta }{ 2 } } |^2 + \groupC_\vartheta ) \, \groupC_\rho
\\ & 
    \cdot
    \bigg(
      \groupC_{ -\rho, \rho }
      +
      \tfrac{
        \groupC_{ \vartheta, -\rho, \rho } \, t^{ ( 1 - \vartheta ) } \,
        \| F \|_{ \operatorname{Lip}^0( H, H_{ -\vartheta } ) }
      }{ 
        ( 1 - \vartheta ) 
      }
      +
      \tfrac{
        \groupC_{ \nicefrac{\vartheta}{2}, -\rho, \rho } 
        \, 
        \sqrt{ 
          ( \power + 3 ) \, 
          (\power+2) \, 
          t^{ ( 1 - \vartheta ) } 
        }
        \,
        \| B \|_{ \operatorname{Lip}^0( H, HS( U, H_{ - \vartheta / 2 } ) ) }
      }{
        \sqrt{ 2 - 2 \vartheta }
      }
    \bigg)
  \Bigg]
  .
\end{split}
\end{equation}
Putting 
\eqref{eq:initial_value_part_2nd} and \eqref{eq:integrated_absolute_sum_FB_tilde_2nd_conclude} 
into 
\eqref{eq:mild_ito_2nd} 
proves \eqref{eq:weak_distance}.
This finishes
the proof of Proposition~\ref{prop:weak_temporal_regularity_2nd}.
\end{proof}

\section{Weak convergence rates for Euler-type approximations of 
SPDEs with mollified nonlinearities}
\label{sec:euler_integrated_mollified}

In this section we use the results of Section~\ref{sec:weak_temporal_regularity}
and the Kolmogorov backward equation associated to an SEE 
to establish weak convergence rates for temporal numerical approximations 
of SEEs with mollified nonlinearities; see Corollary~\ref{cor:mollified_weak_solution-num} 
and Corollary~\ref{cor:mollified_weak_solution-num_komplete} below.
Some of the arguments in this section
are similar to some of the arguments in Section~3
in Conus et al.~\cite{ConusJentzenKurniawan2014arXiv}.

\subsection{Setting}
\label{sec:setting_euler_integrated_mollified}

Assume the setting in Section~\ref{sec:semigroup_setting},
let 
$ \vartheta \in [0,\frac{1}{2}) $,
$
  F \in 
  C^5_b( H , H_1 ) 
$, 
$
  B \in 
  C^5_b( 
    H, 
    HS( 
      U, 
$
$
      H_1
    ) 
  ) 
$, 
$
  \varphi \in C^5_b( H, V)
$, 
let 
$ 
  ( B^b )_{ b \in \mathbb{U} } \subseteq C( H, H ) 
$ 
be the functions with the property that for all 
$ v \in H $, 
$ b \in \mathbb{U} $ 
it holds that 
$
     B^b( v ) 
    = 
      B( 
        v 
      )
      \,
      b
$, 
let
$ \varsigma_{ F, B } \in \R $
be a real number given by
$  
  \varsigma_{ F, B } 
  =
    \max\!\big\{
    1
    ,
    \|
      F
    \|_{ 
      C_b^3( H, H_{-\vartheta} )
    }^3
    ,
    \|
      B
    \|_{ 
      C_b^3( H, HS( U, H_{ - \vartheta / 2 } ) )
    }^6
    \big\}
$, 
let
$
  X, Y \colon [0,T] \times \Omega \to H
$, 
$
  \bar{Y} \colon [0,T] \times \Omega \to H_1
$, 
and 
$
  X^x \colon [0,T] \times \Omega \to H
$, 
$ x \in H $, 
be 
$
  ( \mathcal{F}_t )_{ t \in [0,T] }
$-predictable stochastic processes 
such that for all 
$ x \in H $ 
it holds that 
$
  \sup_{ t \in [0,T] }
  \big[
  \| X_t \|_{ \lpn{5}{\P}{ H } } 
  +
  \| X^x_t \|_{ \lpn{5}{\P}{H} } 
  \big]
  < \infty
$, 
$ X^x_0 = x $, 
$
  \bar{Y}_0 \in \lpn{5}{\P}{H_1}
$,
and 
$
  Y_0 = X_0 = \bar{Y}_0 
$ 
and such that
for all 
$ x \in H $, 
$ t \in (0,T] $ 
it holds $ \P $-a.s.\ that 
\begin{equation}
  X_t
  = 
    e^{ t A }\, X_0 
  + 
    \int_0^t e^{ ( t - s )A }\, F( X_s ) \, ds
  + 
    \int_0^t e^{ ( t - s )A }\, B( X_s ) \, dW_s
    ,
\end{equation}
\begin{equation}
  X^x_t
  = 
    e^{ t A }\, x 
  + 
    \int_0^t e^{ ( t - s )A }\, F( X^x_s ) \, ds
  + 
    \int_0^t e^{ ( t - s )A }\, B( X^x_s ) \, dW_s
    ,
\end{equation}
\begin{equation}
  Y_t
  = 
    S_{ 0, t }\, Y_0 
  + 
    \int_0^t S_{ s, t }\, R_s\, F( Y_{ \floor{ s }{ h } } ) \, ds
  + 
    \int_0^t S_{ s, t }\, R_s\, B( Y_{ \floor{ s }{ h } } ) \, dW_s 
    ,
\end{equation}
\begin{equation}
  \bar{Y}_t
  = 
    e^{ t A } \, \bar{Y}_0 
  + 
    \int_0^t e^{ ( t - s )A } \, F( Y_{ \floor{ s }{ h } } ) \, ds
  + 
    \int_0^t e^{ ( t - s )A } \, B( Y_{ \floor{ s }{ h } } ) \, dW_s 
    ,
\end{equation}
let 
$
  u \colon [0,T] \times H \to V  
$
be the function 
with the property that for all $ x \in H $, $ t \in [0,T] $
it holds that
$
  u( t, x ) = 
  \ES\big[ 
    \varphi( X^x_{ T - t } )
  \big]
$,
let 
$
  c_{ \delta_1, \dots, \delta_k }
  \in [0,\infty]	
$,
$ \delta_1, \dots, \delta_k \in \R $,
$ k \in \{ 1, 2, 3, 4 \} $,
be the extended real numbers 
with the property that
for all $ k \in \{ 1, 2, 3, 4 \} $,
$ \delta_1, \dots, \delta_k \in \R $
it holds that
\begin{equation}
\begin{split} 
&
  c_{ \delta_1, \delta_2, \dots, \delta_k }
=
  \sup_{
    t \in [0,T)
  }
  \sup_{ 
    x \in H
  }
  \sup_{ 
    v_1, \dots, v_k \in H \backslash \{ 0 \}
  }
  \left[
  \frac{
    \big\|
      ( 
        \frac{ 
          \partial^k
        }{
          \partial x^k
        }
        u
      )( t, x )( v_1, \dots, v_k )
    \big\|_V
  }{
    ( T - t )^{ 
      (
        \delta_1 + \ldots + \delta_k
      ) 
    }
    \left\| v_1 \right\|_{ 
      H_{ \delta_1 } 
    }
    \cdot
    \ldots
    \cdot
    \left\| v_k \right\|_{ 
      H_{ \delta_k } 
    }
  }
  \right]
  ,
\end{split}
\end{equation}
let 
$
  \tilde{c}_{ \delta_1, \delta_2, \delta_3, \delta_4 }
  \in [0,\infty]	
$,
$ \delta_1, \delta_2, \delta_3, \delta_4 \in \R $, 
be the extended real numbers 
with the property that for all 
$ \delta_1, \dots, \delta_4 \in \R $
it holds that
\begin{equation}
\begin{split} 
&
  \tilde{c}_{ \delta_1, \delta_2, \delta_3, \delta_4 }
\\ & =
  \sup_{ t \in [ 0, T ) }\,
  \sup_{\substack{
    x_1, x_2 \in H, \\ x_1 \neq x_2
  }}\,
  \sup_{ 
    v_1, \ldots, v_4 \in H \backslash \{ 0 \}
  }
  \left[
  \frac{
    \big\|
    \big(
      \big( 
        \frac{ 
          \partial^4
        }{
          \partial x^4
        }
        u
      \big)( t, x_1 )
      -
      \big( 
        \frac{ 
          \partial^4
        }{
          \partial x^4
        }
        u
      \big)( t, x_2 )
      \big)
      ( v_1, \dots, v_4 )
    \big\|_V
  }{
    ( T - t )^{ 
      (
        \delta_1 + \ldots + \delta_4
      ) 
    }
    \left\| x_1 - x_2 \right\|_H
    \left\| v_1 \right\|_{ H_{ \delta_1 } }
     \cdot
     \ldots
    \cdot
    \left\| v_4 \right\|_{ H_{ \delta_4 } }
  }
  \right]
  ,
\end{split}
\end{equation}
let 
$
  ( K_r )_{ r \in [ 0, \infty ) }
  \subseteq
  [ 0, \infty ]
$ 
be the extended real numbers which satisfy 
that for all 
$ r \in [ 0, \infty ) $ 
it holds that 
$
  K_r
  =
  \sup_{ s, t \in [ 0, T ] }
  \ES\big[
    \max\{
    1,
    \| \bar{Y}_s \|^r_H,
    \| Y_t \|^r_H
    \}
  \big]
$, 
and let
$ 
  u_{1,0} \colon [0,T] \times H \to V 
$
and
$
  u_{0,k} \colon [0,T] \times H \to L^{ (k) }( H, V ) 
$, 
$ k \in \{ 1, 2, 3, 4 \} $, 
be the functions 
with the property that
for all $ t \in [0,T] $, 
$ x \in H $, 
$ k \in \{ 1, 2, 3, 4 \} $, 
$ v_1, \ldots, v_k \in H $ 
it holds that 
$
  u_{1,0}( t, x )
  =
  ( \frac{ \partial }{ \partial t } u )( t, x )
$
and 
$
  u_{0,k}( t, x )( v_1, \ldots, v_k )
  =
  \big( ( \frac{ \partial^k }{ \partial x^k } u )( t, x ) \big)( v_1, \ldots, v_k )
$.

\subsection{Weak convergence rates for semilinear integrated Euler-type 
approximations of SPDEs with mollified nonlinearities}
\label{sec:euler_integrated_result_mollified}

\begin{lemma}\label{lem:weak_regularity_F_2nd_term}
Assume the setting in Section~\ref{sec:setting_euler_integrated_mollified} and let 
$ t \in [ 0, T ) $, 
$
  \psi = ( \psi(x,y) )_{ x, y \in H } \in \mathbb{M}( H \times H, V )
$, 
$
  \phi \in \mathbb{M}( H, V )
$
satisfy that for all 
$ x, y \in H $ 
it holds that 
$
  \psi( x, y )
  =
  u_{0,1}( t, x )
  F( y )
$
and 
$
  \phi(x)
  =
  \psi(x,x)
$.
Then it holds that 
$ \psi \in C^3( H \times H, V ) $, 
$ \phi \in C^3(H, V) $
and for all 
$ x, x_1, x_2, y, y_1, y_2 \in H $ 
it holds that 
\begin{equation}
\label{eq:psiF_Lipschitzx}
\begin{split}
&
    \max_{
    i, j \in \N_0
    ,\,
    i + j \leq 2
    }
    \big\|
    \big(
    \tfrac{ \partial^{(i+j)} }{ \partial x^i \partial y^j }
    \psi
    \big)
    ( x_1, y )
    -
    \big(
    \tfrac{ \partial^{(i+j)} }{ \partial x^i \partial y^j }
    \psi
    \big)
    ( x_2, y )
    \big\|_{ L^{ (i+j) }( H, V ) }
\\ & \leq
  \tfrac{
    \| x_1 - x_2 \|_H
  }{
  ( T - t )^{ \vartheta }
  } \,
  \| F \|_{ C^2_b( H, H_{ -\vartheta } ) } \,
  \big[ c_{ -\vartheta, 0 } + c_{ -\vartheta, 0, 0 } + c_{ -\vartheta, 0, 0, 0 } \big]
  \max\{ 1, \| y \|_H \}
  ,
\end{split}
\end{equation}
\begin{equation}
\label{eq:psiF_Lipschitzy}
\begin{split}
&
    \max_{
    i, j \in \N_0
    ,\,
    i + j \leq 2
    }
    \big\|
    \big(
    \tfrac{ \partial^{(i+j)} }{ \partial x^i \partial y^j }
    \psi
    \big)
    ( x, y_1 )
    -
    \big(
    \tfrac{ \partial^{(i+j)} }{ \partial x^i \partial y^j }
    \psi
    \big)
    ( x, y_2 )
    \big\|_{ L^{ (i+j) }( H, V ) }
\\ & \leq
  \tfrac{
    \| y_1 - y_2 \|_H
  }{
  ( T - t )^{ \vartheta }
  } \,
  \| F \|_{ C^3_b( H, H_{ -\vartheta } ) } \,
  \big[ c_{ -\vartheta } + c_{ -\vartheta, 0 } + c_{ -\vartheta, 0, 0 } \big]
  ,
\end{split}
\end{equation}
\begin{equation}
\label{eq:psiF_Lipschitzxx}
\begin{split}
&
    \max_{
    i \in \{ 0, 1, 2 \}
    }
    \big\|
    \phi^{ (i) }
    ( x_1 )
    -
    \phi^{ (i) }
    ( x_2 )
    \big\|_{ L^{ (i) }( H, V ) }
\\ & \leq
  \tfrac{
    3 \,
    \| x_1 - x_2 \|_H
  }{
  ( T - t )^{ \vartheta }
  } \,
  \| F \|_{ C^3_b( H, H_{ -\vartheta } ) } \,
  \big[ c_{ -\vartheta } + c_{ -\vartheta, 0 } + c_{ -\vartheta, 0, 0 } + c_{ -\vartheta, 0, 0, 0 } \big]
  \max\{ 1, \| x_1 \|_H, \| x_2 \|_H \}
  .
\end{split}
\end{equation}
\end{lemma}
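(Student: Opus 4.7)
The plan is to exploit the explicit product structure $\psi(x,y) = u_{0,1}(t,x)F(y)$ together with the regularity hypothesis implicit in the constants $c_{\delta_1,\ldots,\delta_k}$ (for $k \in \{1,2,3,4\}$), which presupposes $u(t,\cdot) \in C^4(H,V)$. Combined with $F \in C^5_b(H,H_1) \subseteq C^5_b(H,H)$, standard product- and chain-rule arguments yield $\psi \in C^3(H \times H, V)$ and $\phi \in C^3(H,V)$, with the explicit formula
\begin{equation*}
\big(\tfrac{\partial^{i+j}}{\partial x^i \partial y^j}\psi\big)(x,y)(v_1,\ldots,v_i,w_1,\ldots,w_j)
= u_{0,i+1}(t,x)\big(v_1,\ldots,v_i,F^{(j)}(y)(w_1,\ldots,w_j)\big)
\end{equation*}
for all $i+j \leq 3$. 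This identity is the only non-trivial preliminary.

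For \eqref{eq:psiF_Lipschitzx} I would enumerate the six cases $(i,j) \in \{(0,0),(1,0),(0,1),(2,0),(1,1),(0,2)\}$. In each case the difference $(\tfrac{\partial^{i+j}}{\partial x^i \partial y^j}\psi)(x_1,y) - (\tfrac{\partial^{i+j}}{\partial x^i \partial y^j}\psi)(x_2,y)$ is an evaluation of $u_{0,i+1}(t,x_1) - u_{0,i+1}(t,x_2)$, which by the fundamental theorem of calculus and the definition of $c_{-\vartheta,0,\ldots,0}$ (with $i+1$ trailing zero indices) is bounded by $c_{-\vartheta,0,\ldots,0}(T-t)^{-\vartheta}\|x_1-x_2\|_H \cdot \|F^{(j)}(y)\|_{L^{(j)}(H,H_{-\vartheta})}\prod_\ell \|v_\ell\|_H$. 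Using $\|F^{(j)}(y)\|_{L^{(j)}(H,H_{-\vartheta})} \leq \|F\|_{C^j_b(H,H_{-\vartheta})}$ for $j \in \{1,2\}$ and $\|F(y)\|_{H_{-\vartheta}} \leq \|F\|_{C^1_b(H,H_{-\vartheta})}\max\{1,\|y\|_H\}$ for $j=0$, summation over the six cases produces the three constants $c_{-\vartheta,0}+c_{-\vartheta,0,0}+c_{-\vartheta,0,0,0}$, with $c_{-\vartheta,0,0,0}$ contributed only by $(i,j)=(2,0)$ and the factor $\max\{1,\|y\|_H\}$ contributed only by $(i,j)=(0,0)$.

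The proof of \eqref{eq:psiF_Lipschitzy} follows the same six-case split, but now the relevant difference is $F^{(j)}(y_1) - F^{(j)}(y_2)$, bounded uniformly in $y_1,y_2$ by $|F^{(j)}|_{\operatorname{Lip}^0(H,L^{(j)}(H,H_{-\vartheta}))}\|y_1 - y_2\|_H$, which explains the absence of any moment in $y_1,y_2$. The $u$-derivative that appears is $u_{0,i+1}$ with $i \in \{0,1,2\}$, contributing $c_{-\vartheta} + c_{-\vartheta,0} + c_{-\vartheta,0,0}$, while the maximum over $j \in \{0,1,2\}$ of $\|F^{(j)}\|_{\operatorname{Lip}^0(H,L^{(j)}(H,H_{-\vartheta}))}$ is absorbed into $\|F\|_{C^3_b(H,H_{-\vartheta})}$ (the third derivative of $F$ being needed only to render $F''$ Lipschitz).

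For \eqref{eq:psiF_Lipschitzxx} I would first write $\phi^{(k)}(x) = \sum_{i+j=k}\binom{k}{i}(\tfrac{\partial^{i+j}}{\partial x^i \partial y^j}\psi)(x,x)$, and for each summand decompose
\begin{equation*}
(\tfrac{\partial^{i+j}}{\partial x^i \partial y^j}\psi)(x_1,x_1) - (\tfrac{\partial^{i+j}}{\partial x^i \partial y^j}\psi)(x_2,x_2)
\end{equation*}
by inserting the intermediate point $(x_2,x_1)$, applying \eqref{eq:psiF_Lipschitzx} with $y=x_1$ to the first piece (which produces the factor $\max\{1,\|x_1\|_H,\|x_2\|_H\}$) and \eqref{eq:psiF_Lipschitzy} with $x=x_2$ to the second. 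Collecting the binomial coefficients $\binom{k}{i}$ for $k \in \{0,1,2\}$ yields a combinatorial constant which can be bounded by $3$ after majorizing the coefficient list $c_{-\vartheta},\ldots,c_{-\vartheta,0,0,0}$ uniformly. The entire argument carries no conceptual difficulty beyond careful bookkeeping; the principal risk is mis-tracking which $c_{-\vartheta,\ldots}$-constant and which $C^j_b$-norm of $F$ appears in each of the six mixed-derivative cases, and confirming that the combinatorial constant in \eqref{eq:psiF_Lipschitzxx} is indeed no larger than $3$.
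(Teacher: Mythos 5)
Your treatment of \eqref{eq:psiF_Lipschitzx} and \eqref{eq:psiF_Lipschitzy} is essentially the paper's own argument: the explicit product formula $\big(\tfrac{\partial^{i+j}}{\partial x^i\partial y^j}\psi\big)(x,y)(v_1,\dots,v_i,w_1,\dots,w_j)=u_{0,i+1}(t,x)\big(v_1,\dots,v_i,F^{(j)}(y)(w_1,\dots,w_j)\big)$, the case-by-case bounds through the constants $c_{-\vartheta,0,\dots,0}$, and the fundamental theorem of calculus (for \eqref{eq:psiF_Lipschitzy} the paper differentiates $\psi$ once more in $y$ where you invoke Lipschitz continuity of $F^{(j)}$; the two are equivalent). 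One slip in your bookkeeping commentary: the factor $\max\{1,\|y\|_H\}$ is needed in every case in which $F(y)$ itself (rather than a derivative of $F$) appears, i.e.\ for $(i,j)\in\{(0,0),(1,0),(2,0)\}$, not only for $(0,0)$; this does not affect the validity of the final bound.

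The genuine problem is \eqref{eq:psiF_Lipschitzxx}. As described, you split each summand of $\phi^{(k)}(x_1)-\phi^{(k)}(x_2)=\sum_{i+j=k}\binom{k}{i}\big[(\tfrac{\partial^{k}}{\partial x^i\partial y^j}\psi)(x_1,x_1)-(\tfrac{\partial^{k}}{\partial x^i\partial y^j}\psi)(x_2,x_2)\big]$ at the intermediate point $(x_2,x_1)$ and then apply the aggregated estimates \eqref{eq:psiF_Lipschitzx} and \eqref{eq:psiF_Lipschitzy} to the two pieces, summing the binomial coefficients at the end. This cannot deliver the stated constant $3$: each summand then costs $\big[c_{-\vartheta,0}+c_{-\vartheta,0,0}+c_{-\vartheta,0,0,0}\big]+\big[c_{-\vartheta}+c_{-\vartheta,0}+c_{-\vartheta,0,0}\big]$, and for $k=2$ the binomial weights sum to $4$, so you obtain $4c_{-\vartheta}+8c_{-\vartheta,0}+8c_{-\vartheta,0,0}+4c_{-\vartheta,0,0,0}$, which in general is not $\leq 3\,[\,c_{-\vartheta}+c_{-\vartheta,0}+c_{-\vartheta,0,0}+c_{-\vartheta,0,0,0}\,]$ (already $k=1$ gives a factor up to $4$). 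To recover $3$ you must use the individual mixed-derivative bounds rather than the aggregated maxima: either, as the paper does, bound $\phi'$, $\phi''$, $\phi^{(3)}$ directly via the chain rule, which yields the specific combinations $c_{-\vartheta}+c_{-\vartheta,0}$, then $c_{-\vartheta}+2c_{-\vartheta,0}+c_{-\vartheta,0,0}$, then $c_{-\vartheta}+3c_{-\vartheta,0}+3c_{-\vartheta,0,0}+c_{-\vartheta,0,0,0}$ (each at most $3$ times the bracket, times $\|F\|_{C^3_b(H,H_{-\vartheta})}\max\{1,\|x\|_H\}$), and then apply the fundamental theorem of calculus to $\phi^{(i)}$ for $i\in\{0,1,2\}$; or keep your intermediate-point splitting but note that the $x$-increment of the $(i,j)$ summand only costs the single constant with $i+2$ indices and the $y$-increment only the constant with $i+1$ indices, which again totals at most $3$ times the bracket. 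With either repair the lemma follows; as written, your final step would prove the inequality only with a worse constant than the one asserted.
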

\begin{proof}
First, we note that 
the assumption that
$
  F \in \operatorname{Lip}^4( H, H_1 )
$
and the fact that
$
  \big(
    H \ni x \mapsto
    u_{ 0, 1 }( t, x ) \in L( H, V )
  \big)
  \in
  C^3( H, L( H, V ) ) 
$
ensure that
$ \psi \in C^3( H \times H, V ) $ 
and 
$ \phi \in C^3( H, V ) $. 
Next we observe that for all 
$ x, y, v_1, v_2, v_3 \in H $ 
with 
$ \|v_1\|_H, \|v_2\|_H, \|v_3\|_H \leq 1 $
it holds that 
\begin{equation}
\label{eq:psiF_xderivatives_begin}
\begin{split}
&
  \big\|
   \big(\tfrac{ \partial }{ \partial x }
   \psi\big)( x, y ) \, v_1
  \big\|_V
=
  \left\|
    u_{0,2}( t, x )
    \big(
     F( y ), v_1
    \big)
  \right\|_V
\leq
  \tfrac{
    c_{ -\vartheta, 0 }
  }{
  ( T - t )^{ \vartheta }
  } \,
  \| F( y ) \|_{ H_{ -\vartheta } },
\end{split}
\end{equation}
\begin{equation}
\begin{split}
&
  \big\|
   \big(\tfrac{ \partial^2 }{ \partial x^2 }
   \psi\big)( x, y ) \, ( v_1, v_2 )
  \big\|_V
=
  \left\|
    u_{0,3}( t, x )
    \big(
     F( y ), v_1, v_2
    \big)
  \right\|_V
\leq
  \tfrac{
    c_{ -\vartheta, 0, 0 }
  }{
  ( T - t )^{ \vartheta }
  } \,
  \| F( y ) \|_{ H_{ -\vartheta } },
\end{split}
\end{equation}
\begin{equation}
\label{eq:psiF_xderivatives_end}
\begin{split}
&
  \big\|
   \big(\tfrac{ \partial^3 }{ \partial x^3 }
   \psi\big)( x, y ) \, ( v_1, v_2, v_3 )
  \big\|_V
=
  \left\|
    u_{0,4}( t, x )
    \big(
     F( y ), v_1, v_2, v_3
    \big)
  \right\|_V
\leq
  \tfrac{
    c_{ -\vartheta, 0, 0, 0 }
  }{
  ( T - t )^{ \vartheta }
  } \,
  \| F( y ) \|_{ H_{ -\vartheta } },
\end{split}
\end{equation}
\begin{equation}
\label{eq:psiF_yderivatives_begin}
\begin{split}
&
  \big\|
   \big(\tfrac{ \partial }{ \partial y }
   \psi\big)( x, y ) \, v_1
  \big\|_V
=
  \left\|
    u_{0,1}( t, x )
    \,
     F'( y ) \, v_1
  \right\|_V
\leq
  \tfrac{
    c_{ -\vartheta }
  }{
  ( T - t )^{ \vartheta }
  } \,
  \| F'( y ) \|_{ L( H, H_{ -\vartheta } ) },
\end{split}
\end{equation}
\begin{equation}
\begin{split}
&
  \big\|
   \big(\tfrac{ \partial^2 }{ \partial y^2 }
   \psi\big)( x, y )( v_1, v_2 )
  \big\|_V
=
  \left\|
    u_{0,1}( t, x )
    \big(
     F''( y ) ( v_1, v_2 )
    \big)
  \right\|_V
\leq
  \tfrac{
    c_{ -\vartheta }
  }{
  ( T - t )^{ \vartheta }
  } \,
  \| F''( y ) \|_{ L^{ (2) }( H, H_{ -\vartheta } ) },
\end{split}
\end{equation}
\begin{equation}
\label{eq:psiF_yderivatives_end}
\begin{split}
&
  \big\|
   \big(\tfrac{ \partial^3 }{ \partial y^3 }
   \psi\big)( x, y )( v_1, v_2, v_3 )
  \big\|_V
=
  \left\|
    u_{0,1}( t, x )
    \big(
     F^{(3)}( y ) ( v_1, v_2, v_3 )
    \big)
  \right\|_V
\\ & \leq
  \tfrac{
    c_{ -\vartheta }
  }{
  ( T - t )^{ \vartheta }
  } \,
  \| F^{(3)}( y ) \|_{ L^{ (3) }( H, H_{ -\vartheta } ) },
\end{split}
\end{equation}
\begin{equation}
\label{eq:psiF_mixedderivatives_begin}
\begin{split}
&
  \big\|
   \big(\tfrac{ \partial^2 }{ \partial x \partial y }
   \psi
   \big)( x, y )( v_1, v_2 )
  \big\|_V
=
  \left\|
    u_{0,2}( t, x )
    \big(
     F'( y ) \, v_1,
     v_2
    \big)
  \right\|_V
\leq
  \tfrac{
    c_{ -\vartheta, 0 }
  }{
  ( T - t )^{ \vartheta }
  } \,
  \| F'( y ) \|_{ L( H, H_{ -\vartheta } ) },
\end{split}
\end{equation}
\begin{equation}
\begin{split}
&
  \big\|
   \big(\tfrac{ \partial^3 }{ \partial x^2 \partial y }
   \psi\big)( x, y )( v_1, v_2, v_3 )
  \big\|_V
=
  \left\|
    u_{0,3}( t, x )
    \big(
     F'( y ) \, v_1,
     v_2, v_3
    \big)
  \right\|_V
\\ & \leq
  \tfrac{
  c_{ -\vartheta, 0, 0 }
  }{
  ( T - t )^{ \vartheta }
  } \,
  \| F'( y ) \|_{ L( H, H_{ -\vartheta } ) },
\end{split}
\end{equation}
\begin{equation}
\label{eq:psiF_mixedderivatives_end}
\begin{split}
&
  \big\|
   \big(\tfrac{ \partial^3 }{ \partial x \partial y^2 }
   \psi\big)( x, y )( v_1, v_2, v_3 )
  \big\|_V
=
  \left\|
    u_{0,2}( t, x )
    \big(
     F''( y ) ( v_1, v_2 ),
     v_3
    \big)
  \right\|_V
\\ & \leq
  \tfrac{
    c_{ -\vartheta, 0 }
  }{
  ( T - t )^{ \vartheta }
  } \,
  \| F''( y ) \|_{ L^{ ( 2 ) }( H, H_{ -\vartheta } ) }.
\end{split}
\end{equation}
Combining \eqref{eq:psiF_xderivatives_begin}--\eqref{eq:psiF_xderivatives_end} and 
\eqref{eq:psiF_mixedderivatives_begin}--\eqref{eq:psiF_mixedderivatives_end} 
with the fundamental theorem of calculus in Banach spaces proves~\eqref{eq:psiF_Lipschitzx}.
Moreover, combining \eqref{eq:psiF_yderivatives_begin}--\eqref{eq:psiF_mixedderivatives_end} 
with the fundamental theorem of calculus in Banach spaces shows~\eqref{eq:psiF_Lipschitzy}. 
It thus remains to prove~\eqref{eq:psiF_Lipschitzxx}. 
For this we observe that \eqref{eq:psiF_xderivatives_begin}--\eqref{eq:psiF_mixedderivatives_end} ensure that 
for all $ x, v_1, v_2, v_3 \in H $ with 
$ \|v_1\|_H, \|v_2\|_H, \|v_3\|_H \leq 1 $
it holds that 
\begin{equation}
\label{eq:psiF_xxderivatives_begin}
\begin{split}
&
  \big\|
   \phi'( x ) \, v_1
  \big\|_V
  \leq
  \big\|
   \big(\tfrac{ \partial }{ \partial x }
   \psi\big)( x, x ) \, v_1
  \big\|_V
  +
  \big\|
   \big(\tfrac{ \partial }{ \partial y }
   \psi\big)( x, x ) \, v_1
  \big\|_V
\\ & \leq
  \tfrac{
    c_{ -\vartheta, 0 } \,
    \| F( x ) \|_{ H_{ -\vartheta } }
    +
    c_{ -\vartheta } \,
    \| F'( x ) \|_{ L( H, H_{ -\vartheta } ) }
  }{
  ( T - t )^{ \vartheta }
  }
  \leq
  \tfrac{
    [ c_{ -\vartheta }
    +
    c_{ -\vartheta, 0 } ]
  }{
  ( T - t )^{ \vartheta }
  }
  \,
  \| F \|_{ C^1_b( H, H_{ -\vartheta } ) } \,
  \max\{ 1, \| x \|_H \}
  ,
\end{split}
\end{equation}
\begin{equation}
\begin{split}
&
  \big\|
   \phi''( x ) \, ( v_1, v_2 )
  \big\|_V
\\ & \leq
  \big\|
   \big(\tfrac{ \partial^2 }{ \partial x^2 }
   \psi\big)( x, x ) \, ( v_1, v_2 )
  \big\|_V
  +
  2 \,
  \big\|
   \big(\tfrac{ \partial^2 }{ \partial x \partial y }
   \psi\big)( x, x ) \, ( v_1, v_2 )
  \big\|_V
  +
  \big\|
   \big(\tfrac{ \partial^2 }{ \partial y^2 }
   \psi\big)( x, x ) \, ( v_1, v_2 )
  \big\|_V
\\ & \leq
  \tfrac{
    c_{ -\vartheta, 0, 0 } \,
    \| F( x ) \|_{ H_{ -\vartheta } }
    +
    2 \,
    c_{ -\vartheta, 0 } \,
    \| F'( x ) \|_{ L( H, H_{ -\vartheta } ) }
    +
    c_{ -\vartheta } \,
    \| F''( x ) \|_{ L^{(2)}( H, H_{ -\vartheta } ) }
  }{
  ( T - t )^{ \vartheta }
  }
\\ & \leq
  \tfrac{
    2 \,
    [ c_{ -\vartheta }
    +
    c_{ -\vartheta, 0 } 
    + c_{ -\vartheta, 0, 0 } ]
  }{
  ( T - t )^{ \vartheta }
  }
  \,
  \| F \|_{ C^2_b( H, H_{ -\vartheta } ) } \,
  \max\{ 1, \| x \|_H \}
  ,
\end{split}
\end{equation}
\begin{equation}
\label{eq:psiF_xxderivatives_end}
\begin{split}
&
  \big\|
   \phi^{(3)}( x ) \, ( v_1, v_2, v_3 )
  \big\|_V
\leq
  \big\|
   \big(\tfrac{ \partial^3 }{ \partial x^3 }
   \psi\big)( x, x ) \, ( v_1, v_2, v_3 )
  \big\|_V
+
  3 \,
  \big\|
   \big(\tfrac{ \partial^3 }{ \partial x^2 \partial y }
   \psi\big)( x, x ) \, ( v_1, v_2, v_3 )
  \big\|_V
\\ & +
  3 \,
  \big\|
   \big(\tfrac{ \partial^3 }{ \partial x \partial y^2 }
   \psi\big)( x, x ) \, ( v_1, v_2, v_3 )
  \big\|_V
  +
  \big\|
   \big(\tfrac{ \partial^3 }{ \partial y^3 }
   \psi\big)( x, x ) \, ( v_1, v_2, v_3 )
  \big\|_V
\\ & \leq
  \tfrac{
    c_{ -\vartheta, 0, 0, 0 } \,
    \| F( x ) \|_{ H_{ -\vartheta } }
    +
    3 \,
    c_{ -\vartheta, 0, 0 } \,
    \| F'( x ) \|_{ L( H, H_{ -\vartheta } ) }
    +
    3 \,
    c_{ -\vartheta, 0 } \,
    \| F''( x ) \|_{ L^{(2)}( H, H_{ -\vartheta } ) }
    +
    c_{ -\vartheta } \,
    \| F^{(3)}( x ) \|_{ L^{(3)}( H, H_{ -\vartheta } ) }
  }{
  ( T - t )^{ \vartheta }
  }
\\ & \leq
  \tfrac{
    3 \,
    [ c_{ -\vartheta }
    +
    c_{ -\vartheta, 0 } 
    + c_{ -\vartheta, 0, 0 } 
    + c_{ -\vartheta, 0, 0, 0 } ]
  }{
  ( T - t )^{ \vartheta }
  }
  \,
  \| F \|_{ C^3_b( H, H_{ -\vartheta } ) } \,
  \max\{ 1, \| x \|_H \}
  .
\end{split}
\end{equation}
Combining \eqref{eq:psiF_xxderivatives_begin}--\eqref{eq:psiF_xxderivatives_end} 
with the fundamental theorem of calculus in Banach spaces establishes~\eqref{eq:psiF_Lipschitzxx}. 
The proof of Lemma~\ref{lem:weak_regularity_F_2nd_term} is thus completed.
\end{proof}

\begin{lemma}
\label{lem:weak_regularity_B_2nd_term}
Assume the setting in Section~\ref{sec:setting_euler_integrated_mollified} and let 
$ t \in [ 0, T ) $, 
$
  \psi = ( \psi(x,y) )_{ x, y \in H } \in \mathbb{M}( H \times H, V )
$, 
$
  \phi \in \mathbb{M}( H, V )
$ 
satisfy that for all 
$ x, y \in H $ 
it holds that
$
  \psi( x, y )
  =
  \smallsum_{ b \in \mathbb{U} }
  u_{0,2}( t, x )
  \big(
    B^b( y ),
    B^b( y )
  \big)
$ 
and 
$
  \phi(x)
  =
  \psi(x,x)
$. 
Then it holds that 
$ \psi \in C^2( H \times H, V ) $, 
$ \phi \in C^2( H, V ) $ 
and for all 
$ x, x_1, x_2, y, y_1, y_2 \in H $ 
it holds that 
\begin{equation}
\label{eq:psiB_Lipschitzx}
\begin{split}
&
    \max_{
    i, j \in \N_0
    ,\,
    i + j \leq 2
    }
    \big\|
    \big(
    \tfrac{ \partial^{(i+j)} }{ \partial x^i \partial y^j }
    \psi
    \big)
    ( x_1, y )
    -
    \big(
    \tfrac{ \partial^{(i+j)} }{ \partial x^i \partial y^j }
    \psi
    \big)
    ( x_2, y )
    \big\|_{ L^{ (i+j) }( H, V ) }
\leq
  \tfrac{
    2 \, \| x_1 - x_2 \|_H
  }{
    ( T - t )^{ \vartheta }
  }
\\ & \cdot
  \| B \|^2_{ C^2_b( H, HS( U, H_{ -\nicefrac{ \vartheta }{ 2 } } ) ) } \,
  \big[ 
    c_{ - \nicefrac{ \vartheta }{ 2 }, -\nicefrac{ \vartheta }{ 2 }, 0 } 
    + c_{ - \nicefrac{ \vartheta }{ 2 }, -\nicefrac{ \vartheta }{ 2 }, 0, 0 }
    + \tilde{c}_{ - \nicefrac{ \vartheta }{ 2 }, -\nicefrac{ \vartheta }{ 2 }, 0, 0 }
  \big]
  \max\{ 1, \| y \|_{ H }^2 \}
  ,
\end{split}
\end{equation}
\begin{equation}
\label{eq:psiB_Lipschitzy}
\begin{split}
&
    \max_{
    i, j \in \N_0
    ,\,
    i + j \leq 2
    }
    \big\|
    \big(
    \tfrac{ \partial^{(i+j)} }{ \partial x^i \partial y^j }
    \psi
    \big)
    ( x, y_1 )
    -
    \big(
    \tfrac{ \partial^{(i+j)} }{ \partial x^i \partial y^j }
    \psi
    \big)
    ( x, y_2 )
    \big\|_{ L^{ (i+j) }( H, V ) }
\leq
  \tfrac{
    6 \, \| y_1 - y_2 \|_H
  }{
    ( T - t )^{ \vartheta }
  }
\\ & \cdot
  \| B \|^2_{ C^3_b( H, HS( U, H_{ -\nicefrac{ \vartheta }{ 2 } } ) ) } \,
  \big[ c_{ -\nicefrac{ \vartheta }{ 2 }, -\nicefrac{ \vartheta }{ 2 } } + c_{ -\nicefrac{ \vartheta }{ 2 }, -\nicefrac{ \vartheta }{ 2 }, 0 } + c_{ -\nicefrac{ \vartheta }{ 2 }, -\nicefrac{ \vartheta }{ 2 }, 0, 0 } \big]
  \max\{ 1, \| y_1 \|_{ H }, \| y_2 \|_{ H } \}
  ,
\end{split}
\end{equation}
\begin{equation}
\label{eq:psiB_Lipschitzxx}
\begin{split}
&
    \max_{
    i \in \{ 0, 1, 2 \}
    }
    \big\|
    \phi^{(i)}
    ( x_1 )
    -
    \phi^{(i)}
    ( x_2 )
    \big\|_{ L^{ (i) }( H, V ) }
\leq
  \tfrac{
    8 \,
    \| x_1 - x_2 \|_H
  }{
  ( T - t )^{ \vartheta }
  }
  \,
  \| B \|^2_{ C^3_b( H, HS( U, H_{ -\nicefrac{ \vartheta }{ 2 } } ) ) } 
\\ & \cdot
  \big[ 
    c_{ 
      - \nicefrac{ \vartheta }{ 2 }, 
      - \nicefrac{ \vartheta }{ 2 } 
    } 
    + 
    c_{ 
      - \nicefrac{ \vartheta }{ 2 }, 
      - \nicefrac{ \vartheta }{ 2 }, 0 
    } 
    + 
    c_{ -\nicefrac{ \vartheta }{ 2 }, -\nicefrac{ \vartheta }{ 2 }, 0, 0 } 
    + 
    \tilde{c}_{ -\nicefrac{ \vartheta }{ 2 }, -\nicefrac{ \vartheta }{ 2 }, 0, 0 } 
  \big]
  \max\{ 1, \| x_1 \|^2_H, \| x_2 \|^2_H \}
  .
\end{split}
\end{equation}
\end{lemma}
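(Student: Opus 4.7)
The plan is to mimic the strategy of Lemma~\ref{lem:weak_regularity_F_2nd_term}, but with two essential modifications: (i) the outer factor $F(y) \in H_{-\vartheta}$ is now replaced by the \emph{bilinear} object $\sum_{b \in \mathbb{U}} (B^b(y), B^b(y))$ regarded as an element of $HS(U, H_{-\vartheta/2})^{\otimes 2}$, which contributes \emph{two} factors of $-\vartheta/2$ in the $c$-indices (explaining the $(T-t)^{-\vartheta}$ singularity); and (ii) for the $x$-derivatives of order $2$ of $\psi$ and $\phi$ we must differentiate the bilinear form $u_{0,2}(t,\cdot)$ twice in $x$, producing a term with $u_{0,4}$ whose Lipschitz-in-$x$ modulus is \emph{not} controlled by $c$-constants alone and therefore requires the auxiliary quantities $\tilde{c}_{-\vartheta/2,-\vartheta/2,0,0}$.

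Concretely, I would first verify that $\psi \in C^2(H \times H, V)$ and $\phi \in C^2(H, V)$; this is immediate from $F,B \in C^5_b$ and from the fact that $(t,x)\mapsto u(t,x)$ is $C^{1,5}$ in its spatial argument (so each $u_{0,k}$ is in particular $C^2$ in $x$). Then I would compute explicitly the partial derivatives for $i+j \le 2$, namely
\[
  \tfrac{\partial}{\partial x}\psi(x,y)v_1 = \smallsum_b u_{0,3}(t,x)\big(v_1, B^b(y), B^b(y)\big),
\]
\[
  \tfrac{\partial}{\partial y}\psi(x,y)v_1 = 2\smallsum_b u_{0,2}(t,x)\big(B^b(y), (B^b)'(y)v_1\big),
\]
and the analogous expressions for $\psi_{2,0}$, $\psi_{1,1}$, $\psi_{0,2}$; each of these is a sum of terms of the form $u_{0,k}(t,x)(\cdots, B^{b}(y)\text{ or its }y\text{-derivatives}, B^{b}(y)\text{ or its }y\text{-derivatives}, \cdots)$ with $k\in\{2,3,4\}$. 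Using the $c$-bounds with indices $(-\vartheta/2,-\vartheta/2,0,\ldots,0)$ for the two $B$-slots and $0$ for the remaining slots yields a pointwise estimate of every such term by
$
(T-t)^{-\vartheta}\,\|B\|_{C^j_b(H, HS(U,H_{-\vartheta/2}))}^2\,\max\{1,\|y\|_H^2\}
$
(or the analogous expression in $x$ or $y_1,y_2$).

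For the Lipschitz-in-$x$ estimate \eqref{eq:psiB_Lipschitzx} I would apply the fundamental theorem of calculus in each of the three $(i,j)$-components. For $(i,j)\in\{(0,0),(1,0),(0,1),(1,1),(0,2)\}$ this only involves one further $x$-derivative of some $u_{0,k}$ with $k\le 3$, which is still controlled by $c_{-\vartheta/2,-\vartheta/2,0,\ldots}$; but for $(i,j)=(2,0)$ the difference $u_{0,4}(t,x_1)-u_{0,4}(t,x_2)$ cannot be absorbed by such a $c$-bound, and this is precisely where $\tilde{c}_{-\vartheta/2,-\vartheta/2,0,0}$ enters. This is the main technical obstacle, and the factor $2$ on the right-hand side of \eqref{eq:psiB_Lipschitzx} absorbs the mixed terms from differentiating the two $B^b(y)$-slots.

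For the Lipschitz-in-$y$ estimate \eqref{eq:psiB_Lipschitzy} I would again use the fundamental theorem of calculus, now differentiating the $B^b(y)$-factors; because $\psi$ is quadratic in $B(y)$, differentiating once or twice in $y$ produces at most three $B$-type factors (hence the power $\|B\|_{C^3_b}^2$) and one free $y$-derivative of $B$ of order up to $3$ (hence the requirement $B\in C_b^3$). The constant $6$ tracks the combinatorial count of which of the two symmetric $B^b$-slots is differentiated. Finally, for \eqref{eq:psiB_Lipschitzxx}, I would apply the Leibniz rule to $\phi(x)=\psi(x,x)$, obtaining
$
  \phi^{(i)}(x) = \sum_{k=0}^{i}\binom{i}{k}\big(\tfrac{\partial^i}{\partial x^k \partial y^{i-k}}\psi\big)(x,x),
$
and then combine the $x$-Lipschitz estimate \eqref{eq:psiB_Lipschitzx} applied with $y=x_1$ or $x_2$ together with the $y$-Lipschitz estimate \eqref{eq:psiB_Lipschitzy} to bound $\phi^{(i)}(x_1) - \phi^{(i)}(x_2)$ via
$
  \|\psi_{k,i-k}(x_1,x_1)-\psi_{k,i-k}(x_2,x_1)\|
  +
  \|\psi_{k,i-k}(x_2,x_1)-\psi_{k,i-k}(x_2,x_2)\|.
$
The remaining work is careful bookkeeping of the binomial coefficients and of the maxima over $\|x_j\|_H$ to arrive at the constant $8$ and the factor $\max\{1,\|x_1\|_H^2,\|x_2\|_H^2\}$ (the quadratic power arises because both the $B$-bilinear structure in the $y$-Lipschitz bound and the extra $B^b(y)$-factor in the $x$-Lipschitz bound contribute a $\max\{1,\|x\|_H\}$-factor when specialised at $y=x$).
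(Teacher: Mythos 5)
Your treatment of \eqref{eq:psiB_Lipschitzx} and \eqref{eq:psiB_Lipschitzy} follows essentially the paper's own argument: you compute the partial derivatives $\psi_{i,j}$ explicitly as sums of terms $u_{0,k}(t,x)(\cdot,\ldots,\cdot)$ in which the two $B$-slots carry the indices $-\vartheta/2,-\vartheta/2$, estimate them with the corresponding $c$-constants, apply the fundamental theorem of calculus componentwise, and invoke $\tilde{c}_{-\nicefrac{\vartheta}{2},-\nicefrac{\vartheta}{2},0,0}$ exactly for the difference $u_{0,4}(t,x_1)-u_{0,4}(t,x_2)$ arising in the pure second $x$-derivative, thereby avoiding any fifth derivative of $u$. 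Up to minor bookkeeping imprecision (e.g.\ the role of the factor $2$ in \eqref{eq:psiB_Lipschitzx}, which comes from the product bounds of the type $\|B(y)\|\,\|B'(y)\|$ rather than from differentiating the $B$-slots), this is the paper's proof of the first two inequalities.

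The gap is in \eqref{eq:psiB_Lipschitzxx}. You propose to expand $\phi^{(i)}(x)=\sum_{k=0}^{i}\binom{i}{k}\,\psi_{k,i-k}(x,x)$ and to bound each difference $\psi_{k,i-k}(x_1,x_1)-\psi_{k,i-k}(x_2,x_2)$ by the already established aggregated estimates \eqref{eq:psiB_Lipschitzx} and \eqref{eq:psiB_Lipschitzy}. This is logically sound, but it does not deliver the constant $8$ claimed in the lemma: for $i=2$ there are four summands (counting the multiplicity of $\psi_{1,1}$), each costing $2+6=8$, so the natural execution of your plan yields a constant of order $32$ (and even using the sharper, pre-aggregation derivative bounds one does not obviously get below roughly $12$). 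The paper reaches $8=6+2$ by a different grouping: it writes $\phi''=[\phi''-\psi_{2,0}(\cdot,\cdot)]+\psi_{2,0}(\cdot,\cdot)$, applies the fundamental theorem of calculus to the first bracket, whose derivative equals $2\psi_{2,1}+3\psi_{1,2}+\psi_{0,3}$ on the diagonal and is bounded by $6\,[\ldots]\max\{1,\|x\|_H\}$ (see \eqref{eq:psiB_2ndxxLipschitz_begin}), and estimates the Lipschitz modulus of $x\mapsto\psi_{2,0}(x,x)$ separately by splitting into an $x$-increment (controlled by $\tilde{c}_{-\nicefrac{\vartheta}{2},-\nicefrac{\vartheta}{2},0,0}$) and a $y$-increment (controlled via $\psi_{2,1}$), which costs $2\,[\ldots]\max\{1,\|x_1\|^2_H,\|x_2\|^2_H\}$ (see \eqref{eq:psiB_2ndxxLipschitz_end}). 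Since the constant $8$ is part of the statement and is propagated into the explicit constants of Lemma~\ref{lem:mollified_weak_solution-integrated_num} (it produces the factor $5$ there through $\tfrac{1}{2}(2+8)=5$), your appeal to unspecified careful bookkeeping leaves the claimed inequality unproven; you would need to adopt the paper's decomposition, or an equally sharp one, rather than a termwise application of \eqref{eq:psiB_Lipschitzx}--\eqref{eq:psiB_Lipschitzy}.
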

\begin{proof}
First of all, we note that the assumption that 
$ B \in \operatorname{Lip}^4( H, HS( U, H_1 ) ) $ 
and the fact that 
$
  \big(
  H \ni x \mapsto
  u_{0,2}( t, x ) \in L^{(2)}(H,V)
  \big)
  \in
  C^2( H, L^{(2)}(H,V) ) 
$
ensure that 
$
  \psi \in 
  C^2( H \times H, V )
$,
$
  \phi \in C^2( H, V )
$,
and
$
  \big(
    H \times H \ni (x,y) 
    \mapsto
    ( \frac{ \partial^2 }{ \partial y^2 } \psi)( x, y ) 
    \in L^{(2)}(H,V)
  \big)
  \in
  C^2( H, L^{(2)}(H,V) ) 
$. 
Next we observe that for all 
$ x, x_1, x_2, y, v_1, v_2, v_3 \in H $ with 
$ \|v_1\|_H, \|v_2\|_H, \|v_3\|_H \leq 1 $
it holds that 
\begin{equation}
\label{eq:psiB_xderivatives_begin}
\begin{split}
&
  \big\|
   \big(\tfrac{ \partial }{ \partial x }
   \psi\big)( x, y ) \, v_1
  \big\|_V
\leq
  \smallsum\limits_{ b \in \mathbb{U} }
  \big\|
  u_{0,3}( t, x )
  \big(
  B^b( y ),
  B^b( y ), v_1
  \big)
  \big\|_V
\\ & \leq
  \tfrac{
    c_{ -\nicefrac{ \vartheta }{ 2 }, -\nicefrac{ \vartheta }{ 2 }, 0 }
  }{
  ( T - t )^{ \vartheta }
  } \,
  \| B( y ) \|^2_{ HS( U, H_{ -\nicefrac{ \vartheta }{ 2 } } ) },
\end{split}
\end{equation}
\begin{equation}
\begin{split}
&
  \big\|
   \big(\tfrac{ \partial^2 }{ \partial x^2 }
   \psi\big)( x, y ) \, ( v_1, v_2 )
  \big\|_V
\leq
  \smallsum\limits_{ b \in \mathbb{U} }
  \big\|
  u_{0,4}( t, x )
  \big(
  B^b( y ),
  B^b( y ), v_1, v_2
  \big)
  \big\|_V
\\ & \leq
  \tfrac{
    c_{ -\nicefrac{ \vartheta }{ 2 }, -\nicefrac{ \vartheta }{ 2 }, 0, 0 }
  }{
  ( T - t )^{ \vartheta }
  } \,
  \| B( y ) \|^2_{ HS( U, H_{ -\nicefrac{ \vartheta }{ 2 } } ) },
\end{split}
\end{equation}
\begin{equation}
\label{eq:psiB_xderivatives_end}
\begin{split}
&
  \big\|
   \big(\tfrac{ \partial^2 }{ \partial x^2 }
   \psi\big)( x_1, y ) \, ( v_1, v_2 )
   -
   \big(\tfrac{ \partial^2 }{ \partial x^2 }
   \psi\big)( x_2, y ) \, ( v_1, v_2 )
  \big\|_V
\\ & \leq
  \smallsum\limits_{ b \in \mathbb{U} }
  \big\|
  \big(
    u_{0,4}( t, x_1 ) - u_{0,4}( t, x_2 )
  \big)
  \big(
  B^b( y ),
  B^b( y ), v_1, v_2
  \big)
  \big\|_V
\\ & \leq
  \tfrac{
    \tilde{c}_{ -\nicefrac{ \vartheta }{ 2 }, -\nicefrac{ \vartheta }{ 2 }, 0, 0 } \,
    \| x_1 - x_2 \|_H
  }{
  ( T - t )^{ \vartheta }
  } \,
  \| B( y ) \|^2_{ HS( U, H_{ -\nicefrac{ \vartheta }{ 2 } } ) },
\end{split}
\end{equation}
\begin{equation}
\label{eq:psiB_yderivatives_begin}
\begin{split}
&
  \big\|
   \big(\tfrac{ \partial }{ \partial y }
   \psi\big)( x, y ) \, v_1
  \big\|_V
\leq
  2 
  \smallsum\limits_{ b \in \mathbb{U} }
  \big\|
  u_{0,2}( t, x )
  \big(
  B^b( y ),
  ( B^b )'( y ) \, v_1
  \big)
  \big\|_V
\\ & \leq
  \tfrac{
    2 \, c_{ -\nicefrac{ \vartheta }{ 2 }, -\nicefrac{ \vartheta }{ 2 } }
  }{
  ( T - t )^{ \vartheta }
  } \,
  \| B( y ) \|_{ HS( U, H_{ -\nicefrac{ \vartheta }{ 2 } } ) } \,
  \| B'( y ) \|_{ L( H, HS( U, H_{ -\nicefrac{ \vartheta }{ 2 } } ) ) },
\end{split}
\end{equation}
\begin{equation}
\begin{split}
&
  \big\|
   \big(\tfrac{ \partial^2 }{ \partial y^2 }
   \psi\big)( x, y )( v_1, v_2 )
  \big\|_V
\\ & \leq
  2 
  \smallsum\limits_{ b \in \mathbb{U} }
  \big\|
  u_{0,2}( t, x )
  \big(
  ( B^b )'( y ) \, v_1,
  ( B^b )'( y ) \, v_2
  \big)
  +
    u_{0,2}( t, x )
    \big(
    B^b( y ),
    ( B^b )''( y )( v_1, v_2 )
    \big)
  \big\|_V
\\ & \leq
  \tfrac{
    2 \, c_{ -\nicefrac{ \vartheta }{ 2 }, -\nicefrac{ \vartheta }{ 2 } }
  }{
  ( T - t )^{ \vartheta }
  }
  \,
  \big(
  \| B'( y ) \|^2_{ L( H, HS( U, H_{ -\nicefrac{ \vartheta }{ 2 } } ) ) }
  +
  \| B( y ) \|_{ HS( U, H_{ -\nicefrac{ \vartheta }{ 2 } } ) } \,
  \| B''( y ) \|_{ L^{(2)}( H, HS( U, H_{ -\nicefrac{ \vartheta }{ 2 } } ) ) }
  \big)
  ,
\end{split}
\end{equation}
\begin{equation}
\label{eq:psiB_yderivatives_end}
\begin{split}
&
  \big\|
   \big(\tfrac{ \partial^3 }{ \partial y^3 }
   \psi\big)( x, y )( v_1, v_2, v_3 )
  \big\|_V
\\ & \leq
  2
  \smallsum\limits_{ b \in \mathbb{U} }
  \big\|
  u_{0,2}( t, x )
  \big(
  ( B^b )'( y ) \, v_2,
  ( B^b )''( y ) ( v_1, v_3 )
  \big)
\\ & \quad
  +
  u_{0,2}( t, x )
  \big(
  ( B^b )'( y ) \, v_1,
  ( B^b )''( y ) ( v_2, v_3 )
  \big)
\\ & \quad
    +
    u_{0,2}( t, x )
    \big(
    ( B^b )'( y ) \, v_3,
    ( B^b )''( y )( v_1, v_2 )
    \big)
\\ & \quad
    +
    u_{0,2}( t, x )
    \big(
    B^b( y ),
    ( B^b )^{(3)}( y )( v_1, v_2, v_3 )
    \big)
  \big\|_V
\\ & \leq
  \tfrac{
    2 \, c_{ -\nicefrac{ \vartheta }{ 2 }, -\nicefrac{ \vartheta }{ 2 } }
  }{
  ( T - t )^{ \vartheta }
  }
  \,
  \big(
  3 \, \| B'( y ) \|_{ L( H, HS( U, H_{ -\nicefrac{ \vartheta }{ 2 } } ) ) } \,
  \| B''( y ) \|_{ L^{(2)}( H, HS( U, H_{ -\nicefrac{ \vartheta }{ 2 } } ) ) }
\\ & \quad
  +
  \| B( y ) \|_{ HS( U, H_{ -\nicefrac{ \vartheta }{ 2 } } ) } \,
  \| B^{(3)}( y ) \|_{ L^{(3)}( H, HS( U, H_{ -\nicefrac{ \vartheta }{ 2 } } ) ) }
  \big)
  ,
\end{split}
\end{equation}
\begin{equation}
\label{eq:psiB_mixedderivatives_begin}
\begin{split}
&
  \big\|
   \big(\tfrac{ \partial^2 }{ \partial x \partial y }
   \psi\big)( x, y )( v_1, v_2 )
  \big\|_V
\leq
  2
  \smallsum\limits_{ b \in \mathbb{U} }
  \big\|
  u_{0,3}( t, x )
  \big(
  B^b( y ),
  ( B^b )'( y ) \, v_1,
  v_2
  \big)
  \big\|_V
\\ & \leq
  \tfrac{
    2 \, c_{ -\nicefrac{ \vartheta }{ 2 }, -\nicefrac{ \vartheta }{ 2 }, 0 }
  }{
  ( T - t )^{ \vartheta }
  }
  \,
  \| B( y ) \|_{ HS( U, H_{ -\nicefrac{ \vartheta }{ 2 } } ) } \,
  \| B'( y ) \|_{ L( H, HS( U, H_{ -\nicefrac{ \vartheta }{ 2 } } ) ) },
\end{split}
\end{equation}
\begin{equation}
\label{eq:psiB_mixedderivatives_middle}
\begin{split}
&
  \big\|
   \big(\tfrac{ \partial^3 }{ \partial x^2 \partial y }
   \psi\big)( x, y )( v_1, v_2, v_3 )
  \big\|_V
\\ & \leq
  2 
  \smallsum\limits_{ b \in \mathbb{U} }
  \big\|
  u_{0,4}( t, x )
  \big(
  B^b( y ),
  ( B^b )'( y ) \, v_1,
  v_2, v_3
  \big)
  \big\|_V
\\ & \leq
  \tfrac{
    2 \, c_{ -\nicefrac{ \vartheta }{ 2 }, -\nicefrac{ \vartheta }{ 2 }, 0, 0 }
  }{
  ( T - t )^{ \vartheta }
  } \,
  \,
  \| B( y ) \|_{ HS( U, H_{ -\nicefrac{ \vartheta }{ 2 } } ) } \,
  \| B'( y ) \|_{ L( H, HS( U, H_{ -\nicefrac{ \vartheta }{ 2 } } ) ) },
\end{split}
\end{equation}
\begin{align}
\label{eq:psiB_mixedderivatives_end}
&
  \big\|
   \big(\tfrac{ \partial^3 }{ \partial x \partial y^2 }
   \psi\big)( x, y )( v_1, v_2, v_3 )
  \big\|_V
\\ & \leq
\nonumber
  2
  \smallsum\limits_{ b \in \mathbb{U} }
  \big\|
  u_{0,3}( t, x )
  \big(
  ( B^b )'( y ) \, v_1,
  ( B^b )'( y ) \, v_2,
  v_3
  \big)
+
  u_{0,3}( t, x )
  \big(
  B^b( y ),
  ( B^b )''( y )( v_1, v_2 ),
  v_3
  \big)
  \big\|_V
\\ & \leq
\nonumber
  \tfrac{
    2 \, c_{ -\nicefrac{ \vartheta }{ 2 }, -\nicefrac{ \vartheta }{ 2 }, 0 }
  }{
  ( T - t )^{ \vartheta }
  }
  \,
  \big(
  \| B'( y ) \|^2_{ L( H, HS( U, H_{ -\nicefrac{ \vartheta }{ 2 } } ) ) }
  +
  \| B( y ) \|_{ HS( U, H_{ -\nicefrac{ \vartheta }{ 2 } } ) }
  \,
  \| B''( y ) \|_{ L^{(2)}( H, HS( U, H_{ -\nicefrac{ \vartheta }{ 2 } } ) ) }
  \big)
  .
\end{align}
Combining \eqref{eq:psiB_xderivatives_begin}--\eqref{eq:psiB_xderivatives_end} 
and \eqref{eq:psiB_mixedderivatives_begin}--\eqref{eq:psiB_mixedderivatives_end}
with the fundamental theorem of calculus in Banach spaces proves~\eqref{eq:psiB_Lipschitzx}. 
Moreover, combining \eqref{eq:psiB_yderivatives_begin}--\eqref{eq:psiB_mixedderivatives_end} with the fundamental theorem of calculus in Banach spaces establishes~\eqref{eq:psiB_Lipschitzy}. 
It thus remains to prove~\eqref{eq:psiB_Lipschitzxx}.
For this we observe that 
\eqref{eq:psiB_xderivatives_begin}--\eqref{eq:psiB_mixedderivatives_end} ensure that 
for all 
$ x, v_1, v_2, v_3 \in H $ with 
$ \|v_1\|_H, \|v_2\|_H, \|v_3\|_H \leq 1 $
it holds that 
\begin{equation}
\label{eq:psiB_xxderivatives_begin}
\begin{split}
&
  \big\|
   \phi'( x ) \, v_1
  \big\|_V
  \leq
  \big\|
   \big(\tfrac{ \partial }{ \partial x }
   \psi\big)( x, x ) \, v_1
  \big\|_V
  +
  \big\|
   \big(\tfrac{ \partial }{ \partial y }
   \psi\big)( x, x ) \, v_1
  \big\|_V
\\ & \leq
  \tfrac{
    c_{ -\nicefrac{ \vartheta }{ 2 }, -\nicefrac{ \vartheta }{ 2 }, 0 } \,
    \| B( x ) \|^2_{ HS( U, H_{ -\nicefrac{ \vartheta }{ 2 } } ) }
    +
    2 \,
    c_{ -\nicefrac{ \vartheta }{ 2 }, -\nicefrac{ \vartheta }{ 2 } } \,
    \| B( x ) \|_{ HS( U, H_{ -\nicefrac{ \vartheta }{ 2 } } ) } \,
    \| B'( x ) \|_{ L( H, HS( U, H_{ -\nicefrac{ \vartheta }{ 2 } } ) ) }
  }{
  ( T - t )^{ \vartheta }
  }
\\ & \leq
  \tfrac{
    2 \,
    [ c_{ -\nicefrac{ \vartheta }{ 2 }, -\nicefrac{ \vartheta }{ 2 } }
    +
    c_{ -\nicefrac{ \vartheta }{ 2 }, -\nicefrac{ \vartheta }{ 2 }, 0 } ]
  }{
  ( T - t )^{ \vartheta }
  }
  \,
  \| B \|^2_{ C^1_b( H, HS( U, H_{ -\nicefrac{ \vartheta }{ 2 } } ) ) } \,
  \max\{ 1, \| x \|^2_H \}
  ,
\end{split}
\end{equation}
\begin{equation}
\label{eq:psiB_xxderivatives_end}
\begin{split}
&
  \big\|
   \phi''( x ) \, ( v_1, v_2 )
  \big\|_V
\\ & \leq
  \big\|
   \big(\tfrac{ \partial^2 }{ \partial x^2 }
   \psi\big)( x, x ) \, ( v_1, v_2 )
  \big\|_V
  +
  2 \,
  \big\|
   \big(\tfrac{ \partial^2 }{ \partial x \partial y }
   \psi\big)( x, x ) \, ( v_1, v_2 )
  \big\|_V
  +
  \big\|
   \big(\tfrac{ \partial^2 }{ \partial y^2 }
   \psi\big)( x, x ) \, ( v_1, v_2 )
  \big\|_V
\\ & \leq
  \tfrac{
    c_{ -\nicefrac{ \vartheta }{ 2 }, -\nicefrac{ \vartheta }{ 2 }, 0, 0 } \,
    \| B( x ) \|^2_{ HS( U, H_{ -\nicefrac{ \vartheta }{ 2 } } ) }
    +
    4 \,
    c_{ -\nicefrac{ \vartheta }{ 2 }, -\nicefrac{ \vartheta }{ 2 }, 0 } \,
    \| B( x ) \|_{ HS( U, H_{ -\nicefrac{ \vartheta }{ 2 } } ) } \,
    \| B'( x ) \|_{ L( H, HS( U, H_{ -\nicefrac{ \vartheta }{ 2 } } ) ) }
  }{
  ( T - t )^{ \vartheta }
  }
\\ & \quad +
  \tfrac{
    2 \,
    c_{ -\nicefrac{ \vartheta }{ 2 }, -\nicefrac{ \vartheta }{ 2 } } \,
  \big(
  \| B'( x ) \|^2_{ L( H, HS( U, H_{ -\nicefrac{ \vartheta }{ 2 } } ) ) }
  +
  \| B( x ) \|_{ HS( U, H_{ -\nicefrac{ \vartheta }{ 2 } } ) } \,
  \| B''( x ) \|_{ L^{(2)}( H, HS( U, H_{ -\nicefrac{ \vartheta }{ 2 } } ) ) }
  \big)
  }{
  ( T - t )^{ \vartheta }
  }
\\ & \leq
  \tfrac{
    4 \,
    [
    c_{ -\nicefrac{ \vartheta }{ 2 }, -\nicefrac{ \vartheta }{ 2 } }
    +
    c_{ -\nicefrac{ \vartheta }{ 2 }, -\nicefrac{ \vartheta }{ 2 }, 0 } 
    +
    c_{ -\nicefrac{ \vartheta }{ 2 }, -\nicefrac{ \vartheta }{ 2 }, 0, 0 }
    ]
  }{
  ( T - t )^{ \vartheta }
  }
  \,
  \| B \|^2_{ C^2_b( H, HS( U, H_{ -\nicefrac{ \vartheta }{ 2 } } ) ) } \,
  \max\{ 1, \| x \|^2_H \}
  .
\end{split}
\end{equation}
In the next step we observe that 
\eqref{eq:psiB_yderivatives_end}, \eqref{eq:psiB_mixedderivatives_middle}, 
\eqref{eq:psiB_mixedderivatives_end},
and the fact that
$
  \big(
    H \ni x
    \mapsto
   \phi''( x )
   -
   \big(\tfrac{ \partial^2 }{ \partial x^2 }
   \psi\big)( x, x )
   \in L^{(2)}( H, V )
  \big)
  \in C^1( H, L^{(2)}( H, V ) )
$ 
show that for all $ x, x_1, x_2, v_1, v_2, v_3 \in H $ with 
$
  \|v_1\|_H, \|v_2\|_H,
  \|v_3\|_H
$
$
  \leq 1
$
it holds that 
\begin{align}
\label{eq:psiB_2ndxxLipschitz_begin}
&
  \big\|
   \tfrac{ \partial }{ \partial x }
   \big(
   \phi''( x )
   -
   \big(\tfrac{ \partial^2 }{ \partial x^2 }
   \psi\big)( x, x )
   \big)
   ( v_1, v_2, v_3 )
  \big\|_V
\leq
  2 \,
  \big\|
   \big(\tfrac{ \partial^3 }{ \partial x^2 \partial y }
   \psi\big)( x, x ) \, ( v_1, v_2, v_3 )
  \big\|_V
\\ & +
\nonumber
  3 \,
  \big\|
   \big(\tfrac{ \partial^3 }{ \partial x \partial y^2 }
   \psi\big)( x, x ) \, ( v_1, v_2, v_3 )
  \big\|_V
  +
  \big\|
   \big(\tfrac{ \partial^3 }{ \partial y^3 }
   \psi\big)( x, x ) \, ( v_1, v_2, v_3 )
  \big\|_V
\\ & \leq
\nonumber
  \tfrac{
    4 \, c_{ -\nicefrac{ \vartheta }{ 2 }, -\nicefrac{ \vartheta }{ 2 }, 0, 0 } \,
    \| B( x ) \|_{ HS( U, H_{ -\nicefrac{ \vartheta }{ 2 } } ) } \,
    \| B'( x ) \|_{ L( H, HS( U, H_{ -\nicefrac{ \vartheta }{ 2 } } ) ) }
  }{
  ( T - t )^{ \vartheta }
  }
\\ & +
\nonumber
  \tfrac{
    6 \,
    c_{ -\nicefrac{ \vartheta }{ 2 }, -\nicefrac{ \vartheta }{ 2 }, 0 } \,
  \big(
  \| B'( x ) \|^2_{ L( H, HS( U, H_{ -\nicefrac{ \vartheta }{ 2 } } ) ) }
  +
  \| B( x ) \|_{ HS( U, H_{ -\nicefrac{ \vartheta }{ 2 } } ) }
  \,
  \| B''( x ) \|_{ L^{(2)}( H, HS( U, H_{ -\nicefrac{ \vartheta }{ 2 } } ) ) }
  \big)
  }{
  ( T - t )^{ \vartheta }
  }
\\ & +
\nonumber
  \tfrac{
    6 \, c_{ -\nicefrac{ \vartheta }{ 2 }, -\nicefrac{ \vartheta }{ 2 } }
    \,
    \big(
    \| B'( x ) \|_{ L( H, HS( U, H_{ -\nicefrac{ \vartheta }{ 2 } } ) ) } \,
    \| B''( x ) \|_{ L^{(2)}( H, HS( U, H_{ -\nicefrac{ \vartheta }{ 2 } } ) ) }
    +
    \| B( x ) \|_{ HS( U, H_{ -\nicefrac{ \vartheta }{ 2 } } ) } \,
    \| B^{(3)}( x ) \|_{ L^{(3)}( H, HS( U, H_{ -\nicefrac{ \vartheta }{ 2 } } ) ) }
    \big)
  }{
  ( T - t )^{ \vartheta }
  }
\\ & \leq
\nonumber
  \tfrac{
    6 \,
    [
    c_{ -\nicefrac{ \vartheta }{ 2 }, -\nicefrac{ \vartheta }{ 2 } }
    +
    c_{ -\nicefrac{ \vartheta }{ 2 }, -\nicefrac{ \vartheta }{ 2 }, 0 } 
    +
    c_{ -\nicefrac{ \vartheta }{ 2 }, -\nicefrac{ \vartheta }{ 2 }, 0, 0 }
    ]
  }{
  ( T - t )^{ \vartheta }
  }
  \,
  \| B \|^2_{ C^3_b( H, HS( U, H_{ -\nicefrac{ \vartheta }{ 2 } } ) ) } \,
  \max\{ 1, \| x \|_H \}
  .
\end{align}
In addition, we combine \eqref{eq:psiB_xderivatives_end} 
and \eqref{eq:psiB_mixedderivatives_middle} 
with the fundamental theorem of calculus in Banach spaces 
to obtain that for all $ x_1, x_2, v_1, v_2 \in H $ with 
$
  \|v_1\|_H, \|v_2\|_H
  \leq 1
$
it holds that 
\begin{equation}
\label{eq:psiB_2ndxxLipschitz_end}
\begin{split}
&
  \big\|
   \big(
   \big(\tfrac{ \partial^2 }{ \partial x^2 }
   \psi\big)( x_1, x_1 )
   -
   \big(\tfrac{ \partial^2 }{ \partial x^2 }
   \psi\big)( x_2, x_2 )
   \big)
   ( v_1, v_2 )
  \big\|_V
\\ & \leq
  \big\|
   \big(
   \big(\tfrac{ \partial^2 }{ \partial x^2 }
   \psi\big)( x_1, x_1 )
   -
   \big(\tfrac{ \partial^2 }{ \partial x^2 }
   \psi\big)( x_2, x_1 )
   \big)
   ( v_1, v_2 )
  \big\|_V
\\ & 
  +
  \big\|
   \big(
   \big(\tfrac{ \partial^2 }{ \partial x^2 }
   \psi\big)( x_2, x_1 )
   -
   \big(\tfrac{ \partial^2 }{ \partial x^2 }
   \psi\big)( x_2, x_2 )
   \big)
   ( v_1, v_2 )
  \big\|_V
\\ & \leq
  \tfrac{
    \tilde{c}_{ -\nicefrac{ \vartheta }{ 2 }, -\nicefrac{ \vartheta }{ 2 }, 0, 0 } \,
    \| x_1 - x_2 \|_H
  }{
  ( T - t )^{ \vartheta }
  } \,
  \| B( x_1 ) \|^2_{ HS( U, H_{ -\nicefrac{ \vartheta }{ 2 } } ) }
\\ & 
  +
  \tfrac{
    2 \,
    c_{ -\nicefrac{ \vartheta }{ 2 }, -\nicefrac{ \vartheta }{ 2 }, 0, 0 } \,
    \| x_1 - x_2 \|_H
  }{
    ( T - t )^{ \vartheta }
  } 
  \,
  \| B \|^2_{ 
    C^1_b( H, HS( U, H_{ -\nicefrac{ \vartheta }{ 2 } } ) ) 
  } 
  \,
  \max\{ 1, \| x_1 \|_H, \| x_2 \|_H \}
\\ & \leq
  \tfrac{
    2 \,
    \| x_1 - x_2 \|_H
  }{
  ( T - t )^{ \vartheta }
  }
  \,
  \| B \|^2_{ C^1_b( H, HS( U, H_{ -\nicefrac{ \vartheta }{ 2 } } ) ) } \,
  \big[
    c_{ -\nicefrac{ \vartheta }{ 2 }, -\nicefrac{ \vartheta }{ 2 }, 0, 0 } 
    +
    \tilde{c}_{ -\nicefrac{ \vartheta }{ 2 }, -\nicefrac{ \vartheta }{ 2 }, 0, 0 }
  \big]
  \max\{ 1, \| x_1 \|^2_H, \| x_2 \|^2_H \}
  .
\end{split}
\end{equation}
Combining~\eqref{eq:psiB_xxderivatives_begin}--\eqref{eq:psiB_2ndxxLipschitz_end} with the fundamental theorem of calculus in Banach spaces 
finally yields~\eqref{eq:psiB_Lipschitzxx}. 
The proof of Lemma~\ref{lem:weak_regularity_B_2nd_term} is thus completed.
\end{proof}

\begin{lemma}[Weak convergence of semilinear integrated Euler-type 
approximations of SPDEs with mollified nonlinearities]
\label{lem:mollified_weak_solution-integrated_num}
Assume the setting in Section~\ref{sec:setting_euler_integrated_mollified} 
and let $ \rho \in [ 0, 1 - \vartheta ) $. 
Then it holds that 
$
  \ES\big[
  \| \varphi(X_T) \|_V
  +
  \| \varphi(\bar{Y}_T) \|_V
  \big]
  < \infty
$ 
and 
\begin{equation}
\label{eq:mollified_weak_solution-integrated_num}
\begin{split}
&
  \left\|
    \ES\big[
      \varphi( X_T )
    \big]
    -
    \ES\big[
      \varphi( \bar{Y}_T )
    \big]
  \right\|_V
\leq
  \tfrac{
    5 \, |\groupC_0|^3 \, | \groupC_\rho |^2 \, 
    \groupC_{ 0, \rho } \, T^{ ( 1 - \vartheta - \rho ) }
  }{ 
    ( 1 - \vartheta - \rho ) 
  } \, 
  \varsigma_{ F, B } \, K_5 \, h^\rho \,
\\ & \cdot
  \Big[
  c_{ -\vartheta } + c_{ -\vartheta, 0 }
  + c_{ -\vartheta, 0, 0 } + c_{ -\vartheta, 0, 0, 0 }
  +
  c_{ -\nicefrac{ \vartheta }{ 2 }, -\nicefrac{ \vartheta }{ 2 } }
  +
  c_{ -\nicefrac{ \vartheta }{ 2 }, -\nicefrac{ \vartheta }{ 2 }, 0 }
  +
  c_{ -\nicefrac{ \vartheta }{ 2 }, -\nicefrac{ \vartheta }{ 2 }, 0, 0 }
  +
  \tilde{c}_{ -\nicefrac{ \vartheta }{ 2 }, -\nicefrac{ \vartheta }{ 2 }, 0, 0 }
  \Big]
\\ & \cdot
  \Bigg[
    2^{ ( \rho + 1 ) }
    +
    \tfrac{ T^{ ( 1 - \vartheta ) } }{ ( 1 - \vartheta - \rho ) }
  \bigg(
       2 \, \groupC_\vartheta + \groupC_{ \rho + \vartheta } + 2 \, | \groupC_{ \nicefrac{\vartheta}{2} } |^2
       +
       2 \, \groupC_{ \rho + \nicefrac{\vartheta}{2} } \, \groupC_{ \nicefrac{\vartheta}{2} }
       +
    \groupC_{ \vartheta, 0, \rho } + 2 \, \groupC_{ \nicefrac{ \vartheta }{ 2 } } \, \groupC_{ \nicefrac{ \vartheta }{ 2 }, 0, \rho }
\\ & +
    3 \, ( | \groupC_{ \nicefrac{ \vartheta }{ 2 } } |^2 + \groupC_\vartheta ) 
+
  2 \, ( | \groupC_{ \nicefrac{ \vartheta }{ 2 } } |^2 + \groupC_\vartheta ) \, \groupC_\rho
  \Big[
    \groupC_{ -\rho, \rho }
    +
    \tfrac{
    \groupC_{ \vartheta, -\rho, \rho } \, T^{ ( 1 - \vartheta ) } \,
    }{ ( 1 - \vartheta ) }
    +
    \tfrac{
    \sqrt{6} \, \groupC_{ \nicefrac{\vartheta}{2}, -\rho, \rho } \, 
    T^{ ( 1 - \vartheta )/2 }
    }{
    \sqrt{ 1 - \vartheta }
    }
  \Big]
  \bigg)
  \Bigg]
  .
\end{split}
\end{equation}
\end{lemma}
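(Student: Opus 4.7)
The plan is to combine the Kolmogorov backward equation for the function $u(t,x) := \ES[\varphi(X^x_{T-t})]$ with an application of the It\^o formula to $u(\cdot, \bar Y_\cdot)$. First, by the Markov property and $\bar Y_0 = X_0$, one has $\ES[\varphi(X_T)] = \ES[u(0, \bar Y_0)]$, while $\ES[\varphi(\bar Y_T)] = \ES[u(T, \bar Y_T)]$. Since $F$ and $B$ take values in $H_1$ and $\bar Y_0 \in L^5(\P; H_1)$, standard analytic-semigroup estimates show that $\bar Y$ is $H_1$-valued and admits the $H$-valued semimartingale decomposition $d\bar Y_s = (A\bar Y_s + F(Y_{\lfloor s \rfloor_h}))\,ds + B(Y_{\lfloor s \rfloor_h})\,dW_s$. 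Applying the classical It\^o formula to $u(\cdot, \bar Y_\cdot)$, substituting the backward Kolmogorov PDE $u_{1,0}(s,y) + u_{0,1}(s,y)(Ay + F(y)) + \tfrac{1}{2}\sum_{b \in \mathbb{U}} u_{0,2}(s,y)(B^b(y), B^b(y)) = 0$ at $y = \bar Y_s \in H_1$ (whereupon the $A\bar Y_s$-contributions cancel), and taking expectations yields the clean identity
\begin{equation*}
\ES[\varphi(\bar Y_T) - \varphi(X_T)] = \int_0^T \ES\bigl[\psi_F^{(s)}(\bar Y_s, Y_{\lfloor s \rfloor_h}) - \psi_F^{(s)}(\bar Y_s, \bar Y_s)\bigr]\,ds + \int_0^T \ES\bigl[\psi_B^{(s)}(\bar Y_s, Y_{\lfloor s \rfloor_h}) - \psi_B^{(s)}(\bar Y_s, \bar Y_s)\bigr]\,ds,
\end{equation*}
where $\psi_F^{(s)}(x,y) := u_{0,1}(s,x)\, F(y)$ and $\psi_B^{(s)}(x,y) := \tfrac{1}{2}\sum_{b \in \mathbb{U}} u_{0,2}(s,x)(B^b(y), B^b(y))$.

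I would then bound each integrand pointwise in $s$ by Proposition~\ref{prop:weak_temporal_regularity_2nd}, fed with the quantitative Lipschitz-type estimates for $\psi_F^{(s)}$ and $\psi_B^{(s)}$ provided by Lemmas~\ref{lem:weak_regularity_F_2nd_term} and~\ref{lem:weak_regularity_B_2nd_term} (applied with $q = 1$ and $q = 2$, respectively, and with the Lipschitz constant $\eta$ scaling like $(T-s)^{-\vartheta}$ times explicit combinations of the $c_{-\vartheta,\ldots}$, $c_{-\vartheta/2,-\vartheta/2,\ldots}$ and $\tilde c_{-\vartheta/2,-\vartheta/2,0,0}$ constants). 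Combined with the moment bound $K_5 < \infty$ from Lemma~\ref{lem:Kp_estimate} and the common prefactor $\varsigma_{F,B}$, this produces an integrand bound at time $s$ of the form
\begin{equation*}
(T-s)^{-\vartheta}\,h^\rho\,\Bigl[\tfrac{\groupC_{0,\rho}}{s^\rho} + \tfrac{s^{(1-\vartheta-\rho)}}{1-\vartheta-\rho}\cdot(\text{semigroup constants})\Bigr].
\end{equation*}
Integrating in $s$ via Beta-type estimates — splitting $[0,T] = [0,T/2]\cup[T/2,T]$ to bound $\int_0^T (T-s)^{-\vartheta}s^{-\rho}\,ds$ by an expression yielding the $2^{\rho+1}$-prefactor, and treating $\int_0^T (T-s)^{-\vartheta} s^{(1-\vartheta-\rho)}\,ds$ to produce the $T^{(1-\vartheta-\rho)}/(1-\vartheta-\rho)$-prefactor — and collecting constants produces the bound~\eqref{eq:mollified_weak_solution-integrated_num}. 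The integrability claim $\ES[\|\varphi(X_T)\|_V + \|\varphi(\bar Y_T)\|_V] < \infty$ is immediate from $\varphi \in C^5_b(H,V)$ and $K_5 < \infty$.

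The main obstacle will be the rigorous justification of the It\^o step: one must verify that $\bar Y_s \in H_1$ for almost every $s$, that $s \mapsto A\bar Y_s$ is pathwise Bochner-integrable, and that the Kolmogorov backward equation holds strongly pointwise along the trajectory; each of these follows from the hypotheses $F, B \in C^5_b(H, H_1)$, $\bar Y_0 \in L^5(\P; H_1)$, and standard analytic-semigroup theory, but requires care in the unbounded operator setting. A secondary delicate point is that Proposition~\ref{prop:weak_temporal_regularity_2nd} is stated with $Y_t$ in the integrand rather than $Y_{\lfloor t \rfloor_h}$; its proof, which proceeds by applying the mild It\^o maximal inequality to the joint mild It\^o process $(\bar Y, Y)$ — whose mild drift already involves $F(Y_{\lfloor r \rfloor_h})$ and $B(Y_{\lfloor r \rfloor_h})$ — adapts without essential change to deliver the same quantitative bound for the integrand here, or one may split $Y_{\lfloor s \rfloor_h} = Y_s + (Y_{\lfloor s \rfloor_h} - Y_s)$ and absorb the correction via a weak temporal-regularity argument in the spirit of Proposition~\ref{prop:weak_temporal_regularity_1st}.
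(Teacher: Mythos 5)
Your opening steps coincide with the paper's proof: the identity obtained from the Kolmogorov backward equation together with the It\^{o} formula for $u(\cdot,\bar{Y}_\cdot)$ (justified exactly as you say, via $F,B$ mapping into $H_1$ and $\bar{Y}_0\in L^5(\P;H_1)$), the use of Lemmas~\ref{lem:weak_regularity_F_2nd_term} and~\ref{lem:weak_regularity_B_2nd_term} for the Lipschitz constants of $\psi_F^{(s)},\psi_B^{(s)}$ scaling like $(T-s)^{-\vartheta}$, and the final Beta-type integration producing the $2^{\rho+1}$ and $T^{(1-\vartheta-\rho)}/(1-\vartheta-\rho)$ prefactors. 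The gap is at the central step, which you yourself flag as a ``secondary delicate point'' but then dispose of too quickly. The integrand delivered by the Kolmogorov identity is $\ES[\psi^{(s)}(\bar{Y}_s,Y_{\lfloor s\rfloor_h})-\psi^{(s)}(\bar{Y}_s,\bar{Y}_s)]$, whereas Proposition~\ref{prop:weak_temporal_regularity_2nd} bounds $\ES[\psi(\bar{Y}_t,Y_t)-\psi(\bar{Y}_t,\bar{Y}_t)]$, i.e.\ both processes evaluated at the \emph{same} time. Your fix (a) --- that the proof of Proposition~\ref{prop:weak_temporal_regularity_2nd} ``adapts without essential change'' --- is not substantiated: that proof rests on the mild It\^{o}/Dynkin structure applied to the joint mild It\^{o} process $(\bar{Y},Y)$ up to a single terminal time, and the pair $(\bar{Y}_t,Y_{\lfloor t\rfloor_h})$ does not fit this terminal-value structure; making the adaptation rigorous essentially forces you to treat the interval $[\lfloor t\rfloor_h,t]$ for the $\bar{Y}$-component separately, which is precisely the decomposition the paper performs. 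Your fix (b), splitting $Y_{\lfloor s\rfloor_h}=Y_s+(Y_{\lfloor s\rfloor_h}-Y_s)$, requires a \emph{weak} rate-$h^\rho$ estimate for $\ES[\psi(\bar{Y}_s,Y_{\lfloor s\rfloor_h})-\psi(\bar{Y}_s,Y_s)]$, i.e.\ weak temporal regularity of the numerical process $Y$ in the second slot; no result in the paper provides this (Proposition~\ref{prop:weak_temporal_regularity_1st} concerns temporal increments of $\bar{Y}$ in the first slot only), and a strong estimate $\|Y_s-Y_{\lfloor s\rfloor_h}\|_{L^p}\lesssim h^{(1-\vartheta)/2}$ only yields the halved rate, which is insufficient once $\rho$ is close to $1-\vartheta$.

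The paper resolves the mismatch differently: it telescopes through the grid point, writing the integrand as $[\psi(\bar{Y}_t,Y_{\lfloor t\rfloor_h})-\psi(\bar{Y}_{\lfloor t\rfloor_h},Y_{\lfloor t\rfloor_h})]+[\psi(\bar{Y}_{\lfloor t\rfloor_h},Y_{\lfloor t\rfloor_h})-\psi(\bar{Y}_{\lfloor t\rfloor_h},\bar{Y}_{\lfloor t\rfloor_h})]+[\phi(\bar{Y}_{\lfloor t\rfloor_h})-\phi(\bar{Y}_t)]$ with $\phi(x)=\psi(x,x)$ (six terms in total, three for $F$ and three for $B$). The two outer differences only shift the time of $\bar{Y}$ and are handled by Proposition~\ref{prop:weak_temporal_regularity_1st} with $s=\lfloor t\rfloor_h$ (once for $\psi$ via the Lipschitz-in-$x$ bounds, once for the diagonal $\phi$ via the estimates \eqref{eq:psiF_Lipschitzxx} and \eqref{eq:psiB_Lipschitzxx}, which is exactly why those diagonal bounds appear in Lemmas~\ref{lem:weak_regularity_F_2nd_term} and~\ref{lem:weak_regularity_B_2nd_term}); the middle difference compares $Y$ and $\bar{Y}$ at the common time $\lfloor t\rfloor_h$ and is handled by Proposition~\ref{prop:weak_temporal_regularity_2nd}, using $\lfloor t\rfloor_h\geq t/2$ for $t\geq h$ to convert $\lfloor t\rfloor_h^{-\rho}$ into $2^\rho t^{-\rho}$. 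To repair your argument you should adopt this three-term splitting (or prove the missing weak temporal-regularity estimate for $Y$ from scratch, which would be a new and nontrivial result); as written, the key quantitative step of your proposal does not go through.
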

\begin{proof}
First of all, we observe that the assumption that 
$
  \sup_{ t \in [ 0, T ] }
  \| X_t \|_{ \lpn{5}{\P}{H} }
  < \infty
$ 
implies that 
$
  \ES\big[
    \| \varphi(X_T) \|_V
  \big]
  < \infty
$. 
Moreover, combining the assumption that 
$
  Y_0
  \in \lpn{5}{\P}{H_1}
$ 
with Lemma~\ref{lem:Kp_estimate} 
proves that $ K_5 < \infty $. 
This shows, in particular, that we have that
$
  \sup_{ s \in [ 0, T ] } 
  \ES\big[
    \| \varphi(\bar{Y}_T) \|_V
    +
    \| \bar{Y}_s \|_{ H_1 } 
    +
      \int^T_0
      \|
      u_{ 0, 1 }( t, \bar{Y}_t )
      \,
      B( Y_{ \floor{t}{h} } )
      \|^2_{ HS( U, V ) }
      \, dt
  \big]
  < \infty
$.
This and the standard It{\^o} formula in Theorem~2.4 in
Brze\'{z}niak, Van Neerven, Veraar \citationand\ Weis~\cite{bvvw08} 
prove that
\begin{equation}
\begin{split}
&
    \ES\big[ 
      \varphi( \bar{Y}_T )
    \big]
    -
    \ES\big[ 
      \varphi( X_T )
    \big]
    =
  \E\left[ 
    u( T, \bar{Y}_T )
    -
    u( 0, \bar{Y}_0 )
  \right]
\\ & =
  \int_0^T
  \E\left[
    u_{1,0}( t, \bar{Y}_t )
    +
    u_{0,1}( t, \bar{Y}_t )
    \!\left(
      A
      \bar{Y}_t
      +
      F(
        Y_{ \floor{t}{h} } 
      )
    \right)
  \right]
  dt
\\ & 
  +
  \frac{ 1 }{ 2 }
  \sum_{ b\in\mathbb{U} }
  \int_0^T
  \E\left[
    u_{0,2}( t, \bar{Y}_t )\!\left(
      B^b(
        Y_{ \floor{t}{h} }
      )
      ,
      B^b(
        Y_{ \floor{t}{h} }
      )
    \right)
  \right]
  dt
  .
\end{split}
\end{equation}
Exploiting the fact that $ u $ is a solution of the Kolmogorov backward equation associated 
to 
$
  X^x \colon [0,T] \times \Omega \to H
$, 
$ x \in H $, 
and 
$ \varphi $
(cf., e.g., Theorem 7.5.1 in Da Prato \& Zabczyk~\cite{dz02b})
hence shows that
\begin{equation}
\label{eq:Kolmogorov}
\begin{split}
&
    \ES\big[ 
      \varphi( \bar{Y}_T )
    \big]
    -
    \ES\big[ 
      \varphi( X_T )
    \big]
\\ & =
  \int_0^T
  \E\left[
    u_{0,1}( t, \bar{Y}_t )
    \,
      F(
        Y_{ \floor{ t }{ h } } 
      )
  -
    u_{0,1}( t, \bar{Y}_t )
    \,
      F(
        \bar{Y}_t 
      )
  \right]
  dt
\\ & +
  \frac{ 1 }{ 2 }
  \sum_{ b\in\mathbb{U} }
  \int_0^T
  \E\left[
    u_{0,2}( t, \bar{Y}_t )\!\left(
      B^b\big(
        Y_{ \floor{ t }{ h } }
      \big)
      ,
      B^b\big(
        Y_{ \floor{ t }{ h } }
      \big)
    \right)
  -
    u_{0,2}( t, \bar{Y}_t )\!\left(
      B^b\big(
        \bar{Y}_t 
      \big)
      ,
      B^b\big(
        \bar{Y}_t 
      \big)
    \right)
  \right]
  dt
  .
\end{split}
\end{equation}
The triangle inequality hence shows that 
\begin{align}
\label{eq:mild_ito}
&
  \left\|
  \ES
  \big[
  \varphi( X_T )
  \big]
  -
  \ES
  \big[
  \varphi( \bar{Y}_T )
  \big]
  \right\|_V
\nonumber
\\ & \leq
\nonumber
  \smallint_0^T
  \left\|\E\left[
    u_{0,1}( t, \bar{Y}_t )
    \,
      F(
        Y_{ \floor{ t }{ h } } 
      )
    -
    u_{0,1}( t, \bar{Y}_{ \floor{ t }{ h } } )
    \,
      F(
        Y_{ \floor{ t }{ h } } 
      )
  \right]\right\|_V
  dt
\\ & +
\nonumber
  \smallint_0^T
  \left\|\E\left[
    u_{0,1}( t, \bar{Y}_{ \floor{ t }{ h } } )
    \,
      F(
        Y_{ \floor{ t }{ h } } 
      )
  -
    u_{0,1}( t, \bar{Y}_{ \floor{ t }{ h } } )
    \,
      F(
        \bar{Y}_{ \floor{ t }{ h } } 
      )
  \right]\right\|_V
  dt
\\ & +
  \smallint_0^T
  \left\|\E\left[
    u_{0,1}( t, \bar{Y}_{ \floor{ t }{ h } } )
    \,
      F(
        \bar{Y}_{ \floor{ t }{ h } } 
      )
    -
    u_{0,1}( t, \bar{Y}_t )
    \,
      F(
        \bar{Y}_t 
      )
  \right]\right\|_V
  dt
\\ & +
\nonumber
  \tfrac{ 1 }{ 2 } 
  \smallint_0^T
  \left\|\E\left[
  \,{\smallsum\limits_{ b\in\mathbb{U} }}
    u_{0,2}( t, \bar{Y}_t )\!\left(
      B^b\big(
        Y_{ \floor{ t }{ h } }
      \big)
      ,
      B^b\big(
        Y_{ \floor{ t }{ h } }
      \big)
    \right)
  -
  {\smallsum\limits_{ b\in\mathbb{U} }}
    u_{0,2}( t, \bar{Y}_{ \floor{ t }{ h } } )\!\left(
      B^b\big(
        Y_{ \floor{ t }{ h } }
      \big)
      ,
      B^b\big(
        Y_{ \floor{ t }{ h } }
      \big)
    \right)
  \right]\right\|_V
  dt
\\ & +
\nonumber
  \tfrac{ 1 }{ 2 } 
  \smallint_0^T
  \left\|\E\left[
  \,{\smallsum\limits_{ b\in\mathbb{U} }}
    u_{0,2}( t, \bar{Y}_{ \floor{ t }{ h } } )\!\left(
      B^b\big(
        Y_{ \floor{ t }{ h } }
      \big)
      ,
      B^b\big(
        Y_{ \floor{ t }{ h } }
      \big)
    \right)
  -
  {\smallsum\limits_{ b\in\mathbb{U} }}
    u_{0,2}( t, \bar{Y}_{ \floor{ t }{ h } } )\!\left(
      B^b\big(
        \bar{Y}_{ \floor{ t }{ h } } 
      \big)
      ,
      B^b\big(
        \bar{Y}_{ \floor{ t }{ h } } 
      \big)
    \right)
  \right]\right\|_V
  dt
\\ & +
\nonumber
  \tfrac{ 1 }{ 2 } 
  \smallint_0^T
  \left\|\E\left[
  \,{\smallsum\limits_{ b\in\mathbb{U} }}
    u_{0,2}( t, \bar{Y}_{ \floor{ t }{ h } } )\!\left(
      B^b\big(
        \bar{Y}_{ \floor{ t }{ h } }
      \big)
      ,
      B^b\big(
        \bar{Y}_{ \floor{ t }{ h } }
      \big)
    \right)
-
  {\smallsum\limits_{ b\in\mathbb{U} }}
    u_{0,2}( t, \bar{Y}_t )\!\left(
      B^b\big(
        \bar{Y}_t
      \big)
      ,
      B^b\big(
        \bar{Y}_t
      \big)
    \right)
  \right]\right\|_V
  dt
  .
\end{align}
In the next step we combine 
Lemma~\ref{lem:weak_regularity_F_2nd_term} and Lemma~\ref{lem:weak_regularity_B_2nd_term} 
with Proposition~\ref{prop:weak_temporal_regularity_1st} 
to obtain that for all 
$ t \in ( 0, T ) $ 
it holds that 
\allowdisplaybreaks
\begin{align}
\label{eq:mild_ito_1st_term}
&
  \left\|\E\left[
    u_{0,1}( t, \bar{Y}_t )
    \,
      F(
        Y_{ \floor{ t }{ h } } 
      )
    -
    u_{0,1}( t, \bar{Y}_{ \floor{ t }{ h } } )
    \,
      F(
        Y_{ \floor{ t }{ h } } 
      )
  \right]\right\|_V
\nonumber
\\ & +
\nonumber
  \left\|\E\left[
    u_{0,1}( t, \bar{Y}_{ \floor{ t }{ h } } )
    \,
      F(
        \bar{Y}_{ \floor{ t }{ h } } 
      )
    -
    u_{0,1}( t, \bar{Y}_t )
    \,
      F(
        \bar{Y}_t 
      )
  \right]\right\|_V
\\ & +
\nonumber
  \tfrac{ 1 }{ 2 } 
  \left\|\E\left[
  \,{\smallsum\limits_{ b\in\mathbb{U} }}
    u_{0,2}( t, \bar{Y}_t )\!\left(
      B^b\big(
        Y_{ \floor{ t }{ h } }
      \big)
      ,
      B^b\big(
        Y_{ \floor{ t }{ h } }
      \big)
    \right)
  -
  {\smallsum\limits_{ b\in\mathbb{U} }}
    u_{0,2}( t, \bar{Y}_{ \floor{ t }{ h } } )\!\left(
      B^b\big(
        Y_{ \floor{ t }{ h } }
      \big)
      ,
      B^b\big(
        Y_{ \floor{ t }{ h } }
      \big)
    \right)
  \right]\right\|_V
\\ & +
\nonumber
  \tfrac{ 1 }{ 2 } 
  \left\|\E\left[
  \,{\smallsum\limits_{ b\in\mathbb{U} }}
    u_{0,2}( t, \bar{Y}_{ \floor{ t }{ h } } )\!\left(
      B^b\big(
        \bar{Y}_{ \floor{ t }{ h } }
      \big)
      ,
      B^b\big(
        \bar{Y}_{ \floor{ t }{ h } }
      \big)
    \right)
  -
  {\smallsum\limits_{ b\in\mathbb{U} }}
    u_{0,2}( t, \bar{Y}_t )\!\left(
      B^b\big(
        \bar{Y}_t
      \big)
      ,
      B^b\big(
        \bar{Y}_t
      \big)
    \right)
  \right]\right\|_V
\\ & \leq
  \tfrac{
    | \groupC_0 |^3 \, 
    | \groupC_\rho |^2 \,}{ ( T - t )^\vartheta }
    \,
    K_5 
    \,
    h^\rho
    \,
  \max\!\big\{
    1,
    \|
      F
    \|_{ 
      \operatorname{Lip}^0( H, H_{-\vartheta} )
    }
    ,
    \|
      B
    \|^2_{ 
      \operatorname{Lip}^0( H, HS( U, H_{-\nicefrac{\vartheta}{2}} ) )
    }
  \big\}
\\ & \cdot
\nonumber
  \bigg[
  4 \,
  \big[
  c_{ -\vartheta } + c_{ -\vartheta, 0 }
  + c_{ -\vartheta, 0, 0 } + c_{ -\vartheta, 0, 0, 0 }
  \big] \,
  \| F \|_{ C^3_b( H, H_{ -\vartheta } ) }
\\ & +
\nonumber
  5 \,
  \big[
  c_{ -\nicefrac{ \vartheta }{ 2 }, -\nicefrac{ \vartheta }{ 2 } }
  +
  c_{ -\nicefrac{ \vartheta }{ 2 }, -\nicefrac{ \vartheta }{ 2 }, 0 }
  +
  c_{ -\nicefrac{ \vartheta }{ 2 }, -\nicefrac{ \vartheta }{ 2 }, 0, 0 }
  +
  \tilde{c}_{ -\nicefrac{ \vartheta }{ 2 }, -\nicefrac{ \vartheta }{ 2 }, 0, 0 }
  \big] \,
  \| B \|^2_{ C^3_b( H, HS( U, H_{ -\nicefrac{ \vartheta }{ 2 } } ) ) }
  \bigg]
\\ & \cdot
\nonumber
  \bigg[
    \tfrac{
      2^{ \rho }
    }{
      t^{ \rho }
    }
    +
    \tfrac{
      \left(
        2 \, 
        \groupC_\vartheta 
        +
        \groupC_{ \rho + \vartheta }
        +
        2 \, 
        | \groupC_{ \vartheta / 2 } |^2
        + 
        2 \,
        \groupC_{ \rho + \vartheta / 2 }
        \,
        \groupC_{ \vartheta / 2 }
      \right)
      \,
      |\floor{t}{h}|^{ ( 1 - \vartheta - \rho ) }
      +
      \left( 
        \groupC_\vartheta 
        +
        \frac{ 1 }{ 2 }
        | \groupC_{ \nicefrac{ \vartheta }{ 2 } } |^2 
      \right)
      \,
      ( t - \floor{t}{h} )^{ ( 1 - \vartheta - \rho ) }
    }{ 
      ( 1 - \vartheta - \rho ) }
  \bigg]
  .
\end{align}
In addition, we combine 
Lemma~\ref{lem:weak_regularity_F_2nd_term} and 
Lemma~\ref{lem:weak_regularity_B_2nd_term} 
with Proposition~\ref{prop:weak_temporal_regularity_2nd} 
and the fact that 
$
  \forall \,
  t \in [ h, T ]
  \colon
  \floor{t}{h}
  >
  t / 2
$
to obtain that for all 
$ t \in ( 0, T ) $ 
it holds that 
\allowdisplaybreaks
\begin{align}
\label{eq:mild_ito_2nd_term}
\nonumber
&
  \left\|\E\left[
    u_{0,1}( t, \bar{Y}_{ \floor{ t }{ h } } )
    \,
      F(
        Y_{ \floor{ t }{ h } } 
      )
  -
    u_{0,1}( t, \bar{Y}_{ \floor{ t }{ h } } )
    \,
      F(
        \bar{Y}_{ \floor{ t }{ h } } 
      )
  \right]\right\|_V
\\ & +
\nonumber
  \tfrac{ 1 }{ 2 } 
  \left\|\E\left[
  \,{\smallsum\limits_{ b\in\mathbb{U} }}
    u_{0,2}( t, \bar{Y}_{ \floor{ t }{ h } } )\!\left(
      B^b\big(
        Y_{ \floor{ t }{ h } }
      \big)
      ,
      B^b\big(
        Y_{ \floor{ t }{ h } }
      \big)
    \right)
  -
  {\smallsum\limits_{ b\in\mathbb{U} }}
    u_{0,2}( t, \bar{Y}_{ \floor{ t }{ h } } )\!\left(
      B^b\big(
        \bar{Y}_{ \floor{ t }{ h } } 
      \big)
      ,
      B^b\big(
        \bar{Y}_{ \floor{ t }{ h } } 
      \big)
    \right)
  \right]\right\|_V
\\ & \leq
\nonumber
  \tfrac{
    \groupC_0 \, \groupC_{ 0, \rho }
  }{
    ( T - t )^{ \vartheta }
  }
  \,
    K_4 \, h^\rho \,
  \max\!\big\{
    1,
    \|
      F
    \|_{ 
      \operatorname{Lip}^0( H, H_{-\vartheta} )
    }
    ,
    \|
      B
    \|^2_{ 
      \operatorname{Lip}^0( H, HS( U, H_{-\nicefrac{\vartheta}{2}} ) )
    }
  \big\}
\\ & \cdot
  \max\!\big\{
    1,
    \|
      F
    \|_{ 
      \operatorname{Lip}^0( H, H_{-\vartheta} )
    }
    ,
    \|
      B
    \|_{ 
      \operatorname{Lip}^0( H, HS( U, H_{-\nicefrac{\vartheta}{2}} ) )
    }
  \big\}
  \,
  \Big(
  \big[
  c_{ -\vartheta } + c_{ -\vartheta, 0 }
  + c_{ -\vartheta, 0, 0 }
  \big] \,
  \| F \|_{ C^3_b( H, H_{ -\vartheta } ) }
\\ & +
\nonumber
  3 \,
  \big[
  c_{ -\nicefrac{ \vartheta }{ 2 }, -\nicefrac{ \vartheta }{ 2 } }
  +
  c_{ -\nicefrac{ \vartheta }{ 2 }, -\nicefrac{ \vartheta }{ 2 }, 0 }
  +
  c_{ -\nicefrac{ \vartheta }{ 2 }, -\nicefrac{ \vartheta }{ 2 }, 0, 0 }
  \big] \,
  \| B \|^2_{ C^3_b( H, HS( U, H_{ -\nicefrac{ \vartheta }{ 2 } } ) ) }
  \Big)
\\ & \cdot
\nonumber
  \Bigg[
    \tfrac{
      2^\rho 
    }{
      t^\rho
    }
    +
    \tfrac{ | \floor{t}{h} |^{ ( 1 - \vartheta - \rho ) } }{ ( 1 - \vartheta - \rho ) }
  \bigg(
    \groupC_{ \vartheta, 0, \rho } + 2 \, \groupC_{ \nicefrac{ \vartheta }{ 2 } } \, \groupC_{ \nicefrac{ \vartheta }{ 2 }, 0, \rho }
    +
    2 \, ( | \groupC_{ \nicefrac{ \vartheta }{ 2 } } |^2 + \groupC_\vartheta ) 
+
  2 \, ( | \groupC_{ \nicefrac{ \vartheta }{ 2 } } |^2 + \groupC_\vartheta ) \, \groupC_\rho
\\ & \cdot
\nonumber
  \Big[
    \groupC_{ -\rho, \rho }
    +
    \tfrac{
    \groupC_{ \vartheta, -\rho, \rho } \, T^{ ( 1 - \vartheta ) } \,
    }{ ( 1 - \vartheta ) }
    +
    \tfrac{
    \sqrt{6} \, \groupC_{ \nicefrac{\vartheta}{2}, -\rho, \rho } \, 
    T^{ ( 1 - \vartheta )/2 }
    }{
    \sqrt{ 1 - \vartheta }
    }
  \Big]
  \bigg)
  \Bigg]
  .
\end{align}
Combining \eqref{eq:mild_ito}--\eqref{eq:mild_ito_2nd_term} proves that 
\begin{align}
\label{eq:mild_ito_final}
&
  \left\|
  \ES
  \big[
  \varphi( X_T )
  \big]
  -
  \ES
  \big[
  \varphi( \bar{Y}_T )
  \big]
  \right\|_V
\leq
  5 \, |\groupC_0|^3 \, | \groupC_\rho |^2 \, \groupC_{ 0, \rho } \, \varsigma_{ F, B } \, K_5 \, h^\rho \,
  \smallint^T_0
  \tfrac{1}{
    ( T - t )^\vartheta \, t^\rho
  }
  \, dt
\\ & \cdot
\nonumber
  \Big[
  c_{ -\vartheta } + c_{ -\vartheta, 0 }
  + c_{ -\vartheta, 0, 0 } + c_{ -\vartheta, 0, 0, 0 }
  +
  c_{ -\nicefrac{ \vartheta }{ 2 }, -\nicefrac{ \vartheta }{ 2 } }
  +
  c_{ -\nicefrac{ \vartheta }{ 2 }, -\nicefrac{ \vartheta }{ 2 }, 0 }
  +
  c_{ -\nicefrac{ \vartheta }{ 2 }, -\nicefrac{ \vartheta }{ 2 }, 0, 0 }
  +
  \tilde{c}_{ -\nicefrac{ \vartheta }{ 2 }, -\nicefrac{ \vartheta }{ 2 }, 0, 0 }
  \Big]
\\ & \cdot
\nonumber
  \Bigg[
    2^{ ( \rho + 1 ) }
    +
    \tfrac{ T^{ ( 1 - \vartheta ) } }{ ( 1 - \vartheta - \rho ) }
  \bigg(
       2 \, \groupC_\vartheta + \groupC_{ \rho + \vartheta } + 2 \, | \groupC_{ \nicefrac{\vartheta}{2} } |^2
       +
       2 \, \groupC_{ \rho + \nicefrac{\vartheta}{2} } \, \groupC_{ \nicefrac{\vartheta}{2} }
       +
    \groupC_{ \vartheta, 0, \rho } + 2 \, \groupC_{ \nicefrac{ \vartheta }{ 2 } } \, \groupC_{ \nicefrac{ \vartheta }{ 2 }, 0, \rho }
\\ & +
\nonumber
    3 \, ( | \groupC_{ \nicefrac{ \vartheta }{ 2 } } |^2 + \groupC_\vartheta ) 
+
  2 \, ( | \groupC_{ \nicefrac{ \vartheta }{ 2 } } |^2 + \groupC_\vartheta ) \, \groupC_\rho
  \Big[
    \groupC_{ -\rho, \rho }
    +
    \tfrac{
    \groupC_{ \vartheta, -\rho, \rho } \, T^{ ( 1 - \vartheta ) } \,
    }{ ( 1 - \vartheta ) }
    +
    \tfrac{
    \sqrt{6} \, \groupC_{ \nicefrac{\vartheta}{2}, -\rho, \rho } \, 
    T^{ ( 1 - \vartheta )/2 }
    }{
    \sqrt{ 1 - \vartheta }
    }
  \Big]
  \bigg)
  \Bigg]
  .
\end{align}
This and 
Lemma~3.1.6 in~\cite{Jentzen2014SPDElecturenotes} 
show\footnote{with $ x = 1 - \vartheta $ and $ y = 1 - \rho $ in the notation of 
Lemma~3.1.6 in~\cite{Jentzen2014SPDElecturenotes}}~\eqref{eq:mollified_weak_solution-integrated_num}. 
The proof of Lemma~\ref{lem:mollified_weak_solution-integrated_num}
is thus completed.
\end{proof}

\subsection{Weak convergence rates for Euler-type approximations of SPDEs with mollified nonlinearities}

The next result, Corollary~\ref{cor:weak_error_numerics-integrated},
provides a bound for the weak distance of the numerical approximation
and its semilinear integrated counterpart.
Corollary~\ref{cor:weak_error_numerics-integrated}
is an immediate consequence 
of Proposition~\ref{prop:weak_temporal_regularity_2nd}
and of Lemma~\ref{lem:mollified_weak_solution-integrated_num}.

\begin{corollary}[Weak distance between Euler-type approximations of SPDEs 
with mollified nonlinearities and their semilinear integrated counterparts]
\label{cor:weak_error_numerics-integrated}
Assume the setting in Section~\ref{sec:setting_euler_integrated_mollified} 
and let $ \rho \in [ 0, 1 - \vartheta ) $. Then it holds that 
$
  \ES\big[
  \| \varphi(\bar{Y}_T) \|_V
  +
  \| \varphi(Y_T) \|_V
  \big]
  < \infty
$ 
and 
\begin{align}
&
  \left\|
    \ES\big[ \varphi( \bar{Y}_T ) \big]
    -
    \ES\big[ \varphi( Y_T ) \big]
  \right\|_V
  \leq
  \tfrac{ \groupC_{ 0, \rho } }{ T^\rho } \,
  \| \varphi \|_{ \operatorname{Lip}^2( H, V ) }
  \,
  K_3 \, h^\rho \, \varsigma_{ F, B }
\\ & \cdot
\nonumber
  \Bigg[
    1
    +
    \tfrac{ T^{ ( 1 - \vartheta ) } }{ ( 1 - \vartheta - \rho ) }
  \bigg(
    \groupC_{ \vartheta, 0, \rho } + 2 \, \groupC_{ \nicefrac{ \vartheta }{ 2 } } \, \groupC_{ \nicefrac{ \vartheta }{ 2 }, 0, \rho }
    +
    2 \, ( | \groupC_{ \nicefrac{ \vartheta }{ 2 } } |^2 + \groupC_\vartheta )
\\ & +
  2 \, ( | \groupC_{ \nicefrac{ \vartheta }{ 2 } } |^2 + \groupC_\vartheta ) \, \groupC_\rho
\nonumber
  \Big[
    \groupC_{ -\rho, \rho }
    +
    \tfrac{
    \groupC_{ \vartheta, -\rho, \rho } \, T^{ ( 1 - \vartheta ) } \,
    }{ ( 1 - \vartheta ) }
    +
    \tfrac{
    \groupC_{ \nicefrac{\vartheta}{2}, -\rho, \rho } \, 
    \sqrt{ 3 \, T^{ ( 1 - \vartheta ) } }
    }{
    \sqrt{ 1 - \vartheta }
    }
  \Big]
  \bigg)
  \Bigg]
  .
\end{align}
\end{corollary}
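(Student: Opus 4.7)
The plan is to apply Proposition~\ref{prop:weak_temporal_regularity_2nd} with the test function $\psi \colon H \times H \to V$ defined by $\psi(x,y) := \varphi(y)$ (i.e.\ independent of the first argument), with polynomial growth exponent $q = 0$, at time $t = T$. With this choice one has $\psi(\bar{Y}_T, Y_T) - \psi(\bar{Y}_T, \bar{Y}_T) = \varphi(Y_T) - \varphi(\bar{Y}_T)$, so the left-hand side of the proposition coincides with the quantity that Corollary~\ref{cor:weak_error_numerics-integrated} bounds. The setting of Section~\ref{sec:setting_euler_integrated_mollified} specializes that of Section~\ref{sec:setting_weak_temporal_regularity}: $F \in C^5_b(H, H_1)$ is in particular in $\operatorname{Lip}^0(H, H_{-\vartheta})$, likewise for $B$, the integrability assumption holds with $p = 5$ (so the condition $q \leq p - 3$ becomes $0 \leq 2$), and $K_3 \leq K_5 < \infty$ by Lemma~\ref{lem:Kp_estimate}.

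To verify the Lipschitz hypothesis of Proposition~\ref{prop:weak_temporal_regularity_2nd}, I note that any mixed partial of $\psi$ involving a differentiation in the $x$-variable vanishes identically, while for $i = 0$ and $j \in \{0,1,2\}$ the required estimate reduces to $\|\varphi^{(j)}(y_1) - \varphi^{(j)}(y_2)\|_{L^{(j)}(H,V)} \leq \|\varphi\|_{\operatorname{Lip}^2(H,V)} \|y_1 - y_2\|_H$, which holds because $\varphi \in C^5_b(H,V) \subseteq \operatorname{Lip}^2(H,V)$. Thus the hypothesis holds with $\eta = \|\varphi\|_{\operatorname{Lip}^2(H,V)}$ and $q = 0$. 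Plugging these values into inequality~\eqref{eq:weak_distance} at $t = T$ produces a bound of the form $\|\varphi\|_{\operatorname{Lip}^2} \cdot \varsigma_{F,B}^{\mathrm{prop}} \cdot K_3 \cdot h^\rho \cdot [\,C_{0,\rho}/T^\rho + T^{1-\vartheta-\rho}/(1-\vartheta-\rho) \cdot (\text{sum of constants})\,]$, where with $q = 0$ the factor $\sqrt{(q+3)(q+2)} = \sqrt{6}$ combines with the $\sqrt{2-2\vartheta}$ in the denominator to give $\sqrt{3}/\sqrt{1-\vartheta}$, matching the coefficient $\sqrt{3\,T^{1-\vartheta}}/\sqrt{1-\vartheta}$ appearing in the claim.

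The remaining step is routine bookkeeping: I factor $C_{0,\rho}/T^\rho$ out of the outer bracket (using $C_{0,\rho} \geq 1$ to majorize the resulting $1/C_{0,\rho}$ factor by $1$), and absorb the residual Lipschitz norms $\|F\|_{\operatorname{Lip}^0(H, H_{-\vartheta})}$ and $\|B\|_{\operatorname{Lip}^0(H, HS(U, H_{-\vartheta/2}))}$ that appear inside the sum, together with the proposition's constant $\varsigma_{F,B}^{\mathrm{prop}} = \max\{1, \|F\|_{\operatorname{Lip}^0}, \|B\|^2_{\operatorname{Lip}^0}\}$, into the stronger constant $\varsigma_{F,B} = \max\{1, \|F\|^3_{C^3_b}, \|B\|^6_{C^3_b}\}$ of Section~\ref{sec:setting_euler_integrated_mollified} by means of the trivial inequality $\|\cdot\|_{\operatorname{Lip}^0} \leq \|\cdot\|_{C^3_b}$. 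The integrability assertion $\mathbb{E}[\|\varphi(\bar{Y}_T)\|_V + \|\varphi(Y_T)\|_V] < \infty$ follows from the at-most-linear growth of $\varphi \in C^5_b(H,V)$ combined with $K_5 < \infty$. There is no genuine obstacle here; the corollary is essentially a direct specialization of Proposition~\ref{prop:weak_temporal_regularity_2nd}, and the only subtleties are the constant-matching just described.
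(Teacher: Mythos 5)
Your proposal is correct and coincides with the paper's (unwritten) argument: the corollary is presented there as an immediate consequence of Proposition~\ref{prop:weak_temporal_regularity_2nd}, applied exactly as you do with $\psi(x,y)=\varphi(y)$, $\power=0$, $\psiC=\|\varphi\|_{\operatorname{Lip}^2(H,V)}$, $t=T$, so that $\sqrt{(\power+3)(\power+2)}/\sqrt{2-2\vartheta}=\sqrt{3}/\sqrt{1-\vartheta}$ and $K_{\power+3}=K_3\leq K_5<\infty$. Your constant bookkeeping (pulling $\groupC_{0,\rho}/T^\rho$ out of the bracket via $\groupC_{0,\rho}\geq 1$, and absorbing the residual $\|F\|_{\operatorname{Lip}^0}$, $\|B\|_{\operatorname{Lip}^0}$ factors together with $\max\{1,\|F\|_{\operatorname{Lip}^0},\|B\|^2_{\operatorname{Lip}^0}\}$ into the cubed/sixth-power constant $\varsigma_{F,B}$ of Section~\ref{sec:setting_euler_integrated_mollified}) is precisely what that enlarged constant is designed to accommodate.
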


The next result is a direct consequence of the triangle inequality,
of Corollary~\ref{cor:weak_error_numerics-integrated} 
and of Lemma~\ref{lem:mollified_weak_solution-integrated_num}.

\begin{corollary}[Weak convergence of Euler-type approximations of SPDEs with mollified nonlinearities]
\label{cor:mollified_weak_solution-num}
Assume the setting in Section~\ref{sec:setting_euler_integrated_mollified} and let 
$ \rho \in [ 0, 1 - \vartheta ) $. 
Then it holds that 
$
  \ES\big[
  \| \varphi(X_T) \|_V
  +
  \| \varphi(Y_T) \|_V
  \big]
  < \infty
$ 
and 
\begin{equation}\label{eq:mollified_weak_solution-num}
\begin{split}
&
  \left\|
  \ES
  \big[
  \varphi( X_T )
  \big]
  -
  \ES
  \big[
  \varphi( Y_T )
  \big]
  \right\|_V
\leq
  \tfrac{
    5 \, 
    | \groupC_0 |^3 \, 
    | \groupC_\rho |^2 \, 
    \groupC_{ 0, \rho } 
    \max\{ 1, T^{ ( 1 - \vartheta ) } \}
  }{ 
    ( 1 - \vartheta - \rho ) \, T^\rho 
  } 
  \, \varsigma_{ F, B } \, K_5 \, h^\rho 
\\ & \cdot
  \Bigg[
    2^{ ( \rho + 1 ) }
    +
    \tfrac{ T^{ ( 1 - \vartheta ) } }{ ( 1 - \vartheta - \rho ) }
  \bigg(
       2 \, \groupC_\vartheta + \groupC_{ \rho + \vartheta } + 2 \, | \groupC_{ \nicefrac{\vartheta}{2} } |^2
       +
       2 \, \groupC_{ \rho + \nicefrac{\vartheta}{2} } \, \groupC_{ \nicefrac{\vartheta}{2} }
       +
    \groupC_{ \vartheta, 0, \rho } + 2 \, \groupC_{ \nicefrac{ \vartheta }{ 2 } } \, \groupC_{ \nicefrac{ \vartheta }{ 2 }, 0, \rho }
\\ & +
    3 \, ( | \groupC_{ \nicefrac{ \vartheta }{ 2 } } |^2 + \groupC_\vartheta ) 
+
  2 \, ( | \groupC_{ \nicefrac{ \vartheta }{ 2 } } |^2 + \groupC_\vartheta ) \, \groupC_\rho
  \Big[
    \groupC_{ -\rho, \rho }
    +
    \tfrac{
    \groupC_{ \vartheta, -\rho, \rho } \, T^{ ( 1 - \vartheta ) } \,
    }{ ( 1 - \vartheta ) }
    +
    \tfrac{
    \sqrt{6} \, \groupC_{ \nicefrac{\vartheta}{2}, -\rho, \rho } \, 
    T^{ ( 1 - \vartheta )/2 }
    }{
    \sqrt{ 1 - \vartheta }
    }
  \Big]
  \bigg)
  \Bigg]
\\ & \cdot
  \Big[
  \| \varphi \|_{ \operatorname{Lip}^2( H, V ) }
  +
  c_{ -\vartheta } + c_{ -\vartheta, 0 }
  + c_{ -\vartheta, 0, 0 } + c_{ -\vartheta, 0, 0, 0 }
  +
  c_{ -\nicefrac{ \vartheta }{ 2 }, -\nicefrac{ \vartheta }{ 2 } }
  +
  c_{ -\nicefrac{ \vartheta }{ 2 }, -\nicefrac{ \vartheta }{ 2 }, 0 }
\\ & +
  c_{ -\nicefrac{ \vartheta }{ 2 }, -\nicefrac{ \vartheta }{ 2 }, 0, 0 }
+
  \tilde{c}_{ -\nicefrac{ \vartheta }{ 2 }, -\nicefrac{ \vartheta }{ 2 }, 0, 0 }
  \Big]
  .
\end{split}
\end{equation}
\end{corollary}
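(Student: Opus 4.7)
The plan is to apply the triangle inequality at the semilinear integrated counterpart $\bar{Y}_T$ and then invoke the two results that the corollary is explicitly advertised to combine. Concretely, I would write
\begin{equation*}
\bigl\|\ES[\varphi(X_T)] - \ES[\varphi(Y_T)]\bigr\|_V
\leq
\bigl\|\ES[\varphi(X_T)] - \ES[\varphi(\bar{Y}_T)]\bigr\|_V
+
\bigl\|\ES[\varphi(\bar{Y}_T)] - \ES[\varphi(Y_T)]\bigr\|_V
\end{equation*}
and then estimate the first summand by Lemma~\ref{lem:mollified_weak_solution-integrated_num} and the second by Corollary~\ref{cor:weak_error_numerics-integrated}. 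Finiteness of $\ES[\|\varphi(X_T)\|_V + \|\varphi(Y_T)\|_V]$ is immediate from the (at most linear) growth of $\varphi \in C^5_b(H,V)$ together with $\sup_{t\in[0,T]}\|X_t\|_{L^5(\P;H)} < \infty$ and $K_5 < \infty$; the latter is guaranteed by Lemma~\ref{lem:Kp_estimate} applied with $p=5$, using that $\bar{Y}_0 \in L^5(\P;H_1)$ and $Y_0 = \bar{Y}_0$ so that $\|Y_0\|_{L^5(\P;H)} < \infty$.

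The remainder is purely algebraic bookkeeping. Lemma~\ref{lem:mollified_weak_solution-integrated_num} delivers the $h^\rho$-factor paired with the sum of the $c$- and $\tilde{c}$-constants, while Corollary~\ref{cor:weak_error_numerics-integrated} delivers the $h^\rho$-factor paired with $\|\varphi\|_{\operatorname{Lip}^2(H,V)}$. I would factor out the common prefactor
\begin{equation*}
\tfrac{5\,|\groupC_0|^3\,|\groupC_\rho|^2\,\groupC_{0,\rho}\,\max\{1,T^{(1-\vartheta)}\}}{(1-\vartheta-\rho)\,T^\rho}\,\varsigma_{F,B}\,K_5\,h^\rho
\end{equation*}
from both estimates. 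Absorbing the (smaller) prefactor from Corollary~\ref{cor:weak_error_numerics-integrated} into this one is straightforward because $\groupC_0, \groupC_\rho \geq 1$ yields $1 \leq 5\,|\groupC_0|^3\,|\groupC_\rho|^2$, because $K_3 \leq K_5$, and because the $\varsigma_{F,B}$ in the corollary's statement involves the same quantity defined in Section~\ref{sec:setting_euler_integrated_mollified} (noting that the exponents in the definition of $\varsigma_{F,B}$ dominate those needed here). The $(1-\vartheta)$-scale bracket in Corollary~\ref{cor:weak_error_numerics-integrated} is, term by term, bounded by the corresponding bracket in Lemma~\ref{lem:mollified_weak_solution-integrated_num}, so a single bracket suffices in the final product, and the two $h^\rho$ contributions merge into the single bracket $[\|\varphi\|_{\operatorname{Lip}^2(H,V)} + c_{-\vartheta} + \ldots + \tilde{c}_{-\nicefrac{\vartheta}{2},-\nicefrac{\vartheta}{2},0,0}]$ displayed in \eqref{eq:mollified_weak_solution-num}.

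There is no genuine obstacle here; the corollary is essentially a one-line triangle inequality followed by collection of constants. The serious analytic content has already been absorbed into Lemma~\ref{lem:mollified_weak_solution-integrated_num} (which rests on the mild It\^{o} calculus of Section~\ref{sec:mildcalc}, the Kolmogorov backward equation, and the weak temporal regularity result Proposition~\ref{prop:weak_temporal_regularity_1st}) and into Corollary~\ref{cor:weak_error_numerics-integrated} (which follows directly from the weak distance estimate Proposition~\ref{prop:weak_temporal_regularity_2nd}). The mildly tedious part is verifying that every coefficient appearing inside the brackets of the target estimate is at least as large as the sum of the corresponding coefficients emerging from the two invoked bounds; this is a direct inspection exploiting $\groupC_\bullet \geq 1$ throughout.
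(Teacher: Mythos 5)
Your proposal matches the paper's own argument: the paper states that this corollary is a direct consequence of the triangle inequality (inserting $\bar{Y}_T$), of Corollary~\ref{cor:weak_error_numerics-integrated}, and of Lemma~\ref{lem:mollified_weak_solution-integrated_num}, which is exactly your decomposition, and your constant bookkeeping (absorbing the smaller prefactor via $\groupC_\bullet \geq 1$, $K_3 \leq K_5$, and the term-wise domination of the brackets) is sound. No gaps.
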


In the next result, 
Corollary~\ref{cor:mollified_weak_solution-num_komplete}, we use 
Lemma~\ref{lem:Kp_estimate}
to estimate the real number 
$ K_5 $ on the right hand side of~\eqref{eq:mollified_weak_solution-num}. 
For the formulation of Corollary~\ref{cor:mollified_weak_solution-num_komplete} 
we recall that for all 
$ x \in [ 0, \infty ) $, 
$ \theta \in [ 0, 1 ) $ 
it holds that 
$
  \mathcal{E}_{ 1 - \theta }( x )
  =
  \big[
    \sum^{ \infty }_{ n = 0 }
    \frac{ 
        x^{ 2 n }
        \,
        \Gamma( 1 - \theta )^n
    }{
      \Gamma( n ( 1 - \theta ) + 1 ) 
    }
  \big]^{
    \nicefrac{ 1 }{ 2 }
  }
$ 
(see Section~\ref{sec:notation}).

\begin{corollary}[Weak convergence of Euler-type approximations of SPDEs with mollified nonlinearities]
\label{cor:mollified_weak_solution-num_komplete}
Assume the setting in Section~\ref{sec:setting_euler_integrated_mollified} and let 
$ \theta \in [ 0, 1 ) $, 
$ \rho \in [ 0, 1 - \vartheta ) $. 
Then it holds that 
$
  \ES\big[
  \| \varphi(X_T) \|_V
  +
  \| \varphi(Y_T) \|_V
  \big]
  < \infty
$ 
and 
\begin{equation}
\begin{split}
&
  \left\|
  \ES
  \big[
  \varphi( X_T )
  \big]
  -
  \ES
  \big[
  \varphi( Y_T )
  \big]
  \right\|_V
\leq
  \tfrac{
    57 \, |\groupC_0|^3 \, | \groupC_\rho |^2 \, 
    \groupC_{ 0, \rho } 
    \max\{ 
      1 , T^{ ( 1 - \vartheta ) } 
    \}
  }{ 
    ( 1 - \vartheta - \rho ) \, T^\rho 
  } 
  \, 
  \varsigma_{ F, B } 
  \, 
  h^\rho 
\\ & \cdot
  \left[
    \groupC_0 
    \max\{ 1, \| X_0 \|_{ \lpn{5}{\P}{H} } \}
    +
    \tfrac{
      \groupC_\theta \,
      T^{ ( 1 - \theta ) } \,
      \| F \|_{ \operatorname{Lip}^0( H, H_{ -\theta } ) }
    }{
      ( 1 - \theta )
    }
      +
      \groupC_{ \nicefrac{ \theta }{ 2 } }
      \sqrt{
      \tfrac{
      10 \, T^{ ( 1 - \theta ) }
      }{
      ( 1 - \theta )
      }
      } \,
      \| B \|_{ \operatorname{Lip}^0( H, HS( U, H_{ -\nicefrac{\theta}{2} } ) ) }
  \right]^{ 10 }
\\ & \cdot
  \left|\mathcal{E}_{ ( 1 - \theta ) }\!\left[
    \tfrac{
      \sqrt{ 2 }
      \,
      \groupC_{ \theta }
      \,
      T^{ ( 1 - \theta ) }
      \,
      |
        F
      |_{
        \operatorname{Lip}^0( H, H_{ - \theta } )
      }
    }{
      \sqrt{1 - \theta}
    }
    +
    2 \, \groupC_{ 
      \theta / 2 
    }
    \sqrt{
      5 \, T^{ ( 1 - \theta ) }
    } \,
    |
      B
    |_{
      \operatorname{Lip}^0( H, HS( U, H_{ - \nicefrac{\theta}{2} } ) )
    }
  \right]\right|^5
\\ & \cdot
  \Bigg[
    2^{ (\rho+1) }
    +
    \tfrac{ T^{ ( 1 - \vartheta ) } }{ ( 1 - \vartheta - \rho ) }
  \bigg(
       2 \, \groupC_\vartheta + \groupC_{ \rho + \vartheta } + 2 \, | \groupC_{ \nicefrac{\vartheta}{2} } |^2
       +
       2 \, \groupC_{ \rho + \nicefrac{\vartheta}{2} } \, \groupC_{ \nicefrac{\vartheta}{2} }
       +
    \groupC_{ \vartheta, 0, \rho } + 2 \, \groupC_{ \nicefrac{ \vartheta }{ 2 } } \, \groupC_{ \nicefrac{ \vartheta }{ 2 }, 0, \rho }
\\ & +
    3 \, ( | \groupC_{ \nicefrac{ \vartheta }{ 2 } } |^2 + \groupC_\vartheta ) 
+
  2 \, ( | \groupC_{ \nicefrac{ \vartheta }{ 2 } } |^2 + \groupC_\vartheta ) \, \groupC_\rho
  \Big[
    \groupC_{ -\rho, \rho }
    +
    \tfrac{
    \groupC_{ \vartheta, -\rho, \rho } \, T^{ ( 1 - \vartheta ) } \,
    }{ ( 1 - \vartheta ) }
    +
    \tfrac{
    \sqrt{6} \, \groupC_{ \nicefrac{\vartheta}{2}, -\rho, \rho } \, 
    T^{ ( 1 - \vartheta )/2 }
    }{
    \sqrt{ 1 - \vartheta }
    }
  \Big]
  \bigg)
  \Bigg]
\\ & \cdot
  \Big[
  \| \varphi \|_{ \operatorname{Lip}^2( H, V ) }
  +
  c_{ -\vartheta } + c_{ -\vartheta, 0 }
  + c_{ -\vartheta, 0, 0 } + c_{ -\vartheta, 0, 0, 0 }
  +
  c_{ -\nicefrac{ \vartheta }{ 2 }, -\nicefrac{ \vartheta }{ 2 } }
  +
  c_{ -\nicefrac{ \vartheta }{ 2 }, -\nicefrac{ \vartheta }{ 2 }, 0 }
\\ & +
  c_{ -\nicefrac{ \vartheta }{ 2 }, -\nicefrac{ \vartheta }{ 2 }, 0, 0 }
+
  \tilde{c}_{ -\nicefrac{ \vartheta }{ 2 }, -\nicefrac{ \vartheta }{ 2 }, 0, 0 }
  \Big]
  .
\end{split}
\end{equation}
\end{corollary}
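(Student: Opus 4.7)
The plan is to derive Corollary~\ref{cor:mollified_weak_solution-num_komplete} as a direct combination of Corollary~\ref{cor:mollified_weak_solution-num} and Lemma~\ref{lem:Kp_estimate}. Corollary~\ref{cor:mollified_weak_solution-num} already contains every factor appearing on the right-hand side of the claimed inequality except for the explicit moment estimate (the bracketed $[\,\cdots\,]^{10}$ term and the Mittag-Leffler term to the fifth power); these two blocks enter only through the factor $K_5$ in~\eqref{eq:mollified_weak_solution-num}. Consequently, the task reduces to replacing $K_5$ on the right-hand side of~\eqref{eq:mollified_weak_solution-num} by an explicit upper bound in terms of $\|X_0\|_{\lpn{5}{\P}{H}}$, the semigroup constants $\groupC_\theta$ and $\groupC_{\theta/2}$, and the Lipschitz seminorms of $F$ and $B$ into $H_{-\theta}$ and $HS(U,H_{-\theta/2})$ respectively.

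First I would invoke Corollary~\ref{cor:mollified_weak_solution-num} with the given $\rho$ and $\vartheta$ to produce an estimate of the form $(\text{numerical constant})\cdot K_5 \cdot (\text{remaining factors})$; the remaining factors already coincide with the last four displayed lines of the target inequality, and the numerical constant so produced is exactly $5$. Next I would bound $K_5$ by means of Lemma~\ref{lem:Kp_estimate} applied with $p=5$ and with the role of $\vartheta$ there played by the auxiliary parameter $\theta\in[0,1)$ appearing in the statement. This substitution is legitimate because the standing assumption $F\in C^5_b(H,H_1)$ and $B\in C^5_b(H,HS(U,H_1))$ implies $F\in\operatorname{Lip}^0(H,H_{-\theta})$ and $B\in\operatorname{Lip}^0(H,HS(U,H_{-\theta/2}))$ for every $\theta\in[0,1)$, so that the hypothesis of Lemma~\ref{lem:Kp_estimate} with parameter $\theta$ is satisfied by the very processes $Y$ and $\bar Y$ considered in Section~\ref{sec:setting_euler_integrated_mollified}. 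The resulting bound on $K_5$ is the product of the bracketed initial-data/Lipschitz term raised to the power $2p=10$, the Mittag-Leffler factor raised to the power $p=5$, and the numerical prefactor $2^{p/2+1}=2^{7/2}=8\sqrt{2}$, with the constants $\sqrt{p(p-1)}=2\sqrt{5}$ and $\sqrt{p(p-1)/2}=\sqrt{10}$ reproducing the factors $2\sqrt{5}$ and $\sqrt{10}$ visible inside the brackets of the claimed inequality.

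Combining the two estimates then yields the corollary; the overall leading constant becomes $5\cdot 8\sqrt{2}=40\sqrt{2}\leq 57$, which is precisely the numerical factor $57$ appearing in the statement. I expect no genuine obstacle here: the argument is a mechanical assembly of two previous results, and the only care required is bookkeeping — matching the exponents $2p=10$ and $p=5$, tracking the replacement of $\vartheta$ by $\theta$ in the moment bound while keeping $\vartheta$ in the weak-error bound, and confirming that the $\max\{1,\|X_0\|_{\lpn{5}{\P}{H}}\}$ factor appearing inside the $[\,\cdots\,]^{10}$ block correctly absorbs both the $\groupC_0\|Y_0\|_{\lpn{5}{\P}{H}}$ term and the additive constant $1$ produced by Lemma~\ref{lem:Kp_estimate}. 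This completes the proof.
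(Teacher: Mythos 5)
Your proposal is correct and is exactly the paper's (unwritten) argument: the paper states this corollary without a displayed proof, noting only that it follows by estimating the factor $K_5$ in Corollary~\ref{cor:mollified_weak_solution-num} via Lemma~\ref{lem:Kp_estimate} with $p=5$ and with $\theta$ in the role of $\vartheta$ (legitimate since $F\in C^5_b(H,H_1)$ and $B\in C^5_b(H,HS(U,H_1))$), the prefactor being $5\cdot 2^{7/2}=40\sqrt{2}\leq 57$. Your bookkeeping of the exponents $2p=10$, $p=5$ and of the constants $\sqrt{p(p-1)}=2\sqrt{5}$, $\sqrt{p(p-1)/2}=\sqrt{10}$ matches the stated bound, so there is nothing to add.
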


\section{Weak convergence rates for Euler-type approximations of SPDEs}
\label{sec:weak_convergence_irregular}

In this section we use Corollary~\ref{cor:mollified_weak_solution-num_komplete} 
in Section~\ref{sec:euler_integrated_mollified}
and the somehow non-standard mollification procedure 
in Conus et al.~\cite{ConusJentzenKurniawan2014arXiv}
to establish in Corollary~\ref{cor:weak_convergence_irregular}
weak convergence rates for temporal numerical 
approximations of a certain class of SEEs.
Corollary~\ref{cor:weak_convergence_irregular}, in turn, 
implies Theorem~\ref{intro:theorem} in the introduction.
The arguments in this section are quite similar to 
the arguments in Section~5 in Conus et al.~\cite{ConusJentzenKurniawan2014arXiv}.

\subsection{Setting}
\label{sec:setting_weak_convergence_irregular}

Assume the setting in Section~\ref{sec:semigroup_setting},
assume that 
$ h \leq T $, 
let 
$ \theta \in [0,1) $, 
$ \vartheta \in [ 0, \nicefrac{ 1 }{ 2 } ) \cap [ 0, \theta ] $, 
$
  F \in 
  C^5_b( H , H_{ - \theta } ) 
$, 
$
  B \in 
  C^5_b( 
    H, 
    HS( 
      U, 
      H_{ - \theta / 2 }
    ) 
  ) 
$, 
$
  \varphi \in 
  C^5_b( H, V )
$, 
let
$ \varsigma_{ F, B } \in \R $
be a real number given by 
$  
  \varsigma_{ F, B } 
  =
    \max\!\big\{
    1
    ,
    \|
      F
    \|^3_{ 
      C_b^3( H, H_{-\theta} )
    }
    ,
    \|
      B
    \|_{ 
      C_b^3( H, HS( U, H_{-\nicefrac{\theta}{2}} ) )
    }^6
    \big\}
$, 
let 
$
  X, Y \colon [0,T] \times \Omega \to H
$
and
$
  X^{ \kappa, x } \colon [0,T] \times \Omega \to H
$, 
$ \kappa \in [0,T] $, 
$ x \in H $, 
be 
$
  ( \mathcal{F}_t )_{ t \in [0,T] }
$-predictable stochastic processes 
which satisfy that for all 
$ \kappa \in [0,T] $, 
$ x \in H $ 
it holds that 
$
  \sup_{ t \in [0,T] }
  \big[
  \| X_t \|_{ \lpn{5}{\P}{H} } 
  +
  \| X^{ \kappa, x }_t \|_{ \lpn{5}{\P}{H} } 
  \big]
  < \infty
$, 
$
  X^{ \kappa, x }_0 = x
$, 
and 
$ Y_0 = X_0 $ 
and which satisfy that
for all 
$ \kappa \in [0,T] $, 
$ x \in H $, 
$ t \in (0,T] $ 
it holds $ \P $-a.s.\ that
\begin{equation}
  X_t
  = 
    e^{ t A } X_0 
  + 
    \int_0^t e^{ ( t - s )A }\, F( X_s ) \, ds
  + 
    \int_0^t e^{ ( t - s )A }\, B( X_s ) \, dW_s
    ,
\end{equation}
\begin{equation}
  X^{ \kappa, x }_t
  = 
    e^{ t A } \, x 
  + 
    \int_0^t e^{ ( \kappa + t - s )A }\, F( X^{ \kappa, x }_s ) \, ds
  + 
    \int_0^t e^{ ( \kappa + t - s )A }\, B( X^{ \kappa, x }_s ) \, dW_s
    ,
\end{equation}
\begin{equation}
  Y_t
  = 
    S_{ 0, t }\, Y_0 
  + 
    \int_0^t S_{ s, t }\, R_s\, F( Y_{ \floor{ s }{ h } } ) \, ds
  + 
    \int_0^t S_{ s, t }\, R_s\, B( Y_{ \floor{ s }{ h } } ) \, dW_s 
    ,
\end{equation}
let 
$
  u^{ ( \kappa ) } \colon [0,T] \times H \to V, 
$
$
  \kappa \in [ 0, T ],
$
be the functions 
with the property that
for all $ \kappa, t \in [ 0, T ] $, $ x \in H $ 
it holds that
$
  u^{ ( \kappa ) }( t, x ) = 
  \ES\big[ 
    \varphi( X^{ \kappa, x }_{ T - t } )
  \big]
$,
let 
$
  c^{ ( \kappa ) }_{ \delta_1, \dots, \delta_k }
  \in [0,\infty]	
$,
$ \delta_1, \dots, \delta_k \in \R $,
$ k \in \{ 1, 2, 3, 4 \} $, 
$ \kappa \in [ 0, T ] $, 
be the extended real numbers 
with the property that for all 
$ \kappa \in [ 0, T ] $, 
$ k \in \{ 1, 2, 3, 4 \} $,
$ \delta_1, \dots, \delta_k \in \R $
it holds that
\begin{equation}
\begin{split} 
&
  c^{ ( \kappa ) }_{ \delta_1, \delta_2, \dots, \delta_k }
=
  \sup_{
    t \in [0,T)
  }
  \sup_{ 
    x \in H
  }
  \sup_{ 
    v_1, \dots, v_k \in H \backslash \{ 0 \}
  }
  \left[
  \frac{
    \big\|
      \big( 
        \frac{ 
          \partial^k
        }{
          \partial x^k
        }
        u^{ ( \kappa ) }
      \big)( t, x )( v_1, \dots, v_k )
    \big\|_V
  }{
    ( T - t )^{ 
      (
        \delta_1 + \ldots + \delta_k
      ) 
    }
    \left\| v_1 \right\|_{ H_{ \delta_1 } }
    \cdot
    \ldots
    \cdot
    \left\| v_k \right\|_{ H_{ \delta_k } }
  }
  \right]
  ,
\end{split}
\end{equation}
and let 
$
  \tilde{c}^{ ( \kappa ) }_{ \delta_1, \delta_2, \delta_3, \delta_4 }
  \in [0,\infty]	
$,
$ \delta_1, \delta_2, \delta_3, \delta_4 \in \R $, 
$ \kappa \in [ 0, T ] $, 
be the extended real numbers 
with the property that
for all 
$ \kappa \in [ 0, T ] $, 
$ \delta_1, \delta_2, \delta_3, \delta_4 \in \R $
it holds that
\begin{equation}
\begin{split} 
&
  \tilde{c}^{ ( \kappa ) }_{ \delta_1, \delta_2, \delta_3, \delta_4 }
\\ & =
  \sup_{ t \in [ 0, T ) } \,
  \sup_{\substack{
    x_1, x_2 \in H, \\ x_1 \neq x_2
  }} \,
  \sup_{ 
    v_1, \ldots, v_4 \in H \backslash \{ 0 \}
  }
  \left[
  \frac{
    \big\|
    \big(
      \big( 
        \frac{ 
          \partial^4
        }{
          \partial x^4
        }
        u^{ ( \kappa ) }
      \big)( t, x_1 )
      -
      \big( 
        \frac{ 
          \partial^4
        }{
          \partial x^4
        }
        u^{ ( \kappa ) }
      \big)( t, x_2 )
      \big)
      ( v_1, \dots, v_4 )
    \big\|_V
  }{
    ( T - t )^{ 
      (
        \delta_1 + \ldots + \delta_4
      ) 
    }
    \left\| x_1 - x_2 \right\|_H
    \left\| v_1 \right\|_{ H_{ \delta_1 } }
     \cdot
     \ldots
    \cdot
    \left\| v_4 \right\|_{ H_{ \delta_4 } }
  }
  \right]
  .
\end{split}
\end{equation}

\subsection{Weak convergence result}
\label{sec:weak_convergence_irregular_result}

\begin{proposition}\label{prop:weak_convergence_irregular}
Assume the setting in Section~\ref{sec:setting_weak_convergence_irregular} 
and let 
$ r \in [ 0, 1 - \vartheta ) $, 
$ \rho \in ( 0, 1 - \theta ) $. 
Then it holds that 
$
  \ES\big[
  \| \varphi(X_T) \|_V
  +
  \| \varphi(Y_T) \|_V
  \big]
  < \infty
$
and 
\allowdisplaybreaks
\begin{align}
\label{eq:weak_convergence}
&
  \big\|
    \ES\big[ 
      \varphi( X_T )
    \big]
    -
    \ES\big[ 
      \varphi( Y_T )
    \big]
  \big\|_V
\leq
  \left[
    57 
    \left|
      \max\{ T, \tfrac{ 1 }{ T } \} 
    \right|^{
      ( r + 3 ( \theta - \vartheta ) )
    }
    | \groupC_0 |^{20} 
  \right]
  h^{
    \frac{ \rho \, r }{ ( \rho + 6 ( \theta - \vartheta ) ) }
  }
\nonumber
\\ & \cdot
\nonumber
  \left[
    \max\{
    1
    ,
    \| X_0 \|_{ \lpn{5}{\P}{H} }
    \}
+
  \tfrac{
    \groupC_\theta \, 
    \groupC_{ \nicefrac{\rho}{2} + \theta } 
    \,
    T^{ ( 1 - \theta ) }
    \,
    \| F \|_{ C^1_b( H, H_{ -\theta } ) }
  }{
    ( 1 - \theta - \nicefrac{\rho}{2} )
  }
  +
  \tfrac{
    \groupC_{ \nicefrac{\theta}{2} } \, 
    \groupC_{ \nicefrac{( \rho + \theta )}{2} } \,
    \sqrt{ 10 \, T^{ ( 1 - \theta ) } }
    \,
    \| B \|_{ C^1_b( H, HS( U, H_{ - \theta / 2 } ) ) }
  }{
    \sqrt{ 1 - \theta - \rho }
  }
  \right]^{ 10 }
\\ & \cdot
\nonumber
  \left|\mathcal{E}_{ ( 1 - \theta ) }\!\left[
    \tfrac{
      \sqrt{ 2 }
      \,
      \groupC_0 \, \groupC_{ \theta }
      \,
      T^{ ( 1 - \theta ) }
      \,
      |
        F
      |_{
        C^1_b( H, H_{ - \theta } )
      }
    }{
      \sqrt{1 - \theta}
    }
    +
    2 \, 
    \groupC_0 
    \, 
    \groupC_{ 
      \theta / 2 
    }
    \sqrt{
      5 \, T^{ ( 1 - \theta ) }
    } 
    \,
    |
      B
    |_{
      C^1_b( H, HS( U, H_{ - \nicefrac{\theta}{2} } ) )
    }
  \right]
  \right|^5
\\ & \cdot
  \Bigg[
    2^{(r+1)}
    +
    \tfrac{ T^{ ( 1 - \vartheta ) } }{ ( 1 - \vartheta - r ) }
  \bigg(
       2 \, \groupC_\vartheta + \groupC_{ r + \vartheta } + 2 \, | \groupC_{ \nicefrac{\vartheta}{2} } |^2
       +
       2 \, \groupC_{ r + \nicefrac{\vartheta}{2} } \, \groupC_{ \nicefrac{\vartheta}{2} }
       +
    \groupC_{ \vartheta, 0, r } + 2 \, \groupC_{ \nicefrac{ \vartheta }{ 2 } } \, \groupC_{ \nicefrac{ \vartheta }{ 2 }, 0, r }
\\ & +
\nonumber
  3 \, ( | \groupC_{ \nicefrac{ \vartheta }{ 2 } } |^2 + \groupC_\vartheta ) 
  +
  2 \, ( | \groupC_{ \nicefrac{ \vartheta }{ 2 } } |^2 + \groupC_\vartheta ) \, \groupC_r
  \Big[
    \groupC_{ -r, r }
    +
    \tfrac{
      \groupC_{ \vartheta, -r, r } \, 
      T^{ ( 1 - \vartheta ) } 
    }{ ( 1 - \vartheta ) }
    +
    \tfrac{
      \sqrt{6} \, \groupC_{ \nicefrac{\vartheta}{2}, -r, r } \, 
      T^{ ( 1 - \vartheta )/2 }
    }{
      \sqrt{ 1 - \vartheta }
    }
  \Big]
  \bigg)
  \Bigg]
\\ & \cdot
\nonumber
  \bigg[
  \tfrac{
  |\groupC_{ \nicefrac{\rho}{2} }|^2
  }{
  T^{ \rho/2 }
  } \,
  | \varphi |_{ C^1_b( H, V ) }
  +
  \tfrac{
    | \groupC_0 |^3 \, 
    | \groupC_r |^2 \, 
    \groupC_{ 0, r } \,
    | \groupC_{ \theta - \vartheta } |^3 
    \,
    | \groupC_{ \nicefrac{ ( \theta - \vartheta ) }{ 2 } } |^6 
    \max\{ 1, T^{ ( 1 - \vartheta ) } \}
    \,
    \varsigma_{ F, B } 
  }{ 
    ( 1 - \vartheta - r ) \, T^r 
  } 
  \,
  \Big(
  \| \varphi \|_{ C^3_b( H, V ) }
  +
  \sup_{ \kappa \in ( 0, T ] }
  \big[
  c^{ ( \kappa ) }_{ -\vartheta } 
\\ & +
\nonumber
  c^{ ( \kappa ) }_{ -\vartheta, 0 }
+ 
  c^{ ( \kappa ) }_{ -\vartheta, 0, 0 } + c^{ ( \kappa ) }_{ -\vartheta, 0, 0, 0 } 
  +
  c^{ ( \kappa ) }_{ -\nicefrac{ \vartheta }{ 2 }, -\nicefrac{ \vartheta }{ 2 } }
  +
  c^{ ( \kappa ) }_{ -\nicefrac{ \vartheta }{ 2 }, -\nicefrac{ \vartheta }{ 2 }, 0 }
+
  c^{ ( \kappa ) }_{ -\nicefrac{ \vartheta }{ 2 }, -\nicefrac{ \vartheta }{ 2 }, 0, 0 }
  +
  \tilde{c}^{ ( \kappa ) }_{ -\nicefrac{ \vartheta }{ 2 }, -\nicefrac{ \vartheta }{ 2 }, 0, 0 }
  \big]
  \Big)
  \bigg]
  < \infty
  .
\end{align}
\end{proposition}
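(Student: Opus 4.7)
The strategy is the non-standard mollification procedure of Conus et al. Fix an auxiliary mollification level $\kappa \in (0,T]$ and introduce the mollified nonlinearities $F^{\kappa} := e^{\kappa A} F \in C^5_b(H, H_1)$ and $B^{\kappa} := e^{\kappa A} B \in C^5_b(H, HS(U, H_1))$. Let $\bar{X}^{\kappa}$ denote the mild solution of the SEE with coefficients $(F^{\kappa}, B^{\kappa})$, and let $\bar{Y}^{\kappa}$ denote the Euler-type approximation of that mollified SEE, driven by the same evolution families $S$ and $R$ but with the mollified nonlinearities substituted in place of $F, B$. By the triangle inequality,
\begin{equation*}
\| \ES[\varphi(X_T) - \varphi(Y_T)] \|_V \leq \mathcal{D}_1(\kappa) + \mathcal{D}_2(\kappa) + \mathcal{D}_3(\kappa),
\end{equation*}
where $\mathcal{D}_1(\kappa) = \| \ES[\varphi(X_T) - \varphi(\bar{X}^{\kappa}_T)] \|_V$, $\mathcal{D}_2(\kappa) = \| \ES[\varphi(\bar{X}^{\kappa}_T) - \varphi(\bar{Y}^{\kappa}_T)] \|_V$, and $\mathcal{D}_3(\kappa) = \| \ES[\varphi(\bar{Y}^{\kappa}_T) - \varphi(Y_T)] \|_V$.

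For the middle term $\mathcal{D}_2(\kappa)$ I apply Corollary~\ref{cor:mollified_weak_solution-num_komplete} to the smooth triple $(F^{\kappa}, B^{\kappa}, \varphi)$, using the parameter $\vartheta$ of the present setting as the differentiability scale and the parameter $r$ of the proposition as the numerical exponent $\rho$ of the corollary. The essential quantitative observation is that the analytic semigroup satisfies $\|e^{\kappa A}\|_{L(H_{-\theta}, H_{-\vartheta})} \leq \groupC_{\theta-\vartheta}\kappa^{-(\theta-\vartheta)}$, so that $\|F^{\kappa}\|_{C^k_b(H, H_{-\vartheta})} \leq \groupC_{\theta-\vartheta}\kappa^{-(\theta-\vartheta)} \|F\|_{C^k_b(H, H_{-\theta})}$ and an analogous $\kappa^{-(\theta-\vartheta)/2}$-bound holds for $B^{\kappa}$; hence $\varsigma_{F^{\kappa}, B^{\kappa}} \leq |\groupC_{\theta-\vartheta}|^3 |\groupC_{(\theta-\vartheta)/2}|^6\, \kappa^{-3(\theta-\vartheta)}\, \varsigma_{F,B}$. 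Crucially, the fifth-moment factor $K_5$ from Lemma~\ref{lem:Kp_estimate} (computed with the original exponent $\theta$ in place of $\vartheta$) remains uniformly bounded in $\kappa$ because $F^\kappa, B^\kappa$ are bounded uniformly in $\kappa$ in the original $H_{-\theta}$/$HS(U, H_{-\theta/2})$-scales, while the Kolmogorov-backward coefficients $c^{(\kappa)}_{\cdot}$ and $\tilde{c}^{(\kappa)}_{\cdot}$ enter the final bound only through the supremum $\sup_{\kappa \in (0,T]}$ that appears in \eqref{eq:weak_convergence}. Putting these observations together yields $\mathcal{D}_2(\kappa) \leq K\cdot h^r \kappa^{-3(\theta-\vartheta)}$ with an explicit constant $K$ matching the prefactors in \eqref{eq:weak_convergence}.

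For the outer terms $\mathcal{D}_1(\kappa)$ and $\mathcal{D}_3(\kappa)$ I use $\varphi \in C^1_b(H,V)$ to pass to the strong $L^p$-distance and then apply Proposition~\ref{prop:strong_convergence_numerics} to each of the pairs $(X, \bar{X}^{\kappa})$ and $(Y, \bar{Y}^{\kappa})$: the first pair corresponds to Proposition~\ref{prop:strong_convergence_numerics} with the trivial evolution family $L_{s,t} = e^{(t-s)A}$, the identity temporal projection $\varPi(s) = s$, and mollification index $\kappa$ compared with index $0$; the second pair corresponds to $L_{s,t} = S_{s,t}R_s$ and $\varPi(s) = \lfloor s\rfloor_h$, which is well-defined because $S_{s,t}R_s$ and $e^{\kappa A}$ commute. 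In both instances Proposition~\ref{prop:strong_convergence_numerics} is invoked with its exponent $\rho$ equal to $\rho/2 \in [0, (1-\theta)/2)$ (using $\vartheta = \theta$ in that proposition), yielding
\begin{equation*}
\sup_{t \in [0,T]}\|X_t - \bar{X}^{\kappa}_t\|_{L^p(\P;H)} + \sup_{t \in [0,T]}\|Y_t - \bar{Y}^{\kappa}_t\|_{L^p(\P;H)} \leq C \kappa^{\rho/2},
\end{equation*}
and therefore $\mathcal{D}_1(\kappa) + \mathcal{D}_3(\kappa) \leq |\varphi|_{C^1_b(H,V)}\cdot C\kappa^{\rho/2}$. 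Combining the three bounds and minimizing over the free parameter by choosing $\kappa = h^{2r/(\rho + 6(\theta-\vartheta))}$ balances the rates $\kappa^{\rho/2}$ and $h^r\kappa^{-3(\theta-\vartheta)}$ and produces the exponent $\rho r/(\rho + 6(\theta-\vartheta))$ appearing in \eqref{eq:weak_convergence}; the boundary cases for large $h$ (where the optimal $\kappa$ could exceed $T$) and for small $T$ are absorbed into the prefactor $\max\{T, 1/T\}^{r+3(\theta-\vartheta)}$.

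The principal technical obstacle is the $\kappa$-uniform bookkeeping in Step~2: one must isolate the factor $\varsigma_{F^\kappa, B^\kappa}$ as the \emph{only} source of negative powers of $\kappa$ in Corollary~\ref{cor:mollified_weak_solution-num_komplete}, which is what cleanly pins the exponent $3(\theta-\vartheta)$ governing the optimization. This requires (i) applying Lemma~\ref{lem:Kp_estimate} in the coarser scale $H_{-\theta}$, where $F^\kappa, B^\kappa$ are bounded uniformly in $\kappa$, rather than in the finer scale $H_{-\vartheta}$ used for $\varsigma$, and (ii) invoking the standing hypothesis that $\sup_{\kappa \in (0,T]}[c^{(\kappa)}_{\cdot} + \tilde{c}^{(\kappa)}_{\cdot}] < \infty$ in the final bound. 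A secondary subtlety is that Proposition~\ref{prop:strong_convergence_numerics} restricts its mollification exponent to $[0, (1-\theta)/2)$ rather than $[0, 1-\theta)$; this factor of $\tfrac{1}{2}$ is precisely what causes the numerator in the optimized rate to be $\rho r$ rather than $2\rho r$.
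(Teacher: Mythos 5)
Your overall strategy coincides with the paper's: mollify the nonlinearities with $e^{\kappa A}$, split the weak error into three terms by the triangle inequality, control the two outer terms through $\varphi\in C^1_b$ and Proposition~\ref{prop:strong_convergence_numerics} (with exponent $\nicefrac{\rho}{2}$), control the middle term through Corollary~\ref{cor:mollified_weak_solution-num_komplete} with the $\kappa^{-3(\theta-\vartheta)}$ blow-up isolated in $\varsigma_{e^{\kappa A}F,\,e^{\kappa A}B}$ and the moment bound taken in the coarser $\theta$-scale, and finally balance $\kappa\sim h^{2r/(\rho+6(\theta-\vartheta))}$. There is, however, a concrete gap in the application of Corollary~\ref{cor:mollified_weak_solution-num_komplete}: that corollary is formulated in the setting of Section~\ref{sec:setting_euler_integrated_mollified}, which requires the semilinear integrated counterpart $\bar Y$ to be $H_1$-valued with $\bar Y_0=Y_0=X_0\in L^5(\P;H_1)$; this is exactly what legitimizes the standard It\^{o}-formula/Kolmogorov-equation computation in Lemma~\ref{lem:mollified_weak_solution-integrated_num}, where the term $u_{0,1}(t,\bar Y_t)\big(A\bar Y_t+F(Y_{\lfloor t\rfloor_h})\big)$ only makes sense for $\bar Y_t\in H_1$. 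In the setting of the proposition $X_0$ is merely in $L^5(\P;H)$, so you cannot invoke Corollary~\ref{cor:mollified_weak_solution-num_komplete} for your pair $(\bar X^{\kappa},\bar Y^{\kappa})$ started at $X_0$. The paper introduces a second mollification parameter for precisely this reason: it works with processes $\hat X^{\kappa,\delta},\hat Y^{\kappa,\delta}$ started at $e^{\delta A}X_0\in L^5(\P;H_1)$, obtains the bound uniformly in $\delta$, and then removes $\delta$ by Corollary~\ref{cor:initial_perturbation} together with dominated convergence. Your argument needs this extra regularization layer (or an equivalent device) to be complete.

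A second, smaller inaccuracy: you invoke a ``standing hypothesis'' that $\sup_{\kappa\in(0,T]}\big[c^{(\kappa)}_{\cdot}+\tilde c^{(\kappa)}_{\cdot}\big]<\infty$. No such hypothesis is made; in Section~\ref{sec:setting_weak_convergence_irregular} these quantities are only defined as elements of $[0,\infty]$, and the finiteness of the right-hand side of \eqref{eq:weak_convergence} is part of the assertion to be proved. Your main inequality is unaffected (it holds trivially if the supremum is infinite), but the concluding ``$<\infty$'' still requires justification; the paper settles it by appealing to the Kolmogorov-equation regularity estimates of Andersson \& Jentzen rather than by assumption.
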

\begin{proof}
First of all, we note that there exist 
up to modifications unique 
$
  ( \mathcal{F}_t )_{ t \in [0,T] }
$-predictable stochastic processes 
$
  \hat{Y}^{ \kappa, \delta } \colon [0,T] \times \Omega \to H
$, 
$ \kappa,\delta \in [0,T] $, 
and 
$
  \hat{X}^{ \kappa, \delta } \colon [0,T] \times \Omega \to H
$, 
$ \kappa,\delta \in [0,T] $, 
which satisfy that for all 
$ \kappa, \delta \in [0,T] $ 
it holds that 
$
  \sup_{ t \in [0,T] }
  \| \hat{X}^{ \kappa, \delta }_t \|_{ \lpn{5}{\P}{H} } 
  < \infty
$ 
and 
$
  \hat{X}^{ \kappa, \delta }_0
  =
  \hat{Y}^{ \kappa, \delta }_0
  =
  e^{ \delta A } X_0
$ 
and which satisfy that
for all 
$ \kappa, \delta \in [0,T] $, 
$ t \in (0,T] $ 
it holds $ \P $-a.s.\ that 
\begin{equation}
\label{eq:mollified_solution}
  \hat{X}_t^{ \kappa, \delta } 
= 
    e^{ t A } \hat{X}^{ \kappa, \delta }_0
  + 
    \int_0^t e^{ ( \kappa + t - s ) A } 
  F( \hat{X}_s^{ \kappa, \delta } ) \, ds
  +
    \int_0^t e^{ ( \kappa + t - s ) A } 
  B( \hat{X}_s^{ \kappa, \delta } ) \, dW_s 
  ,
\end{equation}
\begin{equation}
\label{eq:mollified_numerics}
  \hat{Y}_t^{ \kappa, \delta } 
= 
    S_{ 0, t } \, \hat{Y}^{ \kappa, \delta }_0
  + 
    \int_0^t S_{ s, t } \, R_s \, e^{ \kappa A } 
  F( \hat{Y}_{ \floor{ s }{ h } }^{ \kappa, \delta } ) \, ds
  +
    \int_0^t S_{ s, t } \, R_s \, e^{ \kappa A } 
  B( \hat{Y}_{ \floor{ s }{ h } }^{ \kappa, \delta } ) \, dW_s 
\end{equation}
(see, e.g., Proposition 3 in Da Prato et 
al.~\cite{DaPratoJentzenRoeckner2012}, Theorem 4.3 in Brze{\'z}niak \cite{b97b}, 
Theorem 6.2 in Van Neerven et al.~\cite{vvw08}).
In the next step we combine 
Lemma~\ref{lem:Kp_estimate} with the fact that 
$
  \forall \,
  \kappa, \delta \in [0,T]
  \colon
  \| \hat{Y}^{ \kappa, \delta }_0 \|_{ \lpn{5}{\P}{H} } 
  < \infty
$ 
to obtain that for all 
$ \kappa, \delta \in [ 0, T ] $ 
it holds that 
$
  \sup_{ t \in [0,T] }
  \| \hat{Y}^{ \kappa, \delta }_t \|_{ \lpn{5}{\P}{H} } 
  < \infty
$. 
This, the fact that
$
  \forall \, \kappa, \delta \in [0,T]
  \colon
  \sup_{ t \in [0,T] }
  \| 
    \hat{X}^{ \kappa, \delta }_t 
  \|_{ \lpn{5}{\P}{H} } 
  < \infty
$ 
and the assumption that 
$ \varphi \in \operatorname{Lip}^4( H, V ) $ 
ensure that 
for all $ \kappa, \delta \in [ 0, T ] $ it holds that 
\begin{equation}
  \ES\big[
    \| \varphi(\hat{X}^{ \kappa, \delta }_T) \|_V
    +
    \| \varphi(\hat{Y}^{ \kappa, \delta }_T) \|_V
  \big]
  < \infty
  .
\end{equation}
This proves, in particular, that
$
  \ES\big[
  \| \varphi(X_T) \|_V
  +
  \| \varphi(Y_T) \|_V
  \big]
  < \infty
$. 
It thus remains to show \eqref{eq:weak_convergence}. 
For this we observe that the triangle inequality ensures that for all 
$ \kappa, \delta \in [ 0, T ] $ 
it holds that 
\begin{equation}\label{eq:mollified_decompose_solution}
\begin{split}
&
  \big\|
    \ES\big[ 
      \varphi( \hat{X}^{ 0, \delta }_T )
    \big]
    -
    \ES\big[ 
      \varphi( \hat{Y}_T^{ 0, \delta } )
    \big]
  \big\|_V
\leq 
  \big\|
    \ES\big[ 
      \varphi( \hat{X}^{ 0, \delta }_T )
    \big]
    -
    \ES\big[ 
      \varphi( \hat{X}_T^{ \kappa, \delta } )
    \big]
  \big\|_V
\\ & +
  \big\|
    \ES\big[ 
      \varphi( \hat{X}_T^{ \kappa, \delta } )
    \big]
    -
    \ES\big[ 
      \varphi( \hat{Y}_T^{ \kappa, \delta } )
    \big]
  \big\|_V
  +
  \big\|
    \ES\big[ 
      \varphi( \hat{Y}_T^{ \kappa, \delta } )
    \big]
    -
    \ES\big[ 
      \varphi( \hat{Y}_T^{ 0, \delta } )
    \big]
  \big\|_V.
\end{split}
\end{equation}
In the following we provide suitable bounds for the three summands 
on the right hand side of \eqref{eq:mollified_decompose_solution}.
For the first and the third summand on the right hand side of \eqref{eq:mollified_decompose_solution}
we observe that  
Proposition~\ref{prop:strong_convergence_numerics} 
together with the fact that 
$
  \forall \, \kappa, \delta \in [ 0, T ]
  \colon
  \sup_{ t \in [0,T] }
  \| \hat{Y}^{ \kappa, \delta }_{ \floor{t}{h} } \|_{ \lpn{2}{\P}{H} } 
  \leq
  \sup_{ t \in [0,T] }
  \| \hat{Y}^{ \kappa, \delta }_t \|_{ \lpn{2}{\P}{H} } 
  < \infty
$
shows
that for all 
$ \kappa, \delta \in [0,T] $ 
it holds that
\begin{align}
\label{eq:mollified_strong_convergence}
&
  \big\|
    \ES\big[ 
      \varphi( \hat{X}^{ 0, \delta }_T )
    \big]
    -
    \ES\big[ 
      \varphi( \hat{X}_T^{ \kappa, \delta } )
    \big]
  \big\|_V
  +
  \big\|
    \ES\big[ 
      \varphi( \hat{Y}_T^{ \kappa,\delta} )
    \big]
    -
    \ES\big[ 
      \varphi( \hat{Y}_T^{ 0, \delta } )
    \big]
  \big\|_V
  \leq
  \tfrac{
    4 \, |\groupC_{ \nicefrac{\rho}{2} }|^2
  }{
    T^{ \rho/2 }
  } 
  \,
  | \varphi |_{ C^1_b( H, V ) } \,
  \kappa^{ \frac{ \rho }{ 2 } }
\\ & \cdot
\nonumber
  \left[
    \groupC_0 
    \max\{
      1
      ,
      \| e^{ \delta A } X_0 \|_{ \lpn{2}{\P}{H} }
    \}
+
  \tfrac{
    \groupC_\theta \, \groupC_{ \nicefrac{\rho}{2} + \theta } \,
    T^{ ( 1 - \theta ) }
    \,
    \| F \|_{ C^1_b( H, H_{ -\theta } ) }
  }{
    ( 1 - \theta - \nicefrac{\rho}{2} )
  }
  +
  \tfrac{
    \groupC_{ \nicefrac{\theta}{2} } \, \groupC_{ \nicefrac{( \rho + \theta )}{2} }
    \sqrt{ T^{ ( 1 - \theta ) } }
    \,
    \| B \|_{ C^1_b( H, HS( U, H_{ -\nicefrac{\theta}{2} } ) ) }
  }{
    \sqrt{ 1 - \theta - \rho }
  }
  \right]^2
\\ & \cdot
\nonumber
  \left|
  \mathcal{E}_{ ( 1 - \theta ) }\!\left[
    \tfrac{ 
      \sqrt{2} 
      \, 
      T^{ ( 1 - \theta ) } 
      \, 
      \groupC_0 
      \, 
      \groupC_\theta 
    }{ 
      \sqrt{ 1 - \theta } 
    }
    | F |_{ C^1_b( H, H_{ -\theta } ) }
    +
    \sqrt{ 
      2 \, T^{ ( 1 - \theta ) } 
    }
    \,
    \groupC_0 \, 
    \groupC_{ \nicefrac{ \theta }{ 2 } } \,
    | B |_{ 
      C^1_b( H, HS( U, H_{ -\nicefrac{ \theta }{ 2 } } ) ) 
    }
  \right]
  \right|^2
  .
\end{align}
Next we bound the second summand on the right hand side 
of \eqref{eq:mollified_decompose_solution}. For this we note that 
for all $ \kappa \in (0,T] $ it holds that
\begin{equation}
\begin{split}
&
  \max\!\big\{
    1
    ,
    \|
      e^{ \kappa A } F
    \|_{ 
      C_b^3( H, H_{-\vartheta} )
    }^3
    ,
    \|
      e^{ \kappa A } B
    \|_{ 
      C_b^3( H, HS( U, H_{-\nicefrac{\vartheta}{2}} ) )
    }^6
  \big\}
\\ & \leq
  | \groupC_{ \theta - \vartheta } |^3 
  \,
  | \groupC_{ \nicefrac{ ( \theta - \vartheta ) }{ 2 } } |^6 
  \,
  \varsigma_{ F, B } 
  \max\!\big\{ 
    1, 
    \kappa^{ - 3 ( \theta - \vartheta ) } 
  \big\}
  .
\end{split}
\end{equation}
This and Corollary~\ref{cor:mollified_weak_solution-num_komplete} show that for all 
$ \kappa, \delta \in ( 0, T ] $ 
it holds that 
\allowdisplaybreaks
\begin{align}
\label{eq:mollified_weak_convergence}
&
  \big\|
    \ES\big[ 
      \varphi( \hat{X}_T^{ \kappa, \delta } )
    \big]
    -
    \ES\big[ 
      \varphi( \hat{Y}_T^{ \kappa, \delta } )
    \big]
  \big\|_V
\nonumber
\\ & \leq
\nonumber
  \tfrac{
    57 \, 
    |\groupC_0|^3 \, 
    | \groupC_r |^2 \, 
    \groupC_{ 0, r } \,
    | \groupC_{ \theta - \vartheta } |^3 \,
    | \groupC_{ \nicefrac{ ( \theta - \vartheta ) }{ 2 } } |^6 
    \max\{ 1, T^{ ( 1 - \vartheta ) } \}
  }{ 
    ( 1 - \vartheta - r ) \; T^r 
  } 
  \,
  \varsigma_{ F, B } 
  \max\{ 1, \kappa^{ - 3 ( \theta - \vartheta ) } \} 
  \, 
  h^r
\\ & \cdot
\nonumber
  \left[
    \groupC_0 
    \max\{ 1, \| e^{ \delta A } X_0 \|_{ \lpn{5}{\P}{H} } \}
    +
    \tfrac{
      \groupC_\theta 
      \,
      T^{ ( 1 - \theta ) }
      \,
      \| e^{ \kappa A } F \|_{ C^1_b( H, H_{ -\theta } ) }
    }{
      ( 1 - \theta )
    }
      +
    \tfrac{
      \groupC_{ \nicefrac{ \theta }{ 2 } }
      \sqrt{ 10 \, T^{ ( 1 - \theta ) } } 
      \,
      \| e^{ \kappa A } B \|_{ C^1_b( H, HS( U, H_{ - \theta / 2 } ) ) }
    }{
      \sqrt{1 - \theta}
    }
  \right]^{ 10 }
\\ & \cdot
\nonumber
  \left|\mathcal{E}_{ ( 1 - \theta ) }\!\left[
    \tfrac{
      \sqrt{ 2 }
      \,
      \groupC_{ \theta }
      \,
      T^{ ( 1 - \theta ) }
      \,
      |
        e^{ \kappa A } F
      |_{
        C^1_b( H, H_{ - \theta } )
      }
    }{
      \sqrt{1 - \theta}
    }
    +
    2 \, \groupC_{ 
      \theta / 2 
    }
    \sqrt{
      5 \, T^{ ( 1 - \theta ) }
    } \,
    |
      e^{ \kappa A } B
    |_{
      C^1_b( H, HS( U, H_{ - \nicefrac{\theta}{2} } ) )
    }
  \right]\right|^5
\\ & \cdot
  \Bigg[
    2^{(r+1)}
    +
    \tfrac{ T^{ ( 1 - \vartheta ) } }{ ( 1 - \vartheta - r ) }
  \bigg(
       2 \, \groupC_\vartheta + \groupC_{ r + \vartheta } + 2 \, | \groupC_{ \nicefrac{\vartheta}{2} } |^2
       +
       2 \, \groupC_{ r + \nicefrac{\vartheta}{2} } \, \groupC_{ \nicefrac{\vartheta}{2} }
       +
    \groupC_{ \vartheta, 0, r } + 2 \, \groupC_{ \nicefrac{ \vartheta }{ 2 } } \, \groupC_{ \nicefrac{ \vartheta }{ 2 }, 0, r }
\\ & +
\nonumber
    3 \, ( | \groupC_{ \nicefrac{ \vartheta }{ 2 } } |^2 + \groupC_\vartheta ) 
+
  2 \, 
  ( | \groupC_{ \nicefrac{ \vartheta }{ 2 } } |^2 + \groupC_\vartheta ) \, 
  \groupC_r \,
  \Big[
    \groupC_{ -r, r }
    +
    \tfrac{
    \groupC_{ \vartheta, -r, r } \, T^{ ( 1 - \vartheta ) } \,
    }{ ( 1 - \vartheta ) }
    +
    \tfrac{
    \sqrt{6} \, \groupC_{ \nicefrac{\vartheta}{2}, -r, r } \, 
    T^{ ( 1 - \vartheta )/2 }
    }{
    \sqrt{ 1 - \vartheta }
    }
  \Big]
  \bigg)
  \Bigg]
\\ & \cdot
\nonumber
  \Big[
  \| \varphi \|_{ C^3_b( H, V ) }
  +
  c^{ ( \kappa ) }_{ -\vartheta } + c^{ ( \kappa ) }_{ -\vartheta, 0 }
  + c^{ ( \kappa ) }_{ -\vartheta, 0, 0 } + c^{ ( \kappa ) }_{ -\vartheta, 0, 0, 0 }
+
  c^{ ( \kappa ) }_{ -\nicefrac{ \vartheta }{ 2 }, -\nicefrac{ \vartheta }{ 2 } }
  +
  c^{ ( \kappa ) }_{ -\nicefrac{ \vartheta }{ 2 }, -\nicefrac{ \vartheta }{ 2 }, 0 }
\\ & +
\nonumber
  c^{ ( \kappa ) }_{ -\nicefrac{ \vartheta }{ 2 }, -\nicefrac{ \vartheta }{ 2 }, 0, 0 }
+
  \tilde{c}^{ ( \kappa ) }_{ -\nicefrac{ \vartheta }{ 2 }, -\nicefrac{ \vartheta }{ 2 }, 0, 0 }
  \,
  \Big]
  .
\end{align}
Plugging \eqref{eq:mollified_strong_convergence} and \eqref{eq:mollified_weak_convergence} 
into \eqref{eq:mollified_decompose_solution} 
then shows that for all 
$ \kappa, \delta \in ( 0, T ] $ 
it holds that 
\begin{align}
\label{eq:mollified_combined}
&
  \big\|
    \ES\big[ 
      \varphi( \hat{X}^{ 0, \delta }_T )
    \big]
    -
    \ES\big[ 
      \varphi( \hat{Y}_T^{ 0, \delta } )
    \big]
  \big\|_V
\leq
  \max\!\left\{
    4 \, \kappa^{ \frac{\rho}{2} }
    ,
    57 
    \max\!\big\{ 
      1 , 
      \kappa^{ - 3 ( \theta - \vartheta ) } 
    \big\} 
    \, h^r
  \right\}
  | \groupC_0 |^{ 20 } 
\nonumber
\\ & \cdot
  \left[
    \max\{
      1
      ,
      \| X_0 \|_{ \lpn{5}{\P}{H} }
    \}
    +
  \tfrac{
    \groupC_\theta \, \groupC_{ \nicefrac{\rho}{2} + \theta } \,
    T^{ ( 1 - \theta ) }
    \,
    \| F \|_{ C^1_b( H, H_{ -\theta } ) }
  }{
    ( 1 - \theta - \nicefrac{\rho}{2} )
  }
  +
  \tfrac{
    \groupC_{ \nicefrac{\theta}{2} } \, \groupC_{ \nicefrac{( \rho + \theta )}{2} }
    \sqrt{ 10 \, T^{ ( 1 - \theta ) } }
    \,
    \| B \|_{ C^1_b( H, HS( U, H_{ -\nicefrac{\theta}{2} } ) ) }
  }{
    \sqrt{ 1 - \theta - \rho }
  }
  \right]^{ 10 }
\nonumber
\\ & \cdot
\nonumber
  \left|
  \mathcal{E}_{ ( 1 - \theta ) }\!\left[
    \tfrac{
      \sqrt{ 2 }
      \,
      \groupC_0 \, \groupC_{ \theta }
      \,
      T^{ ( 1 - \theta ) }
      \,
      |
        F
      |_{
        C^1_b( H, H_{ - \theta } )
      }
    }{
      \sqrt{1 - \theta}
    }
    +
    2 \, \groupC_0 \, \groupC_{ 
      \theta / 2 
    }
    \sqrt{
      5 \, T^{ ( 1 - \theta ) }
    } \,
    |
      B
    |_{
      C^1_b( H, HS( U, H_{ - \nicefrac{\theta}{2} } ) )
    }
  \right]
  \right|^5
\\ & \cdot
  \Bigg[
    2^{(r+1)}
    +
    \tfrac{ T^{ ( 1 - \vartheta ) } }{ ( 1 - \vartheta - r ) }
  \bigg(
       2 \, \groupC_\vartheta + \groupC_{ r + \vartheta } + 2 \, | \groupC_{ \nicefrac{\vartheta}{2} } |^2
       +
       2 \, \groupC_{ r + \nicefrac{\vartheta}{2} } \, \groupC_{ \nicefrac{\vartheta}{2} }
       +
    \groupC_{ \vartheta, 0, r } + 2 \, \groupC_{ \nicefrac{ \vartheta }{ 2 } } \, \groupC_{ \nicefrac{ \vartheta }{ 2 }, 0, r }
\\ & +
\nonumber
    3 \, ( | \groupC_{ \nicefrac{ \vartheta }{ 2 } } |^2 + \groupC_\vartheta ) 
+
  2 \, ( | \groupC_{ \nicefrac{ \vartheta }{ 2 } } |^2 + \groupC_\vartheta ) \, \groupC_r
  \Big[
    \groupC_{ -r, r }
    +
    \tfrac{
    \groupC_{ \vartheta, -r, r } \, T^{ ( 1 - \vartheta ) } \,
    }{ ( 1 - \vartheta ) }
    +
    \tfrac{
    \sqrt{6} \, \groupC_{ \nicefrac{\vartheta}{2}, -r, r } \, 
    T^{ ( 1 - \vartheta )/2 }
    }{
    \sqrt{ 1 - \vartheta }
    }
  \Big]
  \bigg)
  \Bigg]
\\ & \cdot
\nonumber
  \bigg[
  \tfrac{
  |\groupC_{ \nicefrac{\rho}{2} }|^2
  }{
  T^{ \rho/2 }
  } \,
  | \varphi |_{ C^1_b( H, V ) }
  +
  \tfrac{
    |\groupC_0|^3 \, | \groupC_r |^2 \, \groupC_{ 0, r } \,
    | \groupC_{ \theta - \vartheta } |^3 \,
    | \groupC_{ \nicefrac{ ( \theta - \vartheta ) }{ 2 } } |^6 
    \max\{ 1, T^{ ( 1 - \vartheta ) } \}
  }{ 
    ( 1 - \vartheta - r ) \, T^r 
  } 
  \, 
  \varsigma_{ F, B } 
  \,
  \Big[
  \| \varphi \|_{ C^3_b( H, V ) }
  +
  c^{ ( \kappa ) }_{ -\vartheta } + c^{ ( \kappa ) }_{ -\vartheta, 0 }
\\ & + 
\nonumber
  c^{ ( \kappa ) }_{ -\vartheta, 0, 0 } 
+
  c^{ ( \kappa ) }_{ -\vartheta, 0, 0, 0 }
+
  c^{ ( \kappa ) }_{ -\nicefrac{ \vartheta }{ 2 }, -\nicefrac{ \vartheta }{ 2 } }
  +
  c^{ ( \kappa ) }_{ -\nicefrac{ \vartheta }{ 2 }, -\nicefrac{ \vartheta }{ 2 }, 0 }
+
  c^{ ( \kappa ) }_{ -\nicefrac{ \vartheta }{ 2 }, -\nicefrac{ \vartheta }{ 2 }, 0, 0 }
+
  \tilde{c}^{ ( \kappa ) }_{ -\nicefrac{ \vartheta }{ 2 }, -\nicefrac{ \vartheta }{ 2 }, 0, 0 }
  \,
  \Big]
  \bigg]
  .
\end{align}
In addition, we observe that
\begin{equation}\label{eq:optimal_rate}
\begin{split}
&
  \inf_{ \kappa \in ( 0, T ] }
  \max\!\left\{
    4 \, 
    \kappa^{ \frac{ \rho }{ 2 } }
    ,
    57 
    \max\!\big\{
      1,
      \kappa^{ -3( \theta - \vartheta ) }
    \big\} 
    \,
    h^r
  \right\}
\\ & \leq
  \max\!\left\{
  4 
  \left[
    \min\{ 1, T \}
    \left|
      \tfrac{h}{T}
    \right|^{ \frac{ 2r }{ ( \rho + 6 ( \theta - \vartheta ) ) } }
  \right]^{ \frac{ \rho }{ 2 } }
  ,
    57  
    \max\!\left\{
    1,
    \left[
  \min\{ 1, T \}
  \left|
  \tfrac{h}{T}
  \right|^{ \frac{ 2r }{ ( \rho + 6 ( \theta - \vartheta ) ) } }
    \right]^{ -3( \theta - \vartheta ) }
    \right\} 
    h^r
  \right\}
\\ & =
  \max\!\left\{
  4 
  \left[
    \min\{ 1, T \}
    \left|
      \tfrac{h}{T}
    \right|^{ \frac{ 2r }{ ( \rho + 6 ( \theta - \vartheta ) ) } }
  \right]^{ \frac{ \rho }{ 2 } }
  ,
    57  
    \,
    h^r
    \left[
  \min\{ 1, T \}
  \left|
  \tfrac{h}{T}
  \right|^{ \frac{ 2r }{ ( \rho + 6 ( \theta - \vartheta ) ) } }
    \right]^{ -3( \theta - \vartheta ) }
  \right\}
\\ & =
  \max\!\left\{
  \tfrac{
    4 \,
    | \min\{ 1, T \} |^{ \frac{ \rho }{ 2 } }
  }{
    T^{
      \frac{
      r \rho
      }{
      ( \rho + 6 ( \theta - \vartheta ) )
      }
    }
  }
  ,
  \tfrac{
    57 \,
    T^{
      \frac{
      6 ( \theta - \vartheta ) r
      }{
      ( \rho + 6 ( \theta - \vartheta ) )
      }
    }
  }{
    |\min\{ 1, T \}|^{ 3( \theta - \vartheta ) }
  }
  \right\}
  h^{
    \frac{ \rho \, r }{ ( \rho + 6 ( \theta - \vartheta ) ) }
  }
\\ & 
\leq
  57
  \max\!\left\{
  \tfrac{
    1
  }{
    \left| \min\{ 1, T \} \right|^r
  }
  ,
  \tfrac{
    \left| \max\{ 1, T \} \right|^r
  }{
    |\min\{ 1, T \}|^{ 3( \theta - \vartheta ) }
  }
  \right\}
  h^{
    \frac{ \rho \, r }{ ( \rho + 6 ( \theta - \vartheta ) ) }
  }
\leq
  \frac{
    57 
    \,
    h^{
      \frac{ \rho \, r }{ ( \rho + 6 ( \theta - \vartheta ) ) }
    }
  }{
    \left| 
      \min\{ T, \frac{ 1 }{ T } \} 
    \right|^{ ( r + 3( \theta - \vartheta ) ) }
  } 
  .
\end{split}
\end{equation}
Combining \eqref{eq:mollified_combined} and \eqref{eq:optimal_rate} yields that for all 
$ \delta \in (0,T] $ it holds that 
\allowdisplaybreaks
\begin{align}
\label{eq:weak_convergence_conclude}
&
  \big\|
    \ES\big[ 
      \varphi( \hat{X}^{ 0, \delta }_T )
    \big]
    -
    \ES\big[ 
      \varphi( \hat{Y}_T^{ 0, \delta } )
    \big]
  \big\|_V
\leq
  \left[
    57 
    \left| 
      \max\{ T, \tfrac{ 1 }{ T } \} 
    \right|^{ ( r + 3( \theta - \vartheta ) ) }
    | \groupC_0 |^{ 20 } 
  \right]
  h^{
    \frac{ \rho \, r }{ ( \rho + 6 ( \theta - \vartheta ) ) } 
  }
\\ & \cdot
\nonumber
  \left[
    \max\{
      1
      ,
      \| X_0 \|_{ \lpn{5}{\P}{H} }
    \}
    +
    \tfrac{
      \groupC_\theta \, \groupC_{ \nicefrac{\rho}{2} + \theta } \,
      T^{ ( 1 - \theta ) }
      \,
      \| F \|_{ C^1_b( H, H_{ -\theta } ) }
    }{
      ( 1 - \theta - \nicefrac{\rho}{2} )
    }
    +
    \tfrac{
      \groupC_{ \nicefrac{\theta}{2} } \, \groupC_{ \nicefrac{( \rho + \theta )}{2} }
      \sqrt{ 10 \, T^{ ( 1 - \theta ) } }
      \,
      \| B \|_{ C^1_b( H, HS( U, H_{ -\nicefrac{\theta}{2} } ) ) }
    }{
      \sqrt{ 1 - \theta - \rho }
    }
  \right]^{ 10 }
\\ & \cdot
\nonumber
  \left|\mathcal{E}_{ ( 1 - \theta ) }\!\left[
    \tfrac{
      \sqrt{ 2 }
      \,
      \groupC_0 \, \groupC_{ \theta }
      \,
      T^{ ( 1 - \theta ) }
      \,
      |
        F
      |_{
        C^1_b( H, H_{ - \theta } )
      }
    }{
      \sqrt{1 - \theta}
    }
    +
    2 \, \groupC_0 \, \groupC_{ 
      \theta / 2 
    }
    \sqrt{
      5 \, T^{ ( 1 - \theta ) }
    } \,
    |
      B
    |_{
      C^1_b( H, HS( U, H_{ - \nicefrac{\theta}{2} } ) )
    }
  \right]\right|^5
\\ & \cdot
\nonumber
  \bigg[
    2^{(r+1)}
    +
    \tfrac{ T^{ ( 1 - \vartheta ) } }{ ( 1 - \vartheta - r ) }
  \bigg(
       2 \, \groupC_\vartheta + \groupC_{ r + \vartheta } + 2 \, | \groupC_{ \nicefrac{\vartheta}{2} } |^2
       +
       2 \, \groupC_{ r + \nicefrac{\vartheta}{2} } \, \groupC_{ \nicefrac{\vartheta}{2} }
       +
    \groupC_{ \vartheta, 0, r } + 2 \, \groupC_{ \nicefrac{ \vartheta }{ 2 } } \, \groupC_{ \nicefrac{ \vartheta }{ 2 }, 0, r }
\\ & +
\nonumber
    3 \, ( | \groupC_{ \nicefrac{ \vartheta }{ 2 } } |^2 + \groupC_\vartheta ) 
+
  2 \, ( | \groupC_{ \nicefrac{ \vartheta }{ 2 } } |^2 + \groupC_\vartheta ) \, \groupC_r
  \Big[
    \groupC_{ -r, r }
    +
    \tfrac{
    \groupC_{ \vartheta, -r, r } \, T^{ ( 1 - \vartheta ) } \,
    }{ ( 1 - \vartheta ) }
    +
    \tfrac{
    \sqrt{6} \, \groupC_{ \nicefrac{\vartheta}{2}, -r, r } \, 
    T^{ ( 1 - \vartheta )/2 }
    }{
    \sqrt{ 1 - \vartheta }
    }
  \Big]
  \bigg)
  \bigg]
\\ & \cdot
\nonumber
  \bigg[
  \tfrac{
  |\groupC_{ \nicefrac{\rho}{2} }|^2
  }{
  T^{ \rho/2 }
  } \,
  | \varphi |_{ C^1_b( H, V ) }
  +
  \tfrac{
  |\groupC_0|^3 \, | \groupC_r |^2 \, \groupC_{ 0, r } \,
  | \groupC_{ \theta - \vartheta } |^3 \,
  | \groupC_{ \nicefrac{ ( \theta - \vartheta ) }{ 2 } } |^6 \,
  \max\{ 1, T^{ ( 1 - \vartheta ) } \}
  }{ ( 1 - \vartheta - r ) \, T^r } \, \varsigma_{ F, B } \, 
  \Big[
  \| \varphi \|_{ C^3_b( H, V ) }
  +
  \sup_{ \kappa \in ( 0, T ] }
  \big[
  c^{ ( \kappa ) }_{ -\vartheta } 
\\ & +
\nonumber
  c^{ ( \kappa ) }_{ -\vartheta, 0 }
+ 
  c^{ ( \kappa ) }_{ -\vartheta, 0, 0 } + c^{ ( \kappa ) }_{ -\vartheta, 0, 0, 0 } 
  +
  c^{ ( \kappa ) }_{ -\nicefrac{ \vartheta }{ 2 }, -\nicefrac{ \vartheta }{ 2 } }
  +
  c^{ ( \kappa ) }_{ -\nicefrac{ \vartheta }{ 2 }, -\nicefrac{ \vartheta }{ 2 }, 0 }
+
  c^{ ( \kappa ) }_{ -\nicefrac{ \vartheta }{ 2 }, -\nicefrac{ \vartheta }{ 2 }, 0, 0 }
  +
  \tilde{c}^{ ( \kappa ) }_{ -\nicefrac{ \vartheta }{ 2 }, -\nicefrac{ \vartheta }{ 2 }, 0, 0 }
  \big]
  \Big]
  \bigg]
  .
\end{align}
In the next step we 
note that Corollary~\ref{cor:initial_perturbation} 
together with Lebesgue's dominated convergence theorem yields that 
$
  \lim_{ \delta \to 0 }
  \ES\big[
    \varphi(
      \hat{X}^{ 0, \delta }_T
    )
  \big]
  =
  \ES\big[
    \varphi(
      X_T
    )
  \big]
$ 
and 
$
  \lim_{ \delta \to 0 }
  \ES\big[
    \varphi(
      \hat{Y}^{ 0, \delta }_T
    )
  \big]
  =
  \ES\big[
    \varphi(
      Y_T
    )
  \big]
$. 
Combining this with inequality~\eqref{eq:weak_convergence_conclude} proves the first 
inequality 
in~\eqref{eq:weak_convergence}. 
The second inequality 
in~\eqref{eq:weak_convergence}
follows from Andersson \& Jentzen~\cite{AnderssonJentzen2014}.
The proof of Proposition~\ref{prop:weak_convergence_irregular} is thus completed. 
\end{proof}

\begin{corollary}
\label{cor:weak_convergence_irregular}
Assume the setting in Section~\ref{sec:setting_weak_convergence_irregular} 
and let 
$ \rho \in ( 0, 1 - \theta ) \cap ( 6( \theta - \vartheta ), \infty ) $. 
Then it holds that 
$
  \ES\big[
  \| \varphi(X_T) \|_V
  +
  \| \varphi(Y_T) \|_V
  \big]
  < \infty
$
and 
\allowdisplaybreaks
\begin{align}
\label{eq:weak_convergence_simplified_rate}
&
  \big\|
    \ES\big[ 
      \varphi( X_T )
    \big]
    -
    \ES\big[ 
      \varphi( Y_T )
    \big]
  \big\|_V
\leq
  \left[
  \tfrac{
    57 
    \,
    | \groupC_0 |^{20} 
  }{
    |
      \!
      \min\{ T, \frac{ 1 }{ T } \} 
    |^{
      3 ( \rho + \theta )
    }
  }
  \right]
    h^{ 
      \left(
        \rho - 6 ( \theta - \vartheta ) 
      \right)
    }
\nonumber
\\ & \cdot
\nonumber
  \left[
    \max\{
    1
    ,
    \| X_0 \|_{ \lpn{5}{\P}{H} }
    \}
+
  \tfrac{
    \groupC_\theta 
    \, 
    \groupC_{ \nicefrac{\rho}{2} + \theta } 
    \,
    T^{ ( 1 - \theta ) }
    \,
    \| F \|_{ C^1_b( H, H_{ -\theta } ) }
  }{
    ( 1 - \theta - \nicefrac{\rho}{2} )
  }
  +
  \tfrac{
    \groupC_{ \nicefrac{\theta}{2} } \, \groupC_{ \nicefrac{( \rho + \theta )}{2} }
    \sqrt{ 10 \, T^{ ( 1 - \theta ) } }
    \,
    \| B \|_{ C^1_b( H, HS( U, H_{ -\nicefrac{\theta}{2} } ) ) }
  }{
    \sqrt{ 1 - \theta - \rho }
  }
  \right]^{10}
\\ & \cdot
\nonumber
  \left|
  \mathcal{E}_{ ( 1 - \theta ) }\!\left[
    \tfrac{
      \sqrt{ 2 }
      \,
      \groupC_0 \, \groupC_{ \theta }
      \,
      T^{ ( 1 - \theta ) }
      \,
      |
        F
      |_{
        C^1_b( H, H_{ - \theta } )
      }
    }{
      \sqrt{1 - \theta}
    }
    +
    2 \, \groupC_0 \, \groupC_{ 
      \theta / 2 
    }
    \sqrt{
      5 \, T^{ ( 1 - \theta ) }
    } \,
    |
      B
    |_{
      C^1_b( H, HS( U, H_{ - \nicefrac{\theta}{2} } ) )
    }
  \right]
  \right|^5
\\ & \cdot
  \Bigg[
    2^{(\rho+1)}
    +
    \tfrac{ T^{ ( 1 - \vartheta ) } }{ ( 1 - \vartheta - \rho ) }
  \bigg(
       2 \, \groupC_\vartheta + \groupC_{ \rho + \vartheta } + 2 \, | \groupC_{ \nicefrac{\vartheta}{2} } |^2
       +
       2 \, \groupC_{ \rho + \nicefrac{\vartheta}{2} } \, \groupC_{ \nicefrac{\vartheta}{2} }
       +
    \groupC_{ \vartheta, 0, \rho } + 2 \, \groupC_{ \nicefrac{ \vartheta }{ 2 } } \, \groupC_{ \nicefrac{ \vartheta }{ 2 }, 0, \rho }
\\ & +
\nonumber
    3 \, ( | \groupC_{ \nicefrac{ \vartheta }{ 2 } } |^2 + \groupC_\vartheta )
+
  2 \, ( | \groupC_{ \nicefrac{ \vartheta }{ 2 } } |^2 + \groupC_\vartheta ) \, \groupC_\rho
  \Big[
    \groupC_{ -\rho, \rho }
    +
    \tfrac{
    \groupC_{ \vartheta, -\rho, \rho } \, T^{ ( 1 - \vartheta ) } \,
    }{ ( 1 - \vartheta ) }
    +
    \tfrac{
    \sqrt{6} \, \groupC_{ \nicefrac{\vartheta}{2}, -\rho, \rho } \, 
    T^{ ( 1 - \vartheta )/2 }
    }{
    \sqrt{ 1 - \vartheta }
    }
  \Big]
  \bigg)
  \Bigg]
\\ & \cdot
\nonumber
  \bigg[
  \tfrac{
  |\groupC_{ \nicefrac{\rho}{2} }|^2
  }{
  T^{ \rho/2 }
  } \,
  | \varphi |_{ C^1_b( H, V ) }
  +
  \tfrac{
    |\groupC_0|^3 
    \, 
    | \groupC_\rho |^2 
    \, 
    \groupC_{ 0, \rho } 
    \,
    | \groupC_{ \theta - \vartheta } |^3 
    \,
    | \groupC_{ \nicefrac{ ( \theta - \vartheta ) }{ 2 } } |^6 
    \max\{ 
      1, T^{ ( 1 - \vartheta ) } 
    \}
    \, 
    \varsigma_{ F, B } 
  }{ 
    ( 1 - \vartheta - \rho ) \, T^\rho 
  } 
  \, 
  \Big(
  \| \varphi \|_{ C^3_b( H, V ) }
  +
  \sup_{ \kappa \in ( 0, T ] }
  \big[
  c^{ ( \kappa ) }_{ -\vartheta } 
\\ & +
\nonumber
  c^{ ( \kappa ) }_{ -\vartheta, 0 }
+ 
  c^{ ( \kappa ) }_{ -\vartheta, 0, 0 } + c^{ ( \kappa ) }_{ -\vartheta, 0, 0, 0 } 
  +
  c^{ ( \kappa ) }_{ -\nicefrac{ \vartheta }{ 2 }, -\nicefrac{ \vartheta }{ 2 } }
  +
  c^{ ( \kappa ) }_{ -\nicefrac{ \vartheta }{ 2 }, -\nicefrac{ \vartheta }{ 2 }, 0 }
+
  c^{ ( \kappa ) }_{ -\nicefrac{ \vartheta }{ 2 }, -\nicefrac{ \vartheta }{ 2 }, 0, 0 }
  +
  \tilde{c}^{ ( \kappa ) }_{ -\nicefrac{ \vartheta }{ 2 }, -\nicefrac{ \vartheta }{ 2 }, 0, 0 }
  \big]
  \Big)
  \bigg]
  < \infty.
\end{align}
\end{corollary}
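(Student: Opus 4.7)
The plan is to deduce Corollary~\ref{cor:weak_convergence_irregular} directly from Proposition~\ref{prop:weak_convergence_irregular} by an appropriate choice of the free parameter $r$ appearing in that proposition. Since Proposition~\ref{prop:weak_convergence_irregular} permits any $r \in [0, 1-\vartheta)$ and any (with slightly different meaning of $\rho$) second parameter in $(0, 1-\theta)$, and since under the corollary's hypotheses we have $\rho < 1-\theta \leq 1-\vartheta$, I would apply the proposition with the choice $r := \rho$ (identifying both parameters with the corollary's $\rho$). This produces an estimate identical to~\eqref{eq:weak_convergence_simplified_rate} except that (i) the rate exponent comes out as $\rho^2/(\rho + 6(\theta-\vartheta))$ instead of $\rho - 6(\theta-\vartheta)$, and (ii) the $T$-dependent prefactor comes out as $|\max\{T, 1/T\}|^{\rho + 3(\theta-\vartheta)}$ instead of $|\min\{T, 1/T\}|^{-3(\rho+\theta)} = |\max\{T, 1/T\}|^{3(\rho+\theta)}$.

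To bridge this gap I would invoke the elementary identity $\rho^2 = (\rho - 6(\theta-\vartheta))(\rho + 6(\theta-\vartheta)) + 36(\theta-\vartheta)^2$, which yields
\begin{equation*}
  h^{ \rho^2 / ( \rho + 6(\theta-\vartheta) ) }
  \;=\;
  h^{ \rho - 6(\theta-\vartheta) }
  \cdot
  h^{ 36(\theta-\vartheta)^2 / ( \rho + 6(\theta-\vartheta) ) } .
\end{equation*}
The residual factor $h^{36(\theta-\vartheta)^2 / (\rho + 6(\theta-\vartheta))}$ then needs to be absorbed into the $T$-prefactor. Using the standing assumption $h \leq T$, I would distinguish two cases: if $h \leq 1$, the residual factor is at most $1$, which is bounded by $|\max\{T, 1/T\}|^{2\rho + 3\vartheta}$; if $h > 1$, then $T \geq h > 1$ and $|\max\{T,1/T\}| = T$, so the residual is at most $T^{36(\theta-\vartheta)^2/(\rho+6(\theta-\vartheta))} \leq T^{6(\theta-\vartheta)}$ by the trivial bound $\rho \geq 0$. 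The hypothesis $\rho > 6(\theta-\vartheta)$ then gives $6(\theta-\vartheta) < \rho \leq 2\rho + 3\vartheta$, so in both cases the residual factor is dominated by $|\max\{T, 1/T\}|^{2\rho + 3\vartheta}$.

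Multiplying this residual factor into the prefactor $|\max\{T,1/T\}|^{\rho + 3(\theta-\vartheta)}$ inherited from the proposition yields a combined $T$-dependence of $|\max\{T, 1/T\}|^{\rho + 3(\theta-\vartheta) + 2\rho + 3\vartheta} = |\max\{T, 1/T\}|^{3(\rho+\theta)} = |\min\{T, 1/T\}|^{-3(\rho+\theta)}$, exactly as claimed. All of the other terms in~\eqref{eq:weak_convergence_simplified_rate} — the tenth power involving $X_0$, the Mittag-Leffler-type factor to the fifth power, the large bracket gathering the $\groupC$-constants, and the final bracket containing the $c^{(\kappa)}$- and $\tilde c^{(\kappa)}$-constants together with $\|\varphi\|_{C^k_b}$ — agree termwise with those in Proposition~\ref{prop:weak_convergence_irregular} after the substitution $r = \rho$, so no further rearrangement is needed. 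The main obstacle is purely bookkeeping: verifying that every numerical constant, every fractional power of $T$, and every $\groupC$-index in the proposition's bound survives the substitution $r = \rho$ and lines up with the corresponding object in~\eqref{eq:weak_convergence_simplified_rate}; the conceptual content is already fully contained in Proposition~\ref{prop:weak_convergence_irregular}.
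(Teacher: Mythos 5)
Your proposal is correct and follows essentially the same route as the paper: apply Proposition~\ref{prop:weak_convergence_irregular} with $r=\rho$, split the exponent $\tfrac{\rho^2}{\rho+6(\theta-\vartheta)}=\bigl(\rho-6(\theta-\vartheta)\bigr)+\tfrac{36(\theta-\vartheta)^2}{\rho+6(\theta-\vartheta)}$ (the paper's \eqref{eq:simplified_rate} is exactly this decomposition in disguise), and absorb the residual $h$-power into the $T$-prefactor via $h\leq T$, using $|\min\{T,\tfrac1T\}|^{-3(\rho+\theta)}$ as a common upper bound. Your case distinction $h\leq 1$ versus $h>1$ is a slightly different (and equally valid) way of carrying out the absorption that the paper does in one line with $h\leq\max\{1,T\}$.
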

\begin{proof}
First of all, we apply\footnote{with $ r = \rho $ in the notation of Proposition~\ref{prop:weak_convergence_irregular}} 
Proposition~\ref{prop:weak_convergence_irregular}
to obtain 
that 
$
  \ES\big[
  \| \varphi(X_T) \|_V
  +
  \| \varphi(Y_T) \|_V
  \big]
  < \infty
$
and 
\allowdisplaybreaks
\begin{align}
\label{eq:weak_convergence_simplified_rate_proof}
&
  \big\|
    \ES\big[ 
      \varphi( X_T )
    \big]
    -
    \ES\big[ 
      \varphi( Y_T )
    \big]
  \big\|_V
\leq
  \left[
    \tfrac{
      57 \,
      | \groupC_0 |^{20} \,
    }{
      | \min\{ T, \frac{ 1 }{ T } \} |^{ ( \rho + 3 ( \theta - \vartheta ) ) }
    } 
  \right]
  h^{
    \frac{ \rho^2 }{ ( \rho + 6 ( \theta - \vartheta ) ) } 
  }
\nonumber
\\ & \cdot
  \left[
    \max\{
    1
    ,
    \| X_0 \|_{ \lpn{5}{\P}{H} }
    \}
+
  \tfrac{
    \groupC_\theta \, \groupC_{ \nicefrac{\rho}{2} + \theta } \,
    T^{ ( 1 - \theta ) }
    \,
    \| F \|_{ C^1_b( H, H_{ -\theta } ) }
  }{
    ( 1 - \theta - \nicefrac{\rho}{2} )
  }
  +
  \tfrac{
    \groupC_{ \nicefrac{\theta}{2} } \, \groupC_{ \nicefrac{( \rho + \theta )}{2} }
    \sqrt{ 10 \, T^{ ( 1 - \theta ) } }
    \,
    \| B \|_{ C^1_b( H, HS( U, H_{ -\nicefrac{\theta}{2} } ) ) }
  }{
    \sqrt{ 1 - \theta - \rho }
  }
  \right]^{ 10 }
\nonumber
\\ & \cdot
  \left|
  \mathcal{E}_{ ( 1 - \theta ) }\!\left[
    \tfrac{
      \sqrt{ 2 }
      \,
      \groupC_0 \, \groupC_{ \theta }
      \,
      T^{ ( 1 - \theta ) }
      \,
      |
        F
      |_{
        C^1_b( H, H_{ - \theta } )
      }
    }{
      \sqrt{1 - \theta}
    }
    +
    2 \, \groupC_0 \, \groupC_{ 
      \theta / 2 
    }
    \sqrt{
      5 \, T^{ ( 1 - \theta ) }
    } \,
    |
      B
    |_{
      C^1_b( H, HS( U, H_{ - \nicefrac{\theta}{2} } ) )
    }
  \right]
  \right|^5
\nonumber
\\ & \cdot
  \Bigg[
    2^{(\rho+1)}
    +
    \tfrac{ T^{ ( 1 - \vartheta ) } }{ ( 1 - \vartheta - \rho ) }
  \bigg(
       2 \, \groupC_\vartheta + \groupC_{ \rho + \vartheta } + 2 \, | \groupC_{ \nicefrac{\vartheta}{2} } |^2
       +
       2 \, \groupC_{ \rho + \nicefrac{\vartheta}{2} } \, \groupC_{ \nicefrac{\vartheta}{2} }
       +
    \groupC_{ \vartheta, 0, \rho } + 2 \, \groupC_{ \nicefrac{ \vartheta }{ 2 } } \, \groupC_{ \nicefrac{ \vartheta }{ 2 }, 0, \rho }
\\ & +
\nonumber
    3 \, ( | \groupC_{ \nicefrac{ \vartheta }{ 2 } } |^2 + \groupC_\vartheta ) 
+
  2 \, ( | \groupC_{ \nicefrac{ \vartheta }{ 2 } } |^2 + \groupC_\vartheta ) \, \groupC_\rho
  \Big[
    \groupC_{ -\rho, \rho }
    +
    \tfrac{
    \groupC_{ \vartheta, -\rho, \rho } \, T^{ ( 1 - \vartheta ) } \,
    }{ ( 1 - \vartheta ) }
    +
    \tfrac{
    \sqrt{6} \, \groupC_{ \nicefrac{\vartheta}{2}, -\rho, \rho } \, 
    T^{ ( 1 - \vartheta )/2 }
    }{
    \sqrt{ 1 - \vartheta }
    }
  \Big]
  \bigg)
  \Bigg]
\\ & \cdot
\nonumber
  \bigg[
  \tfrac{
  |\groupC_{ \nicefrac{\rho}{2} }|^2
  }{
  T^{ \rho/2 }
  } \,
  | \varphi |_{ C^1_b( H, V ) }
  +
  \tfrac{
    | \groupC_0 |^3 \, 
    | \groupC_\rho |^2 \, 
    \groupC_{ 0, \rho } \,
    | \groupC_{ \theta - \vartheta } |^3 \,
    | \groupC_{ \nicefrac{ ( \theta - \vartheta ) }{ 2 } } |^6 
    \max\{ 
      1, T^{ ( 1 - \vartheta ) } 
    \}
    \, 
    \varsigma_{ F, B }
  }{ 
    ( 1 - \vartheta - \rho ) \, T^\rho 
  }  
  \, 
  \Big(
  \| \varphi \|_{ C^3_b( H, V ) }
  +
  \sup_{ \kappa \in ( 0, T ] }
  \big[
  c^{ ( \kappa ) }_{ -\vartheta } 
\\ & +
\nonumber
  c^{ ( \kappa ) }_{ -\vartheta, 0 }
+ 
  c^{ ( \kappa ) }_{ -\vartheta, 0, 0 } + c^{ ( \kappa ) }_{ -\vartheta, 0, 0, 0 } 
  +
  c^{ ( \kappa ) }_{ -\nicefrac{ \vartheta }{ 2 }, -\nicefrac{ \vartheta }{ 2 } }
  +
  c^{ ( \kappa ) }_{ -\nicefrac{ \vartheta }{ 2 }, -\nicefrac{ \vartheta }{ 2 }, 0 }
+
  c^{ ( \kappa ) }_{ -\nicefrac{ \vartheta }{ 2 }, -\nicefrac{ \vartheta }{ 2 }, 0, 0 }
  +
  \tilde{c}^{ ( \kappa ) }_{ -\nicefrac{ \vartheta }{ 2 }, -\nicefrac{ \vartheta }{ 2 }, 0, 0 }
  \big]
  \Big)
  \bigg]
  < \infty
  .
\end{align}
Next we note that 
\begin{equation}
\label{eq:simplified_rate}
\begin{split}
&
  h^{
    { \frac{ \rho^2 }{ ( \rho + 6 ( \theta - \vartheta ) ) } }
  }
  =
  h^{ 
      \rho 
      \left[
        \frac{ 1 }{ 1 + 6 ( \theta - \vartheta ) / \rho } 
        -
        1 
        + \frac{ 6 ( \theta - \vartheta ) }{ \rho }
      \right]
    }
    \,
    h^{ 
      \rho 
      \left[
        1 - \frac{ 6 ( \theta - \vartheta ) }{ \rho }
      \right]
    }
\\ & 
  \leq
  |\! \max\{ 1, T \} |^{
    \rho 
    \left[
      \frac{ 1 }{ 1 + 6 ( \theta - \vartheta ) / \rho } 
      -
      1 
      + \frac{ 6 ( \theta - \vartheta ) }{ \rho }
    \right]
  }
    \,
    h^{ 
      \left(
        \rho - 6 ( \theta - \vartheta ) 
      \right)
    }
\leq
  |\! \max\{ 1, T \} |^\rho
    \,
    h^{ 
      \left(
        \rho - 6 ( \theta - \vartheta ) 
      \right)
    }
    .
\end{split}
\end{equation}
Plugging \eqref{eq:simplified_rate} into \eqref{eq:weak_convergence_simplified_rate_proof} 
implies \eqref{eq:weak_convergence_simplified_rate}. 
This completes the proof of Corollary~\ref{cor:weak_convergence_irregular}.
\end{proof}

\section{Lower bounds for weak errors of Euler-type approximations for SPDEs}
\label{sec:lower_bound}

In this section a few specific lower bounds for weak approximation errors
of temporal numerical approximations are established in the case of concrete example SEEs. 
A few specific lower bounds for weak approximation errors of spatial spectral Galerkin approximations
can be found in Section~6 in Conus et al.~\cite{ConusJentzenKurniawan2014arXiv}.
Lower bounds for strong approximation errors for examples of SEEs and for whole classes of SEEs can be found in 
\cite{dg01,mr07a,mrw08} and the references mentioned therein. 
The article \cite{mrw08} and Section~5 in \cite{ConusJentzenKurniawan2014arXiv} study exclusively parabolic SEEs driven by additive noise.
The papers \cite{dg01,mr07a} investigate parabolic SEEs driven by possibly non-additive noise.
In this section we consider exclusively parabolic SEEs driven by additive noise.

\subsection{Setting}
\label{sec:setting_lower_bound}

Throughout Section~\ref{sec:lower_bound} the following setting is frequently used. 
Let 
$ 
  ( H, \left< \cdot, \cdot \right>_H, \left\| \cdot \right\|_H ) 
$
be a separable $ \R $-Hilbert space
with $ H \neq \{ 0 \} $,
let $ \set \subseteq H $ be an orthonormal basis of $ H $,
let 
$ h \in ( 0, \infty ) $, 
$ T \in \{ h, 2 h , 3 h , \dots \} $, 
$ \beta \in [ 0, \frac{ 1 }{ 2 } ) $,
$ 
  \angle 
  =
  \left\{
    ( t_1, t_2 ) \in [0,T]^2 \colon t_1 < t_2
  \right\}
$, 
let 
$ \lambda, \mu \colon \set \rightarrow \R $ be functions
such that 
$ 
  \sup_{ b \in \set } \lambda_b < 0
$ 
and 
$
  \sum_{ b \in \set }
  | \mu_b |^2
  \,
  | \lambda_b |^{ -2 \beta }
  < \infty
$, 
let 
$ A \colon D(A) \subseteq H \to H $ be a linear operator 
such that
$
  D( A ) 
  = 
  \{ 
    v \in H 
  \colon 
    \sum_{ b \in \set }
    \left| 
      \lambda_b
      \left< b, v \right>_H 
    \right|^2
  < \infty
  \}
$
and such that
for all $ v \in D(A) $ it holds that
$
  A v = 
  \sum_{ b \in \set }
  \lambda_b
  \left< b, v \right>_H
  b
$,
let
$  
  ( 
    H_r 
    ,
    \left< \cdot, \cdot \right>_{ H_r }
    ,$ $
    \left\| \cdot \right\|_{ H_r } 
  )
$,
$ r \in \R $,
be a family of interpolation spaces associated to $ - A $ 
(see, e.g., Theorem and Definition~2.5.32 in \cite{Jentzen2014SPDElecturenotes})), 
let $ ( \Omega, \mathcal{F}, \P, ( \mathcal{F}_t )_{ t \in [0,T] } ) $
be a stochastic basis,
let $ ( W_t )_{ t \in [0,T] } $ be a cylindrical
$ \operatorname{Id}_H $-Wiener process w.r.t.\ $ ( \mathcal{F}_t )_{ t \in [0,T] } $,
and 
let 
$
  B \in 
    HS( 
      H, 
      H_{ -\beta }
    ) 
$ 
satisfy that
for all $ v \in H $
it holds that
$
  B v = 
  \sum_{ b \in \set }
  \mu_b
  \left< b, v \right>_H
  b
$. 
The above assumptions 
ensure 
that there exist 
$ 
  X, Y_1, Y_2
  \in \mathcal{M}\big( \mathcal{F}, \mathcal{B}(H) \big)
$ 
which satisfy that
it holds $ \P $-a.s.\ that
$
  X
  = 
    \int_0^T e^{ ( T - s )A } \, B \, dW_s 
$, 
$
  Y_1
  = 
    \int_0^T e^{ ( T - \floor{ s }{ h } )A } \, B \, dW_s 
$, 
and 
$
  Y_2
  = 
    \int_0^T
    \big( \operatorname{Id}_H - hA \big)^{
      -
      (
      T - \floor{ s }{ h }
      )
      /
      h
    }
    \, B
    \, dW_s
$.

\subsection{Variance estimates for Euler-type approximations of SPDEs}

\begin{lemma}[Variance estimates for exponential Euler approximations]
\label{lem:variance_differece_expE}
Assume the setting in Section~\ref{sec:setting_lower_bound} 
and let $ b \in \mathbb{H} $. 
Then it holds that 
$
  \ES\big[
    |
      \langle
        b, X
      \rangle_H
    |^2
    +
    |
      \langle
        b, Y_1
      \rangle_H
    |^2
  \big]
  < \infty
$ 
and 
\begin{equation}
\label{eq:variance_differece_expE}
\begin{split}
  \operatorname{Var}\!\big(
  \big<
    b, X
  \big>_H
  \big)
  -
  \operatorname{Var}\!\big(
  \big<
    b, Y_1
  \big>_H
  \big)
\geq
  \frac{
    | \mu_b |^2
    \left(
      1 
      - 
      e^{
        - 2 | \lambda_b | T
      }
    \right)
    h
  }{
    4 \, 
    e^{
      | \lambda_b | h
    }
  }
  \geq
  0.
\end{split}
\end{equation}
\end{lemma}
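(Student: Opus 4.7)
The plan is to reduce the statement to a one-dimensional calculation by projecting onto the basis vector $b$, use It\^{o}'s isometry together with the diagonal structure of $A$ and $B$ to obtain explicit closed-form expressions for both variances, and then prove the lower bound through an elementary inequality involving $\sinh$.

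First, I will observe that since $(W_t)$ is a cylindrical $\operatorname{Id}_H$-Wiener process, the scalar process $(\langle b, W_t\rangle_H)_{t \in [0,T]}$ is a standard real-valued Brownian motion; moreover, since $A$ and $B$ are simultaneously diagonal with eigenpairs $(\lambda_b, b)$ and $(\mu_b, b)$, one has $\langle b, e^{tA}Bv\rangle_H = \mu_b e^{\lambda_b t}\langle b, v\rangle_H$ for all $v \in H$, $t \geq 0$. Applying It\^{o}'s isometry therefore yields the mean-zero Gaussian random variables
\begin{equation*}
  \langle b, X\rangle_H = \mu_b \smallint_0^T e^{\lambda_b(T-s)}\,d\langle b, W_s\rangle_H,
  \qquad
  \langle b, Y_1\rangle_H = \mu_b \smallint_0^T e^{\lambda_b(T-\lfloor s\rfloor_h)}\,d\langle b, W_s\rangle_H,
\end{equation*}
whose variances equal the (finite) integrals $|\mu_b|^2\int_0^T e^{-2|\lambda_b|(T-s)}\,ds$ and $|\mu_b|^2\int_0^T e^{-2|\lambda_b|(T-\lfloor s\rfloor_h)}\,ds$ respectively. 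This also verifies square-integrability.

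Second, using $T = nh$ for some $n \in \mathbb{N}$, I will evaluate the second integral as a geometric sum, namely
\begin{equation*}
  \smallint_0^T e^{-2|\lambda_b|(T-\lfloor s\rfloor_h)}\,ds
  = h \smallsum_{m=1}^n e^{-2|\lambda_b|mh}
  = \frac{h\,(1 - e^{-2|\lambda_b|T})}{e^{2|\lambda_b|h}-1},
\end{equation*}
while the first integral is simply $(1 - e^{-2|\lambda_b|T})/(2|\lambda_b|)$. Subtracting gives the exact identity
\begin{equation*}
  \operatorname{Var}(\langle b, X\rangle_H) - \operatorname{Var}(\langle b, Y_1\rangle_H)
  = \tfrac{|\mu_b|^2\,(1 - e^{-2|\lambda_b|T})}{2|\lambda_b|}
  \Big[1 - \tfrac{2|\lambda_b|h}{e^{2|\lambda_b|h}-1}\Big],
\end{equation*}
which in particular is manifestly non-negative.

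The remaining task, and the only real analytical step, is to show that the bracket above dominates $|\lambda_b|h\,e^{-|\lambda_b|h}/2$. Writing $y = |\lambda_b|h \geq 0$ and using $e^{2y}-1 = 2e^y\sinh(y) \geq 2y\,e^y$ (which follows from $\sinh(y) \geq y$ for $y \geq 0$), I will deduce $\tfrac{2y}{e^{2y}-1} \leq e^{-y}$ and hence
\begin{equation*}
  1 - \tfrac{2y}{e^{2y}-1} \geq 1 - e^{-y} = \tfrac{e^y - 1}{e^y} \geq \tfrac{y}{e^y} \geq \tfrac{y}{2\,e^y},
\end{equation*}
where I use $e^y - 1 \geq y$ in the second inequality. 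Multiplying this lower bound by the prefactor from the previous display yields exactly $|\mu_b|^2(1-e^{-2|\lambda_b|T})h/(4\,e^{|\lambda_b|h})$, and the second stated inequality $\geq 0$ is immediate. The main (mild) obstacle is locating a sharp enough one-line elementary bound for $1 - 2y/(e^{2y}-1)$ that matches the target exponential factor $e^{-|\lambda_b|h}$ rather than $e^{-2|\lambda_b|h}$; the $\sinh$ trick handles this cleanly.
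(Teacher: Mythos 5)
Your proposal is correct and follows essentially the same route as the paper: project onto $b$, use It\^{o}'s isometry to get explicit variances, recognize the geometric sum over the grid points, and finish with elementary inequalities. The only (minor) difference is the endgame: the paper lower-bounds the resulting integral $\int_0^h \big( e^{-2|\lambda_b|s} - e^{-2|\lambda_b|h} \big) ds$ by restricting to $[0,\nicefrac{h}{2}]$ and using $\tfrac{1-x}{1-x^2} \geq \tfrac12$, whereas you derive the closed-form bracket $1 - \tfrac{2y}{e^{2y}-1}$ and bound it via $e^{2y}-1 = 2e^{y}\sinh(y) \geq 2y\,e^{y}$ and $e^{y}-1\geq y$, which is an equally valid and arguably cleaner way to reach the same constant.
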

\begin{proof}
First, observe that it holds $ \P $-a.s.\ that
\begin{equation}
\label{eq:variance_exponentialE}
\begin{split}
&
  \big< 
    b,
    Y_1
  \big>_H
=
  \left< 
    b,
    \int_0^T
    e^{ ( T - \floor{ s }{ h } )A } B \, dW_s
  \right>_H
=
  \int_0^T
  \left< 
    e^{ ( T - \floor{ s }{ h } )A }
    \, b,
    B \, dW_s
  \right>_H
\\ & =
  \int_0^T
  e^{ -|\lambda_b| \left( T - \floor{ s }{ h } \right) }
  \left< 
    b,
    B \, dW_s
  \right>_H
=
  \mu_b
  \int_0^T
  e^{ -|\lambda_b| \left( T - \floor{ s }{ h } \right) }
  \left< 
    b,
    dW_s
  \right>_H
  .
\end{split}
\end{equation}
This shows that
$
  \ES\big[
  |\langle
    b, X
  \rangle_H|^2
  +
  |\langle
    b, Y_1
  \rangle_H|^2
  \big]
  < \infty
$. 
It thus remains to prove \eqref{eq:variance_differece_expE}.
For this we combine \eqref{eq:variance_exponentialE}, It\^{o}'s isometry, the fact that 
$
  \forall \, s \in [ 0, T ]
 \colon 
  T
  -
  \floor{ T - s }{ h }
  =
  \ceil{ s }{ h }
$, 
and, e.g., Lemma~6.1 in~\cite{ConusJentzenKurniawan2014arXiv} 
to obtain that 
\begin{equation}
\label{eq:norm_square_lb_exponential_Euler}
\begin{split}
&
  \operatorname{Var}\!\big(
  \big<
    b, X
  \big>_H
  \big)
  -
  \operatorname{Var}\!\big(
  \big<
    b, Y_1
  \big>_H
  \big)
=
  | \mu_b |^2
  \int^T_0
  e^{
    - 2 | \lambda_b | s 
    }
  -
  e^{
   - 2 | \lambda_b | \ceil{ s }{ h } 
   }
  \,
  ds
\\ & =
  | \mu_b |^2
  \int^T_0
  e^{
  - 2 | \lambda_b | \ceil{ s }{ h } 
  }
  \left(
  e^{
    2 | \lambda_b | 
    ( \ceil{ s }{ h } - s ) 
    }
  -
  1
  \right)
  ds
=
  | \mu_b |^2
  \left(
  \sum^{ \nicefrac{T}{h} }_{ k = 1 }
  e^{
  - 2 | \lambda_b |
  k h 
  }
  \right)
  \int^h_0
  \left(
  e^{
    2 | \lambda_b | s
    }
  -
  1
  \right)
  ds
\\ & =
  | \mu_b |^2
  \left(
  \frac{
  1 - e^{
    - 2 | \lambda_b | T
    }
  }{
  1 - e^{
    - 2 | \lambda_b | h
    }
  }
  \right)
  \int^h_0
  \left(
  e^{
    - 2 | \lambda_b | s
    }
  -
  e^{
      - 2 | \lambda_b | h
      }
  \right)
  dt
  .
\end{split}
\end{equation}
Moreover, note that 
\begin{equation}
\label{eq:integral_approx}
\begin{split}
  \int^h_0
  \left(
  e^{
    - 2 | \lambda_b | s
    }
  -
  e^{
      - 2 | \lambda_b | h
      }
  \right)
  ds
& \geq
  \int^{ 
  \nicefrac{ h }{ 2 } 
  }_0
  \left(
  e^{
    - 2 | \lambda_b | s
    }
  -
  e^{
      - 2 | \lambda_b | h
      }
  \right)
  ds
\geq
  \frac{ h }{ 2 }
  \left(
  e^{
    - | \lambda_b | h
    }
  -
  e^{
      - 2 | \lambda_b | h
      }
  \right)
  .
\end{split}
\end{equation}
Combining~\eqref{eq:norm_square_lb_exponential_Euler} and \eqref{eq:integral_approx} yields that 
\begin{equation}
\begin{split}
  \operatorname{Var}\!\big(
  \big<
    b, X
  \big>_H
  \big)
  -
  \operatorname{Var}\!\big(
  \big<
    b, Y_1
  \big>_H
  \big)
\geq
  \frac{
    | \mu_b |^2
    \,
    e^{ -|\lambda_b| h }
    \left(
      1 - 
      e^{
        - 2 | \lambda_b | T
      }
    \right)
  }{ 2 }
  \left(
  \frac{
  1
  -
  e^{
      - | \lambda_b | h
      }
  }{
  1 - e^{
    - 2 | \lambda_b | h
    }
  }
  \right)
  h
  .
\end{split}
\end{equation}
This and the fact that 
$
  \forall \, x \in [ 0, 1 ) 
  \colon
  \nicefrac{
  ( 1 - x )
  }{
  ( 1 - x^2 )
  }
  =
  \nicefrac{ 1 }{
  ( 1 + x )
  }
  \geq
  \nicefrac{ 1 }{ 2 }
$ 
finish the proof of Lemma~\ref{lem:variance_differece_expE}.
\end{proof}

\begin{lemma}[Variance estimates for linear-implicit Euler approximations]
\label{lem:variance_difference_linear_implicitE}
Assume the setting in Section~\ref{sec:setting_lower_bound} 
and let 
$ b \in \mathbb{H} $. 
Then it holds that 
$
  \ES\big[
  |\langle
    b, X
  \rangle_H|^2
  +
  |\langle
    b, Y_2
  \rangle_H|^2
  \big]
  < \infty
$ 
and 
\begin{equation}
\label{eq:variance_difference_linear_implicitE}
\begin{split}
&
  \operatorname{Var}\!\big(
  \big<
    b, X
  \big>_H
  \big)
  -
  \operatorname{Var}\!\big(
  \big<
    b, Y_2
  \big>_H
  \big)
\geq
  \frac{
    | \mu_b |^2
    \left(
      1 - 
      e^{
        - 2 | \lambda_b | T
      }
    \right)
    h
  }{ 
    4
    \left( 
      1 + h | \lambda_b | 
    \right) 
  }
\geq
  \frac{
    | \mu_b |^2
    \left(
      1 
      - 
      e^{
        - 2 | \lambda_b | T
      }
    \right)
    h
  }{
    4 \, 
    e^{
      | \lambda_b | h
    }
  }
  \geq
  0
  .
\end{split}
\end{equation}
\end{lemma}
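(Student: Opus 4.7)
My plan is to argue in parallel with the proof of Lemma~\ref{lem:variance_differece_expE}, reducing the infinite-dimensional problem to a one-dimensional computation via the eigenvector $b$, and then extracting the lower bound by summing in closed form rather than trying to compare integrands pointwise. Since $b \in \set$ is an eigenvector of $A$ with eigenvalue $\lambda_b < 0$, the resolvent acts as $(\operatorname{Id}_H - h A)^{-k} b = (1 + h |\lambda_b|)^{-k} b$ for every $k \in \N_0$, and $B$ is self-adjoint with $B b = \mu_b b$. Together with the fact that $T = N h$ for some $N \in \N$ and that $( \operatorname{Id}_H - hA )^{-(T - \floor{s}{h})/h}$ acts as multiplication by $(1 + h |\lambda_b|)^{-(T - \floor{s}{h})/h}$ on $b$, this gives $\P$-a.s.
\begin{equation*}
  \langle b, Y_2 \rangle_H
  \;=\; \mu_b \smallint_0^T (1 + h|\lambda_b|)^{-(T - \floor{s}{h})/h} \, d\langle b, W_s \rangle_H,
\end{equation*}
together with $\langle b, X \rangle_H = \mu_b \int_0^T e^{-|\lambda_b|(T-s)} \, d\langle b, W_s \rangle_H$. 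Both integrands are bounded and deterministic, so both random variables are square-integrable, giving the integrability assertion.

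Applying It\^o's isometry then yields
\begin{equation*}
  \operatorname{Var}\big(\langle b, X \rangle_H\big) - \operatorname{Var}\big(\langle b, Y_2 \rangle_H\big)
  \;=\; |\mu_b|^2 \smallint_0^T \!\big[ e^{-2|\lambda_b|(T-s)} - (1 + h|\lambda_b|)^{-2(T - \floor{s}{h})/h} \big] \, ds.
\end{equation*}
The next step is to change variables $u = T - s$ and split the integral across the grid intervals $((k-1)h, kh]$ for $k \in \{1, \ldots, N\}$, on which the resolvent kernel is constant. The exponential part telescopes to $(1 - e^{-2|\lambda_b| T})/(2|\lambda_b|)$, while the geometric sum $\sum_{k=1}^N (1+h|\lambda_b|)^{-2k}$ evaluates in closed form to $[1 - (1+h|\lambda_b|)^{-2N}]/[h|\lambda_b|(2+h|\lambda_b|)]$. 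Combining these pieces produces
\begin{equation*}
  \operatorname{Var}(\langle b,X\rangle_H) - \operatorname{Var}(\langle b,Y_2\rangle_H)
  \;=\; |\mu_b|^2 \left[ \tfrac{1 - e^{-2|\lambda_b| T}}{2|\lambda_b|} - \tfrac{1 - (1+h|\lambda_b|)^{-2N}}{|\lambda_b|(2 + h|\lambda_b|)} \right].
\end{equation*}

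The decisive step is the elementary inequality $1 + x \leq e^x$ (valid for $x \geq 0$), which gives $(1+h|\lambda_b|)^{-2N} \geq e^{-2 N h |\lambda_b|} = e^{-2|\lambda_b| T}$ and hence $1 - (1+h|\lambda_b|)^{-2N} \leq 1 - e^{-2|\lambda_b| T}$. Substituting this into the negative term and simplifying the resulting common factor reduces the lower bound to $|\mu_b|^2 (1 - e^{-2|\lambda_b| T}) h \,/\, [2(2+h|\lambda_b|)]$. The trivial numerical inequality $2(2+h|\lambda_b|) \leq 4(1+h|\lambda_b|)$ then produces the first inequality in~\eqref{eq:variance_difference_linear_implicitE}, the inequality $1 + h|\lambda_b| \leq e^{h|\lambda_b|}$ gives the second, and non-negativity is immediate.

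The main obstacle I anticipate is psychological rather than technical: the pointwise integrand can switch signs across each subinterval (near the left endpoint the continuous kernel dominates, near the right endpoint the resolvent kernel does), so the pointwise-comparison strategy used in the exponential Euler case fails here. One has to commit to the global/telescoped approach --- computing both sides in closed form and only afterwards invoking $1 + x \leq e^x$ --- in order to recover the correct sign.
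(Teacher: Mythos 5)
Your proposal is correct and follows essentially the same route as the paper's proof: the same spectral representation of $\langle b, Y_2\rangle_H$ via the resolvent acting on the eigenvector $b$, It\^{o}'s isometry, evaluation of the piecewise-constant integral as the closed-form geometric sum $h\sum_{k=1}^{T/h}(1+h|\lambda_b|)^{-2k} = \frac{1-(1+h|\lambda_b|)^{-2T/h}}{|\lambda_b|(2+h|\lambda_b|)}$, and the comparison $(1+h|\lambda_b|)^{-1}\geq e^{-h|\lambda_b|}$ to obtain the lower bound $\frac{|\mu_b|^2(1-e^{-2|\lambda_b|T})h}{2(2+h|\lambda_b|)}$, from which both stated inequalities follow by elementary estimates. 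The only difference is presentational (you split the integral over grid intervals directly where the paper invokes the identity $T-\lfloor T-s\rfloor_h=\lceil s\rceil_h$ and cites a lemma of Conus et al.\ for the exponential part), so there is nothing to correct.
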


\begin{proof}
First, 
we observe that it holds $ \P $-a.s.\ that
\begin{equation}
\label{eq:variance_implicitE}
\begin{split}
&
  \big< 
    b,
    Y_2
  \big>_H
\\ & =
  \left< 
    b,
    \int_0^T
    \big(
    \operatorname{Id}_H
    -
    h A
    \big)^{ -( T - \floor{ s }{ h } )/h }
    B \, dW_s
  \right>_H
=
  \int_0^T
  \left< 
    \big(
    \operatorname{Id}_H
    -
    h A
    \big)^{ -( T - \floor{ s }{ h } )/h }
    \, b ,
    B \, dW_s
  \right>_H
\\ & =
  \int_0^T
  ( 1 + h|\lambda_b| )^{ -( T - \floor{ s }{ h } )/h }
  \left< 
    b,
    B \, dW_s
  \right>_H
=
  \mu_b
  \int_0^T
  ( 1 + h|\lambda_b| )^{ -( T - \floor{ s }{ h } )/h }
  \left< 
    b,
    dW_s
  \right>_H
  .
\end{split}
\end{equation}
This shows that
$
  \ES\big[
  |\langle
    b, X
  \rangle_H|^2
  +
  |\langle
    b, Y_2
  \rangle_H|^2
  \big]
  < \infty
$. 
It thus remains to prove \eqref{eq:variance_difference_linear_implicitE}.
For this we combine \eqref{eq:variance_implicitE}, It\^{o}'s isometry, the fact that 
$
  \forall \, s \in [ 0, T ] 
  \colon
  T
  -
  \floor{ T - s }{ h }
  =
  \ceil{ s }{ h }
$, 
and, e.g.,
Lemma 6.1 in \cite{ConusJentzenKurniawan2014arXiv}
to obtain that 
\begin{equation}
\label{eq:geometric_sum_approach_implicit}
\begin{split}
&
  \operatorname{Var}\!\big(
  \big<
    b, X
  \big>_H
  \big)
  -
  \operatorname{Var}\!\big(
  \big<
    b, Y_2
  \big>_H
  \big)
  =
  | \mu_b |^2
  \left[
    \frac{
      1
      -
      e^{ - 2 | \lambda_b | T }
    }{
      2 | \lambda_b |
    }
    -
    \int^T_0
    ( 1 + h | \lambda_b | )^{
      - 2 \ceil{ s }{ h } / h
    }
    \,
  ds
  \right]
\\ & =
  | \mu_b |^2
  \left[
    \frac{
      1
      -
      e^{ - 2 | \lambda_b | T }
    }{
      2 \, | \lambda_b |
    }
    -
    h
    \sum^{ \nicefrac{ T }{ h } }_{ k = 1 }
    \left( 
      1 + h | \lambda_b | 
    \right)^{
      - 2 k
    }
  \right]
=
  | \mu_b |^2
  \left[
    \frac{
      1
      -
      e^{ - 2 | \lambda_b | T }
    }{
      2 \, | \lambda_b |
    }
    -
    \frac{
      \left[
        1
        -
        ( 1 + h | \lambda_b | )^{
         - 2 T / h  
        }
      \right]
    }{
      | \lambda_b |
      \left( 2 + h | \lambda_b | \right)
    }
  \right]
  .
\end{split}
\end{equation}
The fact that 
$
  \forall \, x \in [ 0, \infty ) 
  \colon
  ( 1 + x )^{ -1 }
  \geq
  e^{ -x }
$ 
hence yields that 
\begin{equation}
\begin{split}
  \operatorname{Var}\!\big(
  \big<
    b, X
  \big>_H
  \big)
  -
  \operatorname{Var}\!\big(
  \big<
    b, Y_2
  \big>_H
  \big)
& \geq
  | \mu_b |^2
  \left(
      1
      -
      e^{ - 2 | \lambda_b | T }
  \right)
  \left[
  \frac{
  1
  }{
    2 \, | \lambda_b |
  }
  -
  \frac{
    1
  }{
    | \lambda_b | \,
    ( 2 + h | \lambda_b | )
  }
  \right]
\\ & =
  \frac{
    | \mu_b |^2
    \left(
      1
      -
      e^{ - 2 | \lambda_b | T }
    \right)
    h
  }{
    2 
    \left( 2 + h | \lambda_b | \right)
  }
  .
\end{split}
\end{equation}
This implies \eqref{eq:variance_difference_linear_implicitE}.
The proof of Lemma~\ref{lem:variance_difference_linear_implicitE} is thus completed.
\end{proof}

\subsection{Lower bounds for the squared norm as the test function}

\begin{proposition}\label{prop:variance_difference_concrete_lower_bound}
Assume the setting in Section~\ref{sec:setting_lower_bound}, 
let $ b \colon \N \to \set $ be a bijective function,
and let 
$ c, \rho \in ( 0, \infty ) $, 
$ 
  \delta \in \R 
$, 
$ i \in \{ 1, 2 \} $ 
satisfy that for all $ n \in \N $
it holds that
$
  \lambda_{ b_n }
  =
  - c \, n^{ \rho }
$ 
and 
$
  \mu_{ b_n }
  =
  \left| \lambda_{ b_n } \right|^{ \delta }
$. 
Then 
it holds that 
$
  B 
  \in
  \cap_{
    r \in 
    ( 
      - \infty , 
      -
      \frac{ 1 }{ 2 }
      \left[ 
        \nicefrac{ 1 }{ \rho } + 2 \delta 
      \right]
    )
  }
  HS( 
    H, H_r 
  )
$ 
and 
\begin{equation}
\label{eq:variance_difference_concrete_lower_bound}
\begin{split}
&
  \ES\big[ 
    \| X \|^2_H
  \big]
  -
  \ES\big[ 
    \| Y_i \|^2_H
  \big]
\geq
  \tfrac{
    \left(
      1 
      - 
      e^{
        - 2 c T
      }
    \right) 
    \,
    \left(
      1
      -
      e^{ - 1 }
    \right) 
    \,
    T^{ ( \nicefrac{1}{\rho} + 2 \delta )^+ } \,
    c^{ 2 \delta } \,
    h^{ 
      ( 
        1 - [ \nicefrac{1}{\rho} + 2 \delta ]^+
      ) 
    }
  }{
    4^{ ( 1 + \rho \delta^- ) } 
    \,
    e^{ 2^\rho e c T }
    \,
    \left( \rho + ( 1 + 2 \rho \delta )^- \right)
  }
  > 0
  .
\end{split}
\end{equation}
\end{proposition}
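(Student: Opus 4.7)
The plan is to expand the variance error mode-by-mode using the eigenbasis $\mathbb{H}$, reduce to the per-coordinate lower bounds already furnished by Lemmas~\ref{lem:variance_differece_expE} and \ref{lem:variance_difference_linear_implicitE}, and then carefully lower-bound the resulting infinite series by a judicious truncation.

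First, for the Hilbert-Schmidt statement, I exploit that $\{b_n\}_{n\in\mathbb{N}}$ is an orthonormal basis of eigenvectors of $-A$, so a direct computation gives
\begin{equation*}
\|B\|_{HS(H, H_r)}^2 = \sum_{n=1}^\infty |\lambda_{b_n}|^{2r}|\mu_{b_n}|^2 = c^{2r+2\delta}\sum_{n=1}^\infty n^{\rho(2r+2\delta)},
\end{equation*}
which is finite precisely when $\rho(2r+2\delta) < -1$, i.e.\ $r < -\frac{1}{2}(1/\rho+2\delta)$. Next, $X$ and $Y_i$ are centered $H$-valued Wiener integrals, so Parseval's identity (applied inside the expectation via Tonelli) together with the per-mode bounds in Lemma~\ref{lem:variance_differece_expE} ($i=1$) or Lemma~\ref{lem:variance_difference_linear_implicitE} ($i=2$) yields
\begin{equation*}
\ES\bigl[\|X\|_H^2\bigr] - \ES\bigl[\|Y_i\|_H^2\bigr] = \sum_{n=1}^\infty \bigl(\mathrm{Var}(\langle b_n, X\rangle_H) - \mathrm{Var}(\langle b_n, Y_i\rangle_H)\bigr) \geq \frac{c^{2\delta}h}{4}\sum_{n=1}^\infty \frac{n^{2\rho\delta}\bigl(1-e^{-2cn^\rho T}\bigr)}{e^{cn^\rho h}}.
\end{equation*}

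The final step is to estimate this series from below. I would introduce a cutoff $N \in \mathbb{N}$ chosen essentially as the largest integer with $cN^\rho h \lesssim 2^\rho e\, cT$, so that $e^{cn^\rho h} \leq e^{2^\rho e c T}$ uniformly for $n \leq N$ (this is the source of the $e^{2^\rho ecT}$ factor in the target denominator). For such $n \geq 1$, I use the trivial monotonicity bound $1-e^{-2cn^\rho T} \geq 1-e^{-2cT}$, reducing the problem to a lower bound on $\sum_{n=1}^N n^{2\rho\delta}$. Here the two sign regimes split naturally: for $1+2\rho\delta > 0$, an integral comparison $\sum_{n=1}^N n^{2\rho\delta} \geq N^{1+2\rho\delta}/(1+2\rho\delta)$ (giving the $\rho$ in the denominator via $N^\rho \sim 1/(ch)$), and for $1+2\rho\delta \leq 0$ one simply retains the $n=1$ term, giving the lower bound $1$ and the correction $(1+2\rho\delta)^-$. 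Plugging $N \sim (ch)^{-1/\rho}$ (modulo dimension-related factors involving $2^\rho$ and $T$) and combining the cases produces the exponent $h^{1-(1/\rho+2\delta)^+}$ together with the prefactor $T^{(1/\rho+2\delta)^+}c^{2\delta}$.

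The main obstacle is the constant bookkeeping in the final step. The cutoff $N$ must be simultaneously compatible with $N \geq 1$, with controlling $e^{cN^\rho h}$, and with generating the $T$-dependent prefactor: the naturally appearing factor $c^{-1/\rho}$ must be converted via $T \geq h$ into $T^{(1/\rho+2\delta)^+}c^{2\delta}$ at the cost of an exponential factor $e^{2^\rho ecT}$. The casework in $\delta$ (whether the weights $n^{2\rho\delta}$ grow or decay) and in $1+2\rho\delta$ (whether the power-sum diverges or converges) must be carried through to explain the $4^{1+\rho\delta^-}$ and $(\rho + (1+2\rho\delta)^-)$ terms, and one must also argue that $T^{(\nicefrac{1}{\rho}+2\delta)^+}$ is non-degenerate (using $T \geq h > 0$) so that the right-hand side is strictly positive, as asserted by the final inequality $> 0$.
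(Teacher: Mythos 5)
Your first two steps coincide with the paper's own proof: the Hilbert--Schmidt computation, and the mode-wise decomposition via Parseval combined with Lemmas~\ref{lem:variance_differece_expE} and~\ref{lem:variance_difference_linear_implicitE}, which reduce the claim to the lower bound $S:=\sum_{n\ge1}n^{2\rho\delta}e^{-cn^{\rho}h}\ge \frac{(1-e^{-1})\,T^{(\nicefrac{1}{\rho}+2\delta)^+}h^{-(\nicefrac{1}{\rho}+2\delta)^+}}{4^{\rho\delta^-}\,e^{2^{\rho}ecT}\,(\rho+(1+2\rho\delta)^-)}$. The paper obtains this by comparing $S$ with $\int_1^{\infty}x^{2\rho\delta}e^{-2^{\rho}cx^{\rho}h}\,dx$ (the source of $4^{\rho\delta^-}$ and of the $2^{\rho}$ in the exponent, see \eqref{eq:sum_lb}), substituting, and then restricting to a window $[r,er]$ with $r\in\{2^{\rho}ch,\,2^{\rho}cT\}$, where \eqref{eq:integral_lb} produces the factor $(1-e^{-1-q^-})/(1+q^-)$, $q=\nicefrac{1}{\rho}+2\delta$. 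Your truncation of the series at $N\asymp(2^{\rho}eT/h)^{1/\rho}$ is a discrete version of the same idea; note that it really must be $T/h$ (not $(ch)^{-1}$, as you write at one point) that sets the cutoff, since the $T^{(\nicefrac{1}{\rho}+2\delta)^+}$ factor comes precisely from pushing the truncation up to that scale.

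The genuine gap is your treatment of the case $1+2\rho\delta\le 0$, where you propose to ``retain the $n=1$ term, giving the lower bound $1$''. This yields only $S\ge e^{-2^{\rho}ecT}$, whereas the asserted inequality (with $(\nicefrac{1}{\rho}+2\delta)^+=0$ in this case) requires $S\ge\frac{1-e^{-1}}{4^{\rho\delta^-}(\rho+(1+2\rho\delta)^-)}\,e^{-2^{\rho}ecT}$, and this prefactor is unbounded over the admissible parameter range: take $2\rho\delta=-1$ and $\rho\downarrow 0$ (admissible in the setting of Section~\ref{sec:setting_lower_bound} since $\beta\in(0,\tfrac12)$ still works), so that $\rho\delta^-=\tfrac12$, $(1+2\rho\delta)^-=0$, and the prefactor equals $\frac{1-e^{-1}}{2\rho}\to\infty$. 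A single mode cannot deliver this, because for small $\rho$ roughly $\rho^{-1}$-many modes with $cn^{\rho}h\lesssim 1$ contribute comparably, and the denominator $\rho+(1+2\rho\delta)^-$ encodes exactly this count. To repair your scheme you must keep the entire truncated sum in \emph{both} sign regimes and lower bound it by an integral (e.g.\ $\sum_{n=1}^N n^{2\rho\delta}\ge\int_1^{N+1}x^{2\rho\delta}\,dx$, which for $1+2\rho\delta\le0$ and $N\asymp(2^{\rho}eT/h)^{1/\rho}\ge 2e^{1/\rho}$ gives a contribution of order $\rho^{-1}$), followed by the careful constant bookkeeping you defer — which is in effect what the paper carries out through \eqref{eq:sum_lb}, the change of variables, and \eqref{eq:integral_lb}. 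As written, your case split does not prove the explicit constant in \eqref{eq:variance_difference_concrete_lower_bound}.
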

\begin{proof}
First of all, we observe that for all 
$
  r \in 
  ( -\infty, -\frac{1}{2} \, [ \nicefrac{1}{\rho} + 2 \delta ] )
$
it holds that 
$
  2 \rho ( r + \delta ) < -1
$ 
and 
\begin{equation}
  \|B\|_{ HS( H, H_r ) }
  =
  \sum^\infty_{ n = 1 }
  |\mu_{b_n}|^2 \, |\lambda_{b_n}|^{2r}
  =
  \sum^\infty_{ n = 1 }
  \left| \lambda_{ b_n } \right|^{ 2 ( r + \delta ) }
  =
  \sum^\infty_{ n = 1 }
  c^{ 2( r + \delta ) } \,
  n^{ 2 \rho ( r + \delta ) }
  < \infty
  .
\end{equation}
This proves that
$
  B 
  \in
  \cap_{
    r \in 
    ( 
      - \infty , 
      -
      \frac{ 1 }{ 2 }
      \left[ 
        \nicefrac{ 1 }{ \rho } + 2 \delta 
      \right]
    )
  }
  HS( 
    H, H_r 
  )
$.
It thus remains to prove \eqref{eq:variance_difference_concrete_lower_bound}.
For this we note that 
\begin{equation}
\label{eq:sum_lb}
\begin{split}
&
  \sum^\infty_{n=1}
  \frac{
    n^{ 2 \rho \delta }
  }{
    e^{
      c n^\rho h
    }
  }
  =
  \sum^{ \infty }_{ n = 0 }
  \int^{ n + 1 }_n
  \frac{
    ( n + 1 )^{ 2 \rho \delta }
  }{
    e^{ c ( n + 1 )^\rho h }
  }
  \, dx
\geq
  \sum^\infty_{n=1}
  \int^{ n + 1 }_n
  \frac{
    ( n + 1 )^{ 2 \rho \delta }
  }{
    e^{ c ( n + 1 )^\rho h }
  }
  \, dx
\\ & \geq
  \sum^\infty_{n=1}
  \int^{ n + 1 }_n
  \frac{
    2^{ - 2 \rho \delta^- }
    x^{ 2 \rho \delta }
  }{
    e^{ 2^\rho c x^\rho h }
  }
  \, dx
  =
  \frac{ 1 }{
    4^{ \rho \delta^- }
  }
  \int^\infty_1
  \frac{
    x^{ 2 \rho \delta }
  }{
    e^{ 2^\rho c x^\rho h }
  }
  \, dx
  .
\end{split}
\end{equation}
Combining \eqref{eq:sum_lb} with Lemma~\ref{lem:variance_differece_expE} and Lemma~\ref{lem:variance_difference_linear_implicitE} 
ensures that 
\begin{equation}
\label{eq:norm_sq_lb_1st_estimate}
\begin{split}
&
  \ES\big[ 
    \| X \|^2_H
  \big]
  -
  \ES\big[ 
    \| Y_i \|^2_H
  \big]
=
  \sum^{ \infty }_{ n = 1 }
  \Big[
    \operatorname{Var}\!\big(
    \big< b_n, X \big>_H
    \big)
    -
    \operatorname{Var}\!\big(
    \big< b_n, Y_i \big>_H
    \big)
  \Big]
\\ & \geq
  \frac{
    c^{ 2 \delta } 
    \left(
      1 - 
      e^{
        - 2 c T
      }
    \right) 
    h
  }{
    4
  }
  \,
  \sum^{ \infty }_{ n = 1 }
  \frac{
    n^{ 2 \rho \delta }
  }{
    e^{
      c n^\rho h
    }
  }
\geq
  \frac{
    c^{ 2 \delta } 
    \left(
      1 - 
      e^{ - 2 c T
      }
    \right) 
    h
  }{
    4^{
      \left( 
        1 + \rho \delta^{ - } 
      \right)
    }
  }
  \int^\infty_1
  \frac{
    x^{ 2 \rho \delta }
  }{
    e^{ 
      2^\rho c x^\rho h 
    }
  }
  \, dx
\\ & =
  \frac{
    \left(
      1 
      - 
      e^{
        - 2 c T
      }
    \right) 
    h^{ 
      \left( 
        1 - 2 \delta - \nicefrac{ 1 }{ \rho } 
      \right) 
    }
  }{
    2^{ ( 3 + 2 \rho \delta^{ + } ) } 
    \,
    \rho 
    \, 
    c^{ \nicefrac{ 1 }{ \rho } }
  }
  \int^\infty_{ 2^\rho ch }
  \frac{
    x^{ 
      ( \nicefrac{ 1 }{ \rho } + 2 \delta - 1 )
    }
  }{
    e^{ x }
  }
  \, dx
\\ & \geq
  \frac{
    \left(
      1 
      - 
      e^{
        - 2 c T
      }
    \right) 
    h^{ 
      \left( 
        1 - 2 \delta - \nicefrac{ 1 }{ \rho } 
      \right) 
    }
  }{
    2^{ ( 3 + 2 \rho \delta^{ + } ) } 
    \,
    \rho 
    \, 
    c^{ \nicefrac{ 1 }{ \rho } }
  }
  \max\!\left\{
    \int^{ 2^\rho e c h }_{ 2^\rho ch }
    \frac{
      x^{ 
        ( \nicefrac{ 1 }{ \rho } + 2 \delta - 1 )
      }
    }{
      e^{ x }
    }
    \, dx
    ,
    \int^{ 2^\rho e c T }_{ 2^\rho c T }
    \frac{
      x^{ 
        ( \nicefrac{ 1 }{ \rho } + 2 \delta - 1 )
      }
    }{
      e^{ x }
    }
    \, dx
  \right\}
\\ & \geq
  \frac{
    \left(
      1 
      - 
      e^{
        - 2 c T
      }
    \right) 
    h^{ 
      \left( 
        1 - 2 \delta - \nicefrac{ 1 }{ \rho } 
      \right) 
    }
  }{
    2^{ ( 3 + 2 \rho \delta^{ + } ) } 
    \,
    e^{ 2^\rho e c T }
    \,
    \rho 
    \, 
    c^{ \nicefrac{ 1 }{ \rho } }
  }
  \max\!\left\{
    \int^{ 2^\rho e c h }_{ 2^\rho ch }
      x^{ 
        ( \nicefrac{ 1 }{ \rho } + 2 \delta - 1 )
      }
    \, dx
    ,
    \int^{ 2^\rho e c T }_{ 2^\rho c T }
      x^{ 
        ( \nicefrac{ 1 }{ \rho } + 2 \delta - 1 )
      }
    \, dx
  \right\}
  .
\end{split}
\end{equation}
Next we observe that the fact that 
$
  \forall \, x \in ( 0, \infty ) 
  \colon
  \frac{ ( e^x - 1 ) }{ x } \geq 1
$ 
implies that 
for all 
$ r, q \in ( 0, \infty ) $
it holds that 
\begin{equation}
\begin{split}
&
  \int^{ er }_r
  x^{-1}
  \, dx
  =
  1
  ,
\qquad
  \int^{ er }_r
  x^{ ( q - 1 ) }
  \, dx
  =
  \frac{
    r^q \, ( e^q - 1 )
  }{ q }
\geq
  r^q,
\qquad
  \int^{ er }_r
  x^{ ( - q - 1 ) }
  \, dx
  =
  \frac{
    r^{ - q } \, ( 1 - e^{-q} )
  }{ q }
  .
\end{split}
\end{equation}
This and the fact that 
$
  \forall \, x \in (0,\infty)
  \colon
  \frac{ ( 1 - e^{-(1+x)} ) }{ ( 1 + x ) }
  \leq
  \frac{ ( 1 - e^{-x} ) }{ x }
  \leq 1
$ 
ensure that for all $ r \in ( 0, \infty )$, $ q \in \R $ it holds that 
\begin{equation}
\label{eq:integral_lb}
  \int^{ er }_r
  x^{ (q - 1) }
  \, dx
  \geq
  \frac{
    r^q
    \,
    ( 1 - e^{ -1-q^- } )
  }{
    ( 1 + q^- )
  }
  .
\end{equation}
In the next step we combine
\eqref{eq:norm_sq_lb_1st_estimate} and \eqref{eq:integral_lb} 
to obtain that 
\begin{equation}
\begin{split}
&
  \ES\big[ 
    \| X \|^2_H
  \big]
  -
  \ES\big[ 
    \| Y_i \|^2_H
  \big]
\\ & \geq 
  \frac{
    \left(
      1 
      - 
      e^{
        - 2 c T
      }
    \right) 
    \big(
      1
      -
      e^{ -1-( \nicefrac{1}{\rho} + 2 \delta )^- }
    \big) 
    \,
    h^{ 
      \left( 
        1 - 2 \delta - \nicefrac{ 1 }{ \rho } 
      \right) 
    }
  }{
    2^{ ( 3 + 2 \rho \delta^{ + } ) } 
    \,
    e^{ 2^\rho e c T }
    \,
    \rho 
    \,
    c^{ \nicefrac{ 1 }{ \rho } }
    \left( 1 + ( \nicefrac{ 1 }{ \rho } + 2 \delta )^- \right)
  }
  \max\!\left\{
    \left[ 
      2^{ \rho } c h
    \right]^{ 
      ( \nicefrac{ 1 }{ \rho } + 2 \delta )
    }
    ,
    \left[ 
      2^{ \rho } c T
    \right]^{ 
      ( \nicefrac{ 1 }{ \rho } + 2 \delta )
    }
  \right\}
\\ & \geq
  \frac{
    \left(
      1 
      - 
      e^{
        - 2 c T
      }
    \right) 
    \big(
      1
      -
      e^{ -1-( \nicefrac{1}{\rho} + 2 \delta )^- }
    \big) \,
    c^{ 2 \delta } \,
    h^{ 
      \left( 
        1 - 2 \delta - \nicefrac{ 1 }{ \rho } 
      \right) 
    }
  }{
    4^{ ( 1 + \rho \delta^- ) } 
    \,
    e^{ 2^\rho e c T }
    \left( \rho + ( 1 + 2 \rho \delta )^- \right)
  }
  \,
  \max\{
    h^{ ( \nicefrac{1}{\rho} + 2 \delta ) }
    ,
    T^{ ( \nicefrac{1}{\rho} + 2 \delta ) }
  \}
  .
\end{split}
\end{equation}
This together with the fact that 
$
  \max\{
    h^{ ( \nicefrac{1}{\rho} + 2 \delta ) }
    ,
    T^{ ( \nicefrac{1}{\rho} + 2 \delta ) }
  \}
  =
  T^{ ( \nicefrac{1}{\rho} + 2 \delta )^+ }
  \,
  h^{ -( \nicefrac{1}{\rho} + 2 \delta )^- }
$ 
implies \eqref{eq:variance_difference_concrete_lower_bound}. 
The proof of Proposition~\ref{prop:variance_difference_concrete_lower_bound} is thus completed.
\end{proof}

The next result, Corollary~\ref{cor:variance_difference_laplacian_lower_bound},
specialises 
Proposition~\ref{prop:variance_difference_concrete_lower_bound}
to the case where 
$ c = \pi^2 $
and 
$ \rho = 2 $
(Laplacian with Dirichlet boundary conditions on $ (0,1) $)
and slightly further estimates
the right hand side of \eqref{eq:variance_difference_concrete_lower_bound}.

\begin{corollary}
\label{cor:variance_difference_laplacian_lower_bound}
Assume the setting in Section~\ref{sec:setting_lower_bound}, 
let $ b \colon \N \to \set $ be a bijective function,
and let 
$ 
  \delta \in  \R 
$, 
$ i \in \{ 1, 2 \} $
satisfy that for all $ n \in \N $
it holds that
$
  \lambda_{ b_n }
  =
  - \pi^2 \, n^2
$ 
and 
$
  \mu_{ b_n }
  =
  \left| \lambda_{ b_n } \right|^{ \delta }
$. 
Then it holds that 
$
  B 
  \in
  \cap_{
    r \in 
    ( 
      - \infty , 
      - \delta - \nicefrac{ 1 }{ 4 }
    )
  }
  HS( 
    H, H_r 
  )
$ 
and 
\begin{equation}
\begin{split}
&
  \ES\big[ 
    \| X \|^2_H
  \big]
  -
  \ES\big[ 
    \| Y_i \|^2_H
  \big]
\geq
  \left[
  \tfrac{
    \left(
      1 - 
      e^{
        - T
      }
    \right)
    \,
    \left(
      1
      - 
      e^{
        - 1
      }
    \right)
    \,
    T^{ ( \nicefrac{ 1 }{ 2 } + 2 \delta )^+ } 
    \pi^{ 4 \delta }
  }{
    4^{ ( 1 + 2 \delta^- ) } 
    \,
    e^{ 12 \pi^2 T }
    \,
    \left(
      3 + 4 \delta^-
    \right)
  }
  \right]
    h^{ \min\{ \nicefrac{ 1 }{ 2 } - 2 \delta , 1 \} }
  > 0
  .
\end{split}
\end{equation}
\end{corollary}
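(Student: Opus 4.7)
The plan is to deduce Corollary~\ref{cor:variance_difference_laplacian_lower_bound} as a direct specialisation of Proposition~\ref{prop:variance_difference_concrete_lower_bound} to the case $c = \pi^2$ and $\rho = 2$, which corresponds precisely to the Dirichlet Laplacian eigenvalues on $(0,1)$. First I would verify the assumptions of Proposition~\ref{prop:variance_difference_concrete_lower_bound}: since $b \colon \N \to \mathbb{H}$ is bijective and the decay $\mu_{b_n} = |\lambda_{b_n}|^\delta = \pi^{2\delta} n^{2\delta}$ depends only on $\delta$, all hypotheses are met. The Hilbert--Schmidt containment is then immediate since, with $\rho = 2$, the index $-\tfrac12[\nicefrac{1}{\rho} + 2\delta]$ from the proposition equals $-\tfrac14 - \delta$, matching the statement.

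Next I would substitute $c = \pi^2$ and $\rho = 2$ into the quantitative estimate~\eqref{eq:variance_difference_concrete_lower_bound} and compare the resulting factors term by term to the target inequality. The substitutions give $c^{2\delta} = \pi^{4\delta}$, $(\nicefrac{1}{\rho} + 2\delta)^+ = (\nicefrac12 + 2\delta)^+$, $2^\rho e c T = 4e\pi^2 T$, and $\rho \delta^- = 2\delta^-$. The only factors that do not match verbatim are those that need to be weakened (downwards) to arrive at the cleaner constants in the corollary. Specifically I plan to use the following three elementary estimates:
\begin{equation}
1 - e^{-2\pi^2 T} \;\geq\; 1 - e^{-T},
\qquad
e^{4 e \pi^2 T} \;\leq\; e^{12 \pi^2 T},
\qquad
2 + (1 + 4\delta)^- \;\leq\; 3 + 4 \delta^-,
\end{equation}
the first by monotonicity of $1 - e^{-(\cdot)}$ together with $2\pi^2 \geq 1$, the second by the numerical inequality $4e < 12$, and the third by a short case distinction on whether $\delta \geq -\tfrac14$ or $\delta < -\tfrac14$ (in both cases a direct computation reduces the inequality to $0 \leq 1$ or to $0 \leq 2$, respectively). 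Each of these estimates makes the respective factor in the denominator/numerator of~\eqref{eq:variance_difference_concrete_lower_bound} move in the direction that \emph{weakens} the lower bound, so they can be combined without difficulty.

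The final cosmetic step is to rewrite the $h$-exponent. Proposition~\ref{prop:variance_difference_concrete_lower_bound} yields the exponent $1 - [\nicefrac{1}{\rho} + 2\delta]^+ = 1 - (\nicefrac12 + 2\delta)^+$; distinguishing the two cases $\delta \geq -\tfrac14$ (where this exponent equals $\tfrac12 - 2\delta \leq 1$) and $\delta < -\tfrac14$ (where it equals $1 \leq \tfrac12 - 2\delta$) shows that $1 - (\nicefrac12 + 2\delta)^+ = \min\{\nicefrac12 - 2\delta, 1\}$, exactly as stated. The positivity of the bound is clear from the positivity of each factor in the resulting expression (and the proposition already asserts strict positivity). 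Nothing here is subtle; the whole proof is a substitution plus three one-line numerical estimates, so I do not expect any genuine obstacle.
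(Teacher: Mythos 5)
Your proposal is correct and is exactly the route the paper takes: the paper gives no separate proof but states that the corollary "specialises Proposition~\ref{prop:variance_difference_concrete_lower_bound} to the case $c=\pi^2$, $\rho=2$ and slightly further estimates the right hand side", which is precisely your substitution plus the three monotone weakenings $1-e^{-2\pi^2 T}\geq 1-e^{-T}$, $e^{4e\pi^2 T}\leq e^{12\pi^2 T}$, and $2+(1+4\delta)^-\leq 3+4\delta^-$, together with the identity $1-(\nicefrac{1}{2}+2\delta)^+=\min\{\nicefrac{1}{2}-2\delta,1\}$.
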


\subsection{Lower bounds for a specific regular test function}

\begin{lemma}
\label{lem:weak_error_lower_bound_general}
Let 
$ 
  ( H, \left< \cdot, \cdot \right>_H, \left\| \cdot \right\|_H ) 
$ 
be a separable $ \R $-Hilbert space
with $ H \neq \{ 0 \} $, 
let 
$
  ( \Omega, \mathcal{F}, \P )
$ 
be a probability space, 
let $ \set \subseteq H $ be an orthonormal basis of $ H $, 
let $ \varphi \in \mathbb{M}( H, \R ) $ satisfy that 
for all $ v \in H $ 
it holds that 
$
  \varphi( v )
  =
  \operatorname{exp}\!\left(
    -\| v \|^2_H
  \right)
$, 
and let 
$ 
  X, Y \in 
  L^2( \P; H )
$ 
be such that 
$
  \left<
  b, X
  \right>_H
$, 
$ b \in \set $, 
is a family of independent centered Gaussian random variables, 
such that 
$
  \left<
  b, Y
  \right>_H
$, 
$ b \in \set $, 
is a family of independent centered Gaussian random variables, 
and such that for all $ b \in \mathbb{H} $ 
it holds that 
$
  \operatorname{Var}\!\left(
    \left<
      b, X
    \right>_H
  \right)
  \geq
  \operatorname{Var}\!\left(
    \left<
      b, Y
    \right>_H
  \right)
$.
Then 
it holds that
$ \varphi \in C^5_b( H, \R ) $ 
and
\begin{equation}
\label{eq:weak_error_lower_bound_general}
\begin{split}
&
  \ES\big[ 
    \varphi( Y )
  \big]
  -
  \ES\big[ 
    \varphi( X )
  \big]
\geq
  \frac{
    \big(
      \ES[
        \| X \|^2_H
      ]
      -
      \ES[
        \| Y \|^2_H
      ]
    \big)
  }{
    \exp\!\big(
      6
      \,
      \ES[ \| X \|^2_H ]
    \big)
  }
  .
\end{split}
\end{equation}
\end{lemma}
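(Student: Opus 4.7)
The plan is to evaluate both expectations $\mathbb{E}[\varphi(X)]$ and $\mathbb{E}[\varphi(Y)]$ explicitly as infinite products over the coordinates $b \in \mathbb{H}$, and then to interpolate between the two product expressions via a one-parameter family of variances. Before starting, I would dispatch the claim $\varphi \in C_b^5(H,\mathbb{R})$ by a direct induction: the $k$-th Fr\'echet derivative of $v \mapsto e^{-\|v\|_H^2}$ at $v$ applied to $(h_1,\ldots,h_k)$ is of the form $P_k(v;h_1,\ldots,h_k)\, e^{-\|v\|_H^2}$, where $P_k$ is a finite linear combination of products of inner products $\langle h_i, h_j\rangle_H$ and $\langle v, h_{i_m}\rangle_H$ of total degree at most $k$ in $v$. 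Hence the derivative is bounded in norm by $Q_k(\|v\|_H)\, e^{-\|v\|_H^2}\prod_{i=1}^k\|h_i\|_H$ with $Q_k$ a polynomial, and $\|v\|_H^j e^{-\|v\|_H^2}$ is uniformly bounded in $v$ for every $j$.

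Next, exploiting the assumed independence of $(\langle b, X\rangle_H)_{b \in \mathbb{H}}$ together with Parseval's identity $\|X\|_H^2 = \smallsum_{b \in \mathbb{H}}|\langle b, X\rangle_H|^2$, bounded convergence applied to the finite partial sums $\sum_{b \in F}|\langle b, X\rangle_H|^2 \uparrow \|X\|_H^2$ yields
\begin{equation*}
\mathbb{E}[\varphi(X)]
=
\prod_{b \in \mathbb{H}} \mathbb{E}\!\left[e^{-|\langle b, X\rangle_H|^2}\right]
=
\prod_{b \in \mathbb{H}} \big(1 + 2\sigma_b(X)^2\big)^{-1/2},
\end{equation*}
where $\sigma_b(X)^2 := \operatorname{Var}(\langle b, X\rangle_H)$, and an analogous formula holds for $Y$. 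The per-coordinate identity $\int_\mathbb{R} e^{-z^2}\, dN(0,\sigma^2)(z) = (1+2\sigma^2)^{-1/2}$ is a direct Gaussian integral, and the infinite product converges since $\smallsum_b \sigma_b(X)^2 = \mathbb{E}[\|X\|_H^2] < \infty$.

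The core of the argument is then a one-parameter interpolation. Setting $a_b := 2\sigma_b(X)^2 \geq 2\sigma_b(Y)^2 =: c_b$ and
\begin{equation*}
f(t) := \prod_{b \in \mathbb{H}} \big(1 + c_b + t(a_b - c_b)\big)^{-1/2}, \qquad t \in [0,1],
\end{equation*}
I have $f(0) = \mathbb{E}[\varphi(Y)]$ and $f(1) = \mathbb{E}[\varphi(X)]$. Termwise differentiation (legitimate because the partial sums of $\sum_b (a_b - c_b)/(1 + c_b + t(a_b - c_b))$ are uniformly dominated on $[0,1]$ by $\sum_b a_b < \infty$) gives
\begin{equation*}
-f'(t) = \tfrac{1}{2}\, f(t)\, \smallsum_{b \in \mathbb{H}} \frac{a_b - c_b}{1 + c_b + t(a_b - c_b)}.
\end{equation*}
Since $a_b \geq c_b$, $f$ is nonincreasing so $f(t) \geq f(1)$ on $[0,1]$, and since $c_b + t(a_b - c_b) \leq a_b$, integration over $[0,1]$ produces
\begin{equation*}
\mathbb{E}[\varphi(Y)] - \mathbb{E}[\varphi(X)]
\geq
\tfrac{1}{2}\, f(1) \smallsum_{b \in \mathbb{H}} \frac{a_b - c_b}{1 + a_b}.
\end{equation*}

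The final step invokes two elementary inequalities. From $1 + x \leq e^x$ I get $1/(1+a_b) \geq e^{-a_b} \geq \exp(-\smallsum_{b'} a_{b'}) = e^{-2\mathbb{E}[\|X\|_H^2]}$, while from $\log(1+x) \leq x$ I get $f(1) \geq \exp(-\tfrac{1}{2}\smallsum_b a_b) = e^{-\mathbb{E}[\|X\|_H^2]}$. Together with the identity $\smallsum_b (a_b - c_b) = 2(\mathbb{E}[\|X\|_H^2] - \mathbb{E}[\|Y\|_H^2])$, these bounds yield the sharper estimate
\begin{equation*}
\mathbb{E}[\varphi(Y)] - \mathbb{E}[\varphi(X)]
\geq
e^{-3\mathbb{E}[\|X\|_H^2]}\big(\mathbb{E}[\|X\|_H^2] - \mathbb{E}[\|Y\|_H^2]\big),
\end{equation*}
which implies \eqref{eq:weak_error_lower_bound_general} since $e^{-3s} \geq e^{-6s}$ for $s \geq 0$. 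I do not anticipate any genuine obstacle; the only point demanding real care is the justification of termwise differentiation of the infinite product defining $f$, which is handled by dominated convergence using the summability of $(a_b)_{b \in \mathbb{H}}$.
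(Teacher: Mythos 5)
Your proposal is correct. It starts from the same key computation as the paper -- the explicit Gaussian product formula $\ES[\varphi(X)]=\prod_{b\in\set}(1+2\operatorname{Var}(\left<b,X\right>_H))^{-\nicefrac{1}{2}}$ -- but then diverges: the paper factors the difference as $\ES[\varphi(Y)]\big(1-\big[\prod_b\tfrac{1+2\operatorname{Var}(\left<b,Y\right>_H)}{1+2\operatorname{Var}(\left<b,X\right>_H)}\big]^{\nicefrac12}\big)$, lower-bounds $\ES[\varphi(Y)]$ by Jensen's inequality, and then uses the elementary inequalities $\prod_k(1+x_k)\geq 1+\sum_k x_k$ and $1-(1+2x)^{-\nicefrac12}\geq x(1+2x)^{-\nicefrac32}$ to reach an intermediate bound of the form $e^{-\ES\|X\|_H^2}(1+2\,\ES\|X\|_H^2)^{-\nicefrac52}\,(\ES\|X\|_H^2-\ES\|Y\|_H^2)$, which is then weakened to the stated $e^{-6\,\ES\|X\|_H^2}$ constant; whereas you interpolate linearly between the two variance sequences, differentiate $\log f$ termwise (justified by the uniform domination $\frac{a_b-c_b}{1+c_b+t(a_b-c_b)}\leq a_b$ and summability of $(a_b)$), and integrate, which yields the sharper constant $e^{-3\,\ES\|X\|_H^2}$ and trivially implies the claim. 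Your route is also more self-contained: you verify $\varphi\in C^5_b(H,\R)$ and the product formula directly, where the paper cites its companion article and reduces to a finite basis before passing to the limit; your treatment of the infinite product via uniform convergence of the differentiated log-series is a legitimate substitute for that reduction. The paper's approach buys a purely algebraic argument with no differentiation under an infinite product, at the cost of a slightly clumsier constant; your interpolation argument is arguably cleaner and quantitatively a bit stronger, at the cost of the (correctly handled) termwise-differentiation justification.
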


\begin{proof}
First of all, we observe that it is well-known that 
$ \varphi \in C^5_b( H, \R ) $ 
(see, e.g., (97)--(102) in~\cite{ConusJentzenKurniawan2014arXiv}). 
Next we assume w.l.o.g.\ that $ \set $ is a finite set
(\eqref{eq:weak_error_lower_bound_general} in the case 
where $ \set $ is an infinite set follows from 
Lebesgue's dominated convergence theorem 
and \eqref{eq:weak_error_lower_bound_general} in the case where $ \set $ is a finite set).
Next we note that the assumption that 
$
  \left<
  b, X
  \right>_H
$, 
$ b \in \set $, 
is a family of independent centered Gaussian random variables,
the assumption that
$
  \left<
  b, Y
  \right>_H
$, 
$ b \in \set $, 
is a family of independent centered Gaussian random variables, 
and, e.g., (103) in Conus et al.~\cite{ConusJentzenKurniawan2014arXiv}
imply that
\begin{equation}\label{eq:weak_error_decompose}
\begin{split}
&
  \ES\big[ 
    \varphi( Y )
  \big]
  -
  \ES\big[ 
    \varphi( X )
  \big]
\\ & =
  \prod_{ b \in \mathbb{H} }
  \left[
    1 +
    2 \operatorname{Var}\!\big(
    \big< b, Y \big>_H
    \big)
  \right]^{ -\nicefrac{ 1 }{ 2 } }
  -
  \prod_{ b \in \mathbb{H} }
  \left[
    1 +
    2 \operatorname{Var}\!\big(
    \big< b, X \big>_H
    \big)
  \right]^{ -\nicefrac{ 1 }{ 2 } }
\\ & =
  \left(
  \prod_{ b \in \mathbb{H} }
  \left[
    1 +
    2 \operatorname{Var}\!\big(
    \big< b, Y \big>_H
    \big)
  \right]^{ -\nicefrac{ 1 }{ 2 } }
  \right)
  \left(
    1 -
    \left[
    \prod_{ b \in \mathbb{H} }
    \frac{
  \left[
    1 +
    2 \operatorname{Var}\!\big(
    \big< b, Y \big>_H
    \big)
  \right]
    }{
  \left[
    1 +
    2 \, \operatorname{Var}\!\big(
    \big< b, X \big>_H
    \big)
  \right]
    }
    \right]^{ \nicefrac{ 1 }{ 2 } }
  \right)
\\ & =
  \ES\big[ 
    \varphi( Y )
  \big]
  \left(
    1 -
    \left[
    \prod_{ b \in \mathbb{H} }
    \frac{
  \left[
    1 +
    2 \operatorname{Var}\!\big(
    \big< b, Y \big>_H
    \big)
  \right]
    }{
  \left[
    1 +
    2 \operatorname{Var}\!\big(
    \big< b, X \big>_H
    \big)
  \right]
    }
    \right]^{ \nicefrac{ 1 }{ 2 } }
  \right)
  .
\end{split}
\end{equation}
Moreover, Jensen's inequality shows that 
\begin{equation}
\label{eq:weak_moment_numerics}
\begin{split}
&
  \ES\big[ 
    \varphi( Y )
  \big]
\geq
  \operatorname{exp}\!\left(
  -\ES[ \| Y \|^2_H ]
  \right)
  \geq
  \operatorname{exp}\!\left(
  -\ES[ \| X \|^2_H ]
  \right)
  .
\end{split}
\end{equation}
Next we observe that the facts that
$
  \forall \, b \in \set \colon
  2 
  \left[
    \operatorname{Var}\!\left(
      \left<
        b, X
      \right>_H
    \right)
    -
    \operatorname{Var}\!\left(
      \left<
        b, Y
      \right>_H
    \right)
  \right]
  \geq 0
$
and 
$
  \forall \, n \in \N \colon
  \forall \, x_1, \dots, x_n \in [0,\infty) \colon
  \prod_{ k = 1 }^n
  \left[ 
    1 + x_k
  \right]
  \geq
  1 +
  \sum_{ k = 1 }^n
  x_k
$
prove that
\allowdisplaybreaks
\begin{align}
\label{eq:weak_error_incomplete_lower_bound}
&
  1 
  -
  \left[
    \prod_{ b \in \mathbb{H} }
    \frac{
      \left[
        1 +
        2 
        \operatorname{Var}\!\big(
          \big< b, Y \big>_H
        \big)
      \right]
    }{
      \left[
        1 +
        2 
        \operatorname{Var}\!\big(
          \big< b, X \big>_H
        \big)
      \right]
    }
  \right]^{ 
    \nicefrac{ 1 }{ 2 } 
  }
  =
  1 
  -
  \left[
    \prod_{ b \in \mathbb{H} }
    \frac{
      \left[
        1 +
        2 
        \operatorname{Var}\!\big(
          \big< b, X \big>_H
        \big)
      \right]
    }{
      \left[
        1 +
        2 
        \operatorname{Var}\!\big(
          \big< b, Y \big>_H
        \big)
      \right]
    }
  \right]^{ 
    - \nicefrac{ 1 }{ 2 } 
  }
\\ & =
\nonumber
    1 -
    \left[
    \prod_{ b \in \mathbb{H} }
  \left[
    1 
    + 
    \frac{
      2 
      \left[
      \operatorname{Var}\!\big(
      \big< b, X \big>_H
      \big)
      -
      \operatorname{Var}\!\big(
      \big< b, Y \big>_H
      \big)
      \right]
    }{
  \left[
    1 +
    2 
    \operatorname{Var}\!\big(
    \big< b, Y \big>_H
    \big)
  \right]
    }
  \right]
    \right]^{ -\nicefrac{ 1 }{ 2 } }
\\ & \geq
\nonumber
  1 -
  \left[
    1 + 
    \sum_{ b \in \mathbb{H} }
    \frac{
      2
      \left[
        \operatorname{Var}\!\big(
          \big< b, X \big>_H
        \big)
        -
        \operatorname{Var}\!\big(
          \big< b, Y \big>_H
        \big)
      \right]
    }{
  \left[
    1 +
    2 \operatorname{Var}\!\big(
    \big< b, Y \big>_H
    \big)
  \right]
    }
    \right]^{ -\nicefrac{ 1 }{ 2 } }
  .
\end{align}
In addition, we note that the fundamental theorem 
of calculus ensures that 
$
  \forall \, x \in [0,\infty) 
  \colon
  1 - 
  \left[ 
    1 + x
  \right]^{ - 1 / 2 }
  =
  \frac{ 1 }{ 2 }
  \int_0^x
  \left[
    1 + y
  \right]^{ - 3 / 2 }
  dy
  \geq
  \frac{ 1 }{ 2 }
  x
  \left[
    1 + x
  \right]^{ - 3 / 2 }
$.
Hence, we obtain that
$
  \forall \, x \in [0,\infty) 
  \colon
  1 - 
  \left[ 
    1 + 2 x
  \right]^{ - 1 / 2 }
\geq
  x
  \left[
    1 + 2 x
  \right]^{ - 3 / 2 }
$.
Combinig this with \eqref{eq:weak_error_incomplete_lower_bound}
implies that
\begin{equation}
\label{eq:weak_error_incomplete_lower_bound2}
\begin{split}
&
  1 
  -
  \left[
    \prod_{ b \in \mathbb{H} }
    \frac{
      \left[
        1 +
        2 
        \operatorname{Var}\!\big(
          \big< b, Y \big>_H
        \big)
      \right]
    }{
      \left[
        1 +
        2 
        \operatorname{Var}\!\big(
          \big< b, X \big>_H
        \big)
      \right]
    }
  \right]^{ 
    \nicefrac{ 1 }{ 2 } 
  }
\\ & \geq
  1 -
  \left[
    1 + 
    2
    \sum_{ b \in \mathbb{H} }
    \frac{
        \operatorname{Var}\!\big(
          \big< b, X \big>_H
        \big)
        -
        \operatorname{Var}\!\big(
          \big< b, Y \big>_H
        \big)
    }{
  \left[
    1 +
    2 
    \operatorname{Var}\!\big(
    \big< b, Y \big>_H
    \big)
  \right]
    }
    \right]^{ -\nicefrac{ 1 }{ 2 } }
\\ & \geq
    \left[
    \sum_{ b \in \mathbb{H} }
    \frac{
    \operatorname{Var}\!\big(
    \big< b, X \big>_H
    \big)
    -
    \operatorname{Var}\!\big(
    \big< b, Y \big>_H
    \big)
    }{
  \left[
    1 +
    2 \operatorname{Var}\!\big(
    \big< b, Y \big>_H
    \big)
  \right]
    }
    \right]
  \left[
    1+
    2 \sum_{ b \in \mathbb{H} }
    \frac{
    \operatorname{Var}\!\big(
    \big< b, X \big>_H
    \big)
    -
    \operatorname{Var}\!\big(
    \big< b, Y \big>_H
    \big)
    }{
  \left[
    1 +
    2 
    \operatorname{Var}\!\big(
    \big< b, Y \big>_H
    \big)
  \right]
    }
  \right]^{ -\nicefrac{ 3 }{ 2 } } .
\end{split}
\end{equation}
In the next step we combine 
\eqref{eq:weak_error_decompose},
\eqref{eq:weak_moment_numerics},
and
\eqref{eq:weak_error_incomplete_lower_bound2}
with the fact that
$
  \forall \, b \in \set 
  \colon
  \operatorname{Var}\!\big(
    \big< b, Y \big>_H
  \big)
  \leq
  \operatorname{Var}\!\big(
    \big< b, X \big>_H
  \big)
  \leq
  \ES\big[ 
    \| X \|^2_H
  \big]
$
to obtain that 
\begin{equation}
\label{eq:weak_error_lower_bound_unrefined}
\begin{split}
  \ES\big[ 
    \varphi( Y )
  \big]
  -
  \ES\big[ 
    \varphi( X )
  \big]
& \geq
  \operatorname{exp}\!\left(
    - \ES\big[ \| X \|^2_H \big]
  \right)
  \left[
    \sum_{ b \in \mathbb{H} }
    \frac{
      \operatorname{Var}\!\big(
        \big< b, X \big>_H
      \big)
      -
      \operatorname{Var}\!\big(
        \big< b, Y \big>_H
      \big)
    }{
      \left[
        1 +
        2 
        \operatorname{Var}\!\big(
          \big< b, Y \big>_H
        \big)
      \right]
    }
  \right]
\\ & \quad  
  \cdot
  \left[
    1+
    2 \sum_{ b \in \mathbb{H} }
    \frac{
    \operatorname{Var}\!\big(
    \big< b, X \big>_H
    \big)
    -
    \operatorname{Var}\!\big(
    \big< b, Y \big>_H
    \big)
    }{
  \left[
    1 +
    2 \operatorname{Var}\!\big(
    \big< b, Y \big>_H
    \big)
  \right]
    }
  \right]^{ -\nicefrac{ 3 }{ 2 } }
\\ & \geq
  \exp\!\left(
    - 
    \ES\big[ \| X \|^2_H \big]
  \right)
  \left[
    \sum_{ b \in \mathbb{H} }
    \frac{
      \operatorname{Var}\!\big(
        \big< b, X \big>_H
      \big)
      -
      \operatorname{Var}\!\big(
        \big< b, Y \big>_H
      \big)
    }{
      \left[
        1 
        +
        2 
        \,
        \ES\big[ 
          \| X \|^2_H 
        \big]
      \right]
    }
  \right]
\\ & \quad  
  \cdot
  \left[
    1 +
    2 \sum_{ b \in \mathbb{H} }
    \operatorname{Var}\!\big(
    \big< b, X \big>_H
    \big)
  \right]^{ -\nicefrac{ 3 }{ 2 } }
\\ & =
  \exp\!\left(
    - 
    \ES\big[ 
      \| X \|^2_H 
    \big]
  \right)
  \left[
    1 
    +
    2 \,
    \ES\big[ \| X \|^2_H \big]
  \right]^{ - \nicefrac{ 5 }{ 2 } }
  \Big(
    \ES\big[ \| X \|^2_H \big]
    -
    \ES\big[ \| Y \|^2_H \big]
  \Big)
  .
\end{split}
\end{equation}
Combining \eqref{eq:weak_error_lower_bound_unrefined} with the fact that 
$
  \forall \, x \in [ 0, \infty )
  \colon
  ( 1 + x )^{ -1 }
  \geq
  e^{ -x }
$ 
implies \eqref{eq:weak_error_lower_bound_general}.
This completes the proof of Lemma~\ref{lem:weak_error_lower_bound_general}.
\end{proof}

The next result, Corollary~\ref{cor:weak_error_lower_bound_abstract},
is a direct consequence of Lemma~\ref{lem:variance_differece_expE}, 
of Lemma~\ref{lem:variance_difference_linear_implicitE}, 
and of Lemma~\ref{lem:weak_error_lower_bound_general}.

\begin{corollary}
\label{cor:weak_error_lower_bound_abstract}
Assume the setting in Section~\ref{sec:setting_lower_bound}
and 
let 
$ i \in \{ 1, 2 \} $, 
$ \varphi \in \mathbb{M}( H, \R ) $ 
satisfy that 
for all $ v \in H $ 
it holds that 
$
  \varphi( v )
  =
  \operatorname{exp}\!\left(
    -\| v \|^2_H
  \right)
$.
Then 
\begin{equation}
\begin{split}
&
  \ES\big[ 
    \varphi( Y_i )
  \big]
  -
  \ES\big[ 
    \varphi( X )
  \big]
\geq
  e^{
    - 6 \, \ES[ \| X \|^2_H ]
  }
  \left(
    \ES\big[
    \| X \|^2_H
    \big]
    -
    \ES\big[
    \| Y_i \|^2_H
    \big]
  \right)
  .
\end{split}
\end{equation}
\end{corollary}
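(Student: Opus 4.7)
The plan is to observe that Corollary~\ref{cor:weak_error_lower_bound_abstract} is essentially an application of Lemma~\ref{lem:weak_error_lower_bound_general} once the two hypotheses of that lemma---joint Gaussianity with independent orthonormal coordinates, and coordinate-wise variance domination---are checked for $X$ and $Y_i$.

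First I would verify the Gaussian independence structure. Since $B$ is diagonal in the basis $\mathbb{H}$ with eigenvalues $(\mu_b)_{b\in\mathbb{H}}$ and $A$ is likewise diagonal with eigenvalues $(\lambda_b)_{b\in\mathbb{H}}$, both semigroups $e^{tA}$ and the rational approximation $(\operatorname{Id}_H - hA)^{-k}$ preserve the eigenbasis. Consequently, for each $b \in \mathbb{H}$, the projections satisfy (as already computed in the proofs of Lemmas~\ref{lem:variance_differece_expE} and~\ref{lem:variance_difference_linear_implicitE})
\begin{equation*}
  \langle b, X \rangle_H
  = \mu_b \int_0^T e^{-|\lambda_b|(T-s)} \, d\langle b, W_s\rangle_H,
  \qquad
  \langle b, Y_i \rangle_H
  = \mu_b \int_0^T f^{(i)}_b(s) \, d\langle b, W_s\rangle_H,
\end{equation*}
where $f^{(1)}_b(s) = e^{-|\lambda_b|(T-\lfloor s\rfloor_h)}$ and $f^{(2)}_b(s) = (1+h|\lambda_b|)^{-(T-\lfloor s\rfloor_h)/h}$. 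Since $(\langle b, W_\cdot\rangle_H)_{b\in\mathbb{H}}$ is a family of independent real-valued standard Brownian motions, this representation yields that $(\langle b, X\rangle_H)_{b\in\mathbb{H}}$ and $(\langle b, Y_i\rangle_H)_{b\in\mathbb{H}}$ are each families of independent centered Gaussian random variables, as required.

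Next I would invoke the coordinate-wise variance comparisons: for $i=1$, Lemma~\ref{lem:variance_differece_expE} gives
\begin{equation*}
  \operatorname{Var}(\langle b, X\rangle_H) - \operatorname{Var}(\langle b, Y_1\rangle_H) \geq 0
  \quad \text{for every } b \in \mathbb{H},
\end{equation*}
and for $i=2$, Lemma~\ref{lem:variance_difference_linear_implicitE} gives the analogous nonnegativity for $Y_2$. Together with the Gaussianity established in the previous step, this is precisely the hypothesis of Lemma~\ref{lem:weak_error_lower_bound_general}.

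Applying Lemma~\ref{lem:weak_error_lower_bound_general} with $Y$ replaced by $Y_i$ then yields
\begin{equation*}
  \E[\varphi(Y_i)] - \E[\varphi(X)]
  \geq
  \frac{\E[\|X\|_H^2] - \E[\|Y_i\|_H^2]}{\exp(6\,\E[\|X\|_H^2])}
  = e^{-6\,\E[\|X\|_H^2]}\bigl(\E[\|X\|_H^2] - \E[\|Y_i\|_H^2]\bigr),
\end{equation*}
which is exactly the claimed inequality. There is no real obstacle here---the whole proof consists of assembling the two ingredients---so the only thing to be careful about is stating the Gaussian independence cleanly enough that the hypothesis of Lemma~\ref{lem:weak_error_lower_bound_general} is transparently verified.
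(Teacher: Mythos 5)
Your proposal is correct and matches the paper's argument: the paper also obtains Corollary~\ref{cor:weak_error_lower_bound_abstract} directly by combining the coordinate-wise variance estimates of Lemma~\ref{lem:variance_differece_expE} and Lemma~\ref{lem:variance_difference_linear_implicitE} with Lemma~\ref{lem:weak_error_lower_bound_general}, exactly as you do. Your explicit verification of the independent Gaussian coordinate structure via the diagonal representations is the same observation that underlies the paper's ``direct consequence'' statement.
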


The next result, Proposition~\ref{prop:weak_error_concrete_lower_bound},
is an immediate consequence of 
Proposition~\ref{prop:variance_difference_concrete_lower_bound} 
and of Corollary~\ref{cor:weak_error_lower_bound_abstract}.

\begin{proposition}
\label{prop:weak_error_concrete_lower_bound}
Assume the setting in Section~\ref{sec:setting_lower_bound}, 
let $ b \colon \N \to \set $ be a bijective function,
let 
$ i \in \{ 1, 2 \} $,
$ c, \rho \in ( 0, \infty ) $, 
$ 
  \delta \in \R 
$
satisfy that for all $ n \in \N $
it holds that
$
  \lambda_{ b_n }
  =
  - c \, n^{ \rho }
$ 
and 
$
  \mu_{ b_n }
  =
  \left| \lambda_{ b_n } \right|^{ \delta }
$, 
and let $ \varphi \in \mathbb{M}( H, \R ) $ satisfy that 
for all $ v \in H $ 
it holds that 
$
  \varphi( v )
  =
  \operatorname{exp}\!\left(
    -\| v \|^2_H
  \right)
$. 
Then it holds that 
$
  \varphi \in C^5_b( H, \R )
$, 
$
  B 
  \in
  \cap_{
    r \in 
    ( 
      - \infty , 
      -
      \frac{ 1 }{ 2 }
      \left[ 
        \nicefrac{ 1 }{ \rho } + 2 \delta 
      \right]
    )
  }
  HS( 
    H, H_r 
  )
$
and 
\begin{equation}
\label{eq:weak_error_concrete_lower_bound}
\begin{split}
&
  \ES\big[ 
    \varphi( Y_i )
  \big]
  -
  \ES\big[ 
    \varphi( X )
  \big]
\geq
  \left[
      \tfrac{
        \left(
          1 
          - 
          e^{
            - 2 c T
          }
        \right) 
        \,
        \left(
          1
          -
          e^{ - 1 }
        \right) 
        \,
        T^{ ( \nicefrac{ 1 }{ \rho} + 2 \delta )^+ } 
        \,
        c^{ 2 \delta } 
      }{
        4^{ ( 1 + \rho \delta^- ) } 
        \,
        \exp\left( 
          2^{ \rho } 
          e c T 
          + 
          6 \, \ES[ \| X \|^2_H ] 
        \right)
        \,
        \left( \rho + ( 1 + 2 \rho \delta )^- \right)
      }
  \right]
        h^{ 
          ( 
            1 - [ \nicefrac{1}{\rho} + 2 \delta ]^+
          ) 
        }
    > 0
      .
\end{split}
\end{equation}
\end{proposition}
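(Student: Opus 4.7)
The plan is straightforward: the proposition is already flagged as an ``immediate consequence'' of Proposition~\ref{prop:variance_difference_concrete_lower_bound} and Corollary~\ref{cor:weak_error_lower_bound_abstract}, so the proof amounts to combining these two estimates together with a quick check of the two auxiliary regularity claims in the conclusion.

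First I would dispose of the two auxiliary claims. The smoothness $\varphi \in C_b^5(H,\R)$ is a standard computation with the Gaussian bump: one differentiates $\exp(-\|v\|_H^2)$ up to five times and bounds each derivative uniformly in $v \in H$ using that polynomials of $\|v\|_H$ times $\exp(-\|v\|_H^2)$ are globally bounded; this is the same fact that was invoked at the start of the proof of Lemma~\ref{lem:weak_error_lower_bound_general} (and is spelled out, e.g., in (97)--(102) of \cite{ConusJentzenKurniawan2014arXiv}). The Hilbert--Schmidt assertion $B \in \bigcap_{r < -\frac{1}{2}[\nicefrac{1}{\rho} + 2\delta]} HS(H,H_r)$ is already contained in Proposition~\ref{prop:variance_difference_concrete_lower_bound} and follows from the elementary summability $\sum_{n=1}^\infty n^{2\rho(r+\delta)} < \infty$ whenever $r + \delta < -\frac{1}{2\rho}$.

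Next, to produce the quantitative lower bound I would observe that $X$ and $Y_i$ are centred Gaussian random variables in $H$, being stochastic integrals of deterministic operator-valued integrands against the cylindrical $\operatorname{Id}_H$-Wiener process, and that the families $(\langle b, X \rangle_H)_{b \in \mathbb{H}}$ and $(\langle b, Y_i \rangle_H)_{b \in \mathbb{H}}$ consist of independent scalar Gaussians, because $A$ is diagonal with respect to $\mathbb{H}$ and hence $e^{tA}$, $(\operatorname{Id}_H - tA)^{-1}$ and $B$ all leave every one-dimensional eigenspace $\R b$ invariant. This places us squarely in the setting of Lemma~\ref{lem:weak_error_lower_bound_general}, so an appeal to Corollary~\ref{cor:weak_error_lower_bound_abstract} yields
\begin{equation*}
  \ES\bigl[\varphi(Y_i)\bigr] - \ES\bigl[\varphi(X)\bigr] \geq e^{-6\,\ES[\|X\|^2_H]} \bigl( \ES[\|X\|^2_H] - \ES[\|Y_i\|^2_H] \bigr).
\end{equation*}

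Finally I would insert the concrete lower bound for the variance difference supplied by Proposition~\ref{prop:variance_difference_concrete_lower_bound}, namely
\begin{equation*}
  \ES[\|X\|^2_H] - \ES[\|Y_i\|^2_H] \geq \tfrac{(1 - e^{-2cT})\,(1 - e^{-1})\,T^{(\nicefrac{1}{\rho} + 2\delta)^+}\,c^{2\delta}}{4^{(1 + \rho\delta^-)}\,e^{2^\rho e c T}\,(\rho + (1 + 2\rho\delta)^-)} \, h^{\,(1 - [\nicefrac{1}{\rho} + 2\delta]^+)},
\end{equation*}
and simply multiply. The two exponential prefactors $e^{-6\,\ES[\|X\|^2_H]}$ and $e^{-2^\rho e c T}$ combine into the single factor $\exp(2^\rho e c T + 6\,\ES[\|X\|^2_H])$ in the denominator of the right hand side of \eqref{eq:weak_error_concrete_lower_bound}, while all other numerical constants carry over unchanged. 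There is no genuine obstacle in this step; the only delicate point is the verification that the Gaussian/independence hypothesis of Lemma~\ref{lem:weak_error_lower_bound_general} really holds for $X$ and $Y_i$, which as noted above is automatic from the simultaneous diagonalisation of $A$ and $B$ on $\mathbb{H}$.
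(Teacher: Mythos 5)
Your proposal is correct and matches the paper's argument: the paper proves Proposition~\ref{prop:weak_error_concrete_lower_bound} precisely as an immediate combination of Proposition~\ref{prop:variance_difference_concrete_lower_bound} (which already contains the Hilbert--Schmidt claim) and Corollary~\ref{cor:weak_error_lower_bound_abstract}, with the two exponential prefactors merging into the single $\exp(2^{\rho}ecT+6\,\ES[\|X\|^2_H])$ factor exactly as you describe. Your additional verification of the Gaussian/independence hypothesis is fine but not strictly needed, since Corollary~\ref{cor:weak_error_lower_bound_abstract} is already formulated in the setting of Section~\ref{sec:setting_lower_bound}.
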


In the next result, Corollary~\ref{cor:weak_error_laplacian_lower_bound},
we specialise Proposition~\ref{prop:weak_error_concrete_lower_bound}
to the case where $ c = \pi^2 $
and $ \rho = 2 $ (Laplacian with Dirichlet boundary conditions on
$ (0,1) $)
and slightly further estimates
the right hand side of \eqref{eq:weak_error_concrete_lower_bound}.

\begin{corollary}
\label{cor:weak_error_laplacian_lower_bound}
Assume the setting in Section~\ref{sec:setting_lower_bound}, 
let $ b \colon \N \to \set $ be a bijective function,
let 
$ 
  \delta \in \R 
$,
$ i \in \{ 1, 2 \} $,
assume that for all $ n \in \N $
it holds that
$
  \lambda_{ b_n }
  =
  - \pi^2 \, n^2
$ 
and 
$
  \mu_{ b_n }
  =
  \left| \lambda_{ b_n } \right|^{ \delta }
$, 
and let $ \varphi \in \mathbb{M}( H, \R ) $ satisfy that 
for all $ v \in H $ 
it holds that 
$
  \varphi( v )
  =
  \operatorname{exp}\!\left(
    -\| v \|^2_H
  \right)
$. 
Then it holds that 
$
  \varphi \in C^5_b( H, \R )
$, 
$
  B 
  \in
  \cap_{
    r \in 
    ( 
      - \infty 
      ,
      - \delta
      - 
      \nicefrac{ 1 }{ 4 }
    )
  }
  HS( 
    H, H_r
  )
$
and 
\begin{equation}
\begin{split}
&
  \ES\big[ 
    \varphi( Y_i )
  \big]
  -
  \ES\big[ 
    \varphi( X )
  \big]
\geq
  \left[
  \tfrac{
    \left(
      1 - 
      e^{
        - T
      }
    \right)
    \,
    \left(
      1
      - 
      e^{
        - 1 
      }
    \right)
    \,
    T^{ ( \nicefrac{ 1 }{ 2 } + 2 \delta )^+ } 
    \pi^{ 4 \delta }
  }{
    4^{ ( 1 + 2 \delta^- ) } 
    \,
    \left(
      3 + 4 \delta^-
    \right)
    \,
    \exp\left( 
      12 \pi^2 T 
      + 
      6 \, \ES[ \| X \|^2_H ] 
    \right)
  }
  \right]
  h^{ \min\{ \nicefrac{ 1 }{ 2 } - 2 \delta , 1 \} }
  > 0
    .
\end{split}
\end{equation}
\end{corollary}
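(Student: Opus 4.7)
The plan is to derive Corollary~\ref{cor:weak_error_laplacian_lower_bound} as a direct specialization of Proposition~\ref{prop:weak_error_concrete_lower_bound} to the parameter values $c = \pi^2$ and $\rho = 2$. First I would invoke Proposition~\ref{prop:weak_error_concrete_lower_bound} with these parameters, which immediately yields $\varphi \in C^5_b(H,\R)$, the Hilbert--Schmidt statement (since $-\frac{1}{2}[\nicefrac{1}{\rho} + 2\delta] = -\delta - \nicefrac{1}{4}$ when $\rho = 2$), and the raw lower bound
\begin{equation*}
  \ES\!\big[\varphi(Y_i)\big] - \ES\!\big[\varphi(X)\big]
  \geq
  \tfrac{
    ( 1 - e^{-2\pi^2 T} )( 1 - e^{-1} ) \, T^{(\nicefrac{1}{2} + 2\delta)^+} \, \pi^{4\delta}
  }{
    4^{(1 + 2\delta^-)} \,
    \exp(4e\pi^2 T + 6\,\ES[\|X\|_H^2]) \,
    (2 + (1 + 4\delta)^-)
  } \cdot
  h^{1 - (\nicefrac{1}{2} + 2\delta)^+}.
\end{equation*}

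Next I would carry out the elementary simplifications that bridge this bound and the one claimed in the corollary. The prefactor $1 - e^{-2\pi^2 T}$ is bounded below by $1 - e^{-T}$ since $2\pi^2 \geq 1$. The denominator exponent satisfies $4e\pi^2 T \leq 12\pi^2 T$ (because $4e < 12$), so the exponential in the denominator of the raw bound is at most $\exp(12\pi^2 T + 6\,\ES[\|X\|_H^2])$. For the combinatorial constant I observe that $2 + (1+4\delta)^- \leq 3 + 4\delta^-$: if $\delta \geq 0$ both sides are $2$ vs.\ $3$; if $-\nicefrac{1}{4} \leq \delta < 0$ the left-hand side is $2$ while the right-hand side is $3 - 4\delta > 3$; and if $\delta < -\nicefrac{1}{4}$ the left side equals $1 - 4\delta$ while the right side equals $3 - 4\delta$.

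The only step that requires a moment of care is identifying the exponent of $h$. I would check the two cases separately: when $\delta \geq -\nicefrac{1}{4}$ one has $(\nicefrac{1}{2} + 2\delta)^+ = \nicefrac{1}{2} + 2\delta$, so $1 - (\nicefrac{1}{2} + 2\delta)^+ = \nicefrac{1}{2} - 2\delta \leq 1$, hence it equals $\min\{\nicefrac{1}{2} - 2\delta, 1\}$; when $\delta < -\nicefrac{1}{4}$ one has $(\nicefrac{1}{2} + 2\delta)^+ = 0$ and $\nicefrac{1}{2} - 2\delta > 1$, so both sides equal $1$. Hence the $h$-exponents agree exactly.

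There is essentially no hard step; the entire argument is a substitution followed by a handful of elementary numerical estimates. The main thing to be careful about is the case analysis in the last paragraph to match the two different shapes $1 - (\nicefrac{1}{2}+2\delta)^+$ and $\min\{\nicefrac{1}{2}-2\delta,1\}$ of the rate exponent, and to verify that the estimates on the constants go in the correct direction so that the displayed inequality is preserved. Combining these observations concludes the proof.
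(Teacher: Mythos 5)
Your proposal is correct and follows exactly the route the paper intends: Corollary~\ref{cor:weak_error_laplacian_lower_bound} is obtained by specializing Proposition~\ref{prop:weak_error_concrete_lower_bound} to $c=\pi^2$, $\rho=2$ and then slightly weakening the constants ($1-e^{-2\pi^2T}\geq 1-e^{-T}$, $4e\pi^2\leq 12\pi^2$, $2+(1+4\delta)^-\leq 3+4\delta^-$) and identifying $1-(\nicefrac{1}{2}+2\delta)^+=\min\{\nicefrac{1}{2}-2\delta,1\}$. Your case analysis and the directions of all estimates are accurate, so nothing further is needed.
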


\bibliographystyle{acm}
\bibliography{Bib/bibfile}
\end{document}